\newtheorem{remark}{Remark}
\newtheorem{theorem}{Theorem}
\newtheorem{prop}{Proposition}
\newtheorem{definition}{Definition}
\newtheorem{corollary}{Corollary}
\DeclareMathOperator*{\argmin}{argmin}
\title{RANDSMAPs: Random-Feature/multi-Scale Neural Decoders with Mass Preservation} 
\author{ \href{https://orcid.org/0000-0001-9840-0018}{\includegraphics[scale=0.06]{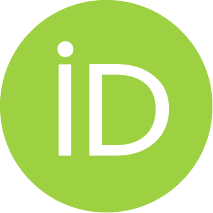}\hspace{1mm}Dimitrios G. Patsatzis} \\
	Modelling and Engineering Risk and Complexity \\
	Scuola Superiore Medirionale\\
	Naples, Italy \\
	\texttt{d.patsatzis@ssmeridionale.it} \\
	\And
	\href{https://orcid.org/0000-0003-2989-4397}{\includegraphics[scale=0.06]{orcid.pdf}\hspace{1mm}Alessandro Della Pia} \\
	Modelling and Engineering Risk and Complexity\\
	Scuola Superiore Medirionale\\
    Naples, Italy \\
	\texttt{a.dellapia@ssmeridionale.it} \\
     \And
	\href{https://orcid.org/0000-0003-4888-467X}{\includegraphics[scale=0.06]{orcid.pdf}\hspace{1mm}Lucia Russo} \\
	Institute of Sciences and Technologies for \\ Sustainable Energy and Mobility \\
	Consiglio Nazionale delle Ricerche\\
    Naples, Italy \\
	\texttt{lucia.russo@stems.cnr.it} \\
	\And
\href{https://orcid.org/0000-0002-9568-3355}{\includegraphics[scale=0.06]{orcid.pdf}\hspace{1mm}Constantinos Siettos}\thanks{Corresponding author} \\
	Dipartimento di Matematica e Applicazioni \\ ``Renato Caccioppoli''\\
	Università degli Studi di Napoli Federico II\\
    Naples, Italy \\
	\texttt{constantinos.siettos@unina.it} \\
}
\begin{document}
\maketitle

\begin{abstract}
We introduce RANDSMAPs (Random-feature/multi-scale neural decoders with Mass Preservation), numerical analysis-informed, explainable neural decoders designed to explicitly respect conservation laws when solving the challenging ill-posed pre-image problem in manifold learning. We start by proving the equivalence of vanilla random Fourier feature neural networks to Radial Basis Function interpolation and the double Diffusion Maps (based on Geometric Harmonics) decoders in the deterministic limit. We then establish the theoretical foundations for RANDSMAP and introduce its multiscale variant to capture structures across multiple scales. We formulate and derive the closed-form solution of the corresponding constrained optimization problem and prove the mass preservation property. Numerically, we assess the performance of RANDSMAP on three benchmark problems/datasets with mass preservation obtained by the Lighthill–Whitham–Richards traffic flow PDE with shock waves, 2D rotated MRI brain images, and the Hughes crowd dynamics PDEs. We demonstrate that RANDSMAPs yield high reconstruction accuracy at low computational cost and maintain mass conservation at single-machine precision. In its vanilla formulation, the scheme remains applicable to the classical pre-image problem, i.e., when mass-preservation constraints are not imposed.
\end{abstract}
\keywords{Manifold Learning \and Decoders\and Explainable Machine Learning \and Numerical Analysis \and Mass Preservation}
\section{Introduction}
Manifold learning is of utmost importance in many fields for uncovering the low-dimensional structure that underlies high-dimensional data and for enabling effective dimensionality reduction. For example, in machine learning it can be used to dramatically reduce sample and computational complexity when training neural networks and neural operators, thus dealing with the ``curse of dimensionality'' \cite{lusch2018deep,lee2020coarse,kemeth2022learning,galaris2022numerical,fabiani2024task,kontolati2024learning,fabiani2025enabling}. The same latent representations are ideal substrates for constructing manifold-informed reduced-order models (ROMs), facilitating training and improving accuracy and stability over longer time periods \cite{deane1991low,rowley2004model,lassila2014model,brunton2016discovering,papaioannou2022time,romor2023non,koronaki2024nonlinear,kaszas2025globalizing}. Overall, manifold methods turn high-dimensional learning and simulation problems into much simpler and more robust tasks by exposing ``the geometry that truly matters'' \cite{evangelou2022parameter,sroczynski2024learning}.

Methods for manifold learning (encoding) and for the solution of the corresponding pre-image problem (decoding) can be generally categorized into two broad classes. The first class contains numerical analysis and linear algebra-based methods, which build explicit, often theoretically grounded restriction and reconstruction maps \cite{belkin2003laplacian,belkin2006convergence,coifman2005geometric,coifman2006geometric,belkin2008towards}. In this class belong (a) linear methods, such as the Singular Value Decomposition (SVD) (or as otherwise called Proper Orthogonal Decomposition (POD)), \cite{deane1991low,graham1996alternative,shvartsman1998nonlinear,rowley2004model,sirisup2005stability}, the related to dynamical systems (Extended) Dynamic Mode Decomposition (DMD) \cite{tu2014dynamic,li2017extended}, and linear random projections \cite{hegde2007random,halko2009finding,erichson2019randomized2}, which are theoretically supported by the celebrated Johnson-Lindenstrauss embedding theorem \cite{johnson1984extensions}, and, (b) nonlinear manifold learning methods, such as kernel-Principal Component Analysis \cite{scholkopf1998nonlinear}, Local Linear Embedding \cite{roweis2000nonlinear}, ISOMAP \cite{tenenbaum2000global}, Laplacian Eigenmaps \cite{belkin2003laplacian,belkin2008towards}, Diffusion Maps (DM) \cite{coifman2006diffusion,coifman2006geometric,nadler2006diffusion,coifman2008diffusion,dsilva2015data}.
For linear manifold learning methods, the pre-image (lifting) problem is solved in a straightforward way as it is implemented via linear projections onto a finite-dimensional subspace, thus solving a linear (regularized) least-squares problem. The resulting linear system admits a closed-form solution via for example, the Moore–Penrose pseudo-inverse, or, the Tikhonov regularization. These linear methods are computationally efficient and yield globally optimal
reconstructions within the chosen subspace, when the manifold--and therefore the lifting operator--can be reasonably assumed to be (approximately) linear. Their limitation, of course, is that they cannot capture nonlinear manifolds and their pre-image mapping. On the other hand, the solution of the pre-image problem in nonlinear manifold learning is ill-posed, and therefore small perturbations in latent coordinates can produce large changes in the reconstructed images. For this reason, a variety of independent approaches, including parametric surrogate models such as Radial Basis Functions (RBFs) and kernel/Ridge regression, local interpolation schemes such as $k$-Nearest Neighbors ($k$-NN) with convex interpolation \cite{fix1985discriminatory,Patsatzis_2023}, spectral/Nystr\"{o}m-based reconstructions such as the ``double" Diffusion Maps (DDM) approach that is based on Geometric Harmonics (GHs) \cite{coifman2006geometric,evangelou2022double} have been proposed (for a review and comparative analysis see \cite{chiavazzo2014reduced,papaioannou2022time}). The choice among these approaches is dictated, among others, by the available data and their density, interpretability, and computational cost. These numerical analysis-based methods for the solution of the pre-image problem are attractive because they are interpretable, often with provable approximation properties, but they can be sensitive to hyperparameter choice, suffer in very high ambient dimensions, and, in some cases, require solving large nonlinear optimization problems. Various manifold learning methods and corresponding decoders are implemented in the Python package \textit{datafold}  \cite{lehmberg2020datafold}.

The second class consists of deep neural networks-based learning methods, and in particular various autoencoder architectures \cite{hinton2006reducing,masci2011stacked,vlachas2022multiscale,romor2023non, peterfreund2020local} that learn encoder and decoder maps jointly end-to-end.
Various structures of autoencoders can be implemented and trained within the \textit{PyTorch} framework \cite{paszke2019pytorch}. While convenient for black-box encoding/decoding, their training is data-hungry, suffering from the ``curse of dimensionality” and they lack interpretability. Importantly, in autoencoder-based approaches, fundamental physical properties such as mass preservation are typically imposed only as soft constraints through penalty terms in the loss function rather than being explicitly enforced at the architectural level, which can further compromise generalization performance and numerical accuracy. 

Here, we propose RANDSMAP, an explainable numerical analysis-informed neural network decoder, based on random nonlinear features/projections \cite{rahimi2008uniform,gorban2016approximation,bach2017equivalence,liu2021random,nelsen2021random,surasinghe2021randomized} to solve the pre-image problem while explicitly preserving mass whenever such a conservation law is apparent in the data. RANDSMAPs follow the same architecture as random feature/projection neural networks (RFNNs) \cite{igelnik1995stochastic,huang2006extreme,scardapane2017randomness,cao2018review,fabiani2021numerical,galaris2022numerical,fabiani2023parsimonious,bollt2024neural,bolager2023sampling,dong2023method,sun2024local,fabiani2025randonets,fabiani2025random}, extending their computational efficiency and explainability capabilities,  for the solution of the mass-reserving pre-image problem. 
The explainability of RANDSMAP stems from the fact that (a) the output is an explicit linear combination of fixed (random feature) basis functions, implying that the contribution of each feature to the prediction is directly given by the learned linear weights, and (b) as we rigorously prove, in the deterministic limit they  asymptotically recover the RBFs reconstruction.

The contributions of this work are threefold. 
First, we introduce and theoretically analyze RANDSMAPs, deriving a closed-form solution for the pre-image problem that guarantees, as we rigorously prove, mass-preservation. Second, we provide a theoretical insight establishing the equivalence between the numerical analysis-based DDM decoder, radial basis functions (RBFs), and the vanilla RFNN decoder in the deterministic limit, when the latter is equipped with random Fourier features. Third, we derive conservation error bounds for the truncated solutions of RANDSMAP, and extend it with multi-scale random Fourier features for capturing structures across multiple scales on the manifold; we prove that the latter features are equivalent to  multi-Gaussian kernels in the deterministic limit.  

RANDSMAPs are evaluated against three mass-preserving benchmark examples with varying scalability: a moderate-dimensional 1D traffic density dataset from the Lighthill-Whitham-Richards (LWR) hyperbolic PDE; a high-dimensional 2D gray-scale MRI image dataset with sparse observations; and a high-dimensional 2D pedestrian density dataset governed by the Hughes crowd dynamics PDEs. Additionally, two classic low-dimensional manifold learning examples without inherent mass conservation (the classic $3$-dimensional Swiss roll and S-curve projected in $20$ dimensions) are included to establish a baseline for RFNN decoder performance. In all examples, RANDSMAPs are compared against the DDM and $k$-NN decoders. To ensure fair assessment, all decoders operate on the same low-dimensional embedding/encoding, obtained here via the DM algorithm, though each decoder is, in principle, compatible with any embedding.     

The structure of the paper is as follows. Section~\ref{sec:Prelim} formally states the constrained pre-image problem and briefly reviews current state-of-the-art methods in the field. Section~\ref{sec:NA_PI_cons} analyzes the conservation properties of the numerical analysis-based decoders considered. Section~\ref{sec:decoderRFNN} introduces the RANDSMAP theoretical foundations: we first establish the connection between the DDM and RBF kernel-based approximations and the random Fourier features in the deterministic limit; then, we present the multi-scale random Fourier features, and finally derive the RANDSMAP's closed-form solution that guarantees mass-preservation, and provide error bounds for its truncated solutions. Section~\ref{sec:numerics} provides an extensive numerical evaluation on the three mass-preserving benchmark problems; results on the non-mass-preserving low-dimensional examples are detailed in Appendices~\ref{app:SR} and \ref{app:SC20}. We conclude in Section~\ref{sec:conclusions} discussing potential future research directions.

\section{Preliminaries and Problem Statement} \label{sec:Prelim}
Let us start with the definition of a $d$-dimensional manifold (see e.g., in \cite{kuhnel2015differential,wang2012geometric}).
\begin{definition}[Manifold]
\textit{A set $\mathcal{M}$ is called a $d$-dimensional manifold (of class $C^{k}$) if for each point $p \in \mathcal{M}$, there is an open set $\mathcal{W} \subset \mathcal{M}$ such that there exists a $C^{k}$ bijection $f: \mathcal{U} \rightarrow \mathcal{W}$, $\mathcal{U} \subset \mathbb{R}^d$ open, with a $C^{k}$ inverse $f^{-1}: \mathcal{W} \rightarrow \mathcal{U}$ (i.e. $\mathcal{W}$ is $C^{k}$-diffeomorphic to $\mathcal{U}$). The bijection $f$ is called a local parametrization of $\mathcal{M}$, while $f^{-1}$ is called a local coordinate mapping, and the pair $(\mathcal{W},f^{-1})$ is called a chart on $\mathcal{M}$.}
\label{eq:defman}
\end{definition}
A point $p \in \mathcal{W}$ can be expressed, via the local parameterization $f$, as 
\begin{equation}
p=f(v_1,v_2,\dots, v_d), 
\end{equation}
where $v=(v_1,v_2,\dots, v_d)=f^{-1}(p)\in \mathcal{U}$. This definition endows $\mathcal{M}$ with the structure of a Hausdorff and second-countable topological space; the standard requirements ensuring that the manifold is separable/$T_2$ (every pair of distinct points on $\mathcal{M}$ has disjoint open neighborhoods) and admits partitions of unity \cite{kuhnel2015differential,wang2012geometric}. Thus, a manifold can be summarized as a Hausdorff, second-countable topological space in which every point has a neighborhood diffeomorphic to an open subset of $\mathbb{R}^d$.

\begin{definition}[Tangent space and tangent bundle to a manifold]
\textit{Let $\mathcal{M} \subset \mathbb{R}^M$ be a d-dimensional manifold embedded in ambient space and $p \in \mathcal{M}$. The tangent space to $\mathcal{M}$ at $p$ is the vector subspace $T_{p}\mathcal{M}$ formed by the tangent vectors at $p$, defined by
$$ T_{p}\mathcal{M} = D f_{v}(T_{v}\mathbb{R}^{d}),$$
where $T_{v}\mathbb{R}^{d}$ is the tangent space at $v$ and $Df_v:T_{v}\mathbb{R}^{d}\to \mathbb{R}^M$ is the derivative of $f$ at $v$. Using the standard bases for $T_v\mathbb{R}^d$ and $\mathbb{R}^M$, the matrix of $Df_v$ is the $M \times d$ Jacobian $[\partial f_i/\partial v_j]$ with $i=1,\ldots,M$, $j=1,\ldots,d$, which has rank $d$. Thus, the vectors $\left\{ \frac{\partial f}{\partial v_1}, \frac{\partial f}{\partial v_2}, \dots, \frac{\partial f}{\partial v_d} \right\}$ form a basis for $T_{p}\mathcal{M}$. The union of tangent spaces $$T \mathcal{M} = \bigcup_{p \in \mathcal{M}} T_{p}\mathcal{M}$$ is called the tangent bundle of $\mathcal{M}$.} 
\end{definition}
\begin{definition}[Riemannian manifold and metric]
\textit{A Riemannian manifold is a manifold $\mathcal{M}$ endowed with a positive definite inner product $g_{p}:T_{p}\mathcal{M} \times T_{p}\mathcal{M}\to\mathbb{R}$, defined on the tangent space $T_{p}\mathcal{M}$ at each point $p\in\mathcal{M}$. The family of inner products is called a Riemannian metric $g$, and the Riemannian manifold is denoted $(\mathcal{M},g)$.}
\end{definition}
The above definitions describe the continuum limit setting of a manifold with infinitely many points. However, in practice, we only have a finite set of observations, and the analytic charts (as well as the Riemannian metric that defines the manifold's geometry) are not directly accessible. Manifold learning addresses this gap by constructing, from finite data, numerical approximations that converge, in a probabilistic sense, to the continuum objects as the sample size $N \to \infty$. To facilitate this discussion, we adopt the standard convention in this field: we denote the continuum coordinate chart by $\Psi$ (so that $\Psi = f^{-1}$), and the continuum parametrization by $\Psi^{-1}$ (so that $\Psi^{-1} = f$, where $f$ is the parametrization from Definition~\ref{eq:defman}). 
\paragraph{Manifold learning and encoding.} Given $\mathcal{X}=\{x_i\}_{i=1}^N\subset\mathbb{R}^M$ sampled i.i.d. from a smooth Riemannian manifold $(\mathcal{M},g)$, manifold learning consists of constructing empirical operators $\mathcal{R}_{N,\theta}$ (e.g. a linear operator based on POD/SVD or DMD, or a nonlinear one based on ISOMAP, DM, etc.), where $\theta$ denotes a possible set of parameters (such as kernel shape parameters). From the eigendecomposition of such an operator, one obtains a low-dimensional embedding
\begin{equation}
\Psi_N: \mathcal{X} \subset{R}^M \to \mathcal{Y} \subset \mathbb{R}^d,
\label{eq:PsiN}
\end{equation}
which assigns to each data point $x_i \in \mathcal{X}$ a coordinate vector $y_i=\Psi_N(x_i) \in \mathbb{R}^d$. 

Under suitable conditions (e.g., uniform sampling from a smooth Riemannian manifold and appropriate choice of kernel parameters), the empirical embedding $\Psi_N$ converges (pointwise or uniformly) to the continuum coordinate chart $\Psi: \mathcal{W}\subset \mathcal{M}\to \mathcal{U} \subset \mathbb{R}^d$ as $N\to\infty$ (since $\mathcal{X}\to \mathcal{W}$ and $\mathcal{Y}\to \mathcal{U}$), while preserving the underlying geometry (e.g., Euclidean, geodesics or diffusion distances). This geometric consistency follows from the fact that a surrogate operator $\mathcal{R}_{N,\theta}$ converges to a continuum operator on the $\mathcal{M}$. For example, with a properly normalized Gaussian affinity kernel, the graph Laplacian constructed in Laplacian Eigenmaps or DM converges to the Laplace–Beltrami operator on $\mathcal{M}$ \cite{belkin2003laplacian,coifman2005geometric,belkin2006convergence,nadler2006diffusion,belkin2008towards}. This allows us to consider the eigenvectors of $\mathcal{R}_{N,\theta}$ corresponding to the largest few eigenvalues as discrete approximations of the principal eigenfunctions of the operator, rendering the resulting embedding $\Psi_{N}$ to inherit the operator's convergence. The construction described above yields an embedding $\Psi_{N}$ for the available data in $\mathcal{X}$. To complete the encoding step, one must also be able to embed new, unseen points $x^* \notin \mathcal{X}$ (the ``out-of-sample extension'' problem). A standard approach for this is the Nystr\"{o}m extension \cite{coifman2006geometric,papaioannou2022time,evangelou2022double,Patsatzis_2023}, which uses the already computed eigenvectors to evaluate the embedding at arbitrary points. 
\paragraph{The Pre-Image (decoding) problem.} As discussed, in the continuum limit, there exists a continuous inverse of $\Psi$
\begin{equation}
\Psi^{-1} : \mathcal{U} \subset \mathbb{R}^d \longrightarrow \mathcal{W} \subset \mathcal{M}.
\end{equation}
With only finite data, however, the map $\Psi_N$ obtained from the empirical operator is not exactly invertible. Recovering a data point from its latent coordinates, referred to as the out-of-sample pre-image problem, is therefore ill-posed, and one must resort to numerical approximations. A generic approach for finding the reconstruction of an out-of-sample point $\hat{x}^* \in \mathbb{R}^M$ is to solve the optimization problem
\begin{equation}
\hat{x}^*= \argmin_{x \in \mathbb{R}^M} \big\| \Psi_{N}(x) - y^* \big\|^2,
\label{eq:PI_opt}
\end{equation}
where $y^* \in \mathbb{R}^d$ is a given latent point, and $\Psi_N(x)$ is the empirical/surrogate embedding/encoding map in Eq.~\eqref{eq:PsiN}. In general, $y^*$ does not correspond to the image of an ``in-sample'' point in $\mathcal{X}$, hence the out-of-sample symbolism $y^*$.

Several classical numerical methods have been proposed for this task, including  RBF-based interpolation methods, the $k$-NN algorithm with convex interpolation \cite{fix1985discriminatory} and DDM based on GHs \cite{evangelou2022double,coifman2006geometric}; for a review and comparison, see \cite{chiavazzo2014reduced,papaioannou2022time}. Some of these methods, such as $k$‑NN with convex interpolation or linear POD via SVD, preserve mass by construction when these are apparent in the data \cite{viola2025pde,alvarez2025next}, while, the nonlinear DDM approach does not, as we prove in Section~\ref{sec:NA_PI_cons}. 
On the other hand, autoencoders, \cite{hinton2006reducing,makhzani2015adversarial,peterfreund2020local,vlachas2022multiscale} provide  a black-box, numerical analysis-agnostic framework for the ``end-to-end'' solution of both the encoding (manifold learning) and decoding (pre‑image) problems.
Compared to classical numerical-analysis algorithms, however, they suffer from several drawbacks: they are data-hungry (due to the ``curse of dimensionality''), lack interpretability, and usually cannot explicitly enforce problem-specific constraints; these are usually imposed as soft constraints in the loss function.


\paragraph{Problem statement.} Existing machine-learning–based decoders typically overlook intrinsic physical constraints, such as mass conservation. Designing a general decoding framework that combines machine learning with numerical analysis that explicitly guarantees mass preservation while maintaining reconstruction accuracy, therefore, remains an open challenge. Toward this goal, we present a numerical-analysis–informed neural network decoder that produces a reconstruction $\hat{x}^*$ for a given latent point $y^*$, which both approximates the solution of the pre-image optimization problem in Eq.~\eqref{eq:PI_opt}, and explicitly preserves the sum-to-one invariant (the mass) in the reconstruction as $\sum_{j=1}^M\hat{x}^*_{j}=1$. 

\section{Numerical analysis-based decoders and conservation properties}
\label{sec:NA_PI_cons}
We begin by briefly reviewing three standard numerical analysis-based decoders and examining their conservation properties. 

First, let us consider a conservative dataset $\mathcal{X}=\{x_i\}_{i=1}^N\subset\mathbb{R}^M$ in the high-dimensional space, where each data vector satisfies the (normalized) mass conservation, implying a sum-to-one invariant $\sum_{j=1}^M x_{ji}=1$ for any $i=1,\ldots,N$. Let $1_N$ and $1_M$ denote the $N$- and $M$-dimensional column vectors of ones, respectively, and let all the data points be collected in the matrix 
$X = \begin{bmatrix} x_1, & \ldots, &x_N \end{bmatrix} \in \mathbb{R}^{M \times N}$. Then, the sum-to-one invariant, apparent in the data, can be equivalently written in the compact form 
\begin{equation}
    1_M^\top X = 1_N^\top.
    \label{eq:s2one_cons}
\end{equation}

Now, consider the encoded coordinates of $\mathcal{X}$ in the dataset $\mathcal{Y}=\{ y_i \}_{i=1}^N \subset \mathbb{R}^d$, where $y_i=\Psi_N(x_i)$, obtained by a low-dimensional embedding $\Psi_N$ in Eq.~\eqref{eq:PsiN}; for example by a linear POD/SVD embedding or a nonlinear DM one. Then, any decoder $\Psi_N^{-1}$ solving the optimization problem in Eq.~\eqref{eq:PI_opt} for the latent points in $\mathcal{Y}$ is mass-conserving, when the sum-to-one invariant
\begin{equation}
    1_M^\top \widehat{X} = 1_N^\top,
    \label{eq:s2one_cons_recon}
\end{equation}
holds for the reconstructions $\widehat{X}= \begin{bmatrix} \hat{x}_1, & \ldots, &\hat{x}_N \end{bmatrix} \in \mathbb{R}^{M \times N}$, where $\hat{x}_i=\Psi_N^{-1}(y_i)$. Note that the sum-to-one invariant should also hold for reconstructions $\widehat{X}^*=\begin{bmatrix} \hat{x}^*_1, & \ldots, &\hat{x}^*_L \end{bmatrix}\in \mathbb{R}^{M \times L}$ of out-of-sample, unseen points in $\mathcal{Y}^*=\{ y^*_l\}_{l=1}^L\subset \mathbb{R}^d$, where $\hat{x}^*_i=\Psi_N^{-1}(y^*_i)$.

Given the above mass-conservation criterion, let us now review three standard numerical analysis-based decoders: the POD/SVD decoder, the $k$-NN decoder with convex interpolation, and the DDM decoder.

\subsection{Decoding with POD/SVD}
The POD/SVD method \cite{deane1991low,graham1996alternative,shvartsman1998nonlinear,rowley2004model,sirisup2005stability} is a linear manifold learning method that encodes high-dimensional data onto the optimal low-dimensional subspace minimizing the (Frobenius) norm of the reconstruction error. Given the data matrix $X$, the decoder reconstructs a point from its latent coordinates via projection onto the leading $d$ singular vectors. As we have proven in our recent works \cite{alvarez2025next,viola2025pde}, due to the apparent sum-to-one invariant in the data, the POD basis is orthogonal to the constant vector $1_M$, rendering the POD reconstruction mass-preserving by construction. A complete proof is also provided in Appendix~\ref{app:POD}. While POD offers a computationally efficient, exactly conservative decoder, its expressivity is inherently limited to linear manifolds, restricting its applicability to nonlinear embeddings.     

\subsection{Decoding with k-NN}
The $k$-NN decoder is a nonlinear, nonparametric method that reconstructs a point by interpolating its $k$ nearest neighbors in the latent space as a convex combination \cite{fix1985discriminatory}. Given a latent point $y^*$, the decoder solves a constrained optimization to find weights $\alpha_k$ such that $\sum_{i=1}^k\alpha_i=1$, and returns the reconstruction $x^*=\sum_{i=1}^k \alpha_i x_{S(i)}$, where $x_{S(i)}$ are high-dimensional, known, pre-images of the $k$ neighbors. The convex interpolation structure inherently guarantees mass-conservation, as when the $k$ neighbors satisfy the sum-to-one invariant, so does a convex interpolation of them. A proof of conservation, along with the detailed $k$-NN decoder implementation, is provided in Appendix~\ref{app:kNN}.

Despite this exact conservation guarantee, the method faces significant practical limitations. It suffers from the ``curse of dimensionality'' in neighbor search, requires computationally expensive pointwise optimization, and its accuracy is sensitive to data sparsity and the choice of $k$; see Remark~\ref{rmk:kNN}. These factors make it impractical for large-scale or batch decoding, where efficiency is paramount. A detailed discussion of these challenges can be found in \cite{halder2024enhancing}.

\subsection{Decoding with DDM/GH}\label{sec:decodingwithDMS}
The DDM decoder is a nonlinear kernel-based pre-image method, based on GHs  \cite{evangelou2022double,coifman2006geometric}. Given the set of ambient points $\mathcal{X}$ and their corresponding low-dimensional embedding $\mathcal{Y}$ (obtained, for example, from a DM encoder; see Appendix~\ref{app:DDM_dec}), the DDM decoder constructs a reconstruction mapping $\mathcal{Y}\to\mathcal{X}$ in two steps. First, it performs a ``second'' DM-like step (hence ``double'' in DDM) applied directly to the latent coordinates $\mathcal{Y}$ to build an improved basis for kernel interpolation in the latent space \cite{evangelou2022double,Patsatzis_2023}. This yields eigenvectors and eigenvalues collected in matrices $V_r\in\mathbb{R}^{N\times r}$ and $\Lambda_r \in \mathbb{R}^{r\times r}$, respectively, where $r$ is the number of retained DDM eigenmodes. Next, the DDM decoder employs out-of-sample reconstruction via GHs \cite{coifman2005geometric,coifman2006geometric} by projecting a new latent point $y^*$ onto the basis $\{V_r,\, \Lambda_r\}$ via kernel evaluations, producing reconstruction weights that are linearly combined with the training data $X$. A detailed presentation of the DDM decoder is provided Appendix~\ref{app:DDM_dec}, where the DM encoder is also presented for completeness. 

For a set of unseen latent points $\mathcal{Y}^*=\{ y^*_l\}_{l=1}^L$, the complete batch reconstruction formula of the DDM decoder (see Eq.~\eqref{eq:DoubleDMs_mat}) is
\begin{equation}
\widehat{X}^* = X\,V_r\, \Lambda_r^{-1} \, V_r^\top \, K^{*\top},
    \label{eq:DoubleDMs_mat_main}
\end{equation}
where $X$ is the training data matrix and $K^*\in \mathbb{R}^{L \times N}$ is the ``second'' DM-like kernel matrix computed between the new points in $\mathcal{Y}^*$ and the training set $\mathcal{Y}$ in the latent space. 
The matrix product $X\,V_r\, \Lambda_r^{-1} \, V_r^\top$ in Eq.~\eqref{eq:DoubleDMs_mat_main} can be precomputed, enabling efficient batch reconstruction without pointwise optimization (unlike, e.g., the $k$-NN decoder; see Remark~\ref{rmk:kNN}). However, while efficient, the kernel-based DDM decoder does not preserve mass in general, as we prove next. 

\begin{prop} \label{prop:DDM_cons}
Suppose the dataset $\mathcal{X}=\{x_i\}_{i=1}^N\subset\mathbb{R}^M$ is mass-preserving, i.e., the data matrix $X \in \mathbb{R}^{M \times N}$ satisfies the (normalized) sum-to-one constraint $1_M^\top X = 1_N^\top$. Let the matrix $\widehat{X}^* \in \mathbb{R}^{M \times L}$ contain the reconstructed fields of a set of new latent points $\mathcal{Y}^*=\{ y^*_l\}_{l=1}^L\subset\mathbb{R}^d$, obtained via the DDM decoder as
\begin{equation*}
    \widehat{X}^* = X\,V_r\, \Lambda_r^{-1} \, V_r^\top \, K^{*\top},
\end{equation*}
where $V_r \in \mathbb{R}^{N \times r}$ and $\Lambda_r \in \mathbb{R}^{r \times r}$ denote matrices collecting the ``second'' DM eigenvectors and eigenvalues, and $K^* \in \mathbb{R}^{L \times N}$ denotes the kernel matrix between new latent points $\mathcal{Y}^*$ and the training embeddings $\mathcal{Y}$. Then, in general, the reconstructed fields $\widehat{X}^*$ are not mass-preserving, i.e., $1_M^\top \widehat{X}^* \neq 1_L^\top$.
\end{prop}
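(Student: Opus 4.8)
The plan is to use the sum-to-one property of the data to collapse the claim onto a statement about latent-space quantities only, and then to show that the resulting identity is a non-trivial analytic condition that fails generically, backing this with a minimal explicit counterexample. First I would left-multiply the DDM formula \eqref{eq:DoubleDMs_mat_main} by $1_M^\top$ and use $1_M^\top X = 1_N^\top$ to obtain
\begin{equation*}
1_M^\top \widehat{X}^* \;=\; 1_N^\top V_r\,\Lambda_r^{-1}\,V_r^\top K^{*\top}.
\end{equation*}
Thus the DDM reconstruction is mass-preserving if and only if $1_N^\top V_r\,\Lambda_r^{-1}\,V_r^\top K^{*\top} = 1_L^\top$. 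The key point is that the right-hand side no longer involves $X$: mass preservation has become a property of the ``second''-DM basis $(V_r,\Lambda_r)$ and the out-of-sample kernel block $K^*$ alone. Hence it suffices to exhibit one admissible configuration — a choice of latent set $\mathcal{Y}$, out-of-sample set $\mathcal{Y}^*$, number of retained modes $r$, and kernel bandwidth — for which this equality is violated; the proposition is then established for every mass-preserving $X$ whose DM embedding yields that $\mathcal{Y}$, by linearity of $\widehat{X}^*$ in $X$.

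Next I would interpret the reduced quantity: the $l$-th entry of $1_N^\top V_r\Lambda_r^{-1}V_r^\top K^{*\top}$ is exactly the Geometric-Harmonics out-of-sample extension of the constant training vector $1_N$ at $y^*_l$, computed in the truncated ``second''-DM eigenbasis. So mass is conserved if and only if the GH reconstruction of the constant field returns the constant $1$ at every out-of-sample point, and I would point to the two generic obstructions. (i) When $r<N$ and $1_N$ does not lie in the span of the $r$ retained eigenvectors — the typical situation, since the DM-like symmetric kernel does not in general have $1_N$ among its leading eigenvectors — already $V_rV_r^\top 1_N\neq 1_N$, so the identity fails even at the training points, a fortiori out of sample. (ii) Even in the most favourable case $r=N$, where $V_r\Lambda_r^{-1}V_r^\top = K^{-1}$ and $1_M^\top\widehat{X}^* = 1_N^\top K^{-1}K^{*\top}$, the required identity $K^*(K^{-1}1_N)=1_L$ forces the kernel to act as a partition of unity on $\mathcal{Y}$, which a Gaussian DM-like kernel does not.

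To make ``in general'' rigorous I would instantiate the smallest non-degenerate case: $d=1$, $N=3$ distinct, non-symmetrically placed latent points, the Gaussian ``second''-DM kernel with a fixed bandwidth, $r=N=3$ retained modes (the case most favourable to conservation), and a single generic out-of-sample point $y^*$ (so $L=1$). Then $1_M^\top\widehat{x}^* = 1_3^\top K^{-1}k^*$ with $k^*\in\mathbb{R}^3$ the vector of kernel evaluations between $y^*$ and $\mathcal{Y}$; solving $Kz=1_3$ and evaluating $k^{*\top}z$ produces a smooth, non-constant function of $y^*$ that is not identically $1$, so a single value of $y^*$ already gives $1_M^\top\widehat{x}^*\neq 1$. (With $r<3$ the identity breaks even more readily because of truncation.) I expect the main obstacle to be ruling out a hidden normalization in the DDM/GH pipeline — the density and row normalizations of the ``second'' DM step and the $\Lambda_r^{-1}$ rescaling — that might secretly enforce $1_N^\top V_r\Lambda_r^{-1}V_r^\top K^{*\top}=1_L^\top$; I would settle this by appealing to the explicit batch construction in Eq.~\eqref{eq:DoubleDMs_mat} and Appendix~\ref{app:DDM_dec}, where no sum-to-one constraint on the reconstruction weights is ever imposed, and by the explicit counterexample above, which closes the argument unconditionally.
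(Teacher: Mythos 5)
Your proposal is correct, and its opening move is exactly the paper's: left-multiply the DDM reconstruction by $1_M^\top$, invoke $1_M^\top X = 1_N^\top$, and reduce the claim to whether $1_N^\top V_r\,\Lambda_r^{-1}\,V_r^\top K^{*\top} = 1_L^\top$. Where you diverge is in how the final ``not equal in general'' is justified. The paper simply asserts that $V_r\Lambda_r^{-1}V_r^\top$ is not the pseudo-inverse of $K^{*\top}$ (only the truncated pseudo-inverse of the training kernel $K^{(2)}$) and stops there; you instead interpret the reduced quantity as the Geometric-Harmonics extension of the constant training vector $1_N$, isolate the two independent failure modes (truncation excluding $1_N$ from $\operatorname{span}(V_r)$, and the Gaussian kernel not acting as a partition of unity even at $r=N$), and close with an explicit $N=3$, $r=N$ counterexample where $k^{*\top}K^{-1}1_3$ is a non-constant function of $y^*$ (indeed it decays to $0$ as $y^*$ moves far from the training set, so it cannot be identically $1$). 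Your version is strictly more rigorous: a ``not in general'' statement is most cleanly established by one admissible counterexample, and your observation that the claim depends only on $(\mathcal{Y},\mathcal{Y}^*,r,\epsilon_2)$ and not on $X$ correctly justifies that a single latent-space configuration settles the proposition for all mass-preserving data. The paper's argument buys brevity and conveys the structural reason for the failure; yours buys an airtight proof and a sharper diagnosis (the GH extension of a constant is not constant), at the cost of a short computation. Your concern about hidden normalizations is also well-placed and correctly resolved: the ``second'' kernel $K^{(2)}$ in Eq.~\eqref{eq:DDMkernel} is the raw Gaussian kernel with no row normalization, so no sum-to-one structure is secretly enforced.
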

\begin{proof}
Multiplying the reconstruction of the DDM decoder on the left with the all-ones vector $1_M^\top$ and using the constraint $1_M^\top X = 1_N^\top$ yields
\begin{equation}
1_M^\top  \widehat{X}^{*}= 1_M^\top X \,V_r\, \Lambda_r^{-1}\, V_r^\top \, K^{*\top}=1_N^\top V_r
\,\Lambda_r^{-1}\, V_r^\top \, K^{*\top}.
\end{equation}
Clearly, $1_M^\top  \widehat{X}^{*}\neq 1_{L}^\top$ as, in the general case, $V_r \,\Lambda_r^{-1}\, V_r^\top $ is not the pseudo-inverse of $K^{*\top}$. In fact, it is the truncated pseudo-inverse of the kernel matrix $K^{(2)}\in\mathbb{R}^{N \times N}$ computed over the training embeddings $\mathcal{Y}$; see Eq.~\eqref{eq:DDMkernel}. Therefore, the DDM decoder is not, in general, mass-preserving.
\end{proof}

\section{The RANDSMAP decoder: Random-features Neural Decoder with Mass Preservation}
\label{sec:decoderRFNN}
The analysis in Section~\ref{sec:NA_PI_cons} shows that whether a classical numerical analysis-based pre‑image method preserves the mass depends on its specific architecture: linear POD/SVD and $k$‑NN with convex interpolation do so (albeit with significant limitations), as a consequence of their linear or convex structure, while nonlinear kernel-based methods such as DDM violate it. On the other hand, deep NN-based decoders (autoencoders) operate as numerical analysis–agnostic black boxes. In such a case, the conservation laws are typically enforced only through soft penalties during training, rather than being built into the model. This highlights a fundamental gap as most of the machine-learning-based nonlinear decoders are not designed to \textit{explicitly enforce} physical invariants. To overcome this, we introduce a framework that can guarantee exact conservation for flexible, nonlinear representations, while preserving both interpretability and numerical analysis rigor.

Here, towards this aim, we introduce a nonlinear neural decoder based on random nonlinear features/projections \cite{rahimi2008uniform,gorban2016approximation,bach2017equivalence,liu2021random,nelsen2021random,surasinghe2021randomized} to solve the pre-image problem while explicitly preserving mass whenever such a conservation law is apparent in the data. Following the architecture of single-hidden layer RFNNs, RANDSMAPs are inherently interpretable: the hidden layer defines an explicit, data-driven set of feature maps, and the decoder’s parameters are simply the linear output weights that combine these features. Critically in RANDSMAPs, instead of solving an unconstrained regression problem (as in RFNNs), we solve the constrained (by the sum-to-one invariant) optimization problem, which, due to the RFNN-like structure, reduces to a linearly‑constrained least‑squares problem. This reformulation allows us to enforce exact conservation \emph{by construction}, without resorting to soft penalty terms that only approximate the constraint. Importantly, this reformulation of an NN-based decoder allows the employment of state‑of‑the‑art numerical-analysis and linear‑algebra tools (e.g., SVD, QR factorizations, preconditioned iterative solvers) and the derivation error estimates based on low‑rank approximations; thereby importing the stability and analytical guarantees of classical numerical schemes into a data‑driven framework.

To introduce RANDSMAPs, we proceed as follows. In Section~\ref{sb:RFNNdec}, we present the vanilla RFNN decoder for general (unconstrained) data and provide a closed-form solution. We prove that with a particular choice of activation functions (Fourier features), the RFNN decoder recovers (in the deterministic limit) the DDM decoder, thus providing a connection to classical RBFs interpolation. In Section~\ref{sb:RFNNdec_ms}, we present the multi-scale random Fourier features, which can be used for enhancing the decoder’s capacity to capture structures in multiple scales/frequencies. We show that the induced feature kernel recovers in the deterministic limit a multi-Gaussian kernel. Finally, in Section~\ref{sb:RANDSMAP} we formally introduce RANDSMAP, derive their mass-preserving closed-form solution via Lagrange multipliers, and provide bound guarantees of the conservation error in the case of truncated solutions. 

\subsection{Vanilla Random Feature Decoders and their equivalence to the DDM decoder and RBFs interpolation in the deterministic limit} \label{sb:RFNNdec}
The concept of random feature embeddings can be traced back to the celebrated Johnson-Lindenstrauss theorem \cite{johnson1984extensions} for linear systems (see also the discussion in \cite{galaris2022numerical,fabiani2023parsimonious}). Regarding NNs, the idea is to fix internal weights and biases of a single hidden-layer, thereby constructing a randomized nonlinear basis. The network's output is then a linear projection onto this fixed basis. This architecture reduces training to a linear least-squares problem for the output weights, while maintaining universal approximation capabilities. In particular, it has been proven that such RFNNs, with an appropriate class of activation functions and i.i.d. sampled parameters, can approximate continuous functions on compact sets arbitrarily well \cite{igelnik1995stochastic}, with established convergence rates \cite{makovoz1996random,rahimi2007random,rahimi2008uniform,gorban2016approximation}. These properties extend to the approximation of nonlinear operators \cite{fabiani2025randonets}, ensuring the framework is suitable for learning maps between spaces.



Within our context, an RFNN provides a randomized nonlinear map of a point in the latent space $y\in \mathbb{R}^d$ to its pre-image in the input space $x\in \mathbb{R}^M$, offering a regularized approximation of the (generally unknown) reconstruction map $\Psi^{-1}$. We denote the map approximation by $\Psi_{N,P}^{-1}$, where $N$ is the number of training data and $P$ is the number of random features, and model the RFNN, as shown in Fig.~\ref{fig:Schematic}, in the general form (see e.g., \cite{lowe1988multivariable})  
\begin{equation}
x^{(j)} = \Psi^{-1}_{N,P}(y)= \sum_{k=1}^{P} \alpha_{jk}
\phi(y; c_k) + \beta_j
, \quad \, j=1,2,\dots, M,
\label{eq:RFNN}
\end{equation}
where $x^{(j)}$ is the $j$-th coordinate of output vector $x$, $\phi(y; c_k)$ is a parametric family of random activation functions uniformly bounded in $\mathbb{R}^{d} \times \mathbb{R}^{\ell}$ and $c_k \in \mathbb{R}^{\ell}$ are $\ell$ random shape parameters drawn from proper probability distributions. The output weights $\alpha_{jk}$ and biases $\beta_j$ are the only unknowns to be learned from data. 

Given the universal approximation properties of random-feature networks \cite{igelnik1995stochastic,rahimi2007random,rahimi2008uniform,gorban2016approximation,fabiani2025randonets,fabiani2025random}, for any compact set $\mathcal{K} \subset \mathbb{R}^d$ and for every map $\Psi^{-1}$ in a bounded subset $\mathcal{B}$ of $C(\mathcal{K})$, there exists an RFNN of the form in Eq.~\eqref{eq:RFNN}, with a sufficiently large number of features $P$ and appropriately chosen weights and activation function, such that 
\begin{equation}
\big\| {\Psi^{-1}_{N,P}} - \Psi^{-1} \big\|^2_{L^2(\mu)}<\epsilon,
\label{eq:RFNNapprox}
\end{equation}
holds for any $\epsilon>0$, where $\mu$ is a suitable probability measure on $\mathcal{K}$. In practice, we learn the weights $\alpha_{jk}$ and biases $\beta_j$ from a finite training dataset $\{(x_i,y_i)\}_{i=1}^N$ to obtain the empirical approximation of $\Psi^{-1}$.

Let $\mathcal{X}=\{x_i\}_{i=1}^N\subset\mathbb{R}^M$ be the training dataset in the ambient space and $\mathcal{Y}=\{y_i\}_{i=1}^N\subset\mathbb{R}^d$ the corresponding latent points. When $N$ is very large, one may downsample by randomly selecting an index subset $\mathcal{S}=\{s_1,\dots,s_n\}\subseteq\{1,\dots,N\}$ with $|\mathcal{S}|=n\le N$, yielding the (down)sampled training sets of targets $\mathcal{X}_S=\{x_{s_i}\}_{i=1}^n$ and inputs $\mathcal{Y}_S=\{y_{s_i}\}_{i=1}^n$. For casting the RFNN formula in a matrix form on the training set, we employ the parameterized activation function in Eq.~\eqref{eq:RFNN} to obtain the $n\times (P+1)$ (down)sampled feature matrix 
\begin{equation}
\Phi_S = \big[1_n \, | \, \widetilde{\Phi}_S \big]
\in\mathbb{R}^{n\times (P+1)}, \quad \widetilde{\Phi}_{S_{ik}}=\phi(y_{s_i}; c_{k}),
\label{eq:RFNNBasis}
\end{equation} 
for $i=1,\dots,n$ and $k=1,\dots,P$. Let us further stack the (down)sampled target outputs in the data matrix $X_S = \begin{bmatrix} x_{s_1}, & \ldots, &x_{s_n} \end{bmatrix} \in \mathbb{R}^{M \times n}$. Then, by collecting the unknown coefficients (output weights and biases) into the row-wise $(P+1) \times M$ matrix
\begin{equation}
    A_S := 
\begin{bmatrix}
\beta &
\alpha_1 &
\dots &
\alpha_P
\end{bmatrix}^{\top}
\in\mathbb{R}^{(P+1) \times M}, \quad 
\beta = \bigl[\beta_1,\, \ldots, \, \beta_M \bigr]^\top, \quad
\alpha_k=\bigl[\alpha_{k1},\, \ldots, \,\alpha_{kM}\bigr]^\top, 
\end{equation}
for $k=1,\ldots,P$, we obtain the RFNN formula in the compact matrix form 
\begin{equation}
X^{\top}_{S} = \Phi_S A_S.
\label{eq:RFNNdecoder}
\end{equation}
We remark that although Eq.~\eqref{eq:RFNNdecoder} is derived for the training set (since $\Phi_S$ is evaluated on $\mathcal{Y}_S$ and $X_S$ contains the corresponding targets), the same linear relationship via $A_S$ can be used directly for inference on new latent points. In what follows, we will denote such out‑of‑sample predictions explicitly; otherwise, for notational compactness, we work with the training set formulation.

The linear form of the RFNN decoder in Eq.~\eqref{eq:RFNNdecoder} reduces training to a linear least-squares problem for computing $A_S$, the solution of which can be sought for example using the vanilla Tikhonov regularization
\begin{equation}\label{eq:obj-row}
\min_{A_S\in\mathbb{R}^{P+1\times M}} \; 
\big\|X^{\top}_S - \Phi_S A_S \big\|_2^2 + \lambda\,\|A_S\|_2^2,
\qquad \lambda>0,
\end{equation}
which provides the closed-form solution
\begin{equation}\label{eq:vanillaTikhonov}
A_S = (\Phi_S^\top \Phi_S + \lambda I_{P+1})^{-1}\, \Phi_S^\top \, X^{\top}_S \in\mathbb{R}^{(P+1)\times M},
\end{equation}
and its dual formulation
\begin{equation}
A_S = \Phi^\top_S (\Phi_S \Phi^\top_S + \lambda I_n)^{-1}\, X^{\top}_S \in\mathbb{R}^{(P+1)\times M},
\label{eq:dualform}
\end{equation}
where $I_{P+1}$ and $I_n$ denote the $(P+1)$- and $n$-dimensional unity matrices. The form in Eq.~\eqref{eq:vanillaTikhonov} is appropriate for overdetermined or square systems ($n\ge P+1$), while the one in Eq.~\eqref{eq:dualform} is for appropriate undetermined systems ($n< P+1$) for avoiding numerical instabilities arising from the inversion of singular matrices. 

Another way to solve the system in Eq.~\eqref{eq:RFNNdecoder} is via SVD of the feature matrix $\Phi_S$. Letting $r=rank(\Phi_S)$, the economy-size SVD gives
\begin{equation}
\Phi_S = U_r \,\Sigma_r\, V_r^\top, \qquad U_r \in \mathbb{R}^{n\times r}, \qquad
V_r \in \mathbb{R}^{(P+1)\times r}, \qquad
\Sigma_r = \operatorname{diag}(\sigma_1,\dots,\sigma_r) \in \mathbb{R}^{r\times r},
\end{equation}
where the columns of $U_r$ and $V_r$ include the left and right singular vectors $u_i \in \mathbb{R}^n$ and $v_i \in \mathbb{R}^{P+1}$, respectively, and $\sigma_i>0$ are the singular values of $\Phi_S$ for $i=1,\ldots,r$. Substituting this decomposition into Eq.~\eqref{eq:vanillaTikhonov} (or equivalently into Eq.~\eqref{eq:dualform}) yields the spectral filter form of the Tikhonov solution
\begin{equation}
A_S=V_r\,(\Sigma_r^{2}+\lambda I_r)^{-1}\,\Sigma_r\, U_r^{\top} X_S^{\top},
\label{eq:TikhSVD}
\end{equation}
or, written as a sum of rank-1 terms
\begin{equation}
A_S=\sum_{i=1}^{r} \frac{\sigma_i}{\sigma_i^2 + \lambda}\; v_i \,\bigl(u_i^{\!\top} X^{\top}_S\bigr).
\end{equation}

For $\lambda \to 0$, we recover the Moore–Penrose pseudo-inverse-based solution
\begin{equation}
A_S\xrightarrow[\lambda\to 0]{} V_r\,\Sigma_r^{-1}\,U_r^\top\, X^{\top}_S= 
\Phi_S^{+}\,X^{\top}_S,
\label{eq:MoorePenrose}
\end{equation}
where $\Phi_S^{+}$ denotes the pseudo-inverse of $\Phi_S$.

Having derived the solution of $A_S$, we finally obtain the reconstruction provided by the RFNN decoder. For a set of new latent points $\mathcal{Y}^*=\{ y^*_l\}_{l=1}^L\subset\mathbb{R}^d$, we first form the corresponding feature matrix
\begin{equation}
    \Phi^* = \big[1_L \, | \, \widetilde{\Phi}^* \big]
\in\mathbb{R}^{L\times (P+1)}, \quad \widetilde{\Phi}_{lj}^*=\phi(y^*_{l}; c_{j}),
\end{equation}
and then compute the reconstructed fields $\hat{x}^*_l$ in the matrix
\begin{equation}
\widehat{X}^* = A^{\top}_S \Phi^{*\top} \in \mathbb{R}^{M \times L},
\label{eq:RFNN_dec}
\end{equation}
using the closed-form solutions of $A_S$ in Eq.~\eqref{eq:vanillaTikhonov}, \eqref{eq:dualform}, \eqref{eq:TikhSVD}, or \eqref{eq:MoorePenrose}.

In a NN representation, one may choose among various forms for $\phi$, including random Fourier features derived from Bochner's theorem, or standard activation functions such as sigmoid or hyperbolic tangent, each yielding different approximation properties. 
At this point, we demonstrate that choosing vanilla random Fourier features in RFNN decoders is equivalent, in the deterministic limit, to the DDM decoder and RBFs interpolation.

\begin{prop}
Consider the dataset $\mathcal{X}_S=\{ x_{s_i}\}_{i=1}^n\subset\mathbb{R}^M$ in the ambient space, the corresponding set of latent coordinates $\mathcal{Y}_S=\{ y_{s_i}\}_{i=1}^n\subset\mathbb{R}^d$. Further, consider the RFNN decoder in Eq.~\eqref{eq:RFNN} without output bias (i.e., with $\beta_j=0$ for every $j=1,\ldots,M$) and with $P$ random Fourier features
\begin{equation}
\phi(y;c_k) = \sqrt{\frac{2}{P}}\cos(w_k^\top y + b_k), \quad c_k=(w_k,b_k), \quad k=1,\ldots,P
\label{eq:fourierrand}
\end{equation}
with $w_k \in \mathbb{R}^d$ sampled i.i.d. from the multivariate Gaussian distribution $w_k\sim \mathcal{N}(0,\Sigma_w)$, where $\Sigma_w$ is a $d\times d$ covariance matrix, and $b_k$ a random phase sampled i.i.d. from the uniform distribution $b_k\sim \mathcal{U}[0,2\pi)$. Then, for a set of new latent points $\mathcal{Y}^*=\{ y_l^*\}_{l=1}^L\subset\mathbb{R}^d$, the corresponding reconstructed fields provided by the RFNN decoder, when the RFNN pre-image is computed using the Moore-Penrose pseudo-inverse solution in Eq.~\eqref{eq:MoorePenrose}, converges as $P\to \infty$ to those provided by the DDM decoder in Eq.~\eqref{eq:DoubleDMs_mat_main} for $P=N$. 
\end{prop}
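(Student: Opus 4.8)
The plan is to write the RFNN reconstruction in closed form, observe that the Gram matrices it involves are empirical averages of products of random Fourier features, let these averages converge to their expectations (a Gaussian kernel, by Bochner's theorem), and recognize the resulting limit as the common expression to which the DDM decoder and Gaussian-RBF interpolation both reduce. First I would record the Moore--Penrose reconstruction explicitly. With $\beta_j=0$ the feature matrix has no constant column, so $\Phi_S\in\mathbb{R}^{n\times P}$ with $(\Phi_S)_{ik}=\phi(y_{s_i};c_k)$, and the out-of-sample formula Eq.~\eqref{eq:RFNN_dec} combined with $A_S=\Phi_S^{+}X_S^{\top}$ from Eq.~\eqref{eq:MoorePenrose} reads $\widehat X^{*}=A_S^{\top}\Phi^{*\top}=X_S(\Phi_S^{\top})^{+}\Phi^{*\top}$. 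Once $\Phi_S$ has full row rank (established below, since for large $P$ the Gram matrix converges to an SPD matrix), $(\Phi_S^{\top})^{+}=(\Phi_S\Phi_S^{\top})^{-1}\Phi_S$, so $\widehat X^{*}=X_S(\Phi_S\Phi_S^{\top})^{-1}\Phi_S\Phi^{*\top}$.

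Next I would identify the limiting kernel. The entries of $\Phi_S\Phi_S^{\top}\in\mathbb{R}^{n\times n}$ and $\Phi_S\Phi^{*\top}\in\mathbb{R}^{n\times L}$ are empirical averages over the $P$ features, e.g. $(\Phi_S\Phi_S^{\top})_{ij}=\tfrac{2}{P}\sum_{k=1}^{P}\cos(w_k^{\top}y_{s_i}+b_k)\cos(w_k^{\top}y_{s_j}+b_k)$. Using $\cos A\cos B=\tfrac12[\cos(A-B)+\cos(A+B)]$ and $\mathbb{E}_b[\cos(\theta+2b)]=0$ for $b\sim\mathcal U[0,2\pi)$, the expectation of a single summand equals $\mathbb{E}_w[\cos(w^{\top}(y_{s_i}-y_{s_j}))]$, which for $w\sim\mathcal N(0,\Sigma_w)$ is the characteristic-function value $e^{-\frac12(y_{s_i}-y_{s_j})^{\top}\Sigma_w(y_{s_i}-y_{s_j})}$ (Bochner's theorem). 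Hence, by the strong law of large numbers applied over the finitely many index pairs,
\begin{equation*}
\Phi_S\Phi_S^{\top}\xrightarrow[P\to\infty]{}K^{(2)},\qquad \Phi_S\Phi^{*\top}\xrightarrow[P\to\infty]{}K^{*\top},
\end{equation*}
where $K^{(2)}$ is the anisotropic Gaussian kernel matrix over $\mathcal Y_S$ and $K^{*}$ the cross-kernel between $\mathcal Y^{*}$ and $\mathcal Y_S$, with shape set by $\Sigma_w$ — precisely the ``second'' DM-like kernels of Eq.~\eqref{eq:DoubleDMs_mat_main} when its shape parameter is matched to $\Sigma_w$.

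Then I would pass to the limit in the reconstruction. Since the Gaussian kernel is strictly positive definite on distinct points, $K^{(2)}$ is SPD and invertible; therefore for all sufficiently large $P$ the matrix $\Phi_S\Phi_S^{\top}$ is invertible, $\Phi_S$ has full row rank, and the closed form above is valid. Continuity of matrix inversion on the open set of invertible matrices gives
\begin{equation*}
\widehat X^{*}\xrightarrow[P\to\infty]{}X_S\,(K^{(2)})^{-1}\,K^{*\top}.
\end{equation*}
This limit is exactly the Gaussian-RBF interpolant of the training targets evaluated at $\mathcal Y^{*}$ (solve $K^{(2)}c_j=(X_S)_{j,:}^{\top}$ and set $\widehat x^{*(j)}=c_j^{\top}k^{*}$), and it coincides with the DDM reconstruction $XV_r\Lambda_r^{-1}V_r^{\top}K^{*\top}$ when $n=N$ and no spectral truncation is applied ($r=N$): writing $K^{(2)}=V\Lambda V^{\top}$, the factor $V_r\Lambda_r^{-1}V_r^{\top}$ is then the eigen-expansion of $(K^{(2)})^{-1}$. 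This is the asserted three-way equivalence in the deterministic limit.

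The main obstacle is the transition from entrywise convergence of the Gram matrices to convergence of the full reconstruction: it rests on $K^{(2)}$ being nonsingular — handled by strict positive definiteness of the Gaussian kernel on distinct latent points — and, for finite but large $P$, on $\Phi_S\Phi_S^{\top}$ being invertible and bounded away from singularity, which is automatic once it converges to an SPD matrix but deserves explicit mention because for $P<n$ the Moore--Penrose solution is genuinely a different object. A secondary bookkeeping point I would make explicit is aligning the DDM conventions — the second-step affinity and any diffusion-map degree normalization — so that $V_r\Lambda_r^{-1}V_r^{\top}$ equals $(K^{(2)})^{-1}$ exactly; with the plain Gaussian affinity and $r=N$ this holds immediately, and a symmetric normalization is absorbed identically on both decoders. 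If a quantitative rate were desired, the Rahimi--Recht concentration bound for random Fourier features yields $\|\Phi_S\Phi_S^{\top}-K^{(2)}\|=O_{\mathbb P}(P^{-1/2})$, but almost-sure convergence suffices for the statement as phrased.
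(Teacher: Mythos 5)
Your proof is correct and rests on the same two pillars as the paper's: Bochner's theorem identifying the expected feature Gram matrix with a Gaussian kernel, and the kernel form $\widehat X^{*}=X_S K_S^{+}K^{*\top}$ of the pseudo-inverse reconstruction, matched to the DDM formula when the kernels and ranks agree. The execution differs in two minor but worthwhile ways. You reach the closed form directly through the full-row-rank identity $(\Phi_S^{\top})^{+}=(\Phi_S\Phi_S^{\top})^{-1}\Phi_S$, whereas the paper detours through the Tikhonov-regularized problem, the representer theorem, and the dual-form identity before sending $\lambda\to 0$; your route is shorter but is only valid once $\Phi_S\Phi_S^{\top}$ is invertible, which you correctly secure for large $P$ from the strict positive definiteness of the limiting Gaussian kernel on distinct latent points (the paper keeps the rank-$r$ pseudo-inverse $V_r\Lambda_r^{-1}V_r^{\top}$ throughout and so never needs, nor states, this nondegeneracy). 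You also make explicit two conditions the paper leaves implicit: that the DDM comparison requires no spectral truncation ($r=N$) and that the second-step DDM affinity must be the plain Gaussian with shape matched to $\Sigma_w$. The paper additionally quantifies the kernel convergence via the matrix Bernstein inequality, which, as you correctly note, is not needed for the qualitative statement.
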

\begin{proof}
Assuming the RFNN decoder in Eq.~\eqref{eq:RFNN} without output bias, we obtain the reduced feature matrix $\Phi_S=\widetilde{\Phi}_S$ with elements $\Phi_{S_{ik}}=\phi(y_i;c_k)$; see Eq.~\eqref{eq:RFNNBasis}. Using the normalized Fourier random features in Eq.~\eqref{eq:fourierrand}, the kernel induced by the RFNN decoder is
\begin{equation}
    K_S=\Phi_S\, \Phi_S^\top \in \mathbb{R}^{n \times n}, \quad K_{S_{ij}}=\big(\Phi_S \, \Phi_S^\top \big)_{ij}=\dfrac{2}{P}\sum_{k=1}^P \cos(w_k^\top y_i + b_k)\cos(w_k^\top y_j + b_k).
\end{equation}
By the Bochner's theorem \cite{rahimi2007random,rahimi2008uniform}, for $w_k\sim \mathcal{N}(0,\Sigma_w)$ and $b_k\sim \mathcal{U}[0,2\pi)$, the expectation of each element is
\begin{align}
\mathbb{E}_{w,b}\big[K_{S_{ij}}\big]
= \mathbb{E}_{w}\big[\cos \big(w^\top (y_i - y_j)\big)\big]
=\Re\Big\{\int_{\mathbb{R}^d} e^{i w^{\top} (y_i - y_j)}\, p(w)\, dw \Big\}= \nonumber \\
\exp\Big(-\frac{1}{2}(y_i - y_j)^\top \Sigma_w (y_i - y_j)\Big)=\bar{K}_S(y_i,y_j),
\label{eq:expectedcosines}
\end{align}
where $p(w)=\mathcal{N}(0,\Sigma_w)$ and $\bar{K}_S$ is the Gaussian kernel with covariance $\Sigma_w$. 

The above result holds in expectation, and therefore the RFNN kernel $K_S$ approximates the true shift-invariant kernel $\bar K_S$ only on average over the random draws of $w_k$ and $b_k$; for finite $P$, $K_S$ is a Monte Carlo approximation of the Gaussian kernel. In the isotropic case where $\Sigma_w = \sigma_w^2 I_d$, the expected kernel becomes the standard RBF kernel
\begin{equation}
  \bar K_S(y_i,y_j) = \exp\Big(-\dfrac{1}{2}\sigma_w^2\|y_i - y_j\|^2\Big),  
  \label{eq:RBF_kernel}
\end{equation}
with bandwidth $1/\sigma_w$, and the approximation error of $K_S$ can be bounded in spectral norm via the matrix Bernstein inequality \cite{tropp2015introduction}
\begin{equation}
\mathbb{E}\,\|K_S - \bar K_S\| \le
2 \sqrt{\frac{n \|\bar K_S\| \log(2n)}{P}} +
\frac{4 n \log(2n)}{3P},\label{eq:Tropp-Bernstein1}
\end{equation}
where $\|\cdot\|$ denotes the spectral norm. From Eq.~\eqref{eq:Tropp-Bernstein1}, it is evident that as $P \to \infty$, the spectral norm $\|K_S - \bar K_S\| \to 0$, i.e., the RFNN-induced kernel converges to the exact Gaussian (RBF) kernel. Therefore, for sufficiently large $P$, $K_S$ can be treated as a high-fidelity approximation of $\bar{K}_S$, justifying its use in the reproducing kernel Hilbert space (RKHS) pre-image solution that follows.

By the representer theorem, 
the solution of the Tikhonov regularized least-squares problem in the RKHS $\mathcal{H}_k$ associated with the RFNN-induced kernel $K_S$, admits an $n$-finite expansion in terms of kernel section evaluated at the training points. Its solution yields the corresponding RKHS coefficients \cite{bollt2020regularized}
\begin{equation}
    A^{\mathcal{H}_k}_S=(K_S+\lambda I_n)^{-1}\,X^{\top}_S \in \mathbb{R}^{n\times M}, \quad \lambda>0.
    \label{eq:ridge3}
\end{equation}
Lifting this solution to the feature space results to RFNN feature-space coefficients
\begin{equation}
    A_S = \Phi_S^\top A^{\mathcal{H}_k}_S= \Phi^\top_S (\Phi_S \Phi^\top_S + \lambda I_n)^{-1}\, X^{\top}_S,
    \label{eq:RFNNcoefs}
\end{equation}
which matches the dual form Tikhonov regularized RFNN solution given in Eq.~\eqref{eq:dualform}. 
 
Given the set of out-of-sample points $\mathcal{Y}^*$ to be reconstructed in the ambient space, we now define the cross-kernel matrix between $\mathcal{Y}^*$ and the training points in $\mathcal{Y}$ as
\begin{equation}
    K^*= \Phi^*\, \Phi_S^\top \in \mathbb{R}^{L \times n}, \quad 
    \Phi^*_{lk}=\phi(y^*_l;c_k) \in \mathbb{R}^{L \times P}.
\end{equation}
Substituting the coefficients in Eq.~\eqref{eq:RFNNcoefs} to the  RFNN reconstruction formula in Eq.~\eqref{eq:RFNN_dec}, we obtain the reconstructed fields $\hat{x}^*_l$ for $l=1,\ldots,L$ in the matrix
\begin{equation}
    \widehat{X}^* = A^{\top}_S \Phi^{*\top} = X_S \big(\Phi_S\, \Phi_S^\top + \lambda I_n \big)^{-1} \Phi_S \Phi^{*\top} = X_S \big(K_S + \lambda I_n \big)^{-1} K^{*\top} \in \mathbb{R}^{M \times L}.
    \label{eq:RFNN_kernel_reg_sol}
\end{equation}
By the representer theorem, the optimal regularized solution lives in the span $\mathrm{span}\{\,k_S(y_{s_i},\cdot)\,\}_{i=1}^n$, which is at most $n$-dimensional, whereas the finite RFNN restricts solutions to the $P$-dimensional subspace $\mathrm{span}\{\, \phi(\cdot; c_k) \,\}_{k=1}^P$. Whenever the feature-induced space contains the kernel  span, the feature-based RFNN solution coincides with the kernel ridge regression solution associated with $K_S$, as shown in Eq.~\eqref{eq:RFNN_kernel_reg_sol}. In this case, the difference between the two formulations is computational, governed by the relative sizes of $P$ and $n$, the choice of which balances training cost and reconstruction accuracy.

Finally, letting $\lambda\to0$ in Eq.~\eqref{eq:RFNN_kernel_reg_sol}, we obtain the Moore-Penrose pseudo-inverse-based reconstruction
\begin{equation}
    \widehat{X}^* = X_S V_r \Lambda_r^{-1}V_r^\top K^{*\top},
\end{equation}
where $K_S^{+}=V_r \Lambda_r^{-1}V_r^\top$ is the rank-$r$ pseudo-inverse of $K_S$. This reconstruction coincides with that of the DDM decoder in Eq.~\eqref{eq:DoubleDMs_mat_main} whenever the RFNN-induced kernel $K_S$ matches the DDM kernel $K$. As the RFNN-induced kernel $K_S$ is a finite approximation of the Gaussian kernel $\bar{K}_S$, which is also used in the DDM decoder, $K_S$ converges to $K$ as $P\to \infty$ for $P=N$ and matching kernel shape parameters $\sigma_w$ and $\epsilon_2$.
\end{proof}

\begin{remark}
While a classical Radial Basis Function (RBF) decoder is not considered in this work, it is conceptually related to the RFNN-induced kernel approach, as both methods define a feature map that embeds the latent coordinates into a RKHS. In particular, the RFNN-induced kernel can be seen as a Monte Carlo approximation of a shift-invariant RBF kernel, as shown for the isotropic case in Eq.~\eqref{eq:RBF_kernel}. However, while both the RFNN and DDM decoders reduce into a linear, convex pre-image problem (for computing the output weights and biases, and the basis $\{V_r, \Lambda_r\}$, respectively), in the classic RBF setting it is often necessary to optimize both the hidden layer coefficients and the kernel widths. This makes the RBF pre-image problem nonlinear and non-convex, and it can be further complicated by the ``curse of dimensionality'', potentially leading to degraded generalization performance.
\end{remark}
\subsection{Multi-scale random Fourier features} \label{sb:RFNNdec_ms}
We now consider a multi-scale variant of RFNNs, where the random Fourier features exhibit frequencies drawn from a mixture of Gaussians with varying bandwidths (see Fig.~\ref{fig:Schematic} for a schematic). This construction yields an induced kernel that is a continuous mixture of Gaussian kernels, providing multi-scale representation capabilities. The following theorem formalizes this connection.
\begin{figure}[!h]
    \centering
    \includegraphics[width=0.65\linewidth]{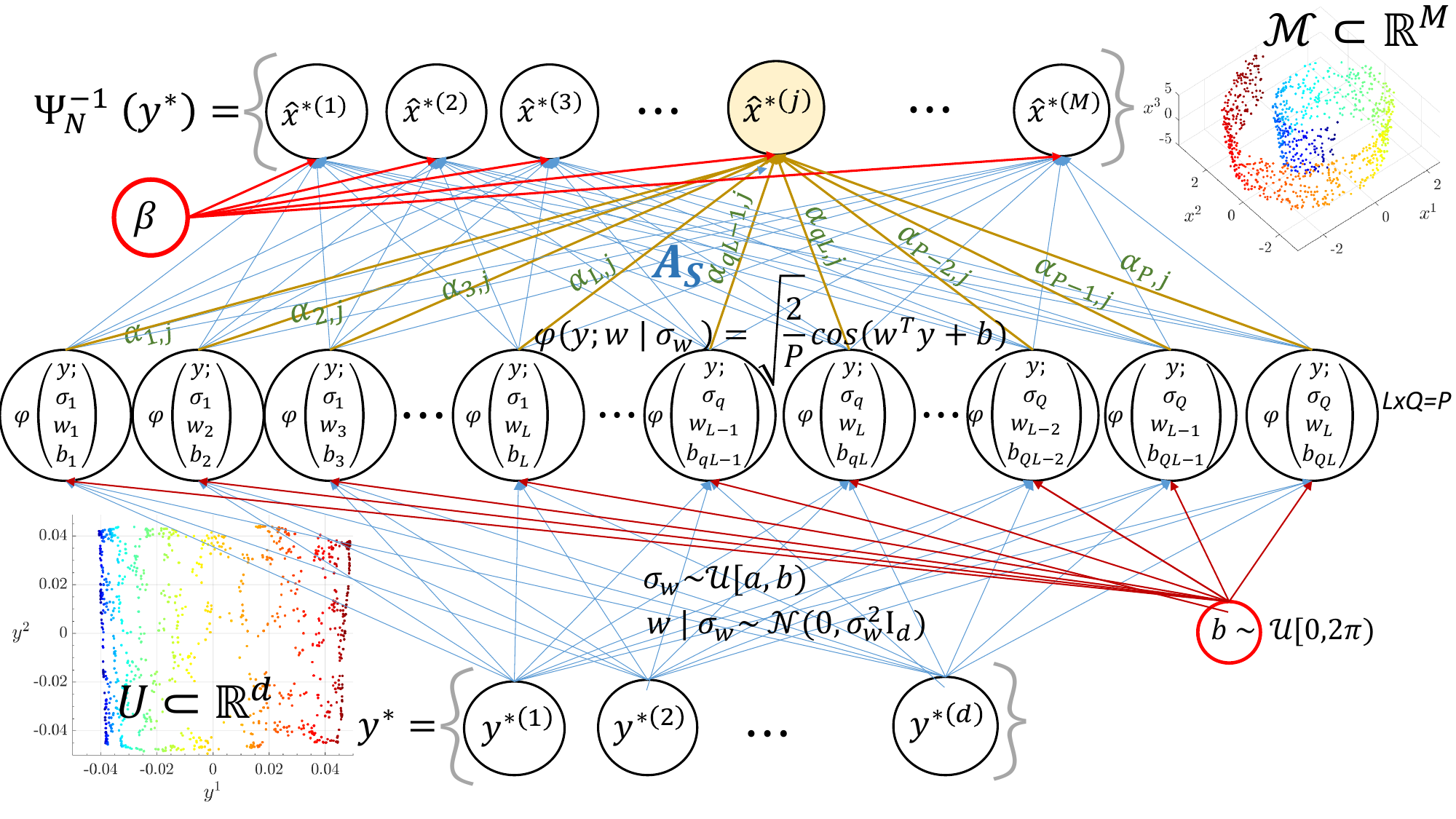}
    \caption{Schematic of the multi-scale Random Feature Neural Network decoder with random Fourier features; the features follow Eq.~\eqref{eq:fourierrand} with phase parameter $b_k \sim \mathcal{U}[0,2\pi)$ and frequency vectors $w_k$ sampled from the conditional probability distribution in Eq.~\eqref{eq:conditionalprobab}, i.e., sampling $\sigma_w \sim \mathcal{U}[a,b)$ and then draw $w \mid \sigma_w \sim \mathcal{N}(0, \sigma_w^2 I_d)$.}
    \label{fig:Schematic}
\end{figure}
\begin{theorem}
Consider the random Fourier features $\phi(y;c_k) = \sqrt{2/P}\cos(w_k^\top y + b_k)$, $k=1,\ldots,P$ as defined in Eq.~\eqref{eq:fourierrand} with $b_k \sim \mathcal{U}[0,2\pi)$. Let the frequency vectors $w_k$ be sampled from the conditional probability distribution
\begin{equation}
p(w\mid\sigma_w)
=(2\pi\sigma_w^2)^{-d/2}\exp\Big(-\frac{\|w\|_2^2}{2\sigma_w^2}\Big),
\label{eq:conditionalprobab}
\end{equation}
conditioned on $\sigma_w$ that are sampled from a uniform distribution in $(a,b)$, i.e., $\sigma_w\sim\mathcal{U}[a,b)$. Then, the resulting RFNN induced kernel (without output bias) is the multi-Gaussian kernel 
\begin{equation}
    \mathbb{E}_{w,b}\big[(\Phi_S\, \Phi_S^\top)_{ij}\big] = \bar{K}_{\Sigma_w}(y_i,y_j)= \frac{1}{b-a} \int_a^b e^{-\frac{1}{2}\sigma_w^2 \|y_i - y_j\|_2^2} d\sigma_w.
\end{equation}
Moreover, the marginal distribution of $w$ obtained by this hierarchical sampling is
\begin{equation}
    p(w)=\frac{(2\pi)^{-d/2}}{b-a} \int_a^b \sigma_w^{-d}\exp \left(-\frac{\|w\|_2^2}{2\sigma_w^2}\right)\, d\sigma_w.
\end{equation}
\end{theorem}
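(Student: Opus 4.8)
The plan is to establish both claims by conditioning on the bandwidth $\sigma_w$ and reducing, at the inner level, to the single-scale computation already performed in Eq.~\eqref{eq:expectedcosines}. The guiding observation is that, conditionally on $\sigma_w$, the hierarchical sampling scheme is \emph{exactly} the isotropic Gaussian random-feature setting of the preceding proposition; the only genuinely new ingredient is the outer average over $\sigma_w\sim\mathcal{U}[a,b)$, handled by the law of total expectation (for the kernel) and by marginalization of a joint density (for $p(w)$).

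For the induced kernel I would start from $(\Phi_S\Phi_S^\top)_{ij} = \tfrac{2}{P}\sum_{k=1}^P \cos(w_k^\top y_i + b_k)\cos(w_k^\top y_j + b_k)$, use that the pairs $(w_k,b_k)$ are i.i.d.\ to collapse the sum, obtaining $\mathbb{E}_{w,b}[(\Phi_S\Phi_S^\top)_{ij}] = 2\,\mathbb{E}_{w,b}[\cos(w^\top y_i + b)\cos(w^\top y_j + b)]$, then apply the product-to-sum identity $\cos A\cos B = \tfrac12[\cos(A-B)+\cos(A+B)]$ and average over $b\sim\mathcal{U}[0,2\pi)$ to kill the $\cos(w^\top(y_i+y_j)+2b)$ term; this leaves $\mathbb{E}_{w,b}[(\Phi_S\Phi_S^\top)_{ij}] = \mathbb{E}_w[\cos(w^\top(y_i-y_j))]$, precisely the left-hand side of Eq.~\eqref{eq:expectedcosines}. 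Writing $\delta = y_i - y_j$ and splitting $\mathbb{E}_w = \mathbb{E}_{\sigma_w}\mathbb{E}_{w\mid\sigma_w}$ by the tower property, the inner expectation is the real part of a Gaussian characteristic function, $\mathbb{E}_{w\mid\sigma_w}[\cos(w^\top\delta)] = \exp(-\tfrac12\sigma_w^2\|\delta\|_2^2)$; averaging against the uniform density $(b-a)^{-1}$ on $[a,b)$ then gives $\frac{1}{b-a}\int_a^b e^{-\frac12\sigma_w^2\|y_i - y_j\|_2^2}\,d\sigma_w$, which is the claimed multi-Gaussian kernel. The exchange of the (finite) expectation with the $\sigma_w$-integral is licit because the integrand is bounded by $1$.

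For the marginal law of $w$ I would simply integrate the joint density $p(w,\sigma_w) = p(w\mid\sigma_w)\,p(\sigma_w)$ over $\sigma_w$, with $p(\sigma_w) = (b-a)^{-1}$ on $[a,b)$ and $p(w\mid\sigma_w)$ as in Eq.~\eqref{eq:conditionalprobab}; factoring $(2\pi\sigma_w^2)^{-d/2} = (2\pi)^{-d/2}\sigma_w^{-d}$ out of the integral produces $p(w) = \frac{(2\pi)^{-d/2}}{b-a}\int_a^b \sigma_w^{-d}\exp(-\|w\|_2^2/(2\sigma_w^2))\,d\sigma_w$. As a cross-check one can recompute the kernel directly from this marginal as $\int_{\mathbb{R}^d}\cos(w^\top\delta)\,p(w)\,dw$ and swap the order of integration (Fubini applies since $\sigma_w^{-d}e^{-\|w\|_2^2/(2\sigma_w^2)}$ is integrable on $\mathbb{R}^d\times[a,b]$), recovering the same mixture.

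I do not expect a real obstacle: the argument is a routine combination of the tower property with Bochner's theorem for a single isotropic Gaussian, the latter already spelled out in the proof of the preceding proposition. The only points needing (minor) care are the Fubini/Tonelli justifications for the two interchanges of integration order — both immediate from boundedness of the cosine and integrability of the Gaussian densities — and bookkeeping of the normalization $\sqrt{2/P}$, so that the $\tfrac2P$ prefactor cancels against the $P$ identical i.i.d.\ terms together with the factor $\tfrac12$ coming from the product-to-sum identity.
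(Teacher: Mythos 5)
Your proposal is correct and follows essentially the same route as the paper: both reduce the kernel expectation to $\mathbb{E}_w[\cos(w^\top(y_i-y_j))]$ via the phase average, invoke Bochner's theorem for the conditional isotropic Gaussian $\mathcal{N}(0,\sigma_w^2 I_d)$, and obtain $p(w)$ by marginalizing the joint density against the uniform prior on $[a,b)$. The only organizational difference is that you condition on $\sigma_w$ first (tower property) and take the outer average last, whereas the paper computes the marginal $p(w)$ first and then applies Fubini to swap $\int_{\mathbb{R}^d}$ with $\int_a^b$; the two orderings are equivalent, and your version relegates the Fubini interchange to a cross-check.
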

\begin{proof}
We start by noting that the expected value of the Gaussian kernel $K_{\sigma_w}(y_i, y_j)=e^{-\frac{1}{2}\sigma_w^2 \|y_i - y_j\|_2^2}$, with respect to the shape parameter $\sigma_w$, when $\sigma_w\sim\mathcal{U}[a,b)$ is given by
\begin{equation}
\mathbb{E}_{\sigma_w} \left[e^{-\frac{1}{2}\sigma_w^2 \|y_i - y_j\|_2^2}\right]
= \frac{1}{b-a} \int_a^b e^{-\frac{1}{2}\sigma_w^2 \|y_i - y_j\|_2^2} d\sigma_w=\bar{K}_{\Sigma_w}(y_i,y_j).
\label{eq:mixedgaussians}
\end{equation}
It is easy to show that the resulting multi-Gaussian kernel $\bar{K}_{\Sigma_w}$ is a symmetric (and shift-invariant), continuous positive definite kernel on $\mathbb{R}^d$.


Given a fixed value $\sigma_w$, we know by Bochner's theorem, that the single Gaussian kernel $K_{\sigma_w}(y_i, y_j)$ has a non negative spectral density
\begin{equation}
p(w\mid\sigma_w)
=(2\pi\sigma_w^2)^{-d/2}\exp\Big(-\frac{\|w\|_2^2}{2\sigma_w^2}\Big),
\quad \int_{\mathbb{R}^d} p(w\mid\sigma_w)\,dw = 1.
\end{equation}
Its marginal spectral density integrating over $\sigma_w$ is given by
\begin{equation}
p(w) = \int_a^b p(w \mid \sigma_w) \, p(\sigma_w) \, d\sigma_w=\frac{1}{b-a}\int_a^b p(w\mid \sigma_w)\,d\sigma_w
=\frac{(2\pi)^{-d/2}}{b-a}
\int_a^b \sigma_w^{-d}\exp \left(-\frac{\|w\|_2^2}{2\sigma_w^2}\right)\,d\sigma_w,
\label{eq:marginal}
\end{equation}
for which $\int_{\mathbb{R}^d} p(w)\,dw = 1.$

Hence, for the corresponding RKHS, based on the RFNN induced kernel without output bias for the random Fourier features in Eq.~\eqref{eq:fourierrand}, and Bochner's theorem, we get
\begin{align} 
& \mathbb{E}_{w,b}\big[(\Phi_S\Phi_S^\top)_{ij}\big]
= \mathbb{E}_{w}\big[\cos \big(w^\top (y_i - y_j)\big)\big] = \Re\Big\{\int_{\mathbb{R}^d} e^{i w^{\top} (y_i - y_j)}\, p(w)\, dw \Big\}= \nonumber \\ 
& \int_{\mathbb{R}^d} e^{i w^\top (y_i - y_j)}\,p(w)\,dw
=\frac{1}{b-a}\int_a^b \Big(\int_{\mathbb{R}^d} e^{i w^\top \delta} p(w\mid \sigma_w)\,dw\Big) d\sigma_w,
\label{eq:mixtures2}
\end{align}
where, for the interchange of the order of integration, the Fubini's theorem \cite{rudin1987real} applies as the integrand $e^{i w^\top(y_i - y_j)}\,p(w)$ is $L^1 \big(\mathbb{R}^d\times(a,b)\big)$ integrable. Hence, from Eq.~\eqref{eq:mixtures2}, we get
\begin{equation}
\mathbb{E}_{w,b}\big[(\Phi_S\Phi_S^\top)_{ij}\big]=\frac{1}{b-a}\int_a^b e^{-\frac{1}{2}\sigma_w^2 \|y_i - y_j\|_2^2}\,d\sigma_w,
\end{equation}
thus, recovering $\bar{K}_{\Sigma_w}(y_i, y_j)$.

Therefore, the marginal distribution $p(w)$ in Eq.~\eqref{eq:marginal} is the spectral density of the multi-Gaussian kernel $\bar{K}_{\Sigma_w}$, obtained by mixing Gaussians $\mathcal{N}\left(0, \sigma_w^2I_d\right)$ with $\sigma_w \sim \mathcal{U}[a,b)$.
\end{proof}
The above theorem yields that an unbiased estimation of the multi‑Gaussian kernel $\bar{K}_{\Sigma_w}$ is achieved by implementing the following simple hierarchical sampling scheme:  first sampling a scale parameter $\sigma_w \sim \mathcal{U}[a,b)$, then sampling the frequency vector $w$, conditioned on $\sigma_w$, from $w \mid \sigma_w \sim \mathcal{N}(0,\,\sigma_w^2 I_d)$, along with sampling an independent random phase $b \sim \mathcal{U}[0,2\pi)$.

Note that in the above scheme, when we aproximate the integral with a finite number of random features, we get a (uniform) mixture of Gaussian kernels, each one with its own shape parameter $\sigma_w$ (see Eq.~\eqref{eq:mixedgaussians}), while the vanilla random Fourier features give as expected value a single Gaussian kernel (see Eq.~\eqref{eq:expectedcosines}). Actually, this approach falls within the multiple kernel learning framework \cite{bach2008exploring,lin2008dimensionality,gonen2011multiple}

The Monte Carlo implementation of the above scheme is performed here by drawing
\begin{equation}
\sigma_q \sim \mathcal{U}[a,b),\quad q=1,\dots,Q,
\end{equation}
and for each $\sigma_q$ draw
\begin{equation}
w_{q\ell}\sim \mathcal{N}(0,\sigma_q^2 I_d),\qquad
b_{q\ell}\sim \mathcal{U}[0,2\pi),\quad \ell=1,\dots,L,
\end{equation}
where $Q$ is the number of scales and $L$ is the number of features per scale. With $P=Q L$ total features, the Monte-Carlo estimator of the expected kernel $\bar{K}_{\Sigma_w,Q}(y_i,y_j)$, reads
\begin{equation}\label{eq:double-sum}
K_{\Sigma_w,Q}(y_i,y_j)
=\frac{1}{Q}\sum_{q=1}^{Q}\left(\frac{1}{L}\sum_{\ell=1}^{L}
2\cos(w_{q\ell}^\top y_i+b_{q\ell})\cos(w_{q\ell}^\top y_j+b_{q\ell})\right) =\frac{1}{Q} \sum_{q=1}^{Q} K_{\sigma_q}(y_i, y_j)\,
\end{equation}
thus corresponding to the concatenation of the individual random feature kernels, where
\begin{equation}
K_{\sigma_q}(y_i, y_j)=
\frac{1}{L}\sum_{\ell=1}^{L}
2\,\cos\big(w_{q\ell}^\top y_i + b_{q\ell}\big)\, \cos\big(w_{q\ell}^\top y_j + b_{q\ell}\big),
\end{equation}
is an unbiased Monte-Carlo estimator of the expected Gaussian kernel at the scale $\sigma_q$,
\begin{equation}
\bar{K}_{\sigma_q}(y_i, y_j)=
e^{-\tfrac{1}{2}\sigma_q^2 \|y_i - y_j\|_2^2}.
\end{equation}
Based on the Bernstein inequality \cite{tropp2015introduction} in Eq.~\eqref{eq:Tropp-Bernstein1}, we will now demonstrate the following Corollary.
\begin{corollary}
The expected approximation bound of the finite multi-scale induced by RFNN kernel $K_{\Sigma_w,Q}$ in Eq.~\eqref{eq:double-sum} for $P=Q\, L$, with respect to the expected kernel, $\bar{K}_{\Sigma_w,Q}$ when $L\to \infty$ is given by:
\begin{equation}
\mathbb{E} \big\| K_{\Sigma_w,Q} - \bar{K}_{\Sigma_w,Q} \big\| \le
2\sqrt{\frac{n \, c\, Q\, \log(2n)}{P}}
+
\frac{4 n\,Q\, \log(2n)}{3 P}, \, c>0.
\label{eq:boundmultikernel}
\end{equation}
\end{corollary}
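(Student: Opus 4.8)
The plan is to split the multi-scale estimator into its $Q$ single-scale blocks, apply the matrix Bernstein bound of Eq.~\eqref{eq:Tropp-Bernstein1} to each block with $L$ features in place of $P$, and recombine by the triangle inequality. Conditioning throughout on the drawn scales $\sigma_1,\dots,\sigma_Q$ (so that $\bar K_{\Sigma_w,Q}:=\tfrac1Q\sum_{q=1}^Q\bar K_{\sigma_q}$ is the conditional expected kernel and the only remaining randomness is in the $\{w_{q\ell},b_{q\ell}\}$), Eq.~\eqref{eq:double-sum} reads $K_{\Sigma_w,Q}=\tfrac1Q\sum_{q=1}^Q K_{\sigma_q}$, and by Eq.~\eqref{eq:expectedcosines} each $K_{\sigma_q}$ is, conditionally on $\sigma_q$, an unbiased average of $L$ i.i.d.\ rank-one terms with mean $\bar K_{\sigma_q}$, where $\bar K_{\sigma_q}(y_i,y_j)=e^{-\frac12\sigma_q^2\|y_i-y_j\|_2^2}$. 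Therefore $K_{\Sigma_w,Q}-\bar K_{\Sigma_w,Q}=\tfrac1Q\sum_{q=1}^Q(K_{\sigma_q}-\bar K_{\sigma_q})$, and the triangle inequality for the spectral norm together with monotonicity and linearity of expectation give $\mathbb{E}\|K_{\Sigma_w,Q}-\bar K_{\Sigma_w,Q}\|\le\tfrac1Q\sum_{q=1}^Q\mathbb{E}\|K_{\sigma_q}-\bar K_{\sigma_q}\|$.

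Next I would bound each summand by the single-scale estimate. For fixed $\sigma_q$ the frequencies $w_{q\ell}\sim\mathcal N(0,\sigma_q^2I_d)$ are exactly of the isotropic form for which Eq.~\eqref{eq:Tropp-Bernstein1} was stated, so applying it with $P\mapsto L$ and $\bar K_S\mapsto\bar K_{\sigma_q}$ yields $\mathbb{E}\|K_{\sigma_q}-\bar K_{\sigma_q}\|\le 2\sqrt{n\|\bar K_{\sigma_q}\|\log(2n)/L}+\tfrac{4n\log(2n)}{3L}$. Each $\bar K_{\sigma_q}$ is positive semidefinite with unit diagonal, hence $\|\bar K_{\sigma_q}\|\le\operatorname{tr}\bar K_{\sigma_q}=n$; more economically one may fix a constant $c$ with $\|\bar K_{\sigma_q}\|\le c$ uniformly over $q$ and independently of $L$. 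Substituting this uniform bound, summing over $q$ and dividing by $Q$, the per-scale bounds are identical and the average collapses to $2\sqrt{nc\log(2n)/L}+\tfrac{4n\log(2n)}{3L}$; inserting $L=P/Q$ then produces exactly $2\sqrt{ncQ\log(2n)/P}+\tfrac{4nQ\log(2n)}{3P}$, which is the stated bound and which tends to $0$ as $L\to\infty$ with $Q$ fixed.

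I do not expect a genuine obstacle here: given the blockwise structure and the already-established single-scale Bernstein estimate, the argument is essentially bookkeeping. The two points needing care are (i) fixing precisely that $\bar K_{\Sigma_w,Q}$ is the \emph{conditional} expected kernel given the sampled scales $\{\sigma_q\}$, so that the $O(1/\sqrt{Q})$ Monte-Carlo fluctuation of the scale average does not enter (it would, if one compared against the full integral kernel $\bar K_{\Sigma_w}$); and (ii) asserting the uniform spectral bound $\|\bar K_{\sigma_q}\|\le c$. An alternative is to apply matrix Bernstein directly to the whole sum of $P=QL$ rank-one terms, but because the centering $\bar K_{\sigma_q}$ varies from block to block this does not improve the rate and it hides the explicit $Q$-dependence displayed in the statement, so the blockwise triangle-inequality route is the cleaner one.
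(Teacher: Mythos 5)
Your proposal is correct and follows essentially the same route as the paper: decompose $K_{\Sigma_w,Q}-\bar K_{\Sigma_w,Q}$ into the $Q$ per-scale blocks, apply the single-scale Bernstein bound of Eq.~\eqref{eq:Tropp-Bernstein1} with $L$ features to each, average, invoke the uniform bound $\|\bar K_{\sigma_q}\|\le c$, and substitute $L=P/Q$. Your version is in fact slightly more careful than the paper's on two points — you correctly use the triangle inequality (giving $\le$) where the paper writes an equality in the first step, and you make explicit both the conditioning on the sampled scales and the fact that one may concretely take $c\le n$ — but these are refinements of the same argument, not a different one.
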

\begin{proof}
For each scale $q=1,\ldots,Q$ with $\sigma_q$,  Eq.~\eqref{eq:Tropp-Bernstein1} implies
\begin{equation}
\mathbb{E} \big\| K_{\sigma_q} - \bar{K}_{\sigma_q} \big\| \le 
2\sqrt{\frac{n \|\bar{K}_{\sigma_q}\|\, \log(2n)}{L}}
+
\frac{4 n\, \log(2n)}{3 L}.
\label{eq:Qstate_bound}
\end{equation}
By averaging over the $Q$ scales, that is
$K_{\Sigma_w,Q}=\frac{1}{Q}\sum_{q=1}^Q K_{\sigma_q}$ and $ \bar{K}_{\Sigma_w,Q}=\frac{1}{Q}\sum_{q=1}^Q 
\bar{K}_{\sigma_q} $, we obtain from Eq.~\eqref{eq:Qstate_bound} the bound of the multi-scale approximation 
\begin{equation}
\mathbb{E} \big\| K_{\Sigma_w,Q} - \bar{K}_{\Sigma_w,Q} \big\| = \frac{1}{Q}\sum_{q=1}^Q\left(\mathbb{E} \big\| K_{\sigma_q} - \bar{K}_{\sigma_q} \big\| \right)
\le 
\frac{1}{Q}\sum_{q=1}^Q\left(
2\sqrt{\frac{n \|\bar{K}_{\sigma_q}\|\, \log(2n)}{L}}
+
\frac{4 n\, \log(2n)}{3 L}\right).
\end{equation}
Under the assumption that there exists a $c\in\mathbb{R}$ such that $ \|\bar{K}_{\sigma_q}\|\le c$ for every scale $q=1,2,\dots Q$, and substituting the number of features per scale by $L=P/Q$, yields the bound in Eq.~\eqref{eq:boundmultikernel}.
\end{proof}

\subsection{RANDSMAPS: Random and multi-scale feature neural decoders with mass-conservation} \label{sb:RANDSMAP}

While the vanilla RFNN decoders of Eq.~\eqref{eq:RFNN}, discussed in Section~\ref{sb:RFNNdec} can approximate arbitrary fields via Eq.~\eqref{eq:RFNN_dec}, they do not inherently preserve physical constraints such as mass conservation. When such constraints are present in the original data, a reconstruction that violates them may be physically inadmissible. To address this limitation, we introduce RANDSMAP (Random-feature Neural Decoders with Mass Preservation), a family of RFNN‑based (and multi-scale) decoders that explicitly respect the conservation of mass \emph{by construction}. 

RANDSMAPs retain the same decoding architecture as RFNNs in Eq.~\eqref{eq:RFNN}. However, instead of computing the unknown coefficients $A_S$ via the closed-form solutions of the unconstrained least-squares problem in Eq.~\eqref{eq:vanillaTikhonov}, \eqref{eq:dualform}, \eqref{eq:TikhSVD}, or \eqref{eq:MoorePenrose}, we now solve a constrained optimization problem. This formulation augments the standard $L_2$-norm minimization with a linear equality constraint that enforces the sum‑to‑one invariant required of the reconstructed fields in Eq.~\eqref{eq:s2one_cons_recon}. To obtain a closed‑form solution, we employ the method of Lagrange multipliers, which guarantees mass-preserving reconstructions. We first formalize this procedure for the (down)sampled input set $\mathcal{Y}_S$ and target set $\mathcal{X}_S$ in the following proposition.

\begin{prop} \label{prp:RANDSMAP}
Suppose the (down)sampled dataset $\mathcal{X}_S=\{x_{s_i}\}_{i=1}^n\subset\mathbb{R}^M$ is mass-preserving, i.e., the data matrix $X_S \in \mathbb{R}^{M \times n}$ satisfies the (normalized) sum-to-one constraint $1_M^\top X_S = 1_n^\top$. Let $\Phi_S = U_r\Sigma_r V_r^\top$ be the SVD of the feature matrix $\Phi_S\in\mathbb{R}^{n\times (P+1)}$ with $\operatorname{rank}(\Phi_S)=r$. Then, the coefficient matrix $A_S\in\mathbb{R}^{(P+1)\times M}$ given by 
\begin{equation}
A_S=V_r(\Sigma^2_r+\lambda I_r)^{-1}\Sigma_r U^{\top}_r\big(X^{\top}_S-\frac{1}{M}\big[I_n-U_r(I_r + \lambda \Sigma^{-2}_r)U^{\top}_r\big]1_n\,1_M^\top\big),
    \label{eq:ShurAfinal0}
\end{equation}
or, in the vanishing regularization limit $\lambda \to 0$, by the Moore-Penrose pseudo-inverse-based solution
\begin{equation} A_S=V_r\Sigma^{-1}_rU^{\top}_rX^{\top}_S=\Phi_S^{+}X^{\top}_S,
    \label{eq:ShurAfinal0s}
\end{equation}
ensures that the reconstructed fields $\widehat{X}_S = A^{\top}_S \Phi^\top$ provided by RANDSMAP decoder are mass-preserving, satisfying the sum-to-one constraint $1_M^\top\widehat{X}_S=1_n^\top$.
\end{prop}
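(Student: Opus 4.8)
The plan is to treat RANDSMAP as a \emph{linearly-constrained} Tikhonov least-squares problem and to solve its KKT system in closed form. First I would rewrite the sum-to-one requirement Eq.~\eqref{eq:s2one_cons_recon} directly in terms of the unknown coefficient matrix: since $\widehat{X}_S=A_S^\top\Phi_S^\top$, we have $1_M^\top\widehat{X}_S=(\Phi_S A_S 1_M)^\top$, so the constraint is the single affine vector equation $\Phi_S\,(A_S 1_M)=1_n$, which couples only the row sums of $A_S$. RANDSMAP then minimizes $\|X_S^\top-\Phi_S A_S\|_2^2+\lambda\|A_S\|_2^2$ subject to this equation; because the objective is strictly convex for $\lambda>0$ and the constraint is affine, the KKT point is the unique minimizer, so mass preservation will follow automatically once the KKT solution is exhibited (and it can also be re-checked by direct substitution).

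Next I would introduce a multiplier $\nu\in\mathbb{R}^n$ and form the Lagrangian $\mathcal{L}(A_S,\nu)=\|X_S^\top-\Phi_S A_S\|_2^2+\lambda\|A_S\|_2^2+\nu^\top(\Phi_S A_S 1_M-1_n)$. Writing the coupling term as the Frobenius pairing $\langle\Phi_S^\top\nu\,1_M^\top,\,A_S\rangle$, the stationarity condition $\nabla_{A_S}\mathcal{L}=0$ reads $(\Phi_S^\top\Phi_S+\lambda I)A_S=\Phi_S^\top\big(X_S^\top-\tfrac12\nu\,1_M^\top\big)$. Inserting the economy SVD $\Phi_S=U_r\Sigma_rV_r^\top$ (and noting the component in $\ker\Phi_S$ drops out) gives $A_S=V_r(\Sigma_r^2+\lambda I_r)^{-1}\Sigma_r\,U_r^\top\big(X_S^\top-\tfrac12\nu\,1_M^\top\big)$ — that is, the unconstrained Tikhonov formula Eq.~\eqref{eq:TikhSVD} with the target shifted by $-\tfrac12\nu\,1_M^\top$.

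The crux is pinning down $\nu$ by substituting back into $\Phi_S A_S 1_M=1_n$. Two facts make the algebra collapse: (i) the data mass constraint $1_M^\top X_S=1_n^\top$ yields $X_S^\top 1_M=1_n$; and (ii) the bias column $\Phi_S=[1_n\,|\,\widetilde\Phi_S]$ forces $1_n\in\mathrm{range}(\Phi_S)=\mathrm{range}(U_r)$, hence $U_rU_r^\top 1_n=1_n$. With $D_\lambda:=\Sigma_r^2(\Sigma_r^2+\lambda I_r)^{-1}$ and $U_r^\top U_r=I_r$, the constraint reduces to $U_rD_\lambda U_r^\top\big(1_n-\tfrac{M}{2}\nu\big)=1_n$; projecting onto $\mathrm{range}(U_r)$ and inverting the positive-definite diagonal $D_\lambda$ gives $U_r^\top\nu=-\tfrac{2\lambda}{M}\Sigma_r^{-2}U_r^\top 1_n$, and only $U_r^\top\nu$ is needed since $\nu$ enters $A_S$ solely through it. Back-substituting and using $D_\lambda^{-1}=I_r+\lambda\Sigma_r^{-2}$ so that $I_r-D_\lambda^{-1}=-\lambda\Sigma_r^{-2}$ reproduces exactly Eq.~\eqref{eq:ShurAfinal0}; letting $\lambda\to0$ annihilates the correction term (equivalently $(I_n-U_rU_r^\top)1_n=0$), leaving the Moore–Penrose form Eq.~\eqref{eq:ShurAfinal0s}. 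Finally, for the mass-preservation claim itself I would compute $1_M^\top\widehat X_S=(\Phi_S A_S 1_M)^\top=(U_rD_\lambda D_\lambda^{-1}U_r^\top 1_n)^\top=(U_rU_r^\top 1_n)^\top=1_n^\top$, and in the limit $\Phi_S\Phi_S^{+}X_S^\top 1_M=U_rU_r^\top 1_n=1_n$, so the constraint holds in both cases.

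I expect the main obstacle to be the rank-deficiency of $\Phi_S$ (when $r<P+1$ or $r<n$): $\Phi_S^\top\Phi_S+\lambda I$ acts as $\Sigma_r^2+\lambda$ on $\mathrm{range}(V_r)$ but as $\lambda I$ on its orthogonal complement, so every manipulation must be carried out in the SVD basis rather than by naive matrix inversion, and the reduced constraint $U_rD_\lambda U_r^\top z=1_n$ is consistent \emph{only} because $1_n\in\mathrm{range}(U_r)$ — which is precisely where the bias column is indispensable (without it the constrained problem could be infeasible). A secondary technical point is the careful matrix-calculus bookkeeping of the Frobenius gradient of the coupling term and the $\lambda\to0$ passage, where one must observe that $\Sigma_r^{-2}$ stays bounded (the $\sigma_i>0$ are fixed) so the correction term vanishes and the constrained minimizer collapses to the minimum-norm least-squares solution $\Phi_S^{+}X_S^\top$, which already satisfies the constraint because the range-projected data $\Phi_S\Phi_S^{+}X_S^\top$ inherits the constant-$1$ row structure from $X_S^\top 1_M=1_n$.
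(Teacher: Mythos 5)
Your proposal is correct and follows essentially the same route as the paper's proof: a Lagrangian/KKT formulation of the linearly-constrained Tikhonov problem, elimination of the multiplier via the Schur complement in the SVD basis, and the observation that the bias column forces $1_n\in\mathcal{R}(\Phi_S)$ so that $U_rU_r^\top 1_n=1_n$, which is exactly what makes both the closed form and the final verification $\Phi_S A_S 1_M=1_n$ collapse. The only cosmetic differences are a rescaled multiplier (the paper puts the factors $\tfrac12$ in the Lagrangian) and your shortcut of solving only for $U_r^\top\nu$ rather than the full multiplier vector; both yield the identical expression in Eq.~\eqref{eq:ShurAfinal0}.
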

\begin{proof}
When the (down)sampled data satisfy mass conservation, the RANDSMAP reconstruction $\widehat{X}_S \in \mathbb{R}^{M \times n}$ must preserve it as in Eq.~\eqref{eq:s2one_cons_recon}, i.e., 
\begin{equation}
1_M^\top \widehat{X}_S = 1_n^\top \Leftrightarrow 1_M^\top A_S^\top \Phi_S^\top  = 1_n^\top \Leftrightarrow \Phi_S A_S\,1_M = 1_n.
\label{eq:masscond}
\end{equation}
To enforce this constraint, we will first derive the solution in Eq.~\eqref{eq:ShurAfinal0} for the regularized least‑squares problem in Eq.~\eqref{eq:obj-row} subject to the linear condition in Eq.~\eqref{eq:masscond}. The minimum $L_2$-norm solution in Eq.~\eqref{eq:ShurAfinal0s} is obtained in the vanishing-limit of the Tikhonov regularization solution.

Introducing a Lagrange multiplier $\mu\in\mathbb{R}^n$ for the $n$ equality constraints yields the Lagrangian
\begin{equation}
\mathcal{L}(A_S,\mu) = \frac{1}{2}\|X^{\top}_S-\Phi_S A_S\|^2_2 + \frac{\lambda}{2}\|A_S\|^2_2 + \mu^\top (\Phi_S A_S\,1_M -1_n), \, \lambda>0,
\end{equation}
which is minimized w.r.t. $A_S$ and $\mu$. Assuming the standard convexity hypothesis of $\nabla^2_{A_S A_S}\mathcal{L}(A,\mu)=\Phi_S^\top \Phi_S + \lambda I_{P+1}$ being positive definite, which is satisfied under the mild condition $\lambda>0$, and setting the derivatives of $\mathcal{L}(A_S,\mu)$ w.r.t. $A_S$ and $\mu$ to zero gives the system  
\begin{equation}
\label{eq:KKT-matrix}
\begin{aligned}
&(\Phi_S^\top \Phi_S + \lambda I_{P+1}) A_S + \Phi_S^\top \mu\,1_M^\top = \Phi_S^\top X^{\top}_S,\\
& \Phi_S A_S\,1_M =1_n.
\end{aligned}
\end{equation}
Equation \eqref{eq:KKT-matrix} is a linear system coupling unknowns $A_S\in\mathbb{R}^{(P+1)\times M}$ and $\mu\in\mathbb{R}^n$, which can be solved using the Shur complement, as $\Phi_S^\top \Phi_S + \lambda I_{P+1}$ is invertible. Solving first for $A_S$ implies
\begin{equation}
    A_S=(\Phi_S^\top \Phi_S + \lambda I_{P+1})^{-1}(\Phi_S^\top X^{\top}_S-\Phi_S^\top \mu\,1_M^\top),
    \label{eq:ShurA}
\end{equation}
which, upon substitution into $\Phi_S A_S\,1_M =1_n$, yields the reduced equation
\begin{align}
    \Phi_S(K_S^\top \Phi_S + \lambda I_{P+1})^{-1}(\Phi_S^\top X^{\top}_S-\Phi_S^\top \mu\,1_M^\top)1_M & = 1_n, \, \mbox{or} \nonumber \\
    \Phi_S(\Phi_S^\top \Phi_S + \lambda I_{P+1})^{-1}(\Phi_S^\top 1_n-M\,\Phi_S^\top \mu) & = 1_n, \, \mbox{or} \nonumber \\
    \Phi_S(\Phi_S^\top \Phi_S + \lambda I_{P+1})^{-1}\Phi_S^\top( 1_n-M\,\mu) & =1_n,
   \label{eq:shur3}
\end{align}
where the (transposed) constrain $X_S^\top 1_M=1_n$ was used in the second step. Using the dual form identity
\begin{equation}
    (\Phi_S^\top \Phi_S + \lambda I_{P+1})^{-1}\Phi_S^\top=\Phi_S^\top(\Phi_S \Phi_S^\top + \lambda I_n)^{-1},
    \label{eq:dual_identity}
\end{equation}
the expression in Eq.~\eqref{eq:shur3} becomes
\begin{equation}
\Phi_S\Phi_S^\top(\Phi_S \Phi_S^\top + \lambda I_n)^{-1}( 1_n-M\,\mu) =1_n,
   \label{eq:shur4}
\end{equation}

Now considering the SVD of the feature matrix $\Phi_S=U_r\Sigma_r V^{\top}_r$, where $U_r \in \mathbb{R}^{n\times r}$, $\Sigma_r\in \mathbb{R}^{r \times r}$ and $V_r \in \mathbb{R}^{(P+1)\times r}$ with $r=\operatorname{rank}(\Phi_S)$, substitution in Eq.~\eqref{eq:shur4} yields the solution:
\begin{equation}
    \mu =\frac{1}{M}\big[I_n-U_r(I_r + \lambda \Sigma_r^{-2})U_r^{\top}\big]1_n.
    \label{eq:eqmu}
\end{equation}
Finally, substituting the expression for $\mu$ in Eq.~\eqref{eq:eqmu} into the Shur component for $A_S$ in Eq.~\eqref{eq:ShurA} and using the SVD of $\Phi_S$, we obtain the closed-form mass preserving solution
\begin{equation}
A_S=V_r(\Sigma^2_r+\lambda I_r)^{-1}\Sigma_r U^{\top}_r\big(X^{\top}_S-\frac{1}{M}\big[I_n-U_r(I_r + \lambda \Sigma^{-2}_r)U^{\top}_r\big]1_n\,1_M^\top\big),
    \label{eq:ShurAfinal}
\end{equation}
recovering the expression in Eq.~\eqref{eq:ShurAfinal0}. We note that at the limit $M\to \infty$, the above solution simplifies to the Tikhonov spectral filter solution in Eq.~\eqref{eq:TikhSVD} for the unconstrained problem.

Let us now verify that the solution for $A_S$ in Eq.~\eqref{eq:ShurAfinal} is indeed mass preserving, by evaluating the sum-to-one constraint $\Phi_S A_S\,1_M=1_n$ in Eq.~\eqref{eq:masscond}. Starting from the left-hand side, substitution of $A_S$ and the SVD of $\Phi_S$ yields
\begin{align}
    \Phi_S A_S1_M = U_r\Sigma_rV_r^{\top }V_r(\Sigma^2_r+\lambda I_r)^{-1}\Sigma_r U^{\top}_r\big(X^{\top}_S-\frac{1}{M}\big[I_n-U_r(I_r + \lambda \Sigma^{-2}_r)U^{\top}_r\big]1_n\,1_M^\top\big)1_M &= \nonumber \\ 
    U_r\Sigma_r(\Sigma^2_r+\lambda I_r)^{-1}\Sigma_r U^{\top}_r\big(X^{\top}_S\, 1_M-\big[I_n-U_r(I_r + \lambda \Sigma^{-2}_r)U^{\top}_r\big]1_n\big) &= \nonumber\\ 
    U_r\Sigma_r(\Sigma^2_r+\lambda I_r)^{-1}\Sigma_r U^{\top}_r\big(1_n-\big[I_n-U_r(I_r + \lambda \Sigma^{-2}_r)U^{\top}_r\big]1_n\big) &= \nonumber \\ 
     U_r\Sigma_r(\Sigma^2_r+\lambda I_r)^{-1}\Sigma_r U^{\top}_r\big(U_r(I_r + \lambda \Sigma^{-2}_r)U^{\top}_r\big)1_n &= \nonumber\\ 
     U_r\Sigma_r(\Sigma^2_r+\lambda I_r)^{-1}\Sigma_r (I_r + \lambda \Sigma^{-2}_r)U^{\top}_r\,1_n=U_r\, U^{\top}_r\,1_n, &
     \label{eq:ShurAconst}
\end{align}
where the (transposed) constrain $X_S^\top 1_M=1_n$ was used in the third step. Now, recall that due to the presence of output biases, the feature matrix is written as $\Phi_S=\big[1_n \, | \, \widetilde{\Phi}_S \big]$; see Eq.~\eqref{eq:RFNNBasis}. Making the trivial assumption that $1_n$ does not, in general, belong in the column space of $\widetilde{\Phi}_S$, i.e., $1_n \notin \mathcal{R}(\widetilde{\Phi}_S)$, the inclusion of output bias implies 
\[\Phi_S \, e_{bias}=1_n.
\] 
where $e_{bias}^\top=[1,0,\ldots,0]\in \mathbb{R}^{P+1}$; therefore implying that $1_n$ belongs in the column space of $\Phi_S$, i.e., $1_n \in \mathcal{R}(\Phi_S)$. But $U_r$ spans $\mathcal{R}(\Phi_S)$ and thus, 
\begin{equation}
U_rU^{\top}_r\,1_n=1_n.
\label{eq:UrUrt}
\end{equation}
Substitution of the above to Eq.~\eqref{eq:ShurAconst} results in $\Phi_S A_S\,1_M=1_n$, so that the condition for the preservation of the the mass is satisfied for the reconstructions $\widehat{X}_S$.

We finally note that the spectral form solution in Eq.~\eqref{eq:ShurAfinal} recovers, in the vanishing regularization limit $\lambda\to0$, the Moore-Penrose pseudo-inverse based solution
\begin{equation}
A_S=V_r\Sigma^{-1}_rU^{\top}_r\,X^{\top}_S=\Phi_S^{+}\, X^{\top},
\end{equation}
which also satisfies the mass preservation condition, due to the result in Eq.~\eqref{eq:UrUrt}.
\end{proof}

As shown in Proposition~\ref{prp:RANDSMAP}, the inclusion of an output bias column in the feature matrix is the key ingredient for mass‑preserving reconstructions on the training data. It is trivial to show (based on the fact that $1_n\in \mathcal{R}(\Phi_S)$ and $1_n \notin \mathcal{R}(\widetilde{\Phi}_S)$) that the constrained solution $A_S$ in Eq.~\eqref{eq:ShurAfinal0} and \eqref{eq:ShurAfinal0s} also provides mass-preserving reconstructions for unseen, out-of-sample data.
However, while the RANDSMAP decoders are mass-preserving, this cannot be achieved by the RFNN decoders, except when the feature matrix is full rank, as we prove in the following Corollary.
\begin{corollary}
The coefficient matrix $A_S$ obtained by vanilla Tikhonov regularization in Eq.~\eqref{eq:vanillaTikhonov}, or its dual form in Eq.~\eqref{eq:dualform}, renders the reconstructed fields provided by the RFNN decoder mass-preserving only when $\Phi_S \in \mathbb{R}^{n\times (P+1)}$ is full rank.
\end{corollary}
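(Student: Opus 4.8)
The plan is a two-stage argument: first show that mass preservation of the RFNN reconstruction is incompatible with any strictly positive regularization, i.e.\ it forces $\lambda=0$; then observe that the two closed forms in Eqs.~\eqref{eq:vanillaTikhonov} and \eqref{eq:dualform}, evaluated at $\lambda=0$, are well-defined only when $\Phi_S$ is full rank. For the first stage I proceed as in the proof of Proposition~\ref{prp:RANDSMAP}: the reconstruction $\widehat{X}_S=A_S^\top\Phi_S^\top$ preserves mass, $1_M^\top\widehat{X}_S=1_n^\top$, iff $\Phi_S A_S 1_M=1_n$ (Eq.~\eqref{eq:masscond}), and the output-bias column supplies $\Phi_S e_{bias}=1_n$ with $e_{bias}^\top=[1,0,\ldots,0]$, so $1_n\in\mathcal{R}(\Phi_S)$. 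Substituting $A_S=(\Phi_S^\top\Phi_S+\lambda I_{P+1})^{-1}\Phi_S^\top X_S^\top$, using the data constraint $X_S^\top 1_M=1_n$ (which eliminates $X_S$), and passing to the economy SVD $\Phi_S=U_r\Sigma_r V_r^\top$ and simplifying exactly as in Eq.~\eqref{eq:TikhSVD} gives $\Phi_S A_S 1_M=U_r\Sigma_r^2(\Sigma_r^2+\lambda I_r)^{-1}U_r^\top 1_n$. Since $U_r$ spans $\mathcal{R}(\Phi_S)\ni 1_n$ we have $U_rU_r^\top 1_n=1_n$, so the requirement $\Phi_S A_S 1_M=1_n$ becomes $\Sigma_r^2(\Sigma_r^2+\lambda I_r)^{-1}U_r^\top 1_n=U_r^\top 1_n$, i.e.\ $\lambda(\Sigma_r^2+\lambda I_r)^{-1}U_r^\top 1_n=0$. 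As $(\Sigma_r^2+\lambda I_r)^{-1}$ is nonsingular and $U_r^\top 1_n\neq 0$ (otherwise $1_n=U_rU_r^\top 1_n=0$), this forces $\lambda=0$; the dual closed form in Eq.~\eqref{eq:dualform} equals the primal one by the identity in Eq.~\eqref{eq:dual_identity} and gives the same conclusion. (The same computation shows that requiring mass preservation at all out-of-sample latent inputs is equivalent to $A_S 1_M=e_{bias}$, which again holds only at $\lambda=0$.)

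For the second stage, setting $\lambda=0$ in Eq.~\eqref{eq:vanillaTikhonov} requires $\Phi_S^\top\Phi_S\in\mathbb{R}^{(P+1)\times(P+1)}$ to be invertible, i.e.\ $\operatorname{rank}(\Phi_S)=P+1$ (full column rank, the regime $n\ge P+1$ for which that form is intended), whereas setting $\lambda=0$ in the dual form Eq.~\eqref{eq:dualform} requires $\Phi_S\Phi_S^\top\in\mathbb{R}^{n\times n}$ invertible, i.e.\ $\operatorname{rank}(\Phi_S)=n$ (full row rank, the regime $n<P+1$). In either case $\operatorname{rank}(\Phi_S)=\min(n,P+1)$, that is, $\Phi_S$ is full rank. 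Conversely, when $\Phi_S$ is full rank one may take $\lambda=0$, recovering the Moore--Penrose solution $A_S=\Phi_S^{+}X_S^\top$ of Eq.~\eqref{eq:MoorePenrose}, which coincides with the vanishing-regularization RANDSMAP solution Eq.~\eqref{eq:ShurAfinal0s} and is mass-preserving; hence full rank is exactly the condition under which the RFNN decoder equipped with the closed forms \eqref{eq:vanillaTikhonov}--\eqref{eq:dualform} can preserve mass.

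I expect the main obstacle to be the logical structure rather than any single computation (the $\lambda$-identity is a one-line SVD manipulation): one must cleanly separate the claim ``mass preservation implies $\lambda=0$'' from the observation ``at $\lambda=0$ these particular regularization formulas degenerate unless $\Phi_S$ is full rank'', and be careful that the statement concerns those literal closed forms and not the pseudo-inverse limit itself, which by Proposition~\ref{prp:RANDSMAP} is always mass-preserving because $1_n\in\mathcal{R}(\Phi_S)$. A minor secondary point, not needed for the stated necessity direction, is justifying that demanding mass preservation at every out-of-sample latent point reduces to $A_S 1_M=e_{bias}$, which uses that the random-feature vectors together with the bias entry affinely span $\mathbb{R}^{P+1}$ for generic parameters.
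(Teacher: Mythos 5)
Your proposal is correct, and the core computation is the same as the paper's: substitute the Tikhonov solution into the mass condition $\Phi_S A_S 1_M = 1_n$, eliminate $X_S$ via $X_S^\top 1_M = 1_n$, and reduce (via the dual identity / economy SVD) to the statement $U_r\Sigma_r^2(\Sigma_r^2+\lambda I_r)^{-1}U_r^\top 1_n = 1_n$. Where you differ is in the logical packaging, and your version is actually the more rigorous one. The paper's proof stops at $\Phi_S\Phi_S^\top(\Phi_S\Phi_S^\top+\lambda I_n)^{-1}1_n$ and asserts ``clearly, for this to equal $1_n$, $\Phi_S$ must be full rank; in such a case $\lambda$ can be set to zero'' --- which, read literally, is a non sequitur, since for any fixed $\lambda>0$ that expression differs from $1_n$ regardless of rank. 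Your two-stage decomposition repairs this: first, $1_n\in\mathcal{R}(\Phi_S)$ (from the bias column) and $U_r^\top 1_n\neq 0$ force $\lambda(\Sigma_r^2+\lambda I_r)^{-1}U_r^\top 1_n=0$, hence $\lambda=0$; second, the literal closed forms \eqref{eq:vanillaTikhonov} and \eqref{eq:dualform} at $\lambda=0$ require invertibility of $\Phi_S^\top\Phi_S$ or $\Phi_S\Phi_S^\top$, i.e.\ full rank, and conversely full rank with $\lambda=0$ gives $\Phi_S\Phi_S^{+}1_n=U_rU_r^\top 1_n=1_n$. Your observation that this must be distinguished from the pseudo-inverse limit of Eq.~\eqref{eq:MoorePenrose} (which is always mass-preserving once $1_n\in\mathcal{R}(\Phi_S)$, even at deficient rank) is exactly the point the paper's one-line conclusion blurs. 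The out-of-sample parenthetical is not needed for the stated claim and can be dropped.
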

\begin{proof}
Substituting the vanilla Tikhonov regularization solution $A_S$  in Eq.~\eqref{eq:vanillaTikhonov} into the left-hand side of the mass-preserving condition in Eq.~\eqref{eq:masscond} yields
\begin{equation}
   \Phi_S A_S 1_M=\Phi_S (\Phi_S^\top \Phi_S+\lambda I_{P+1})^{-1}\Phi_S^\top X^{\top}_S\,1_M=\Phi_S \Phi_S^\top (\Phi_S\,\Phi_S^\top +\lambda I_n)^{-1} 1_n,
\end{equation}
where the dual form identity in Eq.~\eqref{eq:dual_identity} and the sum-to-one constraint of the training data set $X^{\top}_S\,1_M=1_n$ was used on the second step. Clearly, for the above expression to be equal to $1_n$, $\Phi_S$ must be full rank; in such a case, the regularization parameter $\lambda$ is no longer required and therefore can be set to zero.
\end{proof}
The exact conversation of the mass for the RANDSMAP reconstructions relies on the condition $U_r U_r^\top 1_n = 1_n$ (see Eqs.~\eqref{eq:ShurAconst} and \eqref{eq:UrUrt} in Proposition~\ref{prp:RANDSMAP}), which holds when all non-zero singular values of $\Phi_S$ are retained, i.e., when  $r = rank(\Phi_S)$. In practice, however, a truncated SVD is often preferred to reduce computational cost, suppress noise, and improve numerical stability. For a truncated SVD, where only the leading $tr < r=rank(\Phi_S)$ modes are kept, the equality $U_r U_r^\top 1_n = 1_n$ becomes approximate, leaving a residual projection error. For such cases, we present the following Corollary, which provides a bound on the resulting conservation error.
\begin{corollary} \label{cor:cons_err_bound}
Let $U_{tr}$ contain the first $tr<r=rank(\Phi_S)$ left singular vectors of the SVD of $\Phi_S$, in which $1_n\in \mathcal{R}(\Phi_S)$. Then, the residual $e=(I_n - U_{tr} U_{tr}^\top)1_n$ of the mass conservation condition in Eq.~\eqref{eq:masscond} is upper bounded by
\begin{equation}
    \|e\|_2 \le \sigma_{tr+1},
\end{equation}
where $\sigma_{tr+1}$ is the first omitted singular value of $\Phi_S$.
\end{corollary}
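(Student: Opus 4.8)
The plan is to reduce everything to the explicit representation of $1_n$ as an image under $\Phi_S$ that was already produced in the proof of Proposition~\ref{prp:RANDSMAP}, namely $\Phi_S\, e_{bias} = 1_n$ with $e_{bias}^\top = [1,0,\dots,0]\in\mathbb{R}^{P+1}$ and $\|e_{bias}\|_2 = 1$. This unit-norm pre-image is the crucial ingredient: a naive estimate obtained by expanding $1_n$ in the left-singular basis and bounding each coefficient $|u_i^\top 1_n|$ by $\|1_n\|_2=\sqrt{n}$ is far too weak, whereas knowing that $1_n$ lies in $\mathcal{R}(\Phi_S)$ \emph{with a norm-one representative} supplies an extra factor $\sigma_i$ in each coefficient, which is exactly what collapses the bound to $\sigma_{tr+1}$.

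Concretely, I would proceed in three short steps. First, substitute the SVD $\Phi_S = U_r\Sigma_r V_r^\top$ into $\Phi_S e_{bias}=1_n$ to get $1_n = U_r\Sigma_r z$ with $z := V_r^\top e_{bias}\in\mathbb{R}^r$; since $V_r$ has orthonormal columns, $\|z\|_2 \le \|e_{bias}\|_2 = 1$, and taking inner products with the orthonormal columns $u_i$ of $U_r$ yields $u_i^\top 1_n = \sigma_i z_i$ for $i=1,\dots,r$. Second, express the residual: because $1_n\in\mathcal{R}(\Phi_S)=\operatorname{span}(U_r)$ we have $U_rU_r^\top 1_n = 1_n$ (this is Eq.~\eqref{eq:UrUrt}), and since $U_{tr}$ consists of the first $tr$ columns of $U_r$ the two orthogonal projectors differ by $U_rU_r^\top - U_{tr}U_{tr}^\top = \sum_{i=tr+1}^r u_i u_i^\top$, so
\[
e = (I_n - U_{tr}U_{tr}^\top)1_n = (U_rU_r^\top - U_{tr}U_{tr}^\top)1_n = \sum_{i=tr+1}^{r}(u_i^\top 1_n)\,u_i = \sum_{i=tr+1}^{r}\sigma_i z_i\, u_i .
\]

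Third, use orthonormality of the $u_i$ together with the monotone ordering $\sigma_{tr+1}\ge\sigma_i$ for $i\ge tr+1$ and $\|z\|_2\le 1$:
\[
\|e\|_2^2 = \sum_{i=tr+1}^{r}\sigma_i^2 z_i^2 \le \sigma_{tr+1}^2\sum_{i=tr+1}^{r} z_i^2 \le \sigma_{tr+1}^2\,\|z\|_2^2 \le \sigma_{tr+1}^2 ,
\]
and taking square roots gives $\|e\|_2\le\sigma_{tr+1}$, as claimed.

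I do not anticipate a genuine technical obstacle; the computation is orthogonality bookkeeping. The one point that deserves care — and which I would flag as the conceptual crux rather than a difficulty — is the choice to route the argument through the norm-one pre-image $e_{bias}$ (equivalently, through the fact that $\|\Phi_S^{+}1_n\|_2\le 1$, since $e_{bias}$ is \emph{a} solution of $\Phi_S w = 1_n$ and the minimum-norm solution can only be smaller). Without that observation one is left with a projection-error estimate that does not see the scaling by $\sigma_i$, and the sharp bound $\sigma_{tr+1}$ is lost. A secondary, purely bookkeeping point is to invoke $1_n\notin\mathcal{R}(\widetilde{\Phi}_S)$ only insofar as it is needed to justify $1_n\in\mathcal{R}(\Phi_S)$ via the bias column, exactly as in Proposition~\ref{prp:RANDSMAP}; everything downstream is then automatic.
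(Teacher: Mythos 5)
Your proof is correct and follows essentially the same route as the paper: both express $1_n=\Phi_S e_{bias}$ via the bias column, expand in the SVD so that $u_i^\top 1_n=\sigma_i z_i$ with $\|z\|_2=\|V_r^\top e_{bias}\|_2\le 1$ (the paper writes these coefficients as $\sigma_i v_{ij}$ with $j=1$), and bound the tail of the expansion by $\sigma_{tr+1}$ using orthonormality and the ordering of the singular values. The only difference is presentational — you make explicit the role of the unit-norm pre-image, which the paper leaves implicit in the bound $\sum_{i}v_{ij}^2\le 1$.
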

\begin{proof}
Since $1_n\in \mathcal{R}(\Phi_S)$, it can be expressed via the SVD of $\Phi_S=U_r \Sigma_r V_r^\top$, yielding
\begin{equation}
    1_n = \Phi_S e_j = U_r \Sigma_r V_r^\top e_j = \sum_{i=1}^r \sigma_i v_{ij}\, u_i, \,  v_{ij}=(V_r^\top e_j)_i, 
\end{equation}
where $j$ is the index of the column of $\Phi_S$ coinciding with $1_n$; here, by construction of the RANDSMAP feature matrix $j=1$. The projection onto the truncated basis $U_{tr}$ is $U_{tr}U_{tr}^\top1_n=\sum_{i=1}^{tr} \sigma_i v_{ij}\, u_i$, which gives the the residual of the mass conservation condition in Eq.~\eqref{eq:masscond} as
\begin{equation}
e=(I_n - U_{tr} U_{tr}^\top)1_n=\sum_{i=tr+1}^r \sigma_i v_{ij} u_i.
\end{equation}
Because the left singular vectors $\{u_i\}^r_{i=1}$ are orthonormal and the singular values are in decreasing order (i.e., $\sigma_i\le\sigma_{tr+1}$ for every $i\ge tr+1$), the residual is bounded by
\begin{equation}
    \|e\|_2^2 = \sum_{i=tr+1}^r (\sigma_i v_{ij})^2 \le \sigma_{tr+1}^2 \sum_{i=tr+1}^r v_{ij}^2.
\end{equation}
Finally, the columns of the matrix $V_r$ are orthonormal, so that $\sum_{i=tr+1}^r v_{ij}^2 \le \sum_{i=1}^r v_{ij}^2 \le 1$. Hence, the residual is bounded by
\begin{equation}
   \|e\|_2^2 \le \sigma_{tr+1}^2 \Rightarrow \|e\|_2 \le \sigma_{tr+1}.
\end{equation}
\end{proof}

\begin{remark}
We hereby note that for large-scale systems (when $M\gg 1$ and the sample size $n$ or feature dimensions $P$ are large; roughly for $n,P\ge\mathcal{O}(10^4)$), direct methods such as SVD become computationally and memory-wise prohibitive. In such cases, it is preferable to compute the coefficient matrices $A_S$ of RFNNs and RANDSMAP using iterative methods in the Krylov subspace. 

\end{remark}

\section{Numerical results}\label{sec:numerics}
Here, we evaluate the performance of the RANDSMAP decoder for pre-image reconstruction, comparing it with the standard RFNN decoder and the numerical analysis-based DDM and $k$-NN decoders. The assessment focuses on three mass-preserving physical systems of increasing dimensionality and complexity (Sections~\ref{sb:LWR}-\ref{sb:Hughes}). 
To establish a performance baseline in a general non-conservative setting, we also include two classic manifold learning examples, the Swiss roll and the S-curve projected in $20$-dimensions; results for these examples are provided in Appendices~\ref{app:SR} and \ref{app:SC20}.

For both the RANDSMAP (conservative) and RFNN (non-conservative) decoders, we examine three variants distinguished by their random feature mapping: random Fourier features (RFF), multi‑scale random Fourier features (MS‑RFF), and sigmoidal features (Sig). All decoders reconstruct ambient space states from low‑dimensional embeddings obtained via the DM algorithm. 

For each problem, the performance of each decoder is evaluated based on (i) reconstruction accuracy, (ii) computational cost, and (iii) conservation accuracy. We quantify the reconstruction accuracy of each decoder by its relative $L_2$ and $L_\infty$ reconstruction error across the training and testing sets, $\mathcal{X}_{tr}$ and $\mathcal{X}_{ts}$, respectively. To assess computational cost, we recorded the computational times required for both training and inference. Training time includes encoder training (DM algorithm), hyperparameter tuning (of a single parameter for each decoder) over a held-out validation set $\mathcal{X}_{vl}$, and decoder training. Inference time covers the complete end-to-end pipeline on the testing set $\mathcal{X}_{ts}$, including the encoding to obtain the embedding and the decoding to reconstruct the entire testing set. To assess conservation accuracy, we quantify the pointwise deviation from the exact sum-to-one invariant in Eq.~\eqref{eq:s2one_cons_recon}. As the RANDSMAP decoder is designed to preserve the sum-to-one invariant by construction, we verify this numerically and demonstrate that it maintains competitive reconstruction performance. To account for the stochasticity inherent to the random feature construction, all reported results for the RANDSMAP and RFNN decoders are averaged over 100 independent runs. The complete implementation details, including dataset generation, train/validation/test splits, feature construction sampling procedures, hyperparameter grids, and solver specifications, are provided in Appendix~\ref{app:ImpDet}.

All simulations were performed on a workstation equipped with Intel$^{\text{\textregistered}}$ Xeon$^{\text{\textregistered}}$ E5-2630 v4 CPU (2.20GHz, 2 processors) and 64GB of RAM, using MATLAB R2025b, without parallel computing to ensure consistent comparison across decoders. 

\subsection{Case Study 1: The Lighthill-Whitham-Richards 1D Traffic Model Dataset (\texorpdfstring{$M=400$}{M=400})} \label{sb:LWR}
For our first mass-preserving benchmark, we construct a dataset governed by a dynamical system of traffic flow on a one-dimensional road. The physics is described by the Lighthill-Whitham-Richards (LWR) model 
a standard conservation law 
\begin{equation}
    \partial_t \rho + \partial_x (v_{max}\rho(1-\rho/\rho_{max})) = 0,
\end{equation}
where $\rho \equiv \rho(t,x)\in [0, \rho_{max}]$ is the vehicle density, $v_{max}=2$ is the free-flow speed, and $\rho_{max}=1$ is the maximum jam density. All quantities are expressed in physical units: spatial coordinates in meters, time in seconds, velocity in meters per second, and density in vehicles per meter. We solve the PDE of the LWR model on a periodic spatial domain $\Omega \in [-5,5]$. As this is a hyperbolic PDE, even smooth initial data can give rise to shock waves.

To preserve the intrinsic conservation property, we integrated the PDE using a conservative Finite Volume numerical scheme. In particular, we employed a high-resolution Godunov method with a Roe approximate Riemann solver and the van Leer flux limiter, ensuring accurate shock capturing while avoiding spurious oscillations (see \cite{patsatzis2025gorinns} for details). The domain is discretized into $M=400$ uniform cells. For constructing the dataset, we generated $100$ distinct trajectories by drawing random initial conditions, each being a Gaussian density bump, $\rho(0,x) = a \, \exp\left( -(x - x_c)^2 / (2w^2) \right)$, with uniformly sampled parameters of  amplitude $a \sim \mathcal{U}[0.5, 1.5)$, width $w \sim \mathcal{U}[0.2, 0.8)$, and center $x_c \sim \mathcal{U}[-3, 3)$. To introduce realistic measurement noise, we added i.i.d. Gaussian noise $\mathcal{N}(0, \sigma^2)$ with $\sigma=0.02$. We then enforced physical feasibility by setting any negative densities (from noise) to zero, and normalize the total mass to unity so that $\sum_{j=1}^{400} \rho(0, x_j) = 1$. Each initial condition is then integrated for $t_{end} = 20$ with a time step $\delta t = 0.005$. As expected from the LWR model dynamics, smooth bumps evolve into right-going traveling shock waves. Crucially, the numerical scheme preserves the total mass in time. Finally, we randomly select $120$ snapshots from each trajectory, resulting in a full dataset of $N_{obs} = 12000$ density profiles $x_i \in \mathbb{R}^{400}$.

From the full dataset, we randomly split the data into training and validation sets $\mathcal{X}_{tr}$ and $\mathcal{X}_{vl}$, each containing $N = 2000$ points, with the remaining $8000$ points reserved for the testing set $\mathcal{X}_{ts}$; a visualization of the density profiles in $\mathcal{X}_{tr}$ is shown in  Fig.~\ref{fig:LWR_data}. Since the ambient space where the data lie is $400$-dimensional, we project them onto three summary statistics of the density distributions: the mean position $\mu_x$, variance $\sigma_x^2$, and skewness $\gamma_x$. This low-dimensional projection reveals an underlying manifold structure arising from the conservation law.

We next employed the DM algorithm with $\alpha=0$ and $w_1=1$ to the training set to obtain a $d=2$-dimensional embedding. The resulting DM coordinates $y_i \in \mathbb{R}^2$ are shown in Fig.~\ref{fig:LWR_DMcoords}, colored by the mean position $\mu_x$. The dimensionality $d=2$ was selected based on a clear spectral gap after the second eigenvalue (data not shown) and its sufficiency for achieving accurate reconstruction, as demonstrated below.

Next, we trained the proposed RANDSMAP decoders (RANDSMAP-RFF with random Fourier features, RANDSMAP-MS-RFF with multi-scale random Fourier features, and RANDSMAP-Sig with sigmoidal random features) with $P = N, N/2, N/4$ features (using all $n=N$ data points) alongside the deterministic DDM and $k$-NN decoders, as outlined in Appendix~\ref{app:ImpDet}. For a direct comparison of mass conservation, we also train the non-mass-preserving RFNN-Sig decoder. Hyperparameters for the stochastic decoders were tuned via a grid search over 10 values of $\sigma_w \in [0.1, 1]$, $\sigma_{UB} \in [2, 15]$, and $c \in [1, 20]$. The optimal configuration for $P=N$ was $\sigma_w = 0.16$, $\sigma_{UB} = 8.5$, and $c = 11$ (for both RANDSMAP-Sig and RFNN-Sig). For DDM and $k$-NN, we tuned $w_2 \in [0.2, 1]$ and $k \in [2, 11]$, finding optimal values of $w_2 = 0.6$ and $k = 6$.

The full quantitative performance on the training and testing sets is detailed in Tables~\ref{tab:LWR_dec_tr_all} and \ref{tab:LWR_dec_ts_all} in Appendix~\ref{app:LWR}. The tables report the mean relative $L_2$ and $L_\infty$ reconstruction errors, the mass conservation error, and the computational times for training and inference. A concise visual summary of the accuracy vs. computational time trade-off is presented in Fig.~\ref{fig:LWR_Dec}.
\begin{figure}[!htbp]
    \centering
    \begin{subfigure}[b]{0.32\textwidth}
        \centering
        \includegraphics[width=\textwidth]{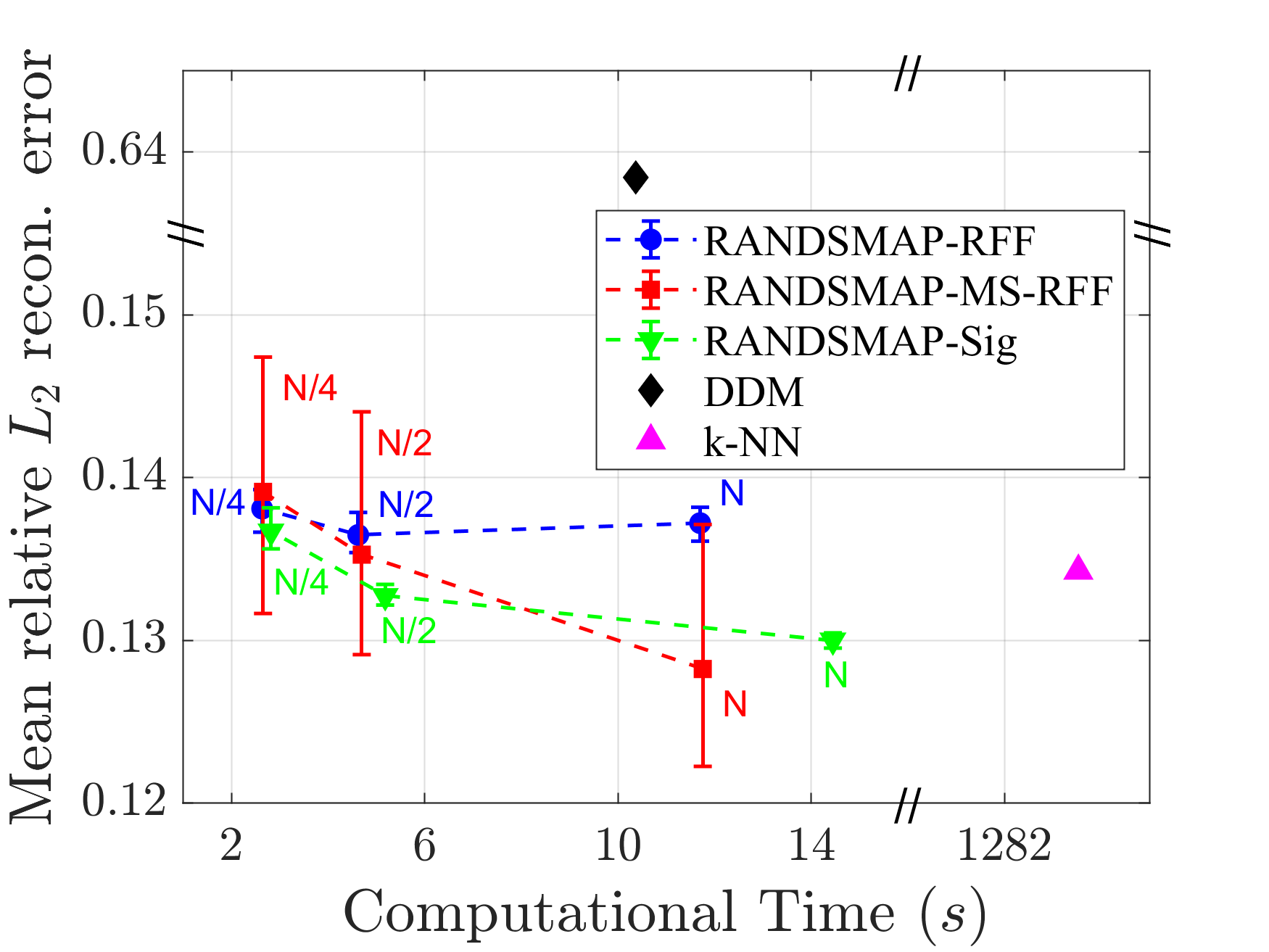}
        \caption{Training, $L_2$ error}
        \label{fig:LWR_dec_tr}
    \end{subfigure}
    \hspace{2pt}
    \begin{subfigure}[b]{0.32\textwidth}
        \centering
        \includegraphics[width=\textwidth]{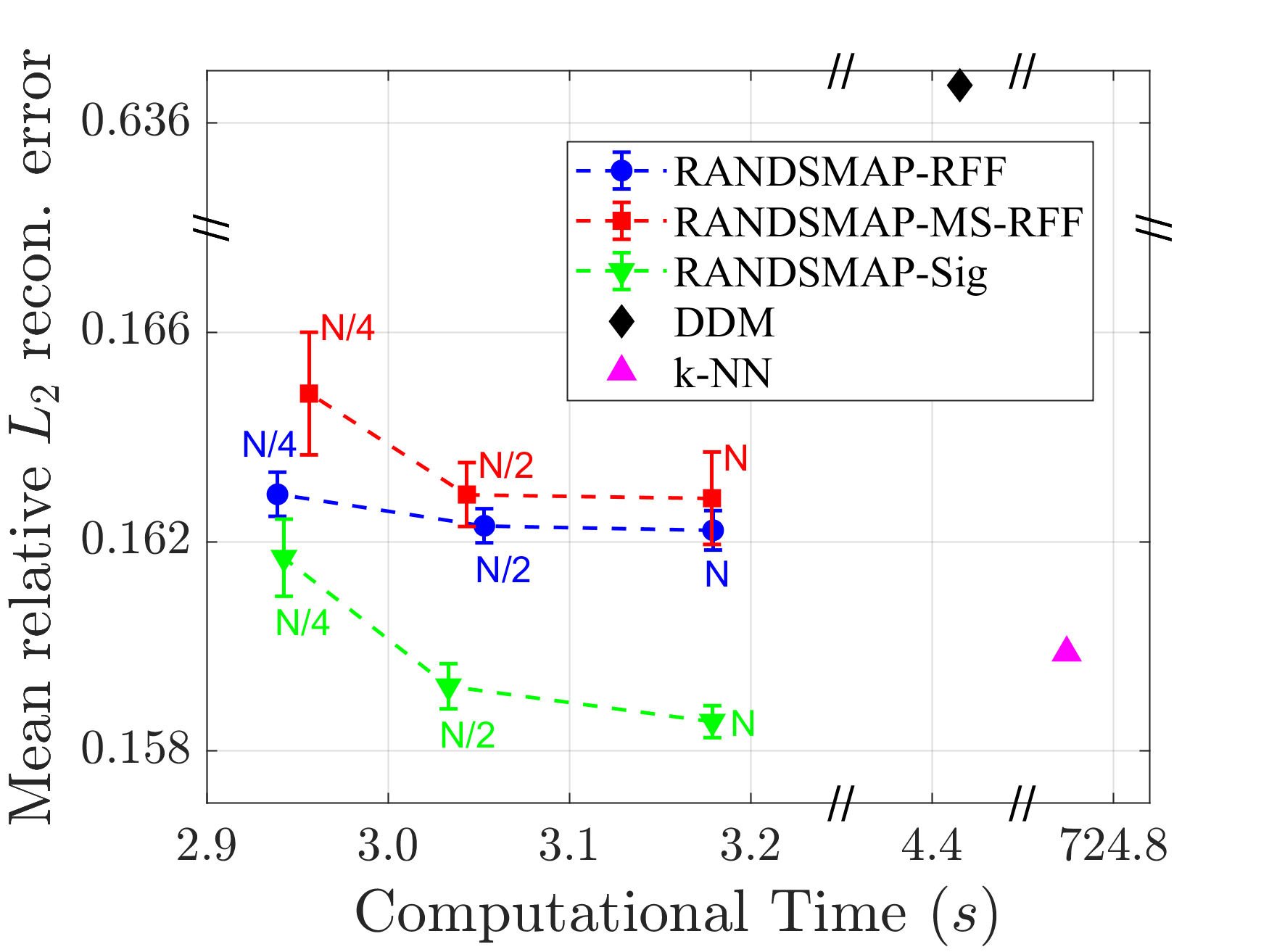}
        \caption{Testing, $L_2$ error}
        \label{fig:LWR_dec_ts}
    \end{subfigure}
    \hspace{2pt}
    \begin{subfigure}[b]{0.32\textwidth}
        \centering
        \includegraphics[width=\textwidth]{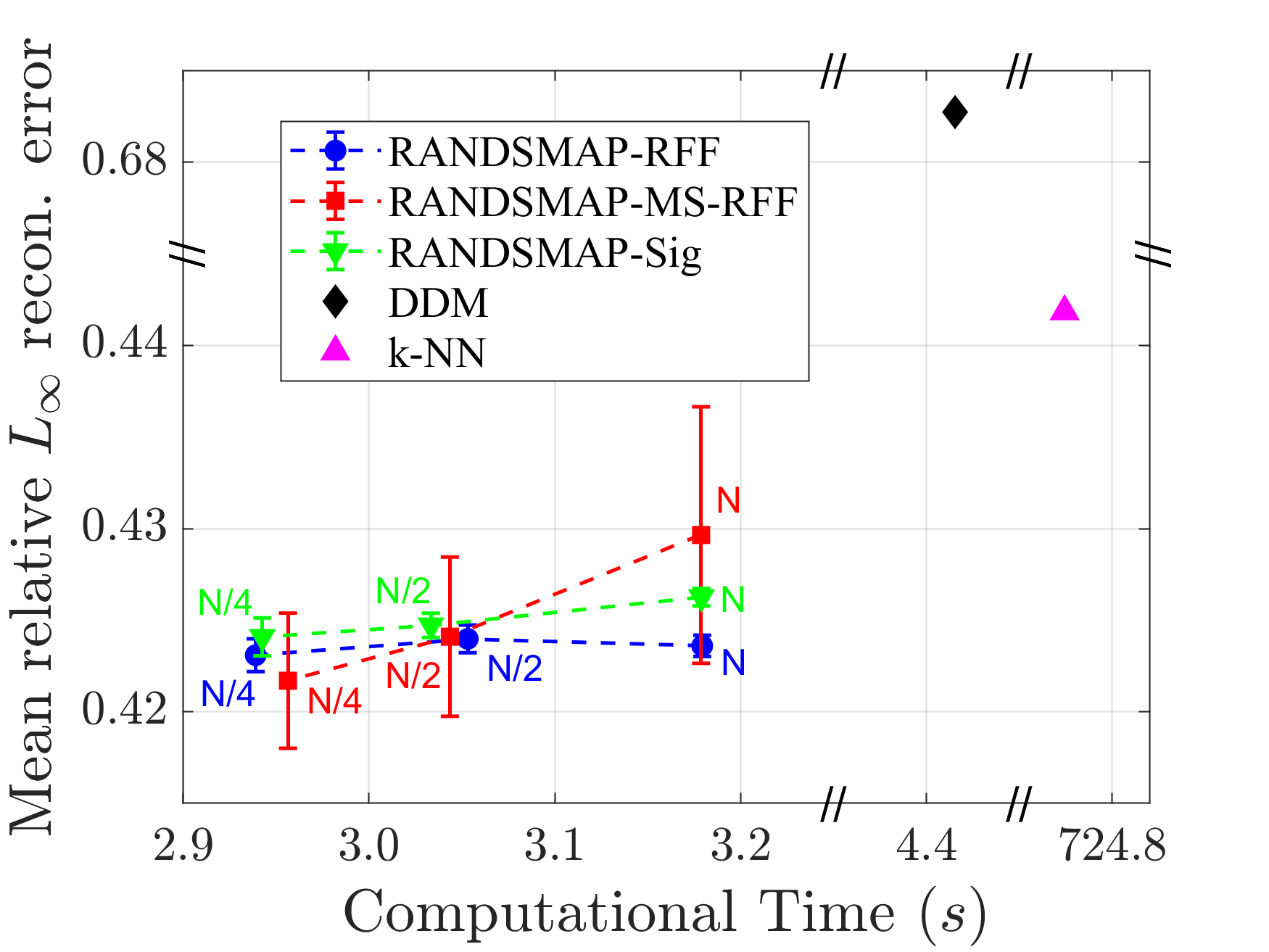}
        \caption{Testing, $L_\infty$ error}
        \label{fig:LWR_dec_ts_linf}
    \end{subfigure}
    \caption{Reconstruction error vs. computational time for the LWR 1D traffic model dataset ($M=400$). Results are shown for $N=2000$ training points. Panel~\ref{fig:LWR_dec_tr} shows mean relative $L_2$ error on the training set vs. training time. Panels~\ref{fig:LWR_dec_ts} and \ref{fig:LWR_dec_ts_linf} show mean relative $L_2$ and $L_\infty$ errors, respectively, on the testing set vs. inference time. For the stochastic RANDSMAP decoders (RANDSMAP-RFF, RANDSMAP-MS-RFF, RANDSMAP-Sig) with $P=N,N/2,N/4$, points show the median over 100 random initializations and error bars indicate the 5–95\% percentile range. Deterministic decoders (DDM, $k$-NN) are shown without error bars. Detailed numerical results are provided in Tables~\ref{tab:LWR_dec_tr_all} and \ref{tab:LWR_dec_ts_all}.}
    \label{fig:LWR_Dec}
\end{figure}
Figure~\ref{fig:LWR_Dec} reveals several key findings. The RANDSMAP decoders are consistently orders of magnitude faster than the $k$-NN decoder while achieving comparable or superior reconstruction accuracy. In contrast, the DDM decoder performs poorly, with a mean relative $L_2$ error of $\sim 64\%$ (training), compared to less than $17\%$ for all other methods. As expected, RANDSMAP variants with fewer random features ($P=N/4$) are less accurate; accuracy improves as $P$ increases, most notably for the multi-scale Fourier (RANDSMAP-MS-RFF) and sigmoidal (RANDSMAP-Sig) features. On the training set, all RANDSMAP decoders surpass the $k$-NN decoder in $L_2$ accuracy. On the testing set, only RANDSMAP-Sig achieves a lower $L_2$ error than $k$-NN. However, for the $L_\infty$ error (which penalizes the worst-case pointwise deviation), all RANDSMAP variants outperform $k$-NN. Notably, the best-performing variant depends on $P$: RANDSMAP-MS-RFF is most accurate for $P=N/4$, while RANDSMAP-RFF is best for $P=N$. This suggests that RANDSMAP-MS-RFF is a favorable choice when robust, worst-case-avoiding reconstruction is prioritized, which is a crucial property for capturing sharp features like shock waves, as evident in the sample reconstruction of Fig.~\ref{fig:LWR_recon}.

Similar trends were also reported for the non-conservative RFNN decoders, when compared to the DDM and $k$-NN decoders on the non-mass-preserving examples in Appendices~\ref{app:SR} and \ref{app:SC20}; they matched or surpassed reconstruction accuracy at substantially lower computational cost, with accuracy improving as $P$ increases. Crucially, however, the poor performance of the DDM decoder in this example is fundamental, not an artifact of poor hyperparameter tuning. As shown in Fig.~\ref{fig:LWR_recon}, the DDM decoder produces an oversmoothed reconstructed profile incapable of resolving the shock discontinuity. An exhaustive 2D hyperparameter search over the DDM weight $w_2$ and the number of eigenvectors $r$ (Fig.~\ref{fig:LWR_DM_GH_tuning}) confirms that the optimal set $(w_2, r) = (0.6, 5)$ still yields a poor reconstruction (Fig.~\ref{fig:LWR_DM_GH_k5}). Attempting to improve shock resolution by increasing to $r=25$ eigenvectors (Fig.~\ref{fig:LWR_DM_GH_k25}) introduces numerical instability and poor global reconstruction, highlighting that the DDM’s smooth, global basis is inherently unsuitable for representing local discontinuities.

Critically, the RANDSMAP decoders enforce exact mass conservation, as proven in Proposition~\ref{prp:RANDSMAP}. This is hereby confirmed numerically as their mean conservation errors are on the order of the single machine precision ($10^{-8}$) (Tables~\ref{tab:LWR_dec_tr_all} and \ref{tab:LWR_dec_ts_all} in the Appendix). In contrast, the non-mass-conserving RFNN-Sig decoder, while similarly accurate in reconstruction, exhibits conservation errors three orders of magnitude larger ($\sim 10^{-5}$). This conservation guarantee comes at a computational cost, since RANDSMAP decoders are approximately $2-4$ times slower to train than their RFNN counterparts. As expected, the $k$-NN decoder also preserves mass to machine precision (Proposition~\ref{prop:kNN_cons}), while DDM does not (Proposition~\ref{prop:DDM_cons}), with errors exceeding $10^6$. It is important here to highlight that these results numerically confirm the conservation error bound provided in Corollary~\ref{cor:cons_err_bound}. Specifically, for the RANDSMAP-MS-RFF variant with $P=N$, an independent run yielded a measured conservation error of approximately $2.4\times 10^{-8}$. This aligns closely with the corresponding feature matrix's trailing singular value $\sigma_{tr+1}=2.6\times 10^{-8}$, thereby satisfying the inequality $\|e\|_2<\sigma_{tr+1}$ as predicted.

Finally, Fig.~\ref{fig:LWR} visualizes the testing set reconstructions for all decoders (excluding RFNN) projected onto the space of summary statistics (mean $\mu_x$, variance $\sigma_x^2$, skewness $\gamma_x$). This projection of the $400$-dimensional density profiles indicates that all but the DDM decoder successfully recover the underlying manifold.
\subsection{Case Study 2: 2D Rotated MRI Images Dataset (\texorpdfstring{$M=128\times128$}{M=128 x 128})} \label{sb:MRI}
For our next benchmark, we assess the proposed decoders on a high-dimensional, image reconstruction task under data scarcity. We generate a dataset from a single $M=128\times128=16384$ gray-scale MRI image (the 4th image from MATLAB's MRI dataset). By applying $N_{obs}=3600$ in-plane rotations with angles $\theta_i\in[0, 2\pi)$, we create a data manifold where the sole degree of freedom is the rotation angle. To enforce a mass-preserving constraint analogous to the traffic flow benchmark, we normalize each rotated image so that its total pixel intensity is sum-to-one invariant, i.e., $\sum_{j=1}^Mx_j=1$. Representative samples are shown in Fig.~\ref{fig:MRIimage_data} in Appendix~\ref{app:MRI}.

We randomly split the data set, treating each flattened image as a vector $x_i\in \mathbb{R}^{16384}$, into a training and a validation set $\mathcal{X}_{tr}$ and $\mathcal{X}_{vl}$, each with $N = 720$ points. The remaining $2160$ points form the testing set $\mathcal{X}_{ts}$. Employing the DM algorithm with $\alpha=1$ and $w_1=0.5$ to the training set yields a $d=2$-dimensional embedding; the resulting DM coordinates $y_i\in \mathbb{R}^2$ are shown on Fig.~\ref{fig:MRI_DMcoords}, colored by the rotation angle $\theta_i$. Following the implementation procedure detailed in Appendix~\ref{app:ImpDet}, we train all decoders (RANDSMAP variants with $P=N, N/2, N/4$, DDM, $k$-NN and the non-mass-preserving RFNN-Sig variant) and tune their hyperparameters via grid search over the following ranges: $\sigma_w\in[0.02,0.1]$ for RANDSMAP-RFF, $\sigma_{UB}\in[35,65]$ for RANDSMAP-MS-RFF, $c\in[45,120]$ for RANDSMAP-Sig and RFNN-Sig, $w_2\in[0.2,1]$ for DDM, and $k\in[2,11]$ for $k$-NN. For the stochastic decoders with $P=N$, the identified optimal values were $\sigma_w=0.03$, $\sigma_{UB}=60$ and $c=115$, while for the deterministic ones were $w_2=0.4$ and $k=2$.  
As in the previous benchmark, the full quantitative performance of all decoders is detailed in Tables~\ref{tab:MRI_dec_tr_all} and \ref{tab:MRI_dec_ts_all} of Appendix~\ref{app:MRI}. A summary of the reconstruction accuracy versus the computational time trade-off is visualized in Fig.~\ref{fig:MRI_Dec}.
\begin{figure}[!htbp]
    \centering
    \begin{subfigure}[b]{0.32\textwidth}
        \centering
        \includegraphics[width=\textwidth]{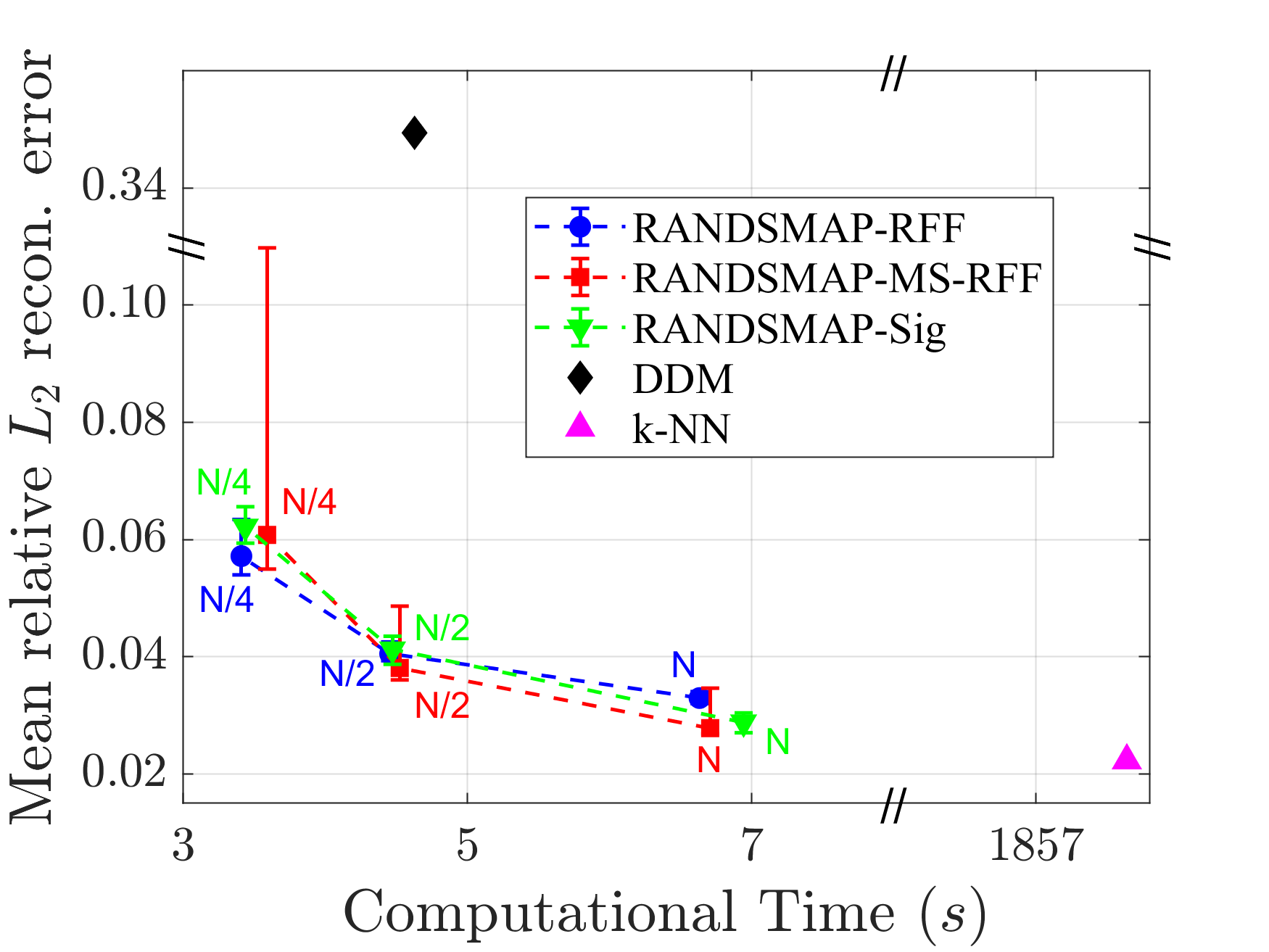}
        \caption{Training, $L_2$ error}
        \label{fig:MRI_dec_tr}
    \end{subfigure}
    \hspace{2pt}
    \begin{subfigure}[b]{0.32\textwidth}
        \centering
        \includegraphics[width=\textwidth]{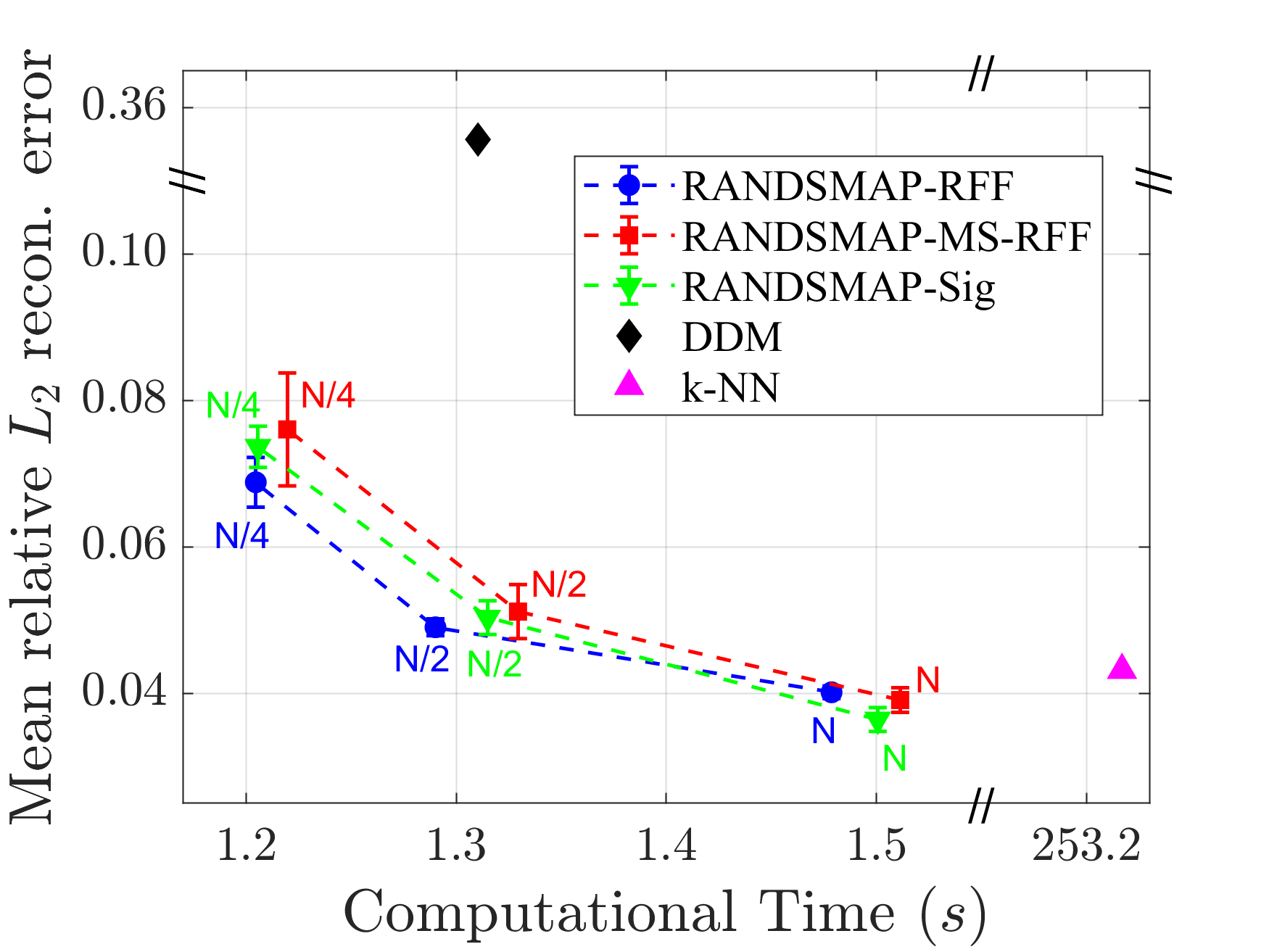}
        \caption{Testing, $L_2$ error}
        \label{fig:MRI_dec_ts}
    \end{subfigure}
    \hspace{2pt}
    \begin{subfigure}[b]{0.32\textwidth}
        \centering
        \includegraphics[width=\textwidth]{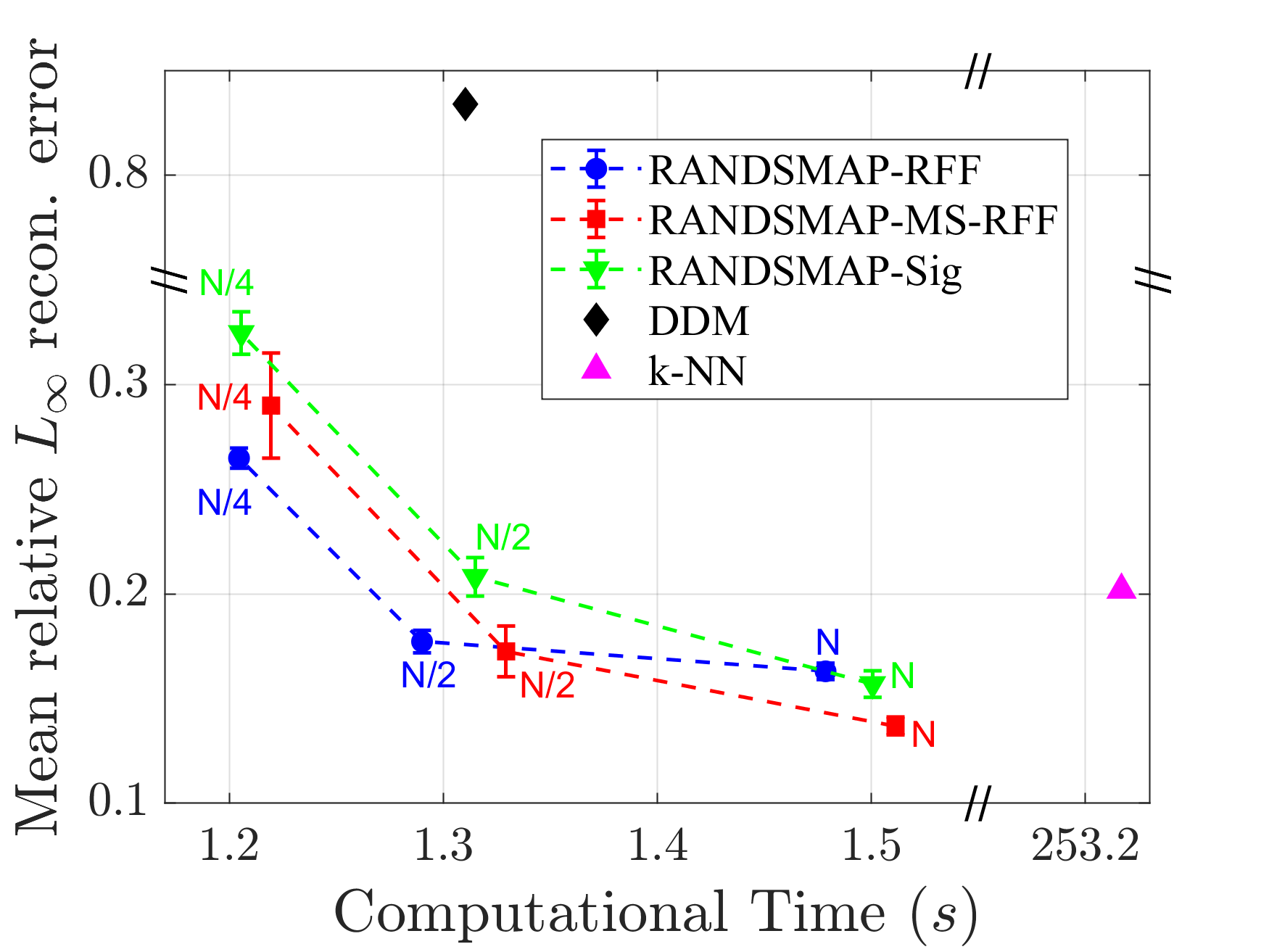}
        \caption{Testing, $L_\infty$ error}
        \label{fig:MRI_dec_ts_linf}
    \end{subfigure}
    \caption{Reconstruction error vs. computational time for the 2D rotated MRI images dataset ($M=128 \times 128$). Results are shown for $N=720$ training points. Panel~\ref{fig:MRI_dec_tr} shows mean relative $L_2$ error on the training set vs. training time. Panels~\ref{fig:MRI_dec_ts} and \ref{fig:MRI_dec_ts_linf} show mean $L_2$ and $L_\infty$ errors, respectively, on the testing set vs. inference time. For the stochastic RANDSMAP decoders (RANDSMAP-RFF, RANDSMAP-MS-RFF, RANDSMAP-Sig) with $P=N/4,N/2,N$, points show the median over 100 random initializations and error bars indicate the 5–95\% percentile range. Deterministic decoders (DDM, $k$-NN) are shown without error bars. Detailed numerical results are provided in Tables~\ref{tab:MRI_dec_tr_all} and \ref{tab:MRI_dec_ts_all}.}
    \label{fig:MRI_Dec}
\end{figure}
Consistent with the traffic flow benchmark, all RANDSMAP decoders significantly outperform the $k$-NN decoder in speed, while matching or surpassing its reconstruction accuracy on both training and testing sets. The DDM decoder again performs poorly due to its inherent limitation in representing sharp image edges, not due to poor tuning (see Fig.~\ref{fig:MRI_DM_GH} for validation). As the number of features $P$ increases, the accuracy of RANDSMAP variants improves at a higher computational cost. In contrast to the traffic flow benchmark, here the most accurate error RANDSMAP variant is that with vanilla Fourier features for low $P=N/4$. However, for $P=N$, all variants achieve similar $L_2$ errors, but RANDSMAP-MS-RFF attains the best $L_\infty$ error, presenting again a favorable choice for worst-case-avoiding reconstruction.       

Mass conservation is again enforced exactly by the RANDSMAP decoders, with errors on the order of $10^{-7}$, while the non-mass-conserving RFNN-Sig decoder exhibits errors three orders of magnitude larger. Interestingly, conservation error improves as $P$ decreases (see Tables~\ref{tab:MRI_dec_tr_all} and \ref{tab:MRI_dec_ts_all}), which we attribute to the smaller, better-conditioned constrained least-squares problem in Eq.~\eqref{eq:KKT-matrix}.
 
For completeness, Fig.~\ref{fig:MRI} visualizes the per-image reconstruction errors for all decoders, and Fig.~\ref{fig:MRI_recon} shows a representative reconstructed slice alongside absolute error maps. These visualizations confirm that the RANDSMAP decoders (particularly the -MS-RFF and -Sig variants) achieve more accurate reconstructions than DDM or $k$-NN. Notably, the RANDSMAP reconstructions appear sharper and less spatially diffused than those of the locally averaging $k$-NN decoder.
\subsection{Case Study 3: The Hughes 2D Pedestrian Model Dataset (\texorpdfstring{$M=200\times50$}{M=200 x 50})} \label{sb:Hughes}
For our last benchmark, we consider a two-dimensional crowd dynamics problem described by Hughes's continuum model \cite{hughes2002continuum}, a system of PDEs coupling pedestrian density $\rho(t,x,y)$ and a potential field $\phi(t,x,y)$
\begin{equation}
    \partial_t \rho - \partial_x\left( \rho f(\rho) \dfrac{\partial_x\phi}{\|\nabla\phi\|}\right) - \partial_y\left( \rho f(\rho) \dfrac{\partial_y\phi}{\|\nabla\phi\|}\right) = 0, \qquad 
    \|\nabla\phi\| 
    =\dfrac{1}{f(\rho)g(\rho)}, 
\end{equation}
where $\|\nabla\phi\|=\sqrt{(\partial_x\phi)^2 + (\partial_y\phi)^2}$, $f(\rho)$ is the pedestrian speed, and $g(\rho)$ is a pedestrians discomfort factor. All quantities are expressed in physical units: spatial coordinates in meters, time in seconds, velocity in meters per second, and density in persons per square meter. Following \cite{viola2025pde}, we adopt a linear density-velocity relation $f(\rho)=v_{max}(1-\rho/\rho_{max})$ with $v_{max}=1$, $\rho_{max}=5$, and set $g(\rho)=1$. The system is solved on a rectangular corridor domain $\Omega = [0,20]\times[0,5]$ containing a square obstacle $\Omega_{obs}=[10,11]\times[2,3]$. Periodic boundary conditions are imposed on the left and right walls, while homogeneous Neumann (zero-flux) conditions are enforced on the top and bottom walls and on the obstacle boundaries.

To preserve mass exactly, we discretize the model using a conservative Finite Volume scheme. At each time step, we first compute the potential $\phi$ via the Fast Sweeping Method \cite{zhao2005fast} and then update the density $\rho$ using a first-order upwind Godunov scheme in both spatial directions (see \cite{viola2025pde} for details). The domain is discretized into $M=200\times 50$ uniform cells. To construct the dataset, we generate $80$ distinct trajectories from random initial conditions. Each initial condition is a 2D Gaussian density bump $\rho(0,x,y)=a \exp \left( - (x-x_c)^2/(2w_x^2) - (y-y_c)^2/(2w_y^2) \right)$ with uniformly sampled parameters of amplitude $a \sim \mathcal{U}[1.2,2.1)$, widths $w_x,w_y \sim \mathcal{U} [1.6,2)$, and centers $x_c, y_c \sim \mathcal{U} [1.5, 3.5)$. Each profile is normalized so that the total initial mass is unity; i.e., $\sum_{j=1}^{200}\sum_{k=1}^{50} \rho(0,x_j,y_k)=1$. Every initial condition is then integrated up to $t_{end}= 70$ with time step $\delta t=0.025$. The resulting flows evolve to the right while smoothly diverting around the obstacle, and the numerical scheme conserves the total mass exactly in time. From each trajectory, we randomly subsample $375$ snapshots, yielding a full dataset of $N_{obs}=30000$ 2D density profiles, each represented as a vector $x_i\in \mathbb{R}^{10000}$. 

We randomly splitted the dataset, treating each density profile as a vector $x_i\in\mathbb{R}^{10000}$, into training and validation sets $\mathcal{X}_{tr}$ and $\mathcal{X}_{vl}$ (each with $N=5000$ points), reserving the remaining $20000$ points for testing ($\mathcal{X}_{ts}$). Figure~\ref{fig:Hughes_data} visualizes $\mathcal{X}_{tr}$ by projecting the high-dimensional density profiles onto three summary statistics: the mean positions $(\mu_x,\mu_y)$  and the variance $\sigma_x^2$. 

We then applied DM with $\alpha=1$ and $w_1=0.4$ to $\mathcal{X}_{tr}$. Unlike previous benchmarks, the eigenvalue spectrum lacks a clear gap. We select $d=10$ DM coordinates as this dimension yields accurate reconstruction and was deemed sufficient in \cite{viola2025pde}. The first three components of the embedding $y_i\in\mathbb{R}^{10}$ are shown in Fig.~\ref{fig:Hughes_DMcoords}, colored by $\mu_x$. Following our standard procedure, we train all decoders (RANDSMAP variants with $P=N, N/2, N/4$, DDM, $k$-NN and the non-mass-preserving RFNN-Sig variant) and tune their hyperparameters via grid search over the following ranges: $\sigma_w\in[0.2,1]$ for RANDSMAP-RFF, $\sigma_{UB}\in[1,10]$ for RANDSMAP-MS-RFF, $c\in[1,20]$ for RANDSMAP-Sig and RFNN-Sig, $w_2\in[0.1,1]$ for DDM, and $k\in[2,18]$ for $k$-NN. For the stochastic decoders with $P=N$, the optimal values were $\sigma_w=0.4$, $\sigma_{UB}=2.5$ and $c=4$, while for the deterministic decoders $w_2=1.45$ and $k=10$.  

The full numerical results are provided in Tables~\ref{tab:Hughes_dec_tr_all} and \ref{tab:Hughes_dec_ts_all} (Appendix~\ref{app:Hughes}); a summary of the reconstruction accuracy vs. computational time trade-off is shown in Fig.~\ref{fig:Hughes_Dec}.
\begin{figure}[!htbp]
    \centering
    \begin{subfigure}[b]{0.32\textwidth}
        \centering
        \includegraphics[width=\textwidth]{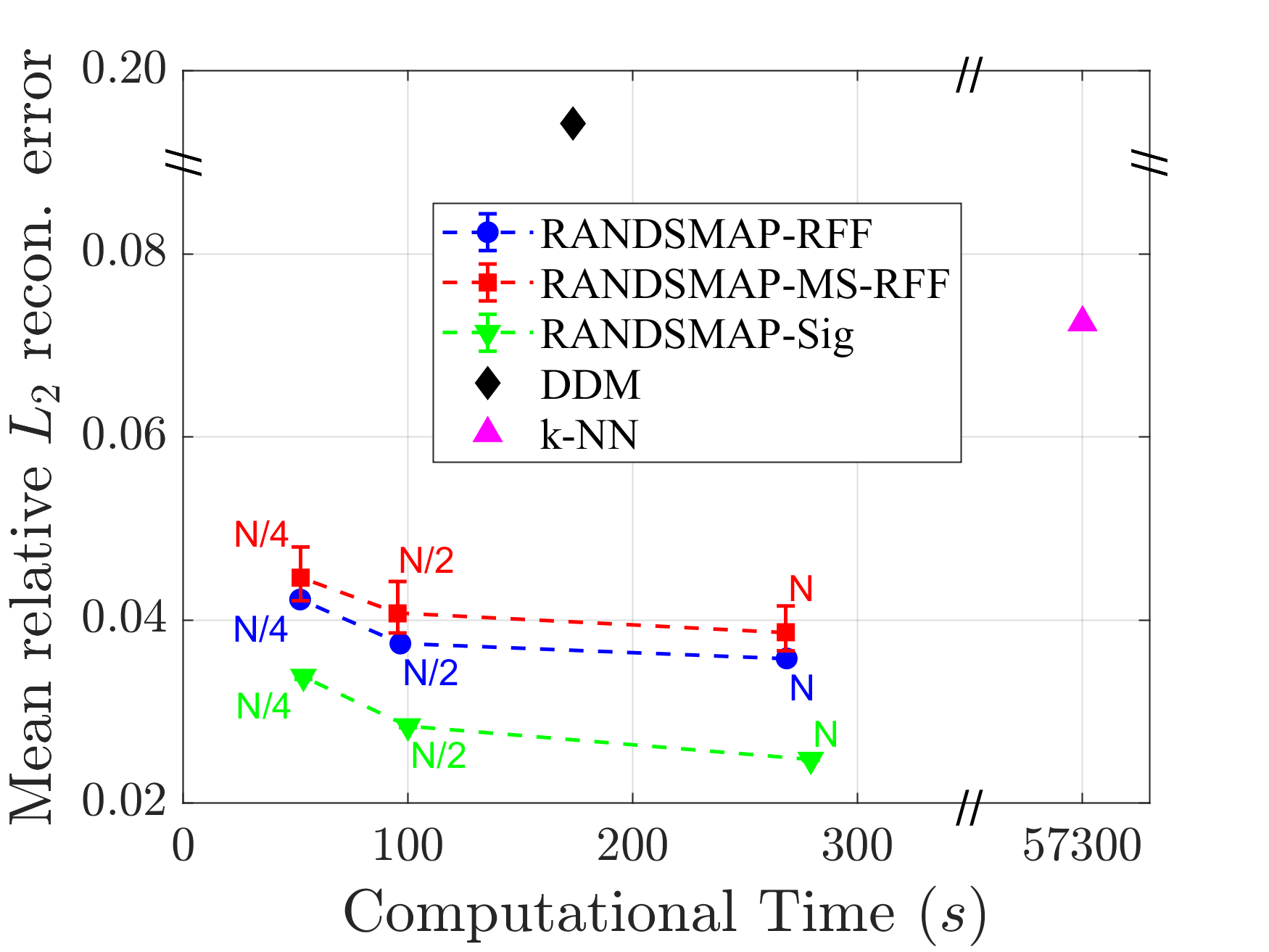}
        \caption{Training, $L_2$ error}
        \label{fig:Hughes_dec_tr}
    \end{subfigure}
    \hspace{2pt}
    \begin{subfigure}[b]{0.32\textwidth}
        \centering
        \includegraphics[width=\textwidth]{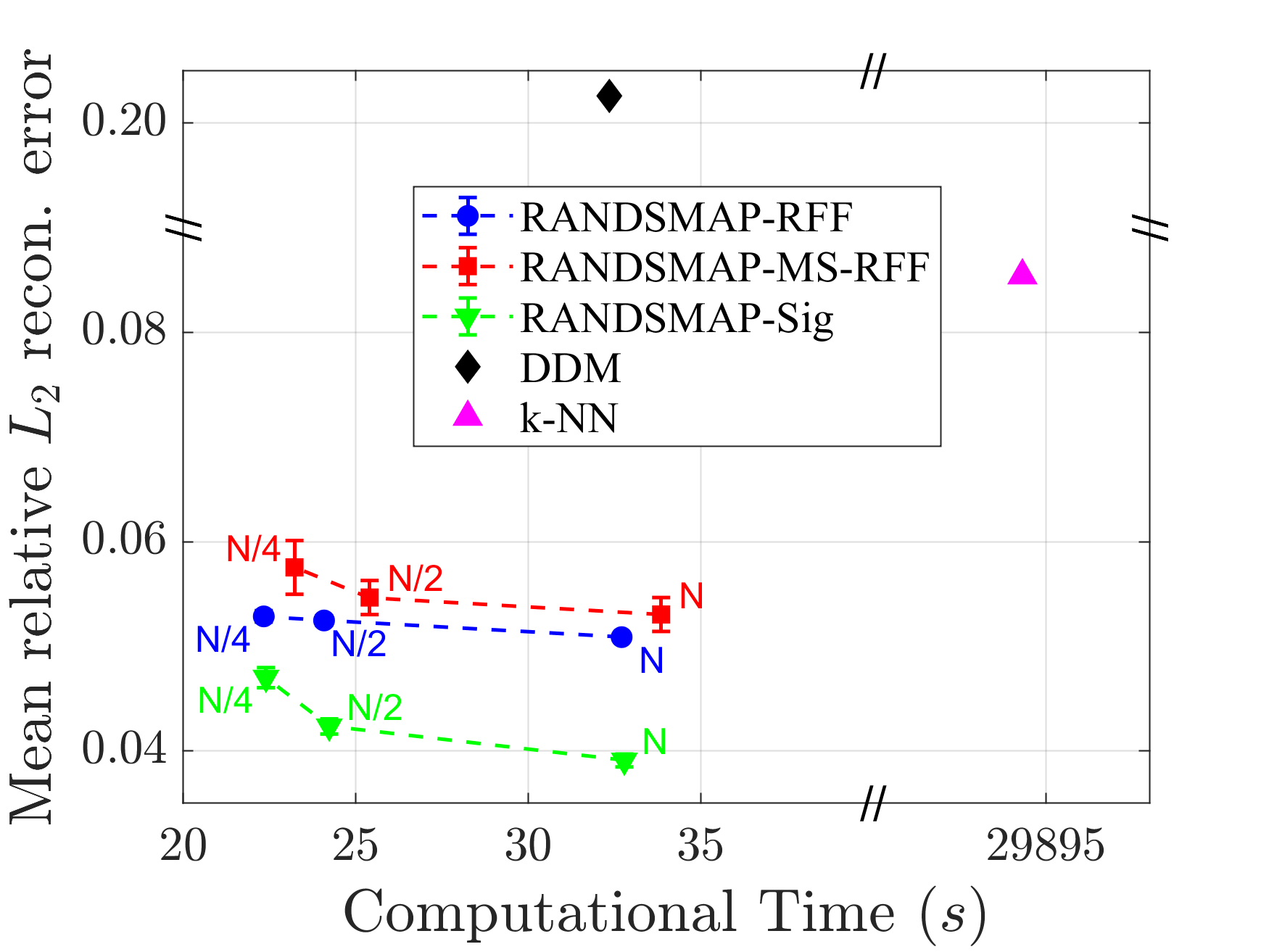}
        \caption{Testing, $L_2$ error}
        \label{fig:Hughes_dec_ts}
    \end{subfigure}
    \hspace{2pt}
    \begin{subfigure}[b]{0.32\textwidth}
        \centering
        \includegraphics[width=\textwidth]{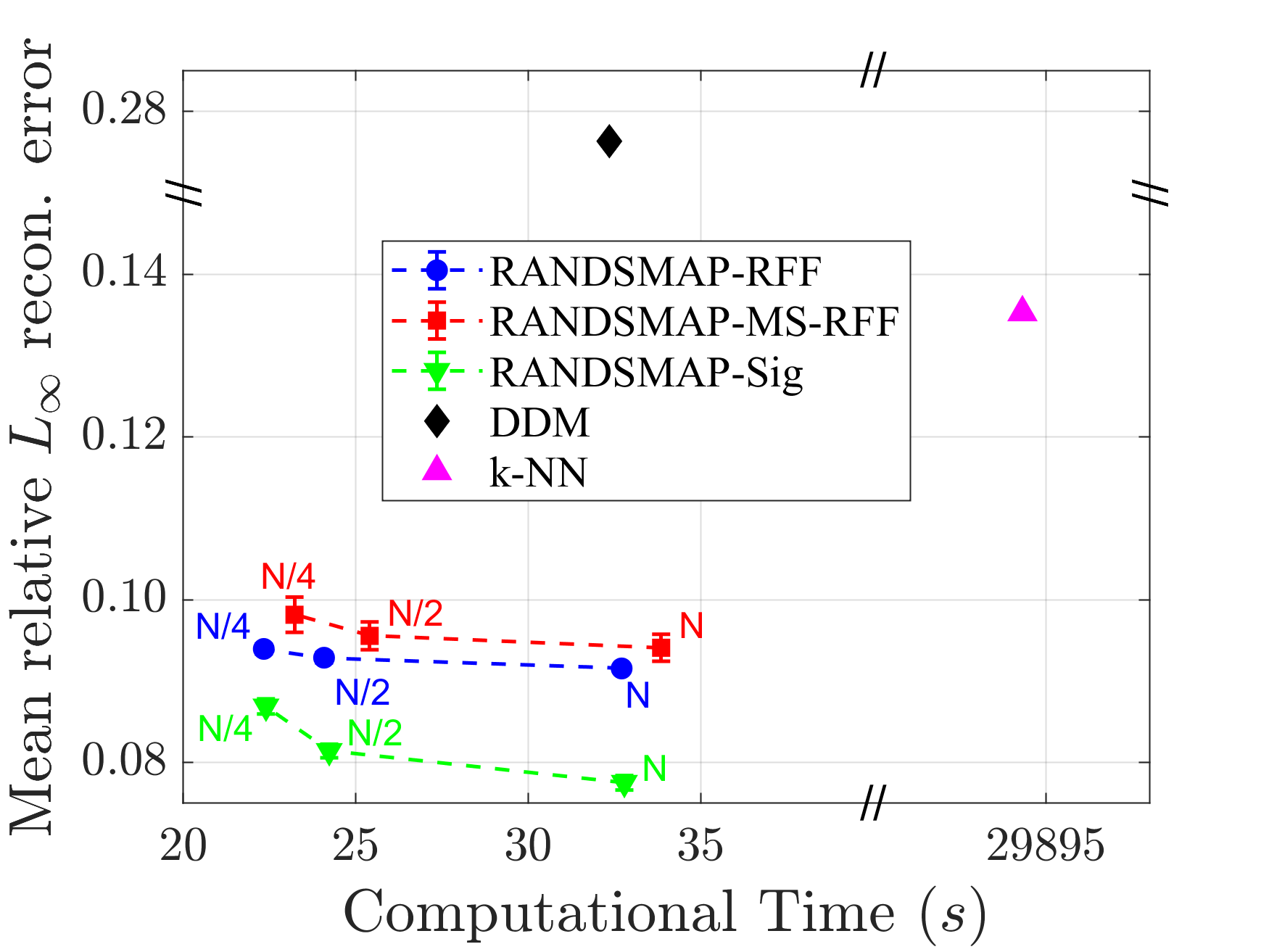}
        \caption{Testing, $L_\infty$ error}
        \label{fig:Hughes_dec_ts_linf}
    \end{subfigure}
    \caption{Reconstruction error vs. computational time for the Hughes 2D pedestrian model dataset ($M=200 \times 50$). Results are shown for $N=5000$ training points. Panel~\ref{fig:Hughes_dec_tr} shows the mean relative $L_2$ error on the training set vs. training time. Panels~\ref{fig:Hughes_dec_ts} and \ref{fig:Hughes_dec_ts_linf} show mean $L_2$ and $L_\infty$ errors, respectively, on the testing set vs. inference time. For the stochastic RANDSMAP decoders (RANDSMAP-RFF, RANDSMAP-MS-RFF, RANDSMAP-Sig) with $P=N/4,N/2,N$, points show the median over 100 random initializations and error bars indicate the 5–95\% percentile range. Deterministic decoders (DDM, $k$-NN) are shown without error bars. Detailed numerical results are provided in Tables~\ref{tab:Hughes_dec_tr_all} and \ref{tab:Hughes_dec_ts_all}.}
    \label{fig:Hughes_Dec}
\end{figure}
Consistently across previous benchmarks, all RANDSMAP decoders are significantly faster than $k$-NN both in training and inference. Crucially, here they also surpass it in reconstruction accuracy, achieving $<6\%$ relative $L_2$ testing error versus $>8\%$ for $k$-NN. The DDM decoder performs fairly ($L_2$ testing error $\sim20\%$) at a cost comparable to RANDSMAP, showcasing a notable improvement over its poor performance in previous benchmarks. This improved performance is likely due to the smoother flows in this problem, which are better approximated by DDM's smooth global kernel basis. An exhaustive hyperparameter search (see Fig.~\ref{fig:Hughes_DM_GH}) confirms this $\sim20\%$ error is optimal for DDM on this manifold. Figure~\ref{fig:Hughes_Dec} further shows that with adequate training data, the RANDSMAP decoders exhibit excellent generalization: accuracy improves with $P$ (at higher computational cost), and $L_2$ and $L_\infty$ errors are similarly low, indicating uniformly good pointwise reconstruction. The RANDSMAP-Sig variant is slightly more accurate than the Fourier feature variants in this case.

Importantly, the RANDSMAP decoders conserve mass to high precision; the mean of pointwise reconstruction is smaller than $10^{-6}$ (see Tables~\ref{tab:Hughes_dec_tr_all} and \ref{tab:Hughes_dec_ts_all}). Interestingly, the non-mass-conserving RFNN-Sig decoder achieves similar conservation error here, except for low $P$ where RANDSMAP variants (especially the Fourier-based RFF and MS-RFF ones) outperform it by 3–5 orders of magnitude. For completeness, we visualize the reconstruction errors onto the summary-statistics space in Fig.~\ref{fig:Hughes}, confirming that all decoders accurately recover the manifold (with DDM being least accurate). Finally, Fig.~\ref{fig:Hughes_recon} shows a representative snapshot where the crowd diverts around the obstacle. The RANDSMAP reconstructions better capture fine details and sharp gradients compared to other decoders.   
\section{Conclusions}\label{sec:conclusions}
This work introduces RANDSMAP, a framework that merges classical numerical analysis with neural feature maps to guarantee exact mass preservation in manifold reconstruction--a gap in both purely numerical and NN‑based approaches. Unlike purely numerical analysis-based methods, which often ignore conservation constraints, or black‑box autoencoders, which cannot enforce conservation laws exactly, RANDSMAP combines random‑feature neural decoders with a numerically stable, linearly‑constrained least‑squares formulation. This ensures exact mass conservation by construction, without sacrificing reconstruction accuracy or interpretability. By importing established numerical tools such as SVD and Cholesky factorizations into a randomized‑feature framework, the method inherits both the expressivity of neural representations and the stability, efficiency, and convergence guarantees of numerical analysis-based schemes.

We establish the theoretical foundation of RANDSMAP, deriving its closed‑form solution, proving exact mass preservation, and demonstrating its equivalence to deterministic kernel methods in expectation in the deterministic limit. Furthermore, we provide conservation error bounds for truncated solutions and extend the framework to multi‑scale random Fourier features, showing their equivalence to multi‑Gaussian kernels in the deterministic limit. Numerical experiments across a range of benchmark problems, including traffic flow with shock waves, MRI reconstruction, and crowd dynamics, validate the theoretical guarantees and demonstrate the practical advantages of RANDSMAP. The method consistently achieves orders of magnitude faster inference and training than $k$‑NN and outperforms the kernel‑based DDM decoder in speed while matching or exceeding their accuracy. Critically, RANDSMAP enforces exact mass preservation, with conservation errors as low as single machine precision, and successfully captures multiscale features that single‑scale kernel methods fail to represent without introducing unphysical oscillations; a limitation rooted in the Runge phenomenon inherent to flat‑limit RBF interpolation \cite{fornberg2007runge}. Furthermore, RANDSMAP scales robustly to high‑dimensional ambient spaces, confirming its suitability for large‑scale, physics‑aware decoding tasks.

RANDSMAP provides a robust and exact framework for integrating hard physical constraints into neural decoders, bridging structured numerical linear algebra with data‑driven learning. Future work could extend this combination to encode broader physical invariants, such as symmetries, differential constraints, or non‑negativity, directly into the feature construction, enabling a new class of physics‑informed, interpretable, and scalable scientific machine learning decoders. A comparison with autoencoders in similar problems is left for future work.
\section*{Acknowledgements}
C. S. acknowledges partial support from the PNRR MUR, projects PE0000013-Future Artificial Intelligence Research-FAIR \& CN0000013 CN HPC - National Centre for HPC, Big Data and Quantum Computing, Gruppo Nazionale Calcolo Scientifico-Istituto Nazionale di Alta Matematica (GNCS-INdAM).

\bibliography{references}
\bibliographystyle{ieeetr}

\section*{APPENDIX}
\appendix
\renewcommand{\theequation}{A.\arabic{equation}}
\renewcommand{\thefigure}{A.\arabic{figure}}
\renewcommand{\thetable}{A.\arabic{table}}
\renewcommand{\theprop}{A.\arabic{prop}}
\setcounter{equation}{0}
\setcounter{figure}{0}
\setcounter{table}{0}
\setcounter{prop}{0}

\section{Mass conservation of the POD/SVD decoder}
\label{app:POD}

Here, we prove that the decoding operator resulting from POD/SVD method is mass-preserving.
\begin{prop} \label{prop:POD}
Suppose the dataset \(X \in \mathbb{R}^{M \times N}\) is mass-preserving, i.e., satisfies the (normalized) sum-to-one constraint $1_M^\top X = 1_N^\top$. Let $\tilde{X} = XH$ be the column-centered data matrix with
\begin{equation}
H = I_N - \frac{1}{N} 1_N 1_N^\top,
\label{eq:Hcenter}
\end{equation}
and let $U_d = [u_1,\ldots,u_d] \in \mathbb{R}^{M \times d}$ denote the orthonormal basis formed by the first $d$ left singular vectors of the SVD of $\tilde{X}$. Then, the reconstructed/decoded field computed by POD
\begin{equation}
\widehat{X} = U_d U_d^\top \tilde{X} \;+\; X (I_N - H),
\label{eq:recon_theorem}
\end{equation}
is also mass-preserving, satisfying the sum-to-one constraint
$1_M^\top \widehat{X} = 1_N^\top$.
\end{prop}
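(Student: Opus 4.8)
The plan is to left-multiply the reconstruction formula in Eq.~\eqref{eq:recon_theorem} by $1_M^\top$ and show the two resulting terms collapse to $1_N^\top$. Writing $1_M^\top \widehat{X} = 1_M^\top U_d U_d^\top \tilde{X} + 1_M^\top X(I_N - H)$, I would treat the second term first: since the data satisfy $1_M^\top X = 1_N^\top$ and, from Eq.~\eqref{eq:Hcenter}, $I_N - H = \tfrac{1}{N}1_N 1_N^\top$, this term equals $1_N^\top(I_N - H) = \tfrac{1}{N}(1_N^\top 1_N)1_N^\top = 1_N^\top$. Equivalently, $X(I_N-H) = \bar{x}\,1_N^\top$ with $\bar{x} = \tfrac{1}{N} X 1_N$ the mean snapshot, whose mass is $1_M^\top \bar{x} = \tfrac{1}{N} 1_M^\top X 1_N = 1$; so the recentering term already carries the full unit mass of every column.

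It then remains to show the first term vanishes, i.e., $1_M^\top U_d U_d^\top \tilde{X} = 0$. The key observation --- and the crux of the argument --- is that column-centering annihilates the constant direction in the ambient space: $1_M^\top \tilde{X} = 1_M^\top X H = 1_N^\top H = 1_N^\top - \tfrac{1}{N}(1_N^\top 1_N)1_N^\top = 0$. Hence $1_M$ lies in the left null space of $\tilde{X}$, which is the orthogonal complement of $\mathrm{range}(\tilde{X})$. Since the left singular vectors $u_1,\dots,u_d$ associated with the nonzero singular values span $\mathrm{range}(\tilde{X})$ (assuming, as is standard in POD, $d \le \mathrm{rank}(\tilde{X})$), we get $1_M^\top u_i = 0$ for $i=1,\dots,d$, that is, $1_M^\top U_d = 0$. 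Consequently the whole first term is $(1_M^\top U_d)\,U_d^\top \tilde{X} = 0$.

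Combining the two computations yields $1_M^\top \widehat{X} = 0 + 1_N^\top = 1_N^\top$, the claimed sum-to-one invariant. The only subtlety worth flagging is the rank assumption: if one insisted on $d > \mathrm{rank}(\tilde{X})$, the surplus columns of $U_d$ could be chosen in the left null space and need not be orthogonal to $1_M$, but in that degenerate regime those modes contribute nothing to $U_d U_d^\top \tilde{X}$ anyway (the corresponding $\sigma_i$ are zero), so the conclusion is unchanged. I expect no genuine obstacle here --- the proof is essentially a one-line orthogonality fact about the SVD (range $\perp$ left null space) plus the elementary bookkeeping of the recentering term --- so the write-up mainly needs to state the left-null-space/range orthogonality cleanly and verify that $X(I_N-H)$ contributes exactly $1_N^\top$.
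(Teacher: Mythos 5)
Your proof is correct and follows essentially the same route as the paper's: show $1_M^\top \tilde{X} = 0$ so that $1_M \perp \mathrm{range}(\tilde{X})$ and hence $1_M^\top U_d = 0$, then verify that the recentering term $X(I_N-H)$ contributes exactly $1_N^\top$. Your remark on the degenerate case $d > \mathrm{rank}(\tilde{X})$ is a small but valid refinement the paper glosses over; otherwise the two arguments coincide.
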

\begin{proof}
Using the constraint $1_M^\top X = 1_N^\top$ and the expression of $H$ in Eq.~\eqref{eq:Hcenter}, the multiplication of the centered matrix $\tilde{X} = X H$ by $1_M^\top$ implies
\begin{equation*}
1_M^\top \tilde{X} = 1^{\top}_M X H = 1_N^\top H = 1_N^\top - \frac{1}{N} 1_N^\top 1_N 1_N^\top = 1_N^\top - 1_N^\top = 0_N^\top,
\end{equation*}
since  $1_N^\top 1_N = N$. This implies that $1_{M}$ is orthogonal to the column space of $\tilde{X}$, and consequently to the left singular vectors $U_d$, which provide an orthonormal basis of this space. Therefore,
\begin{equation*}
1^{\top}_{M} U_d = 0^{\top}_d,
\end{equation*}
where $d=1,2,\dots r$ and $r$ is the rank of $X$. Then, multiplication of the POD reconstruction in Eq.~\eqref{eq:recon_theorem} by $1_{M}^\top$ yields
\begin{equation}
1_M^\top \widehat{X} = 1_M^\top U_d U_d^\top \tilde{X} + 1_M^\top X (I_N - H) = 0_N^\top + 1_M^\top X (I_N - H).
\label{eq:P1eq1}
\end{equation}
Using the constraint $1_M^\top X = 1_N^\top$ and the expression of $H$ in Eq.~\eqref{eq:Hcenter}, we get
\begin{equation}
1_M^\top X (I_N - H)  =  1_N^\top (I_N - H) =  \frac{1}{N} 1_N^\top 1_N 1_N^\top = 1_N^\top,
\label{eq:P1eq2}
\end{equation}
since  $1_N^\top 1_N = N$. Substituting Eq.~\eqref{eq:P1eq2} in Eq.~\eqref{eq:P1eq1}, we finally get  
\begin{equation*}
    1_M^\top \widehat{X} = 1_N^\top,
\end{equation*}
so that the reconstructed fields satisfy the sum-to-one constraint, i.e., POD preserves the total mass.
\end{proof}

Note that Proposition~\ref{prop:POD} also applies for, say $L$, unseen points $\mathcal{X}^*=\{x^*_l\}_{l=1}^L\subset \mathbb{R}^M$ collected in the matrix $X^* = \begin{bmatrix} x_1, & \ldots, &x_L \end{bmatrix} \in \mathbb{R}^{M \times L}$, using the POD basis $U_d$ of the known dataset matrix $X$. This stems from Eqs.~\eqref{eq:P1eq1} and \eqref{eq:P1eq2}, which similarly reduce to $1_L^\top$ for the reconstructed field of unseen points $\widehat{X}^*$.

\renewcommand{\theequation}{B.\arabic{equation}}
\renewcommand{\thefigure}{B.\arabic{figure}}
\renewcommand{\thetable}{B.\arabic{table}}
\renewcommand{\theprop}{B.\arabic{prop}}
\renewcommand{\theremark}{B.\arabic{remark}}
\setcounter{equation}{0}
\setcounter{figure}{0}
\setcounter{table}{0}
\setcounter{prop}{0}
\setcounter{remark}{0}
\section{The k-NN decoder and its conservation properties}\label{app:kNN}
Here, we present the nonlinear $k$-NN decoder with convex interpolation among neighbors and prove that its exact mass-preserving property. Let us first assume that the encoding map $\Psi_N$ in Eq.~\eqref{eq:PsiN} is already known. Given a new latent point $y^*$, the $k$-NN algorithm approximates the reconstructed field $\hat{x}^*$ as a linear combination of $K$ nearest neighbors \cite{fix1985discriminatory,Patsatzis_2023}, i.e., as
\begin{equation}
    \hat{x}^*=\Psi^{-1}_N (y^*) = \sum_{k=1}^K \alpha_k x_{S(k)},
    \label{eq:convexinterpol}
\end{equation}
where $x_{S(k)}\in \mathcal{X}$ are data points with known, via the encoding map $\Psi_N$, embeddings $y_{S(k)}=\Psi_N(x_{S(k)})$. The $k$ neighbors of $y^*$ are identified in the latent space as
\begin{equation}
\mathcal{N}_K(y^*) 
= \argmin_{\substack{S \subset \{1,2,\dots,N\} \\ |S|=K}}
\sum_{k \in S}  \| y^* - y_k \|_2 , \quad  y_{k} = \Psi_N (x_{k}).
\end{equation} 

The weights $\alpha_1, \ldots, \alpha_K \in \mathbb{R}$ are then estimated by solving the following optimization problem in the latent space
\begin{equation}
    \left( \alpha_{1}, \ldots, \alpha_{K} \right) = \argmin_{(\alpha_1, \ldots, \alpha_K)\in [0,1]^K} \Bigg\{ \Bigg\| y^* - \Psi_N\left( \sum_{k=1}^K \alpha_k x_{S(k)} \right)\Bigg\| : \sum_{k=1}^K \alpha_k =1 \Bigg\},
    \label{eq:LiftkNN}
\end{equation}
where $S(k) \in \mathcal{N}_K(y^*)$ are the indices of the nearest neighbors. The constraint $\sum_{k=1}^K \alpha_k =1$ guarantees that the reconstructed field in Eq.~\eqref{eq:convexinterpol} is a convex interpolant of its neighboring samples on the data manifold. The following proposition proves that the decoder resulting from the described $k$-NN algorithm is mass-preserving.

\begin{prop} \label{prop:kNN_cons}
Suppose the dataset $\mathcal{X}=\{x_i\}_{i=1}^N\subset\mathbb{R}^M$ is mass-preserving, i.e., satisfies the (normalized) sum-to-one constraint $1_M^\top x_i = 1$ for each $x_i$, $i=1,\ldots,N$. Let $\hat{x}^*$ denote the reconstructed field of a new latent point $y^*$, provided by the $k$-NN decoder and defined as a convex combination of $K$ neighboring data points with indices $S(k)$ by
\begin{equation*}
    \hat{x}^*= \sum_{k=1}^K \alpha_k x_{S(k)}, \quad x_{S(k)}\in \mathcal{X},
\end{equation*}
where the coefficients $\alpha_k$ satisfy $\sum_{k=1}^K \alpha_k =1$. Then, the reconstructed/decoded field $\hat{x}^*$ is mass-preserving, satisfying the sum-to-one constraint $1_M^\top \hat{x}^* = 1$.
\end{prop}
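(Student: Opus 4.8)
The plan is to prove the claim by a one-line linearity computation, exploiting the fact that the $k$-NN reconstruction is an affine combination of training points whose coefficients form a partition of unity. First I would left-multiply the reconstruction formula $\hat{x}^* = \sum_{k=1}^K \alpha_k x_{S(k)}$ by the all-ones row vector $1_M^\top$ and distribute over the finite sum, obtaining $1_M^\top \hat{x}^* = \sum_{k=1}^K \alpha_k\,(1_M^\top x_{S(k)})$. Next I would invoke the hypothesis that every datum of the conservative dataset $\mathcal{X}$ satisfies the normalized sum-to-one invariant; since each neighbor $x_{S(k)}$ lies in $\mathcal{X}$, we have $1_M^\top x_{S(k)} = 1$ for every $k$, so the expression collapses to $\sum_{k=1}^K \alpha_k$. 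Finally, the equality constraint $\sum_{k=1}^K \alpha_k = 1$ enforced in the optimization problem of Eq.~\eqref{eq:LiftkNN} yields $1_M^\top \hat{x}^* = 1$, which is precisely the asserted sum-to-one invariant for the reconstruction.

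I would then add a short remark clarifying that only the affine (sum-to-one) property of the weights is used in the argument: the non-negativity constraint $\alpha_k \in [0,1]$ plays no role in mass conservation and serves a different purpose, namely keeping $\hat{x}^*$ inside the convex hull of the neighboring samples (which is useful, e.g., for guaranteeing non-negativity of a reconstructed density). As in the POD case treated in Appendix~\ref{app:POD}, I would also observe that the computation is entirely pointwise in $y^*$, so it extends verbatim to a batch of out-of-sample latent points $\mathcal{Y}^* = \{y^*_l\}_{l=1}^L$: stacking the reconstructions into $\widehat{X}^* = [\hat{x}^*_1,\ldots,\hat{x}^*_L] \in \mathbb{R}^{M\times L}$ gives $1_M^\top \widehat{X}^* = 1_L^\top$.

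There is essentially no hard step here — the statement is an immediate consequence of the linearity of $x \mapsto 1_M^\top x$ together with the normalization constraint on the interpolation weights. The only point that warrants explicit mention is that the neighbors $x_{S(k)}$ are selected from the conservative training set $\mathcal{X}$, so that the hypothesis $1_M^\top x_i = 1$ indeed applies to each of them; once that is noted, the chain of equalities above closes the proof.
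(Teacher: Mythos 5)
Your proposal is correct and is essentially identical to the paper's proof: both left-multiply the convex combination by $1_M^\top$, use linearity and the sum-to-one hypothesis on each neighbor $x_{S(k)}\in\mathcal{X}$, and conclude via the weight normalization $\sum_{k=1}^K\alpha_k=1$. Your added remarks (that only the affine constraint is needed, not non-negativity, and that the argument extends pointwise to batches) are accurate but not part of the paper's argument.
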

\begin{proof}
Since the decoder reconstructs $\hat{x}^*$ as a convex interpolant of data points in $\mathcal{X}$, and each neighboring data point $x_{S(k)}\in\mathcal{X}$ satisfies the sum-to-one constraint $1_M^\top x_{S(k)} = 1$, we compute
\begin{equation}
    1^{\top}_M \hat{x}^*=1_M^\top \sum_{k=1}^{K}\alpha_k \, x_{S(k)}=\sum_{k=1}^{K}\alpha_k \, 1_M^\top \, x_{S(k)}=\sum_{k=1}^{K}\alpha_k\, \cdot 1.
\end{equation}
Using the convexity constraint $\sum_{k=1}^K \alpha_k =1$, we obtain 
\begin{equation*}
    1_M^\top \hat{x}^* = 1
\end{equation*}
which shows that the $k$-NN decoder is mass-preserving.
\end{proof}

\begin{remark} \label{rmk:kNN}
The classical $k$-NN algorithm that has been used to solve the pre-image problem in the context of nonlinear manifolds, e.g., with Diffusion Maps \cite{fix1985discriminatory,Patsatzis_2023} is a non-parametric and numerical analysis-agnostic method based on two operations: computing pairwise distances between all points and sorting to identify the nearest neighbors. As a consequence, it suffers from the ``curse of dimensionality'', since both computational cost and memory requirements scale poorly with the number of points $N$; this limitation can be mitigated via GPU parallelization or approximate nearest-neighbor search algorithms. More importantly, the accuracy of $k$-NN deteriorates when the data are sparse, and selecting an appropriate number of neighbors is challenging given the computational complexity. \\
Constructing the decoder via the $k$-NN interpolation in Eq.~\eqref{eq:convexinterpol} requires solving the constrained optimization problem in Eq.~\eqref{eq:LiftkNN}, which can be done with trust-region, projected gradient, or active-set methods. While these approaches explicitly satisfy the convexity constraints, they become computationally inefficient when many points need to be reconstructed, as the optimization must be performed point by point. Alternative implementations in the literature often replace the optimization with approximate barycentric or locally linear weights in the latent space to improve efficiency, at the cost of reduced reconstruction accuracy. A detailed review of these challenges and performance considerations can be found in \cite{halder2024enhancing}.

\end{remark}

\renewcommand{\theequation}{C.\arabic{equation}}
\renewcommand{\thefigure}{C.\arabic{figure}}
\renewcommand{\thetable}{C.\arabic{table}}
\renewcommand{\theprop}{C.\arabic{prop}}
\renewcommand{\theremark}{C.\arabic{remark}}
\setcounter{equation}{0}
\setcounter{figure}{0}
\setcounter{table}{0}
\setcounter{prop}{0}
\setcounter{remark}{0}
\section{The ``Double'' Diffusion Maps (DDM) decoder}
\label{app:DDM_dec}

We present the construction of the ``double'' Diffusion Maps (DDM) decoder \cite{evangelou2022double}, which combines DM embeddings \cite{coifman2005geometric,coifman2006diffusion} with kernel-based interpolation via GHs \cite{coifman2006geometric}. We briefly review these components, as the DM embedding is also used for encoding.  

Consider the dataset $\mathcal{X}=\{ x_i\}_{i=1}^N\subset\mathbb{R}^M$ of $N$ points in the ambient space, sampled i.i.d. from a smooth Riemannian manifold $(\mathcal{M},g)$. Let
\begin{equation}
    X = \begin{bmatrix} x_1, & \ldots, & x_N \end{bmatrix} \in\mathbb{R}^{M \times N},
\end{equation}
be the corresponding data matrix, stacking column-wise the data points $x_i$. The standard DM algorithm \cite{coifman2005geometric,coifman2006diffusion} begins by forming a Gaussian kernel matrix 
\begin{equation}
K^{(1)} \in \mathbb{R}^{N \times N}, \quad K_{ij}^{(1)} = k_1(x_i, x_j;\epsilon_1) = \operatorname{exp}\left(-\dfrac{\lVert x_i-x_j\rVert_2^2}{\epsilon_1^2}\right) \ge 0,
\end{equation}
where $\epsilon_1$ is the shape parameter. Following the $a$-parameterized symmetric normalization, we obtain the $a$-normalized kernel
\begin{equation}
    K^{(1,a)} = \big(D^{(1)}\big)^{-a} K^{(1)} \big(D^{(1)}\big)^{-a}, \quad
    D^{(1)} = \operatorname{diag}\Big(\sum_{j=1}^N K_{ij}^{(1)}\Big), 
\end{equation}
where $a\in[0,1]$. Row-normalization of $K^{(1,a)}$ then yields the Markov $a$-normalized transition matrix
\begin{equation}
T^{(a)}= \big(D^{(1,a)}\big)^{-1} K^{(1,a)}, \quad 
D^{(1,a)} = \operatorname{diag}\Big(\sum_{j=1}^N K_{ij}^{(1,a)}\Big)
\label{eq:transitionmatrix}
\end{equation}
whose $i$-th row represents the transition probabilities from $x_i$ point to all other points. We hereby note that the infinitesimal generator of the diffusion process defined by $T^{(a)}$ converges, in the limit of large $N$ and small $\epsilon_1$, to the Fokker–Planck operator for $a=0$ and to the Laplace–Beltrami operator for $a=1$ \cite{coifman2006diffusion}. In practice, choosing $a=1$ yields a density-invariant DM embedding that eliminates the effect of non-uniform sampling.

The matrix $T^{(a)}$ in Eq.~\eqref{eq:transitionmatrix} is similar to the symmetric, positive semidefinite matrix $S=\left(D^{(1)}\right)^{-1/2} K^{(1)} \left(D^{(1)}\right)^{-1/2}$ and, thus, admits a decomposition
\begin{equation}
T=\sum_{i=1}^N \xi_i u_i v_{i}^\top, 
\end{equation}
where $\xi_i \in \mathbb{R}$ are the (positive) eigenvalues, and $u_i \in \mathbb{R}^N$ and $v_i \in \mathbb{R}^N$ are the corresponding left and right eigenvectors, respectively, which can be chosen to form a biorthonormal basis; i.e., $\langle u_i,v_{j} \rangle = \delta_{ij}$. It can be easily shown that $T^{(a)}$ has a trivial all-ones eigenvector corresponding to eigenvalue one. Excluding the trivial all-ones eigenvector (with eigenvalue one), the best $d$-dimensional Euclidean approximation of the row space of $T^{(a)}$ is given by the $d$ right eigenvectors corresponding to the largest eigenvalues 
\begin{equation}
y_i = (\xi_1 v_{1i}, \dots, \xi_d v_{di})^\top  \in \mathbb{R}^d.
\label{eq:DMcoords}
\end{equation}
It has been shown that the Euclidean distance between such coordinates provides the best approximation of the diffusion distance on the original manifold \cite{nadler2006diffusion}. 

For employing the DM algorithm, one needs to tune/select the hyperparameters $\epsilon_1$ and $\alpha$. Here, instead of tuning $\epsilon_1$, we set $\epsilon_1=w_1\, \text{median}(\|x_i-x_j\|_2)$ and tune $w_1$, while we set $\alpha$ depending on the benchmark example. For new points $x^*\in\mathbb{R}^M$, we further employ  out-of-sample extension via the standard Nystr\"{o}m extension \cite{coifman2006geometric} applied to the eigenfunctions of $T^{(a)}$; we revisit this interpolation in the context of decoding through GHs.

Before proceeding further, let us denote the set of DM embeddings by $\mathcal{Y}=\{ y_i \}_{i=1}^N \subset \mathbb{R}^d$ and the corresponding data matrix $Y = \begin{bmatrix} y_1, & \ldots, & y_N \end{bmatrix} \in \mathbb{R}^{d \times N}$. 
The set $\mathcal{Y}$ serves as the input for the ``second'' DM step, as described next.

\paragraph{``Second'' DM-like step for pre-image reconstruction} To solve the pre-image problem, a ``second'' DM-like step is applied directly to $\mathcal{Y}$ \cite{evangelou2022double}. This constructs a new kernel and its eigenfunctions purely in latent space, providing a better basis for out-of-sample reconstruction via GHs \cite{coifman2005geometric,coifman2006geometric,evangelou2022double}.

Using the points $y_i \in \mathcal{Y}$, we form a Gaussian kernel matrix with shape parameter $\epsilon_2$
\begin{equation}
K^{(2)} \in \mathbb{R}^{N \times N}, \quad K_{ij}^{(2)} = k_2(y_i, y_j; \epsilon_2) = exp\left(-\dfrac{\lVert y_i-y_j\rVert_2^2}{\epsilon_2^2}\right) \ge 0.
\label{eq:DDMkernel}
\end{equation}
Being symmetric and positive semidefinite, $K^{(2)}$ admits an eigendecomposition
\begin{equation}
K^{(2)} \phi_{i} = \lambda_i \phi_{i}, \quad i = 1, \dots, r,
\end{equation}
where $\lambda_i\ge0$ are the eigenvalues, sorted in descending order and $\phi_i\in\mathbb{R}^N$ are the corresponding orthonormal eigenvectors. Retaining the first $r$ dominant eigenmodes gives the eigenvector matrix
\begin{equation}
V_r = \begin{bmatrix}
    \phi_{1}, & \ldots, & \phi_{r}
\end{bmatrix} \in \mathbb{R}^{N \times r},
\end{equation}
and the diagonal eigenvalue matrix $\Lambda_r = \operatorname{diag}(\lambda_1,\ldots,\lambda_r) \in \mathbb{R}^{r \times r}$. The columns of $V_r$ form an orthonormal basis for the subspace used to reconstruct functions in the high-dimensional space via the DDM decoder. 

Any function $f \in \mathbb{R}^N$ defined on the training set $\mathcal{X}$ can be approximated in the truncated DDM subspace as
\begin{equation}
f \approx V_r V_r^\top f.
\label{eq:proj_train}
\end{equation}
Using the truncated eigendecomposition $K^{(2)} \approx V_r \Lambda_r V_r^\top$, the projection operator can be rewritten as
\begin{equation}
V_r V_r^\top
= V_r \Lambda_r^{-1} V_r^\top K^{(2)},
\label{eq:proj_kernel}
\end{equation}
where $V_r \Lambda_r^{-1} V_r^\top$ is the truncated pseudo-inverse of $K^{(2)}$. Then, for functions defined on the training set, we obtain the reconstruction
\begin{equation}
f \approx V_r \Lambda_r^{-1} V_r^\top K^{(2)} f.
\label{eq:recon_train}
\end{equation}
Clearly, to reconstruct with high accuracy the ambient space, $r$ should be taken large enough \cite{evangelou2022double,Patsatzis_2023}, with exact reconstruction attained for $r=N$.

Equation \eqref{eq:recon_train} gives the projection of a function $f$ defined on the training set. For reconstruction of new points, however, $f$ is not available. Instead, we rely on the kernel, which can be evaluated between new unseen points and training ones. This leads to a Nystr\"{o}m-type extension known as Geometric Harmonics \cite{coifman2006geometric}. Specifically, consider a new latent point $y^* \in \mathbb{R}^{d}$ obtained via the (extended) DM embedding in Eq.~\eqref{eq:DMcoords}. 
The kernel vector between $y^*$ and DM coordinates of the training set $\mathcal{Y}$ is  
\begin{equation}
    k^*=
\begin{bmatrix}
k_2(y^*, y_1; \epsilon_2), & \ldots, & k_2(y^*, y_N; \epsilon_2)
\end{bmatrix}\in \mathbb{R}^N.
\label{eq:2DMkernel}
\end{equation}
In analogy to Eq.~\eqref{eq:recon_train}, the projection coefficients $\alpha \in \mathbb{R}^N$ for this new point are be obtained by replacing $K^{(2)} f$ with $k^{*\top}$. This yields the GHs extension formula
\begin{equation}
    \alpha= V_r \, \Lambda_r^{-1} \, V_r^\top k^{*\top} \in \mathbb{R}^N.
    \label{eq:2DM2}
\end{equation}
The vector $\alpha$ represents the coordinates of the new point in the DDM subspace, effectively extending the projection operator to out-of-sample data.

For the pre-image problem, we seek the corresponding high-dimensional reconstructed state $\hat{x}^*$. Following standard kernel-based pre-image methods, we assume $\hat{x}^*$ can be approximated as a linear combination of the available training samples
\begin{equation}
\hat{x}^* = X\alpha.
\label{eq:2DM1}
\end{equation}
Substituting Eq.~\eqref{eq:2DM2} into \eqref{eq:2DM1} gives the compact reconstruction of the DDM decoder for a single latent point
\begin{equation}
    \hat{x}^* = X\,V_r\, \Lambda_r^{-1} \, V_r^\top \, k^{*\top}.
\label{eq:DoubleDMs}
\end{equation}
We note that this formulation enables the simultaneous processing of multiple new latent points. Specifically, for a set of unseen latent points $\mathcal{Y}^*=\{ y^*_l\}_{l=1}^L$, 
we first evaluate the kernel matrix $K^*\in \mathbb{R}^{L \times N}$ in Eq.~\eqref{eq:2DMkernel}, with elements $K^*_{lj} = k_2(y^*_l,y_i,\epsilon_2)$. The batch reconstruction then follows directly from Eq.~\eqref{eq:DoubleDMs}, yielding
\begin{equation}
    \widehat{X}^* = X\,V_r\, \Lambda_r^{-1} \, V_r^\top \, K^{*\top},
    \label{eq:DoubleDMs_mat}
\end{equation}
where $\widehat{X}^*=\begin{bmatrix} x^*_1, & \ldots, & x^*_L \end{bmatrix} \in \mathbb{R}^{M \times L}$ collects column-wise all the reconstructed fields $x_l^*$ for $l=1,\ldots,L$. We note that for employing the DDM encoder, one needs to tune/select the shape parameter $\epsilon_2$ and the number of DDM eigenvectors $r$. Here, in analogy to the DM algorithm, we set $\epsilon_2=w_2\, \text{median}(\|y_i-y_j\|_2)$ and tune $w_2$ instead of $\epsilon_2$.

Finally we note that the ``first'' DM embedding can be extended to new ambient points $\mathcal{X}^* = \{ x^*_l\}_{l=1}^L\subset \mathbb{R}^M$ via the standard Nystr\"{o}m extension, analogous in form the GHs-based decoder in Eq.~\eqref{eq:DoubleDMs_mat}, but applied in the opposite direction. This procedure projects the kernel evaluations $k_1(x^*_l,x_i,\epsilon_1)$ onto the first $d$ right eigenvectors of the transition matrix $T^{(a)}$ (scaled by their eigenvalues) to produce the latent coordinates $y^*_l$, which are collected in $\mathcal{Y}^* = \{ y^*_l\}_{l=1}^L\subset \mathbb{R}^d$, thus completing the DM encoder. Hence, the Nystr\"{o}m extension maps $\mathcal{X}^* \to \mathcal{Y}^*$ (encoding), while the DDM decoder maps $\mathcal{Y}^* \to \mathcal{X}^*$ (decoding).

\begin{remark}
    In the classic DDM algorithm \cite{evangelou2022double}, the ``second'' DM-like step is not performed on the DM coordinates $y_i$ themselves, but on the unscaled DM eigenvector coordinates
    \begin{equation}
        z_i = (v_{1i}, \ldots, v_{di})^\top \in \mathbb{R}^d,
    \end{equation}
    which are related to the DM coordinates via $z_i=\Xi_d^{-1} y_i$ where $\Xi_d = \operatorname{diag}(\xi_1,\ldots,\xi_d)$ (see Eq.~\eqref{eq:DMcoords}). Denoting $\mathcal{Z}=\{ z_i \}_{i=1}^N \subset \mathbb{R}^d$ and $Z = \begin{bmatrix} z_1, & \ldots, & z_N \end{bmatrix} \in \mathbb{R}^{d \times N}$, we have $Y = \Xi_d Z$ and $Z = \Xi_d^{-1} Y$. Consequently, the kernel matrix $K^{(2)}$ in Eq.~\eqref{eq:DDMkernel} for known points and $k^*$ in Eq.~\eqref{eq:2DMkernel} for an unknown point $y^*$ are evaluated on $\mathcal{Z}$, i.e., using $k_2(z_i, z_j; \tilde{\epsilon}_2)$ and $k_2(z^*, z_j; \tilde{\epsilon}_2)$, rather than on the original DM coordinates $\mathcal{Y}$. 

    In our presentation, we form the second kernel directly on the DM coordinates $y_i\in \mathcal{Y}$ for two reasons: (i) to allow the DDM decoder to be applied to any latent embedding $\mathcal{Y}$, not only those obtained from DM algorithm with known eigendecomposition; and (ii) to align the notation with the RFNN, RANDSMAP, and $k$‑NN decoders, enabling a unified comparison of conservation properties. The diagonal scaling $\Xi_d$ is effectively absorbed into the kernel bandwidth, so that $\tilde{\epsilon}_2$ in the classic formulation corresponds to a suitably chosen $\epsilon_2$ in Eqs.~\eqref{eq:DDMkernel}, \eqref{eq:2DMkernel} and \eqref{eq:DoubleDMs_mat}.
\end{remark}

\renewcommand{\theequation}{D.\arabic{equation}}
\renewcommand{\thefigure}{D.\arabic{figure}}
\renewcommand{\thetable}{D.\arabic{table}}
\setcounter{equation}{0}
\setcounter{figure}{0}
\setcounter{table}{0}
\section{Implementation details} \label{app:ImpDet}
Here, we provide the implementation details for the training, tuning, and assessment of the considered decoders.

For all benchmark problems, we generated synthetic datasets $\mathcal{X}=\{x_i\}_{i=1}^{N_{obs}}\subset \mathbb{R}^M$ with $N_{obs}$ data points. Each dataset was randomly split into training ($\mathcal{X}_{tr}$), validation ($\mathcal{X}_{vl}$) and testing ($\mathcal{X}_{ts}$) subsets. The splitting ratios varied between 10/10/80\% and 20/20/60\% to explore the effect of sparsity in the training data. In every configuration, the training and validation sets contained the same number of observations $N$, while the testing sets contained $L$ unseen observations.

To obtain the low-dimensional embedding $\Psi_N$ in Eq.~\eqref{eq:PsiN}, we employed the DM algorithm (see Appendix~\ref{app:DDM_dec}) on the training data $\mathcal{X}_{tr}$, tuning the DM weight $w_1$ for each benchmark. The resulting DM encoder, augmented by the Nystr\"{o}m extension for out-of-sample data, produced the training ($\mathcal{Y}_{tr}$), validation ($\mathcal{Y}_{vl}$) and testing ($\mathcal{Y}_{ts}$) subsets of low-dimensional DM coordinates $y\in\mathbb{R}^d$, where the intrinsic dimension $d$ varied across the benchmarks. 

For the pre-image problem mapping $\Psi_N^{-1}$, we constructed three variants for each RANDSMAP decoder, distinguished by their feature mapping: vanilla random Fourier features (RANDSMAP-RFF), multi‑scale random Fourier features (RANDSMAP-MS‑RFF), and sigmoidal features (RANDSMAP-Sig). The corresponding non‑conservative RFNN decoders follow the same feature choices and are labeled as RFNN‑RFF, RFNN‑MS‑RFF, and RFNN‑Sig. For RFF-based variants, Fourier features were generated by randomly sampling weights $w_k\sim\mathcal{N}(0,\sigma_w^2 I_d)$ and phases $b_k\sim\mathcal{U}[0,2\pi)$ according to Eq.~\eqref{eq:fourierrand}. The bandwidth parameter $\sigma_w$ was treated as a tunable hyperparameter. For MS-RFF-based variants, we employed the hierarchical multi-scale sampling discussed in Section~\ref{sb:RFNNdec_ms}. Specifically, we used $Q=10$ scales, drawing each scale parameter uniformly $\sigma_w\sim[0.001,\sigma_{UB})$, and then sampling $w_k \mid \sigma_w \sim \mathcal{N}(0,,\sigma_w^2 I_d)$ and $b_k\sim\mathcal{U}[0,2\pi)$ to form the feature set. The upper bound $\sigma_{UB}$ of the bandwidth distribution served as the tuning parameter. For Sig-based variants, following the ``parsimonious'' construction in \cite{galaris2022numerical},  
we sampled internal weights uniformly $w_k\sim\mathcal{U}[-c,c)$, and set the internal biases $b_k$ so that the inflection points of the sigmoid activations correspond to uniformly distributed centers within the training input domain $\mathcal{Y}_{tr}$. The weight bound $c$ was tuned. For all the above decoders, we considered feature numbers $P = N, N/2, N/4$ to evaluate the trade‑off between reconstruction accuracy and computational cost. An output bias term was included in every architecture, as it is essential for the RANDSMAP conservative schemes discussed in Section~\ref{sb:RANDSMAP}. 

Once the feature matrix $\Phi_S$ of the RANDSMAP and RFNN decoders was constructed from the training dataset $\mathcal{Y}_{tr}$, the coefficient matrix $A_S$ was computed by the closed-form solutions in Eq.~\eqref{eq:ShurAfinal0} and \eqref{eq:vanillaTikhonov}, respectively. The Tikhonov regularization parameter was set to $\lambda=10^{-3}$; this value provided mild numerical stabilization without significantly affecting the reconstruction error across benchmarks. For the RANDSMAP solution in Eq.~\eqref{eq:ShurAfinal0}, we truncated the SVD of $\Phi_S$, retaining the leading $tr$ modes with singular values satisfying $\sigma_{tr}>\delta_S\, \sigma_1$, where the tolerance was set to $\delta_S=10^{-8}$ to achieve single precision accuracy in conservation error, as per the bound in Corollary~\ref{cor:cons_err_bound}. For the RFNN solution, we solved the positive‑definite linear system in Eq.~\eqref{eq:vanillaTikhonov} via Cholesky decomposition.

The RANDSMAP and RFNN decoders involve random sampling of feature parameters $(w_k,b_k)$. To account for this stochasticity, we performed 100 independent runs for each hyperparameter configuration with different random initializations. All performance metrics were then aggregated across these repetitions.

For comparison to the RANDSMAP and RFNN decoders, we further constructed the numerical analysis-based DDM and $k$-NN decoders. For the DDM decoder (see Section~\ref{sec:decodingwithDMS}), we selected the number of DDM coordinates $r$ by retaining eigenvectors corresponding to eigenvalues $\lambda_r$ satisfying $\lambda_r>\delta \lambda_1$ where $\delta=N\|K^{(2)}\|_2\epsilon_u$ is a numerical tolerance, $\|K^{(2)}\|_2$ is the spectral norm of the DDM kernel $K^{(2)}$ and $\epsilon_u=2^{-53}\approx1.1\times 10^{-16}$ is the unit round-off in double precision arithmetic. The DDM weight $w_2$ for the construction of the ``second'' DM-like step kernel was tuned for each benchmark. For the $k$-NN decoder (see Appendix~\ref{app:kNN}), we used the trust-region method \cite{coleman1996interior} to solve the constrained optimization problem in Eq.~\eqref{eq:LiftkNN}, with function and optimality tolerances set to $10^{-8}$. The number of neighbors $k$ was tuned for each benchmark. 

As evident from the discussion above, all decoders were intentionally constructed with one tunable hyperparameter ($\sigma_w$ for RANDSMAP-/RFNN-RFF, $\sigma_{UB}$ for RANDSMAP-/RFNN-MS-RFF, $c$ for RANDSMAP-/RFNN-Sig, $\epsilon_2$ for DDM, and $k$ for $k$-NN decoders) to enable a fair comparison of computational cost. Hyperparameter tuning was performed on the validation set $\mathcal{Y}_{vl}$, using a grid search over $10$ values in specific, for each benchmark, ranges. The configuration yielding the lowest $L_2$ reconstruction error was selected. The decoder was then re‑trained on the training set $\mathcal{Y}_{tr}$ using this optimal hyperparameter value before final evaluation.
 
For assessing decoder performance, we computed the per-point relative $L_2$ and $L_\infty$ reconstruction errors, and, where relevant (i.e., when the original data are mass-preserving), the conservation errors 
\begin{equation}
    e_{2,i} = \dfrac{\|\tilde{x}_i - x_i\|_2 }{\|x_i\|_2}, \quad
    e_{\infty,i} = \dfrac{\|\tilde{x}_i - x_i\|_\infty }{\|x_i\|_\infty}, \quad e_{con,i} = \bigl| 1_M^\top\, \tilde{x}_{i} - 1 \bigr|,
    \label{eq:recon_errors}
\end{equation}
where $x_i$ is the true high-dimensional observation and $\tilde{x}_i$ is its reconstruction obtained from the decoder as $\tilde{x}_i=\Psi_N^{-1}(y_i)$, where $y_i=\Psi_N(x_i)$. The errors are evaluated both on the training set $\mathcal{X}_{tr}$ (in-sample data) and the testing set $\mathcal{X}_{ts}$ (out-of-sample data). For test points, $x_i$ in Eq.~\eqref{eq:recon_errors} corresponds to the unseen observation $x^*_i$ and $\tilde{x}_i$ to the reconstructed field $\tilde{x}^*_{i}$. 

\renewcommand{\theequation}{E.\arabic{equation}}
\renewcommand{\thefigure}{E.\arabic{figure}}
\renewcommand{\thetable}{E.\arabic{table}}
\setcounter{equation}{0}
\setcounter{figure}{0}
\setcounter{table}{0}
\section{The Swiss Roll dataset (\texorpdfstring{$M=3$}{M=3})} \label{app:SR}

Here, we consider our first non-mass-preserving benchmark example to provide a baseline performance of the proposed non-mass-preserving (RFNN, DDM and $k$-NN) decoders. We consider a standard test example for manifold learning and nonlinear decoder evaluation, the Swiss Roll benchmark. The data lie on an intrinsic $d=2$-dimensional manifold $\mathcal{M}$ embedded in the $M=3$-dimensional ambient space; i.e., $x_i\in \mathbb{R}^3$. To generate the synthetic dataset, we sample $N_{obs}$ angles $\theta_i\sim \mathcal{U}[\pi, 4 \pi)$ and heights $z_i\sim \mathcal{U}[-5,5)$ uniformly. These are mapped to ambient coordinates via the Swiss Roll equations
\begin{equation}
x^1_i = (\theta_i + 0.1 z_i) \sin(\theta_i)/4 + \mathcal{N}(0,\sigma^2), \quad x^2_i = (\theta_i + 0.1 z_i) \cos(\theta_i)/4 + \mathcal{N}(0,\sigma^2), \quad x^3_i = z_i + \mathcal{N}(0,\sigma^2), \label{eq:SwissRolleq}
\end{equation}
with additive Gaussian noise $\mathcal{N}(0,\sigma^2)$ of magnitude $\sigma=0.05$. From the full dataset, we construct two distinct training/validation/testing splits: a sparse configuration with $N=1000$ points in $\mathcal{X}_{tr}$ and $\mathcal{X}_{vl}$ and a dense configuration with $N=2,000$; the remaining points are reserved for the testing sets $\mathcal{X}_{ts}$. Visualizations of the two training sets are provided in Figs.~\ref{fig:SR_data_1K} and \ref{fig:SR_data_2K}.

\begin{figure}[!b]
    \centering
    \begin{subfigure}[b]{0.32\textwidth}
        \centering
        \includegraphics[width=\textwidth]{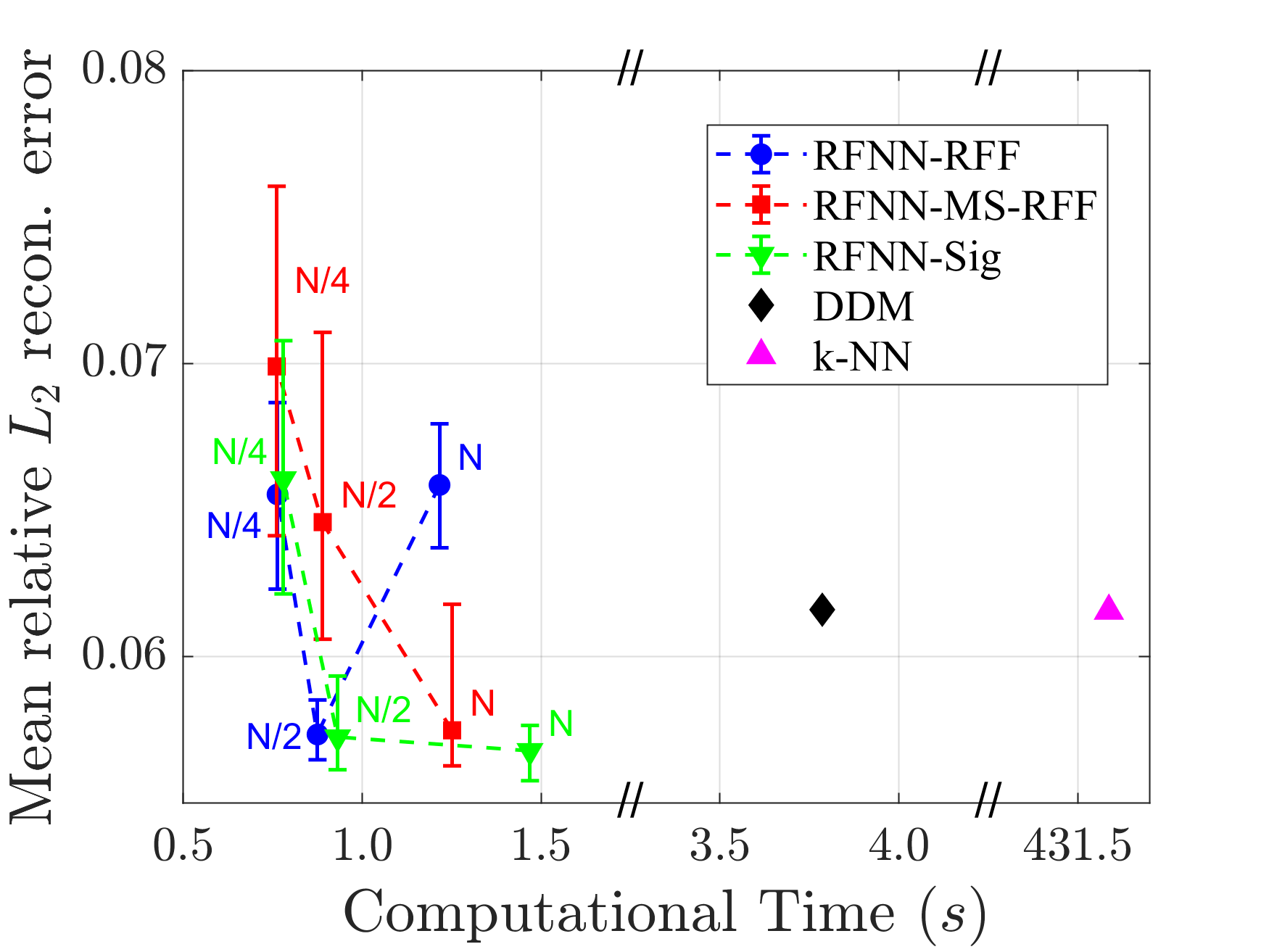}
        \caption{$N=1000$, training, $L_2$ error}
        \label{fig:SR_dec_tr1K}
    \end{subfigure}
    \hspace{2pt}
    \begin{subfigure}[b]{0.32\textwidth}
        \centering
        \includegraphics[width=\textwidth]{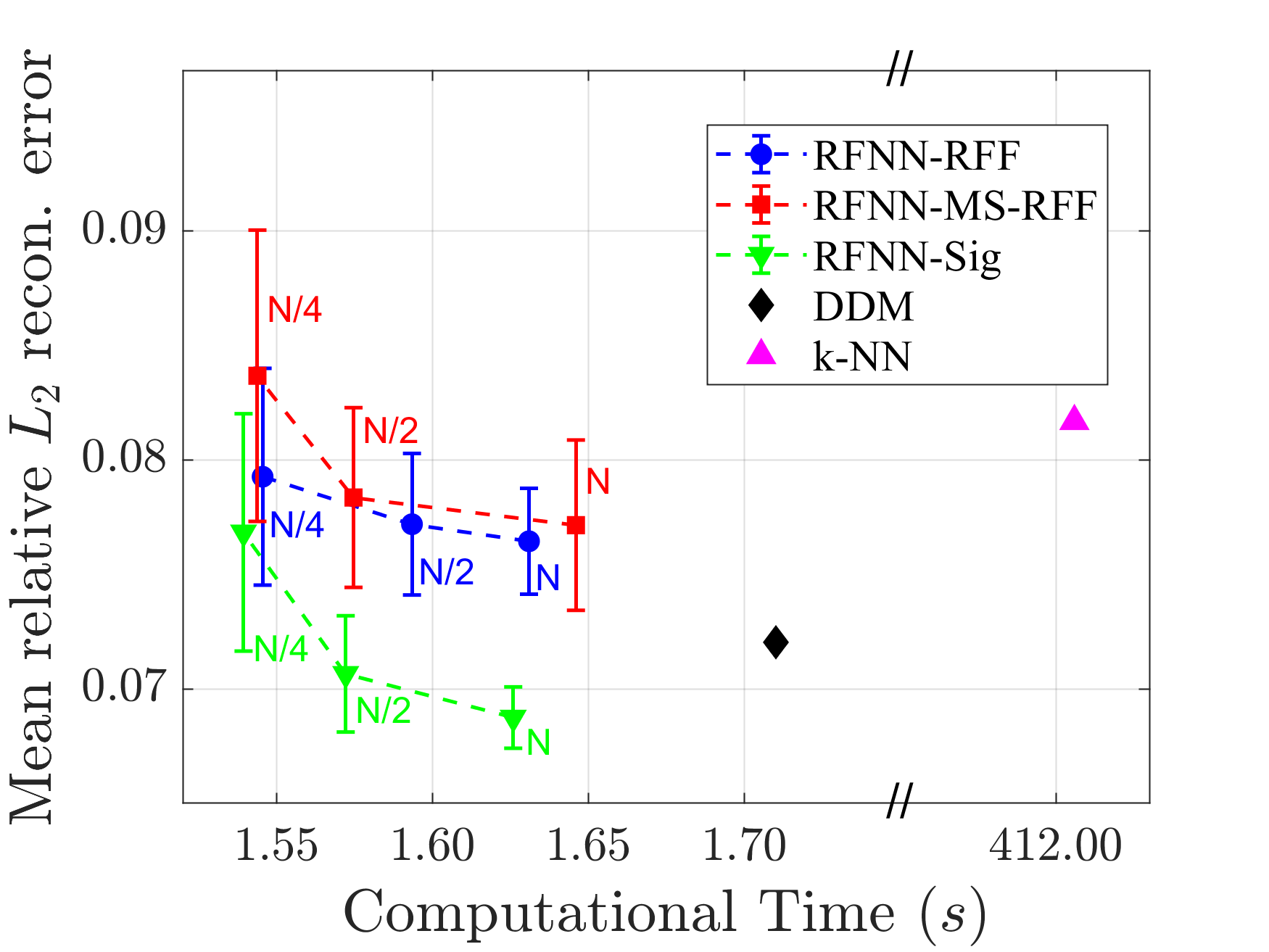}
        \caption{$N=1000$, testing, $L_2$ error}
        \label{fig:SR_dec_ts1K}
    \end{subfigure}
    \hspace{2pt}
    \begin{subfigure}[b]{0.32\textwidth}
        \centering
        \includegraphics[width=\textwidth]{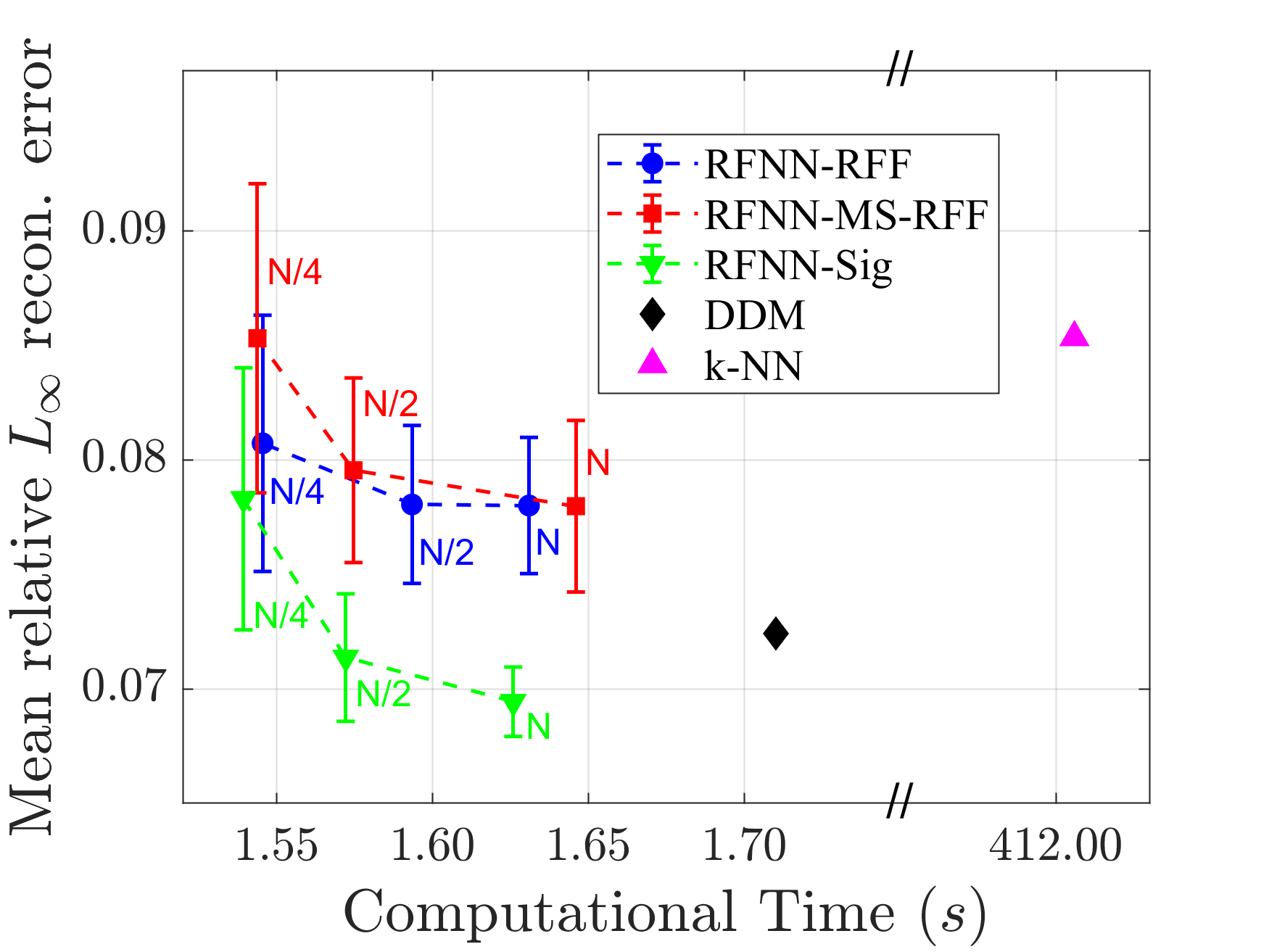}
        \caption{$N=1000$, testing, $L_\infty$ error}
        \label{fig:SR_dec_ts1K_linf}
    \end{subfigure}
    
    \begin{subfigure}[b]{0.32\textwidth}
        \centering
        \includegraphics[width=\textwidth]{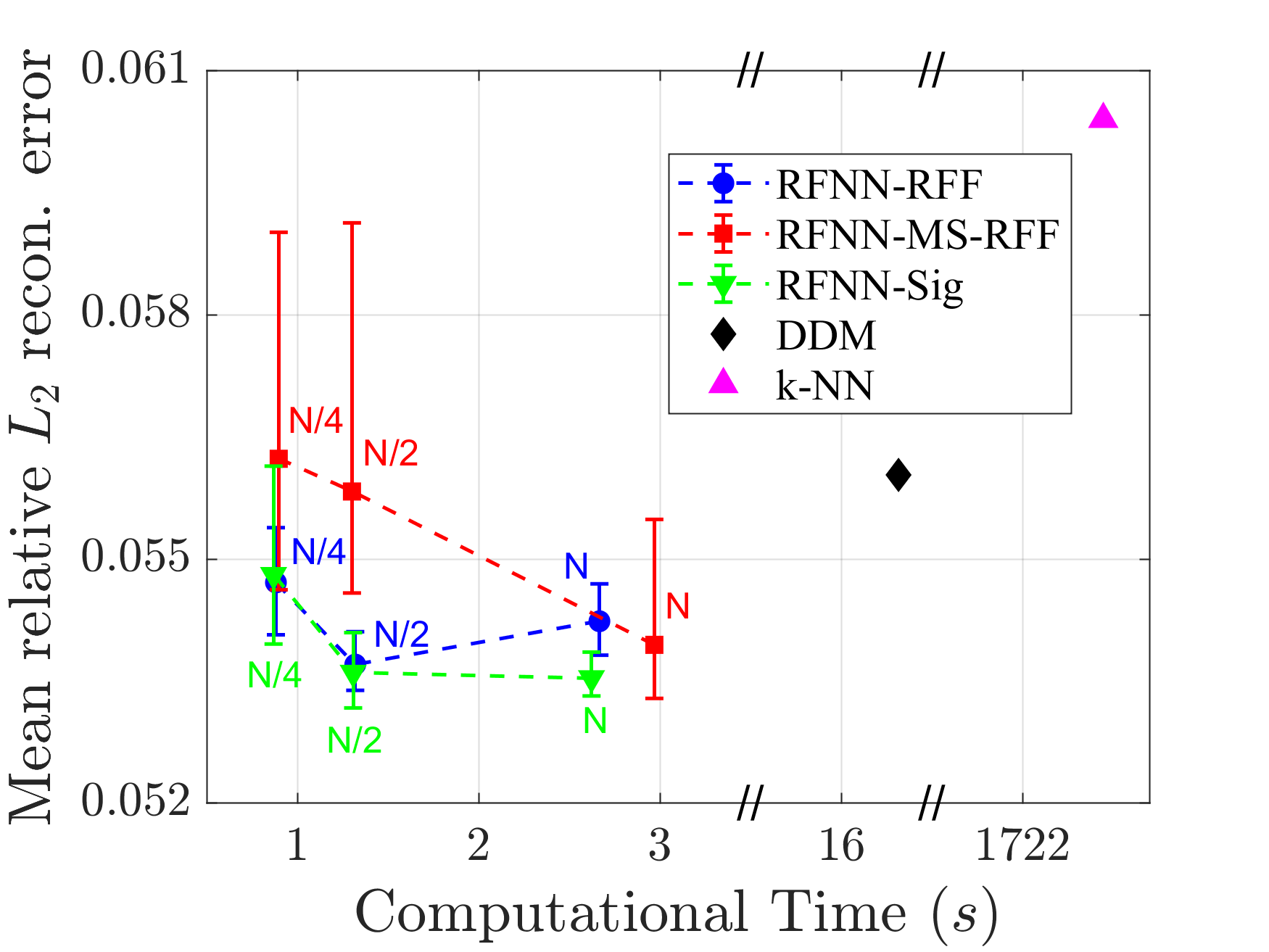}
        \caption{$N=2000$, training, $L_2$ error}
        \label{fig:SR_dec_tr2K}
    \end{subfigure}
    \hspace{2pt}
    \begin{subfigure}[b]{0.32\textwidth}
        \centering
        \includegraphics[width=\textwidth]{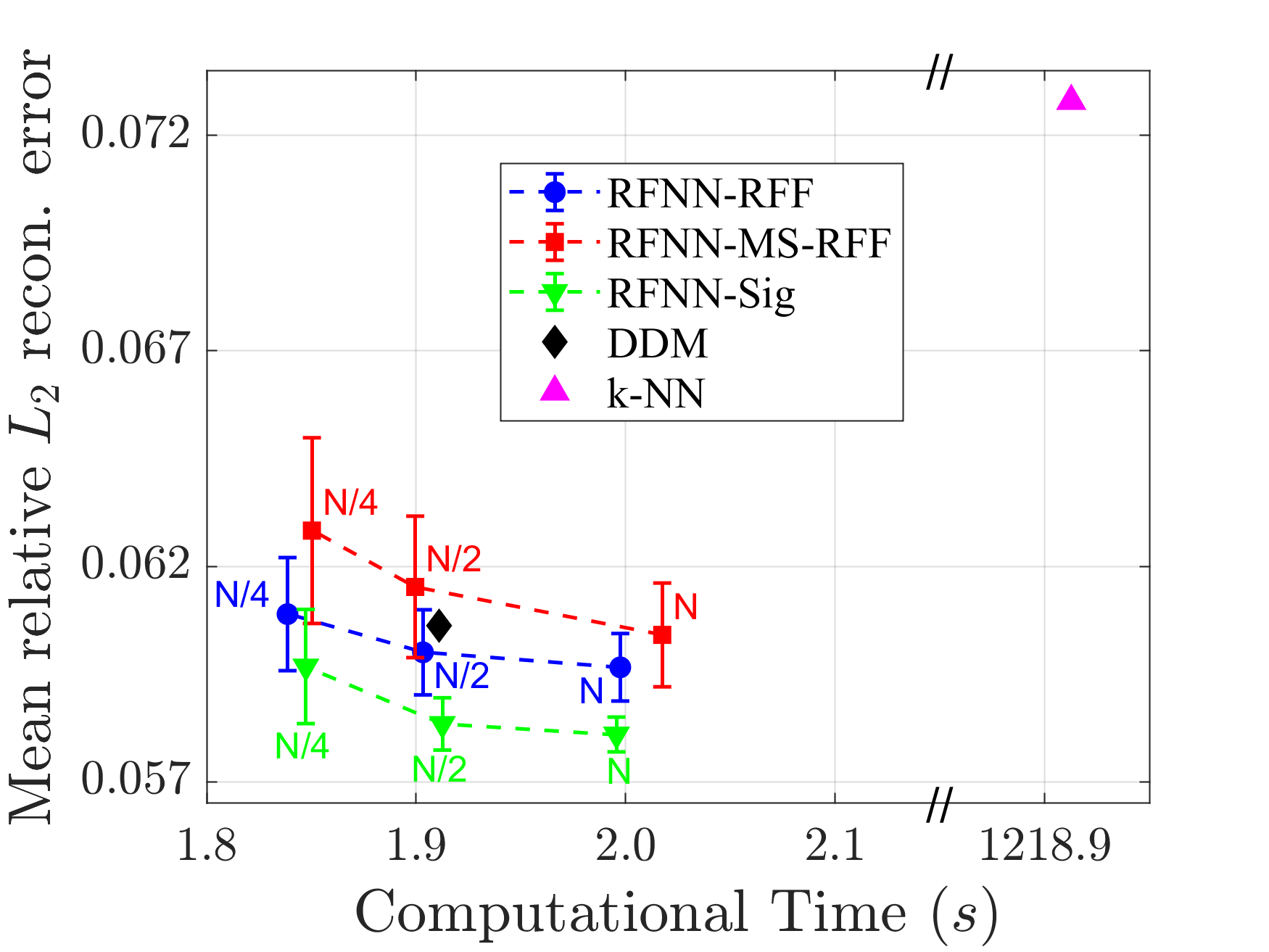}
        \caption{$N=2000$, testing, $L_2$ error}
        \label{fig:SR_dec_ts2K}
    \end{subfigure}
    \hspace{2pt}
    \begin{subfigure}[b]{0.32\textwidth}
        \centering
        \includegraphics[width=\textwidth]{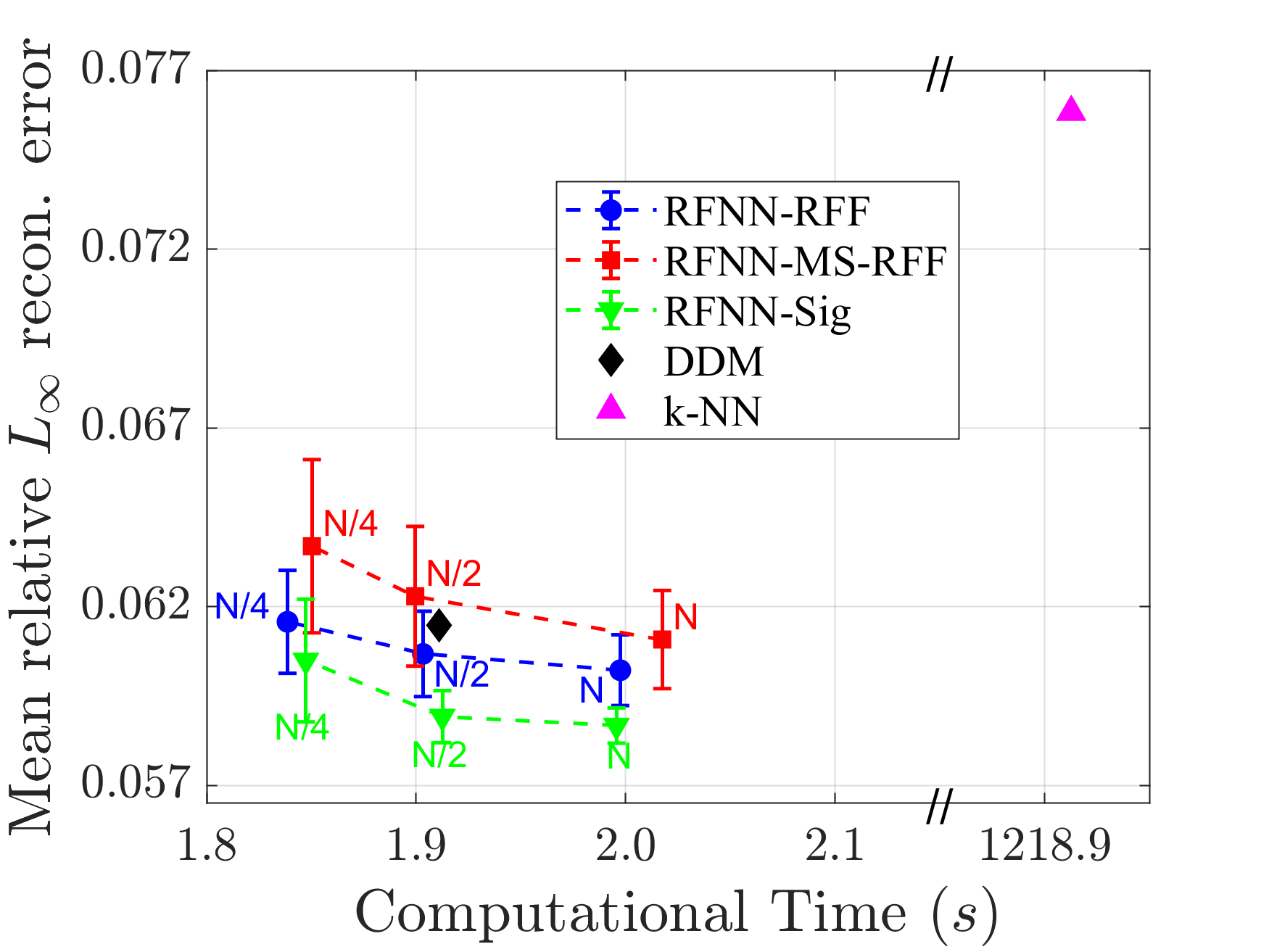}
        \caption{$N=2000$, testing, $L_\infty$ error}
        \label{fig:SR_dec_ts2K_linf}
    \end{subfigure}
    \caption{Reconstruction error vs. computational time for the Swiss Roll dataset ($M=3$). Results are shown for $N=1000$ (top row) and $N=2000$ (bottom row) training points. Panels~\ref{fig:SR_dec_tr1K}, \ref{fig:SR_dec_tr2K} show the mean relative $L_2$ error $e_{2,i}$ (Eq.~\eqref{eq:recon_errors}) on the training set vs. training time. Panels~\ref{fig:SR_dec_ts1K}, \ref{fig:SR_dec_ts2K} and \ref{fig:SR_dec_ts1K_linf}, \ref{fig:SR_dec_ts2K_linf} show the mean relative $L_2$ and $L_\infty$ error, $e_{2,i}$ and $e_{\infty,i}$, respectively, on the testing set vs. inference time. For the stochastic RFNN decoders (RFNN-RFF, RFNN-MS-RFF, RFNN-Sig) with $P=N,N/2,N/4$, points indicate the median over 100 runs; error bars show the 5–95\% percentile range. Deterministic decoders (DDM, $k$-NN) are shown without error bars. Detailed numerical results are provided in Tables~\ref{tab:SR_dec_tr_all} and \ref{tab:SR_dec_ts_all}.}
    \label{fig:SwissRoll_Dec}
\end{figure}

For each configuration, we apply the DM algorithm with $\alpha=1$ and DM weight $w_1=0.12$ (see Appendix~\ref{app:DDM_dec}) to obtain the intrinsic DM coordinates $y_i\in\mathbb{R}^2$. The resulting DM encoder, extended via the Nystr\"{o}m out-of-sample extension, yields the training, validation, and testing sets $\mathcal{Y}_{tr}$, $\mathcal{Y}_{vl}$, $\mathcal{Y}_{ts}$ for decoder training, tuning and evaluation. DM coordinate visualizations, colored by the angular coordinate $\theta$ are shown in Figs.~\ref{fig:SR_DMcoords_1K} and \ref{fig:SR_DMcoords_2K}.

The training and testing performance of all decoders is detailed in Tables~\ref{tab:SR_dec_tr_all} and \ref{tab:SR_dec_ts_all}, where the mean relative $L_2$ and $L_\infty$ reconstruction errors and the computational times required for training and inference are shown. A visual summary of the reconstruction error vs. computational time trade-off is shown in Fig.~\ref{fig:SwissRoll_Dec} (top: sparse $N=1000$, bottom: dense $N=2000$). 

We next train the RFNN with $P=N,N/2,N/4$ (downsampling was not required in this example, rendering $n=N$), DDM, and $k$-NN decoders following the procedure detailed in Appendix~\ref{app:ImpDet}. Specifically, to tune the hyperparameters of the RFNN-RFF, RFNN-MS-RFF and RFNN-Sig decoders we consider the ranges $\sigma_w\in[0.1,1]$, $\sigma_{UB}\in[1,10]$ and $c \in [1,20]$; for $P=N$ the optimal values were $\sigma_w=0.3$ (sparse) and $\sigma_w=0.2$ (dense), $\sigma_{UB}=6$ and $c=8$ (same in sparse and dense datasets). For tuning the DDM and $k$-NN decoders we consider the ranges $w_2\in[0.3,0.9]$ and $k\in[2,11]$; optimal values were $w_2=0.4$, and $k=6$ (sparse) and $k=9$ (dense). 

\begin{table}[htbp]
\centering
\caption{Training set performance for the Swiss Roll dataset ($M=3$). Detailed reconstruction metrics for $N=1000$ and $N=2000$ training points, using $d=2$-dim. DM embeddings. Decoders are compared based on the relative $L_2$ and $L_\infty$ mean reconstruction errors, $e_{2,i}$ and $e_{\infty,i}$ (Eq.~\eqref{eq:recon_errors}), and computational time (in seconds). For stochastic decoders (RFNN-RFF, RFNN-MS-RFF and RFNN-Sig), metrics show the median with 5-95\% percentiles in parentheses, computed over 100 random initializations. Deterministic decoders (DDM, $k$-NN) report single values.}
\label{tab:SR_dec_tr_all}
\begin{tabular}{@{}llccc@{}}
\multirow{2}{*}{Decoder} & \multirow{2}{*}{$P$} & \multicolumn{2}{c}{Mean reconstruction error ($\times 10^{-2}$)} & \multirow{2}{*}{Comp. Time (s)} \\
\cmidrule(lr){3-4}
& & relative $L_2$, $e_{2,i}$ &  relative $L_\infty$, $e_{\infty,i}$ &  \\
\midrule
\multicolumn{5}{l}{\textbf{$N=1000$}} \\
\midrule
\midrule

\multirow{3}{*}{RFNN-RFF} 
&	$N$	    & 6.586 (6.372--6.795) & 6.757 (6.503--7.008) & 1.216 (1.211--1.228) \\
&	$N/2$	& 5.735 (5.649--5.853) & 5.792 (5.704--5.919) & 0.876 (0.873--0.881) \\
&	$N/4$	& 6.554 (6.231--6.868) & 6.728 (6.329--7.066) & 0.764 (0.763--0.766) \\
\midrule

\multirow{3}{*}{RFNN-MS-RFF} 
&	$N$	    & 5.749 (5.628--6.179) & 5.818 (5.655--6.296) & 1.252 (1.244--1.266) \\
&	$N/2$	& 6.459 (6.060--7.107) & 6.613 (6.161--7.342) & 0.890 (0.889--0.893) \\
&	$N/4$	& 6.991 (6.413--7.606) & 7.182 (6.519--7.900) & 0.762 (0.761--0.763) \\
\midrule

\multirow{3}{*}{RFNN-Sig} 
& $N$   & 5.679 (5.577--5.766) & 5.719 (5.618--5.836) & 1.469 (1.465--1.483) \\
& $N/2$ & 5.725 (5.615--5.934) & 5.780 (5.638--6.037) & 0.932 (0.930--0.937) \\
& $N/4$ & 6.606 (6.215--7.079) & 6.792 (6.313--7.312) & 0.780 (0.780--0.783) \\
\midrule
DDM  & - & 6.161 & 6.106 & 3.786   \\
$k$-NN & - & 6.156 & 6.377 & 431.587 \\

\midrule
\multicolumn{5}{l}{\textbf{$N=2000$}} \\
\midrule
\midrule

\multirow{3}{*}{RFNN-RFF} 
&	$N$	    & 5.424 (5.382--5.470) & 5.407 (5.349--5.454) & 2.667 (2.645--2.688) \\
&	$N/2$	& 5.371 (5.339--5.411) & 5.350 (5.307--5.394) & 1.319 (1.312--1.331) \\
&	$N/4$	& 5.471 (5.407--5.539) & 5.459 (5.376--5.538) & 0.882 (0.879--0.886) \\
\midrule

\multirow{3}{*}{RFNN-MS-RFF} 
&	$N$	    & 5.395 (5.329--5.549) & 5.372 (5.297--5.547) & 2.970 (2.937--3.003) \\
&	$N/2$	& 5.583 (5.458--5.913) & 5.576 (5.439--5.957) & 1.302 (1.293--1.312) \\
&	$N/4$	& 5.623 (5.462--5.901) & 5.620 (5.442--5.934) & 0.897 (0.894--0.902) \\
\midrule

\multirow{3}{*}{RFNN-Sig} 
&	$N$	    & 5.354 (5.332--5.386) & 5.326 (5.306--5.366) & 2.623 (2.599--2.652) \\
&	$N/2$	& 5.360 (5.317--5.410) & 5.337 (5.294--5.386) & 1.308 (1.297--1.322) \\
&	$N/4$	& 5.481 (5.396--5.614) & 5.479 (5.379--5.632) & 0.869 (0.864--0.875) \\
\midrule

DDM  &	-	& 5.603 & 5.625 & 16.316   \\
$k$-NN &	-	& 6.041 & 6.214 & 1722.446 \\
\bottomrule
\end{tabular}
\end{table}

\begin{table}[htbp]
\centering
\caption{Testing set performance for the Swiss Roll dataset ($M=3$). Detailed reconstruction metrics for $N=1000$ and $N=2000$ training points, using $d=2$-dim. DM embeddings. Decoders are compared based on the relative $L_2$ and $L_\infty$ mean reconstruction errors, $e_{2,i}$ and $e_{\infty,i}$ (Eq.~\eqref{eq:recon_errors}), and computational time (in seconds). For stochastic decoders (RFNN-RFF, RFNN-MS-RFF and RFNN-Sig), metrics show the median with 5-95\% percentiles in parentheses, computed over 100 random initializations. Deterministic decoders (DDM, $k$-NN) report single values.}
\label{tab:SR_dec_ts_all}
\begin{tabular}{@{}llccc@{}}
\multirow{2}{*}{Decoder} & \multirow{2}{*}{$P$} & \multicolumn{2}{c}{Mean reconstruction error ($\times 10^{-2}$)} & \multirow{2}{*}{Comp. Time (s)} \\
\cmidrule(lr){3-4}
& & relative $L_2$, $e_{2,i}$ &  relative $L_\infty$, $e_{\infty,i}$ &  \\
\midrule
\multicolumn{5}{l}{\textbf{$N=1000$}} \\
\midrule
\midrule

\multirow{3}{*}{RFNN-RFF} 
&	$N$	    & 7.645 (7.413--7.929) & 7.801 (7.504--8.123) & 1.631 (1.581--1.704) \\
&	$N/2$   & 7.719 (7.410--8.154) & 7.806 (7.461--8.244) & 1.593 (1.534--1.644) \\
&	$N/4$   & 7.927 (7.453--8.336) & 8.072 (7.513--8.527) & 1.546 (1.510--1.592) \\
\midrule

\multirow{3}{*}{RFNN-MS-RFF} 
&	$N$	    & 7.715 (7.343--8.038) & 7.798 (7.423--8.126) & 1.646 (1.583--1.692) \\
&	$N/2$	& 7.836 (7.443--8.412) & 7.955 (7.552--8.612) & 1.575 (1.528--1.625) \\
&	$N/4$	& 8.367 (7.731--8.953) & 8.531 (7.856--9.149) & 1.544 (1.479--1.599) \\
\midrule

\multirow{3}{*}{RFNN-Sig} 
& $N$       & 6.874 (6.740--7.033) & 6.944 (6.792--7.123) & 1.626 (1.562--1.687) \\
& $N/2$     & 7.065 (6.812--7.328) & 7.136 (6.858--7.434) & 1.572 (1.532--1.625) \\
& $N/4$     & 7.683 (7.165--8.267) & 7.830 (7.257--8.468) & 1.539 (1.493--1.581) \\
\midrule
DDM  & -  & 7.203 & 7.242 & 1.710 \\
$k$-NN & -  & 8.169 & 8.535 & 412.006 \\

\midrule
\multicolumn{5}{l}{\textbf{$N=2000$}} \\
\midrule
\midrule

\multirow{3}{*}{RFNN-RFF} 
&	$N$	    &  5.966 (5.887--6.071) & 6.022 (5.923--6.148) & 1.997 (1.943--2.049) \\
&	$N/2$	&  6.000 (5.902--6.155) & 6.068 (5.949--6.242) & 1.903 (1.858--1.950) \\
&	$N/4$	&  6.089 (5.958--6.221) & 6.158 (6.014--6.306) & 1.839 (1.798--1.889) \\
\midrule

\multirow{3}{*}{RFNN-MS-RFF} 
&	$N$	    &  6.041 (5.920--6.158) & 6.108 (5.971--6.239) & 2.017 (1.975--2.071) \\
&	$N/2$	&  6.152 (5.988--6.481) & 6.229 (6.034--6.631) & 1.900 (1.851--1.942) \\
&	$N/4$	&  6.283 (6.067--6.646) & 6.369 (6.127--6.799) & 1.850 (1.780--1.901) \\
\midrule

\multirow{3}{*}{RFNN-Sig} 
&	$N$	    & 5.810 (5.769--5.858) & 5.868 (5.818--5.925) & 1.996 (1.943--2.046) \\
&	$N/2$	& 5.834 (5.774--5.918) & 5.893 (5.820--5.991) & 1.913 (1.866--1.961) \\
&	$N/4$	& 5.967 (5.835--6.168) & 6.050 (5.878--6.281) & 1.847 (1.792--1.888) \\
\midrule

DDM  &	-	 & 6.062 & 6.148 & 1.911 \\
$k$-NN &	-	 & 7.279 & 7.584 & 1218.913 \\
\bottomrule
\end{tabular}
\end{table}

Across both sparse and dense data configurations, RFNN decoders are consistently faster in both training and inference than the DDM and especially the $k$-NN decoders, while generally matching or exceeding their accuracy. In training, RFNN with few features ($P=N/4$) are less accurate, but as $P$ increases their accuracy improves, eventually surpassing that of the DDM decoder. RFNN-MS-RFF and RFNN-Sig exhibit a clear monotonic improvement with $P$, whereas RFNN‑RFF shows a less regular trend. In testing, all RFNN variants display a clear accuracy gain as $P$ grows, and the similar behavior of $L_2$ and $L_\infty$ errors indicates uniformly good pointwise reconstruction. In the sparse case ($N=1000$), RFNN‑Sig is the most accurate RFNN variant; RFNN‑RFF and RFNN-MS-RFF are slightly less accurate than DDM. In the dense case ($N=2000$), RFNN‑RFF surpasses the accuracy of DDM, though the inference times of all RFNN variants increase and become comparable to those of DDM. Increasing $N$ from $1000$ to $2000$ reduces the reconstruction error of all kernel‑based decoders (RFNN and DDM) to about 2\%, whereas $k$-NN improves less markedly, widening the performance gap in favor of the kernel methods. Visualizations of the Swiss Roll reconstructions are provided for the testing sets in Figs.~\ref{fig:SwissRoll_1K} and~\ref{fig:SwissRoll_2K}. The reconstructed datasets confirm that RFNN decoders preserve the underlying geometry with high fidelity, matching the quantitative trends reported above.

\begin{figure}[!htbp]
   \centering
  \begin{subfigure}[b]{0.32\textwidth}
    \centering
    \includegraphics[width=\textwidth]{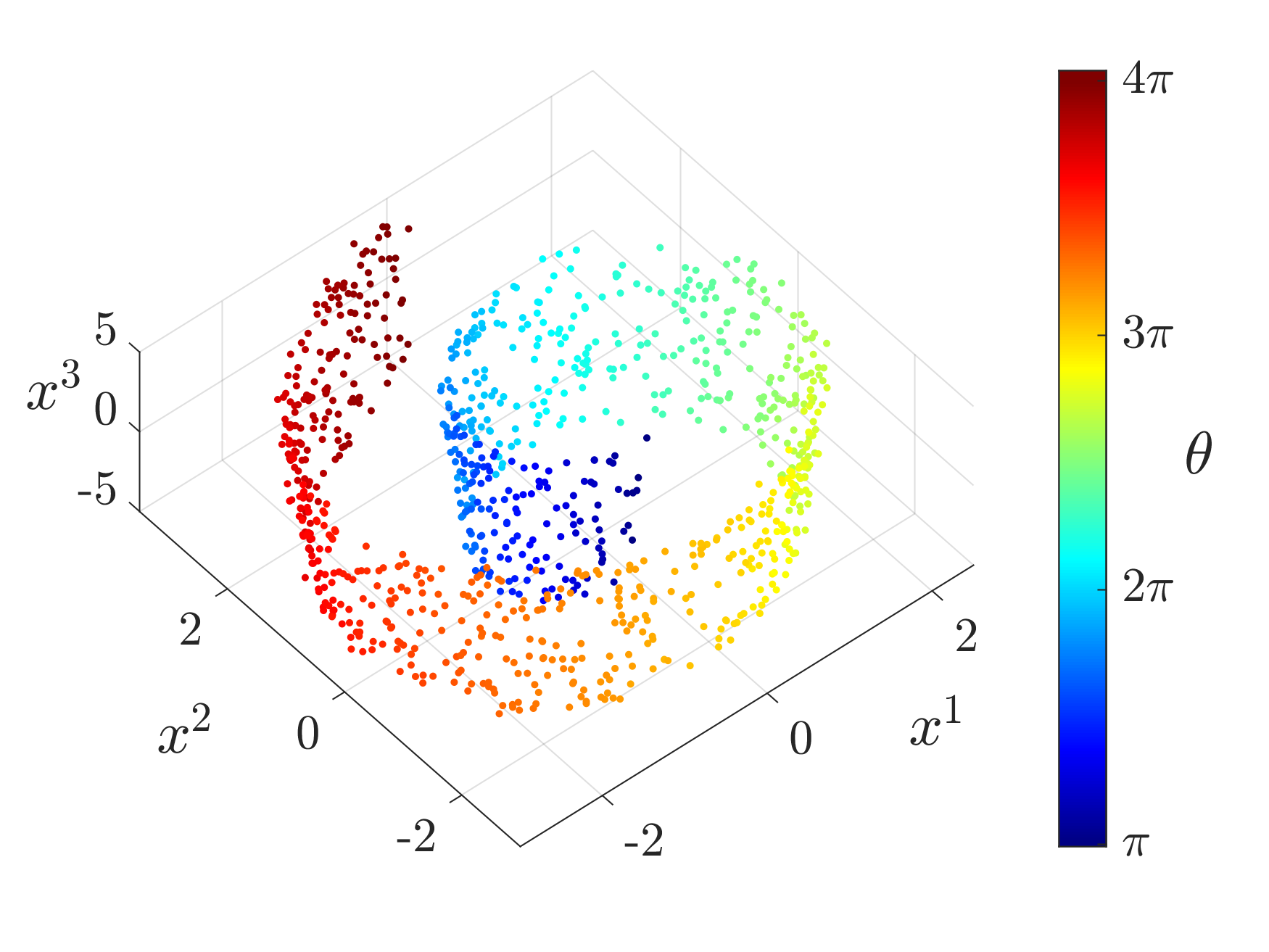}
    \caption{Data set}
    \label{fig:SR_data_1K}
  \end{subfigure}\hspace{0.02\textwidth}%
  \begin{subfigure}[b]{0.32\textwidth}
    \centering
    \includegraphics[width=\textwidth]{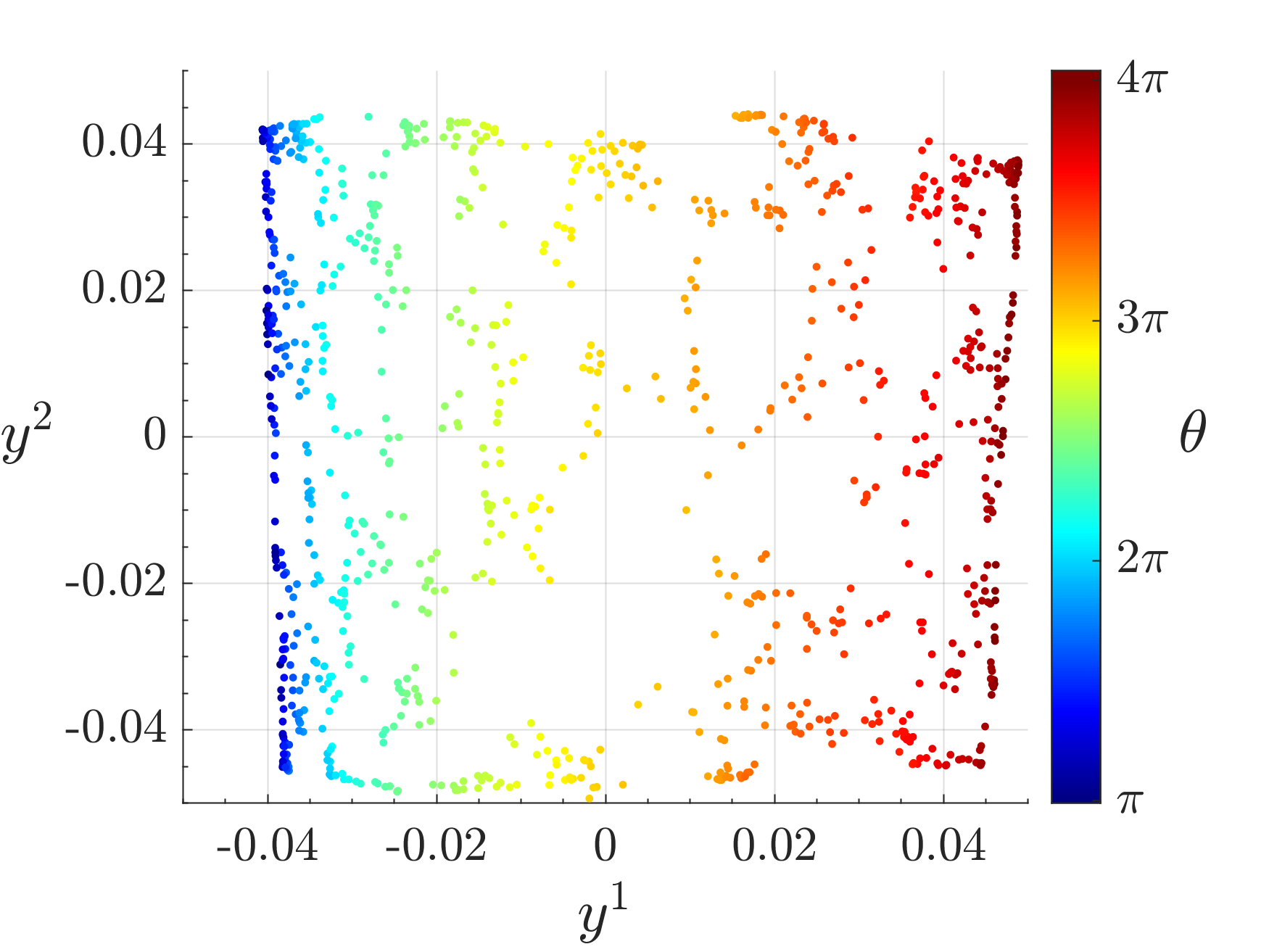}
    \caption{DM coordinates}
    \label{fig:SR_DMcoords_1K}
  \end{subfigure}

    \begin{subfigure}[b]{0.32\textwidth}
        \centering
        \includegraphics[width=\textwidth]{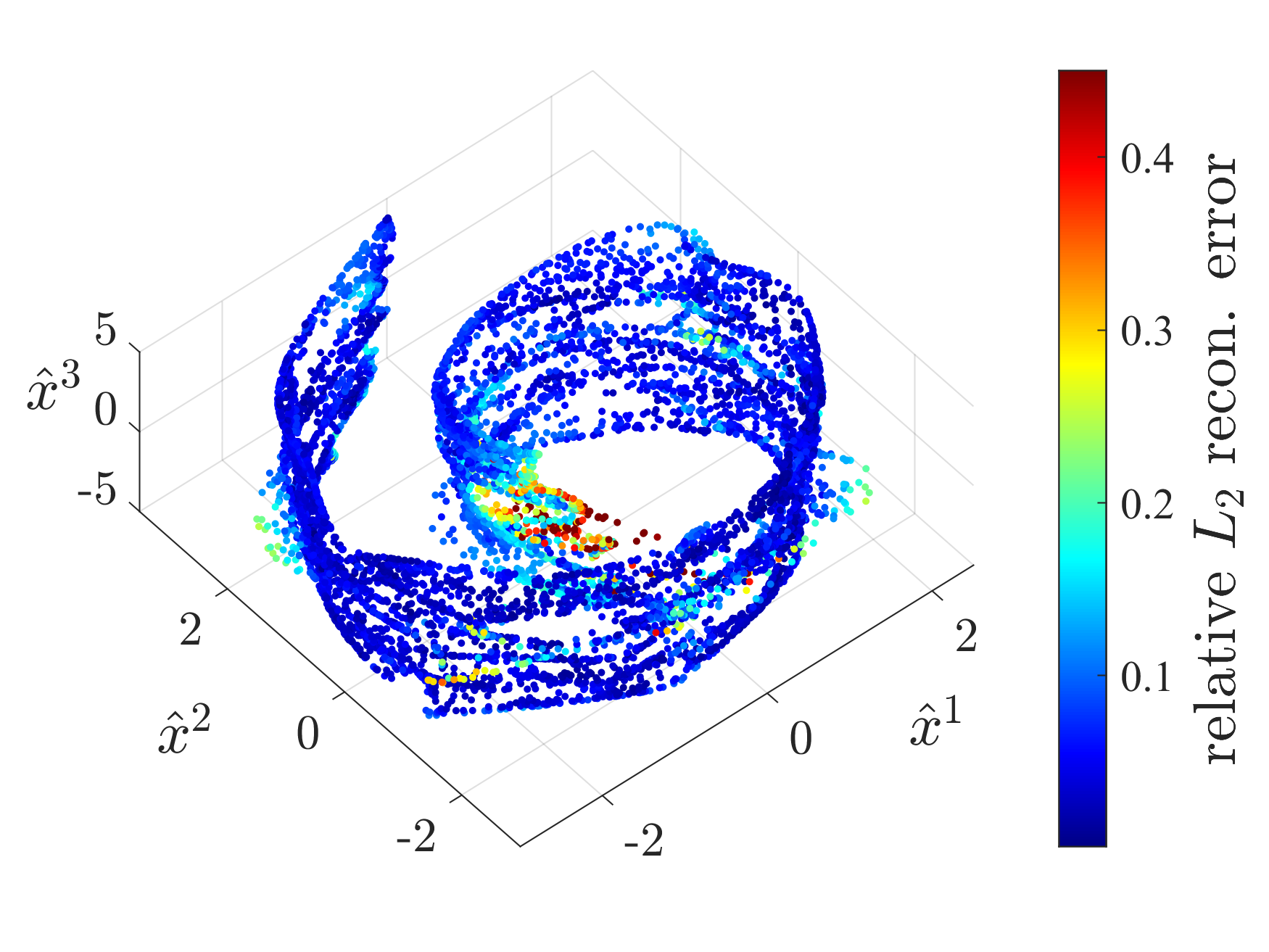}
        \caption{RFNN-RFF ($P=N/2$)}
        \label{fig:SR_DM_RFNNf_recon_1K}
    \end{subfigure}
    \hfill
    \begin{subfigure}[b]{0.32\textwidth}
        \centering
        \includegraphics[width=\textwidth]{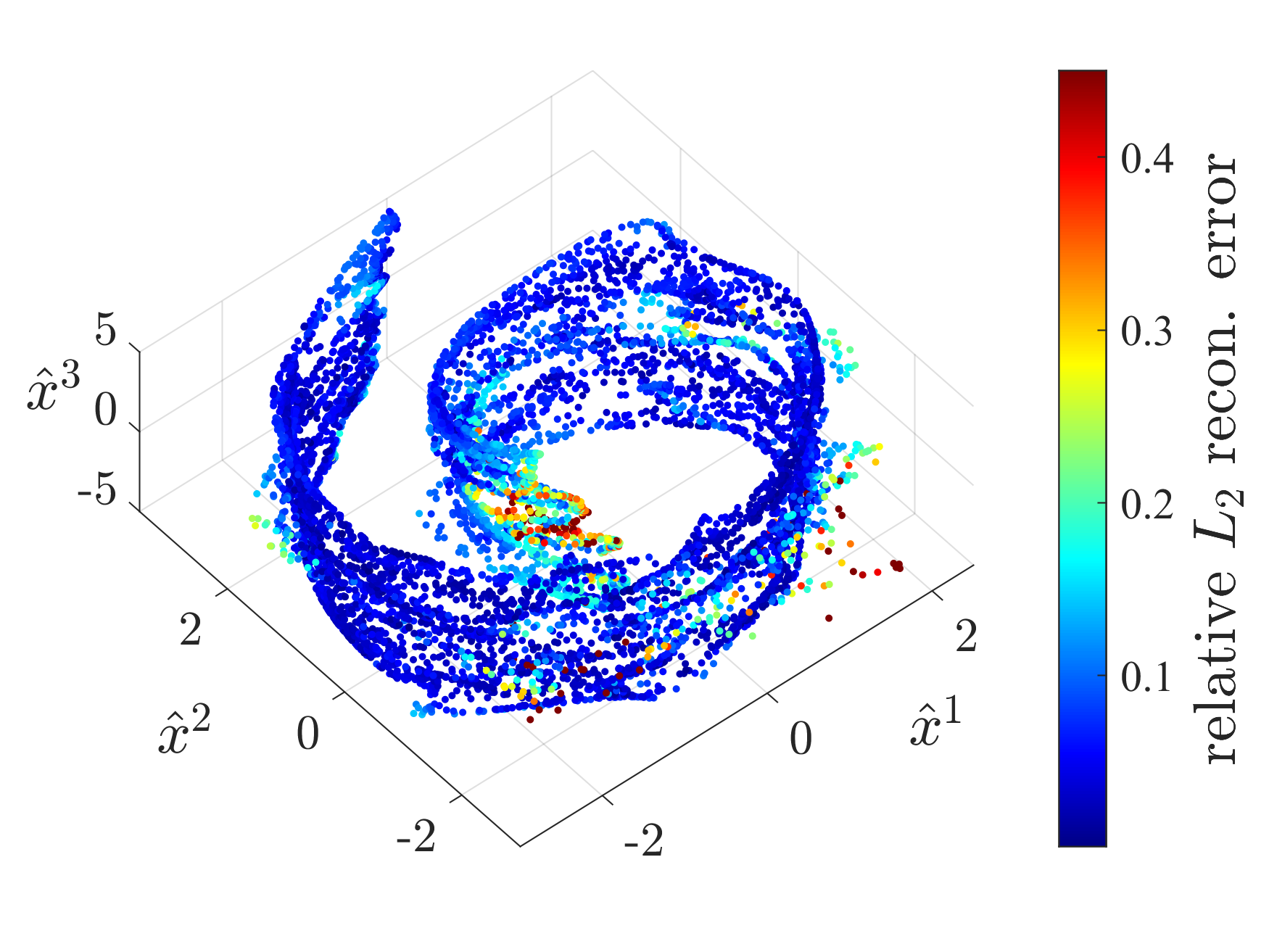}
        \caption{RFNN-MS-RFF ($P=N$)}
        \label{fig:SR_DM_RFNNfMK_recon_1K}
    \end{subfigure}
    \hfill
    \begin{subfigure}[b]{0.32\textwidth}
        \centering
        \includegraphics[width=\textwidth]{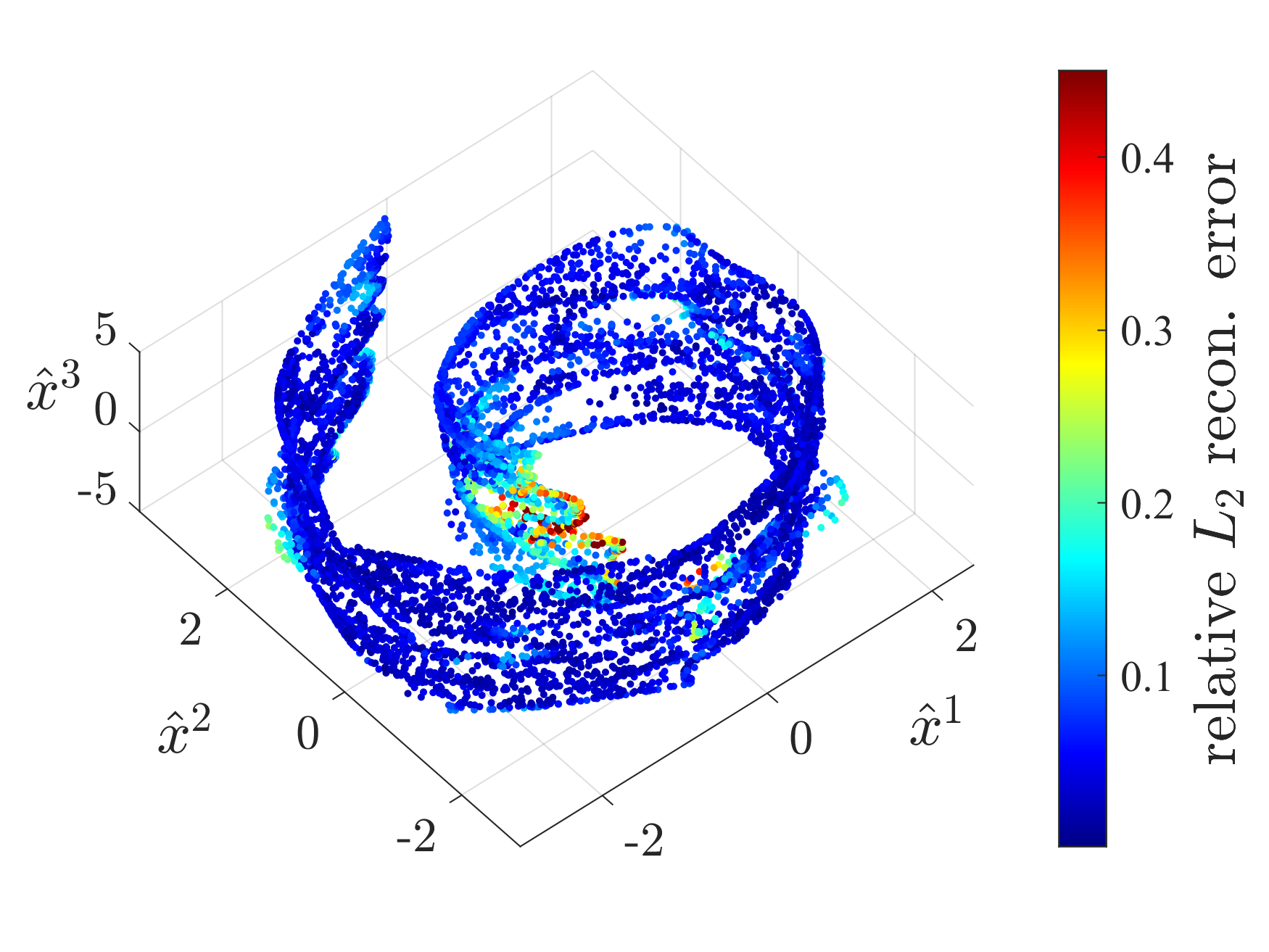}
        \caption{RFNN-Sig ($P=N$)}
        \label{fig:SR_DM_RFNNs_recon_1K}
    \end{subfigure}
    
    \begin{subfigure}[b]{0.32\textwidth}
        \centering
        \includegraphics[width=\textwidth]{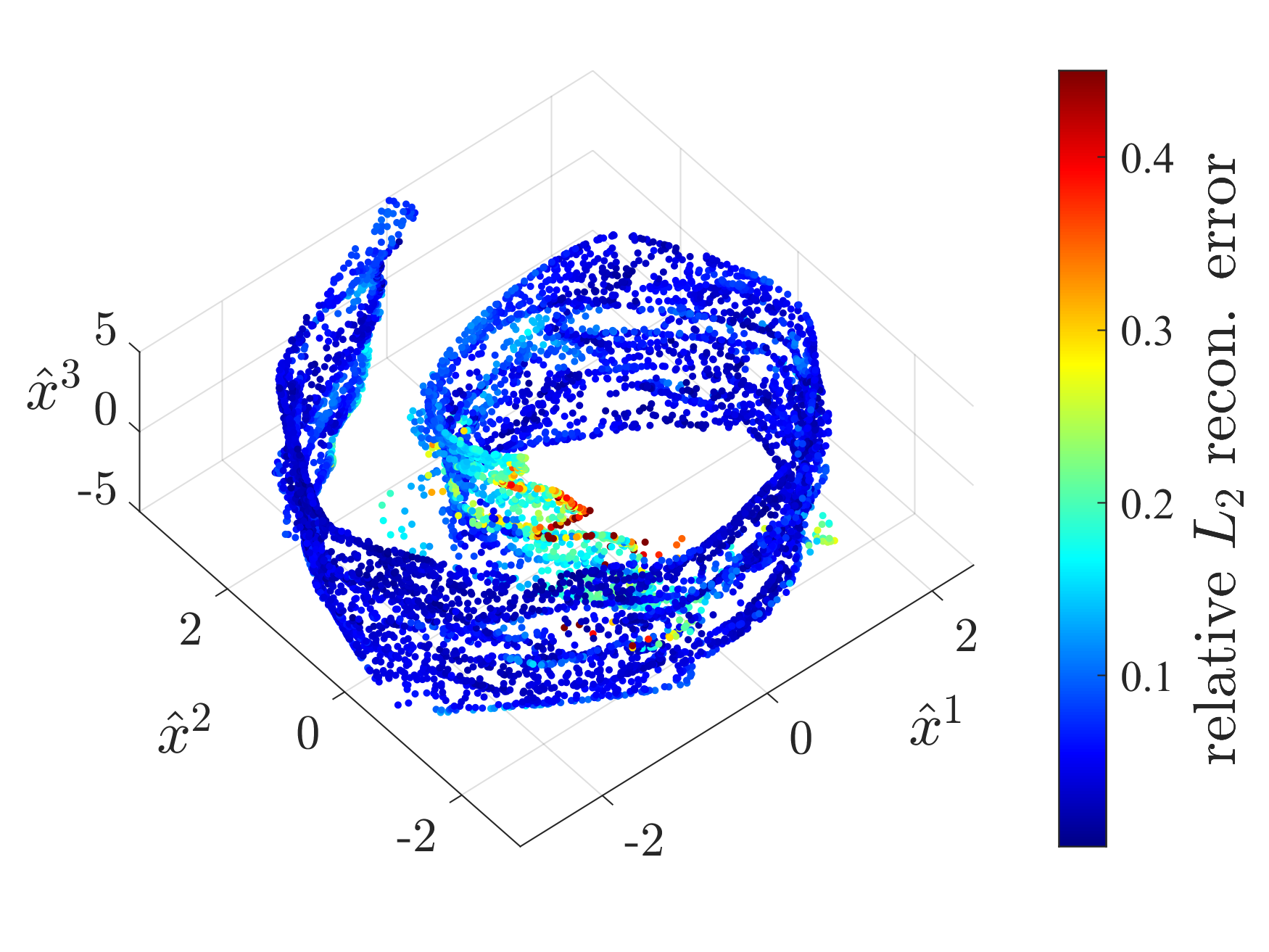}
        \caption{DDM}
        \label{fig:SR_DM_GH_recon_1K}
    \end{subfigure}\hspace{0.02\textwidth}
    \begin{subfigure}[b]{0.32\textwidth}
        \centering
        \includegraphics[width=\textwidth]{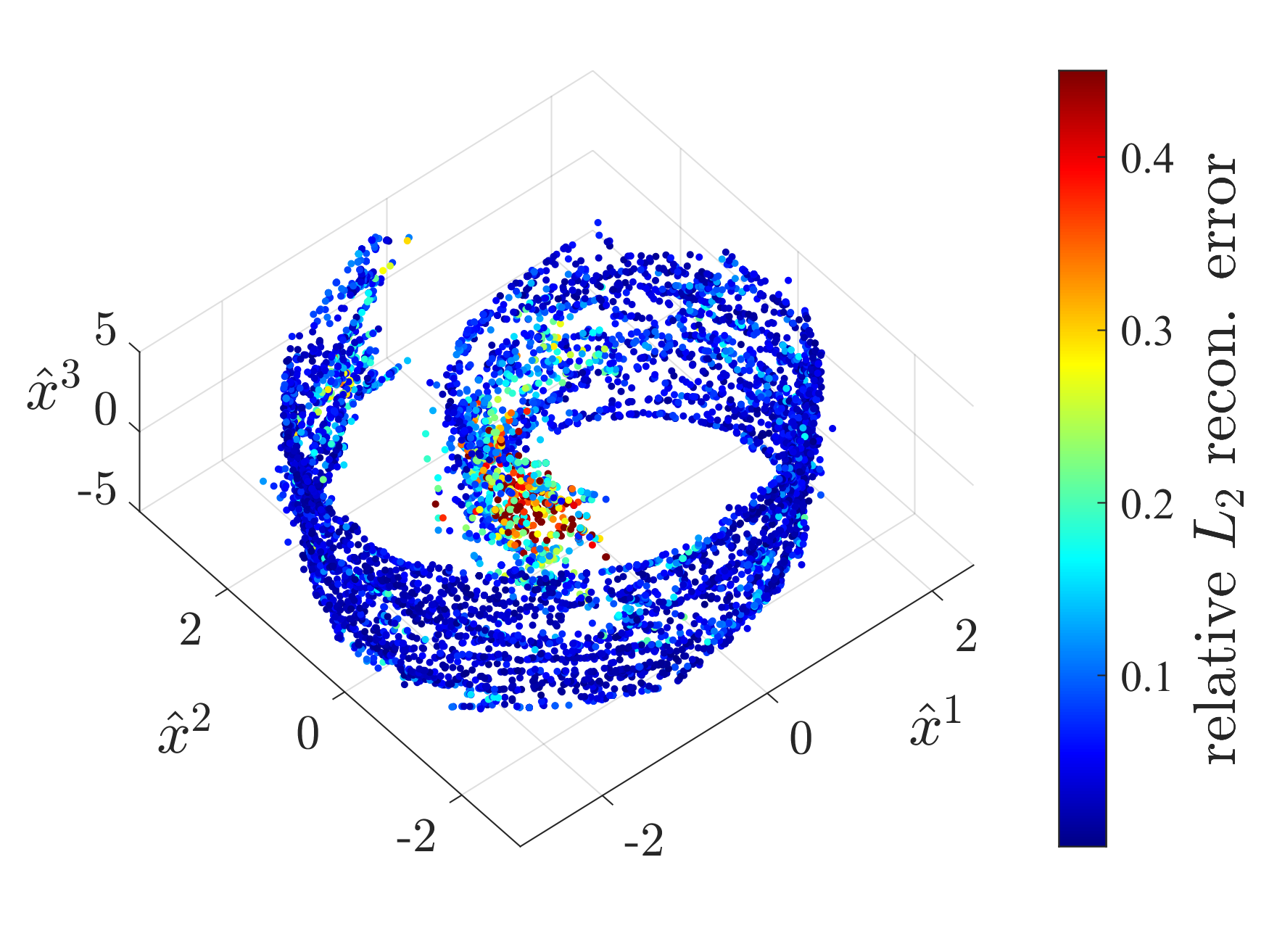}
        \caption{$k$-NN}
        \label{fig:SR_DM_kNN_recon_1K}
    \end{subfigure}
    \caption{Swiss Roll dataset ($M=3$) with $N=1000$ training points. Panel~\ref{fig:SR_data_1K} shows the training data, while panel~\ref{fig:SR_DMcoords_1K} shows the $d=2$ DM coordinates, colored by the intrinsic angular coordinate $\theta$. Panels~\ref{fig:SR_DM_RFNNf_recon_1K}-\ref{fig:SR_DM_kNN_recon_1K} show the reconstructed data $\widehat{x}=[\widehat x^1,\widehat x^2, \widehat x^3]^\top$ and the per-point relative $L_2$ errors $e_{2,i}$ (Eq.~\eqref{eq:recon_errors}) for the five decoders: RFNN-RFF with $P=N/2$, RFNN-MS-RFF with $P=N$, RFNN-Sig with $P=N$, DDM and $k$-NN. For the RFNN decoders, the displayed configuration is the one achieving the lowest training reconstruction error among all tested variants (see Fig.~\ref{fig:SR_dec_tr1K} for the full comparison).}
    \label{fig:SwissRoll_1K}
\end{figure}

\begin{figure}[!htbp]
   \centering
  \begin{subfigure}[b]{0.32\textwidth}
    \centering
    \includegraphics[width=\textwidth]{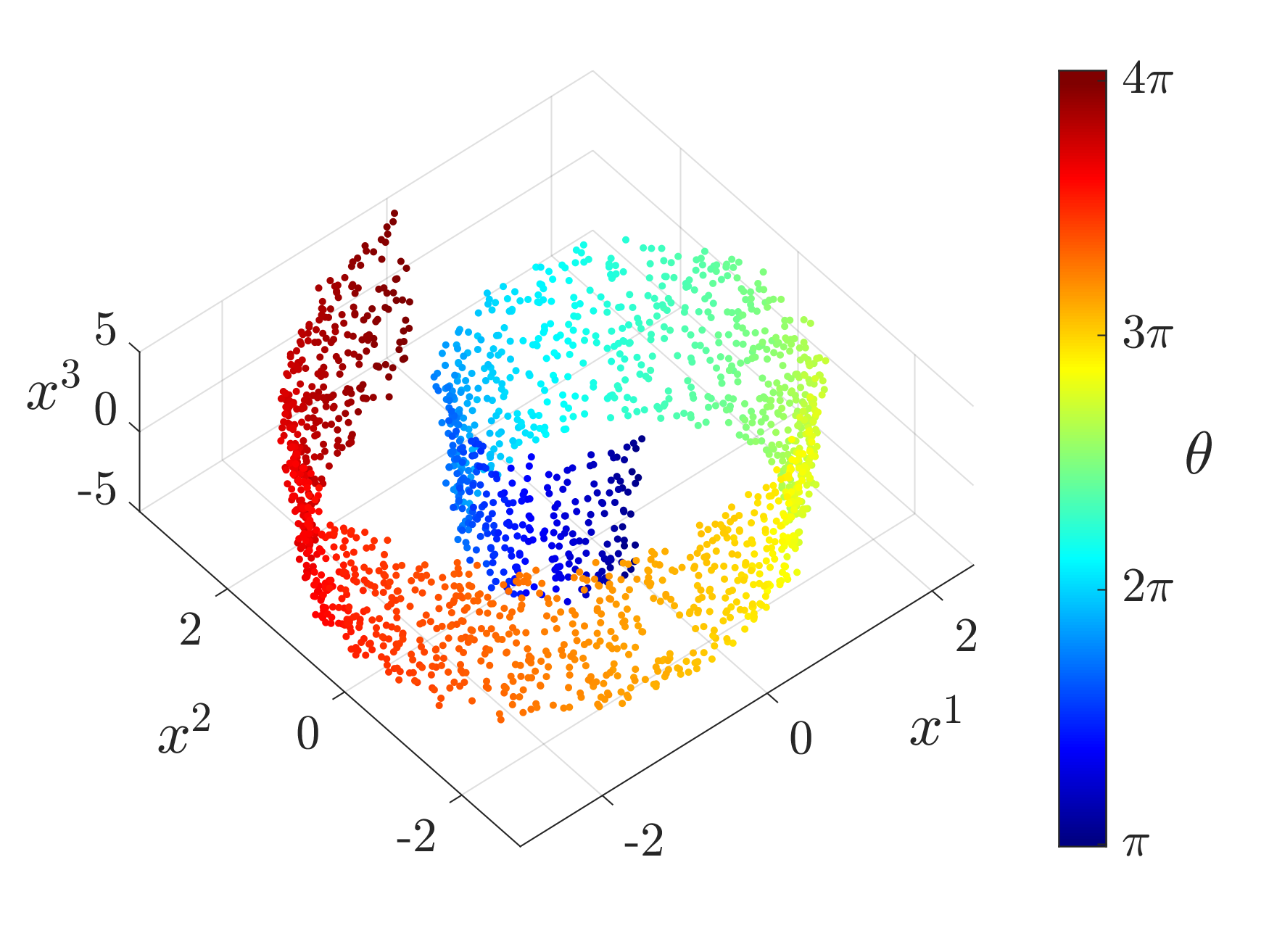}
    \caption{Data set}
    \label{fig:SR_data_2K}
  \end{subfigure}\hspace{0.02\textwidth}%
  \begin{subfigure}[b]{0.32\textwidth}
    \centering
    \includegraphics[width=\textwidth]{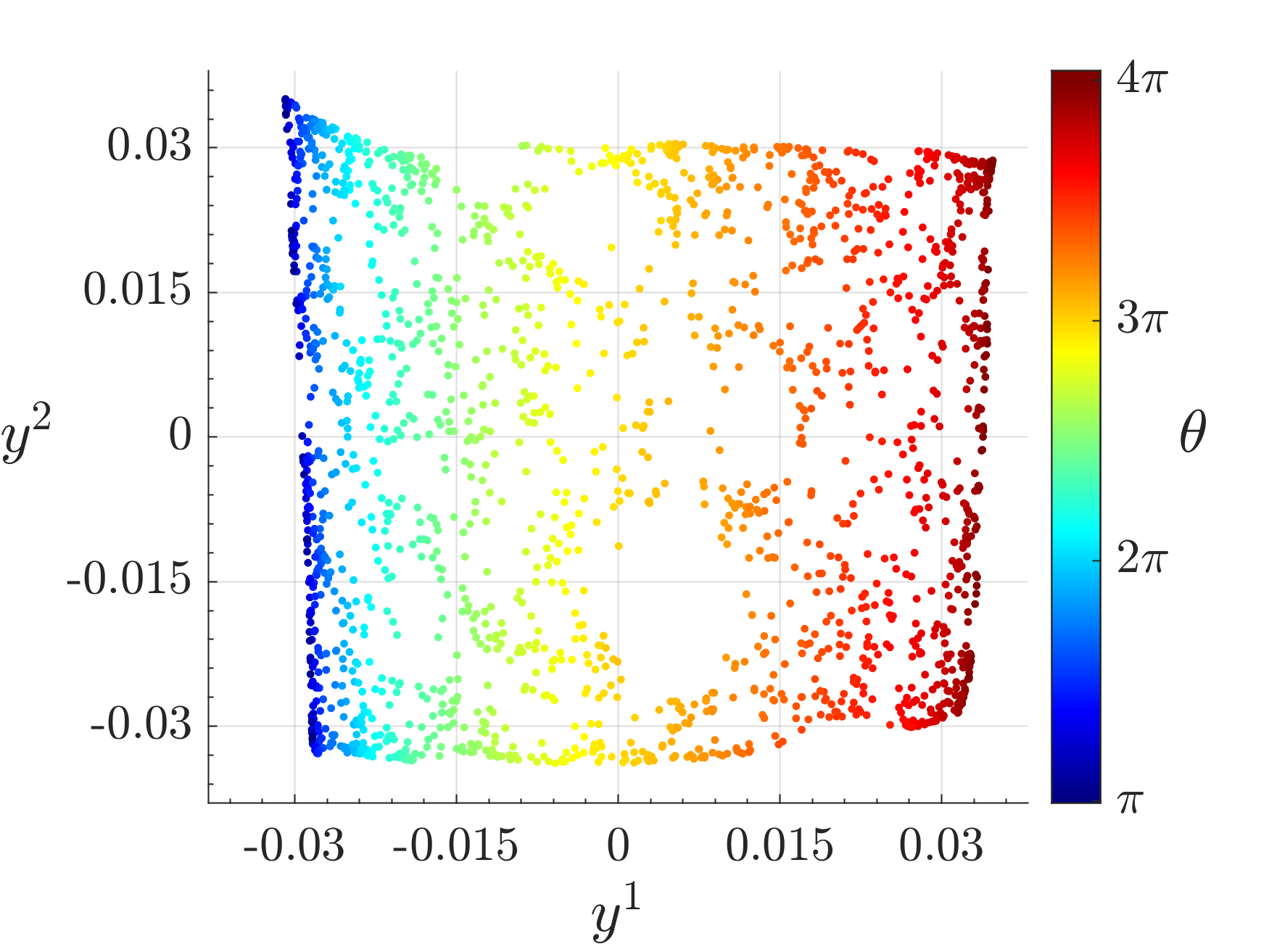}
    \caption{DM coordinates}
    \label{fig:SR_DMcoords_2K}
  \end{subfigure}

    \begin{subfigure}[b]{0.32\textwidth}
        \centering
        \includegraphics[width=\textwidth]{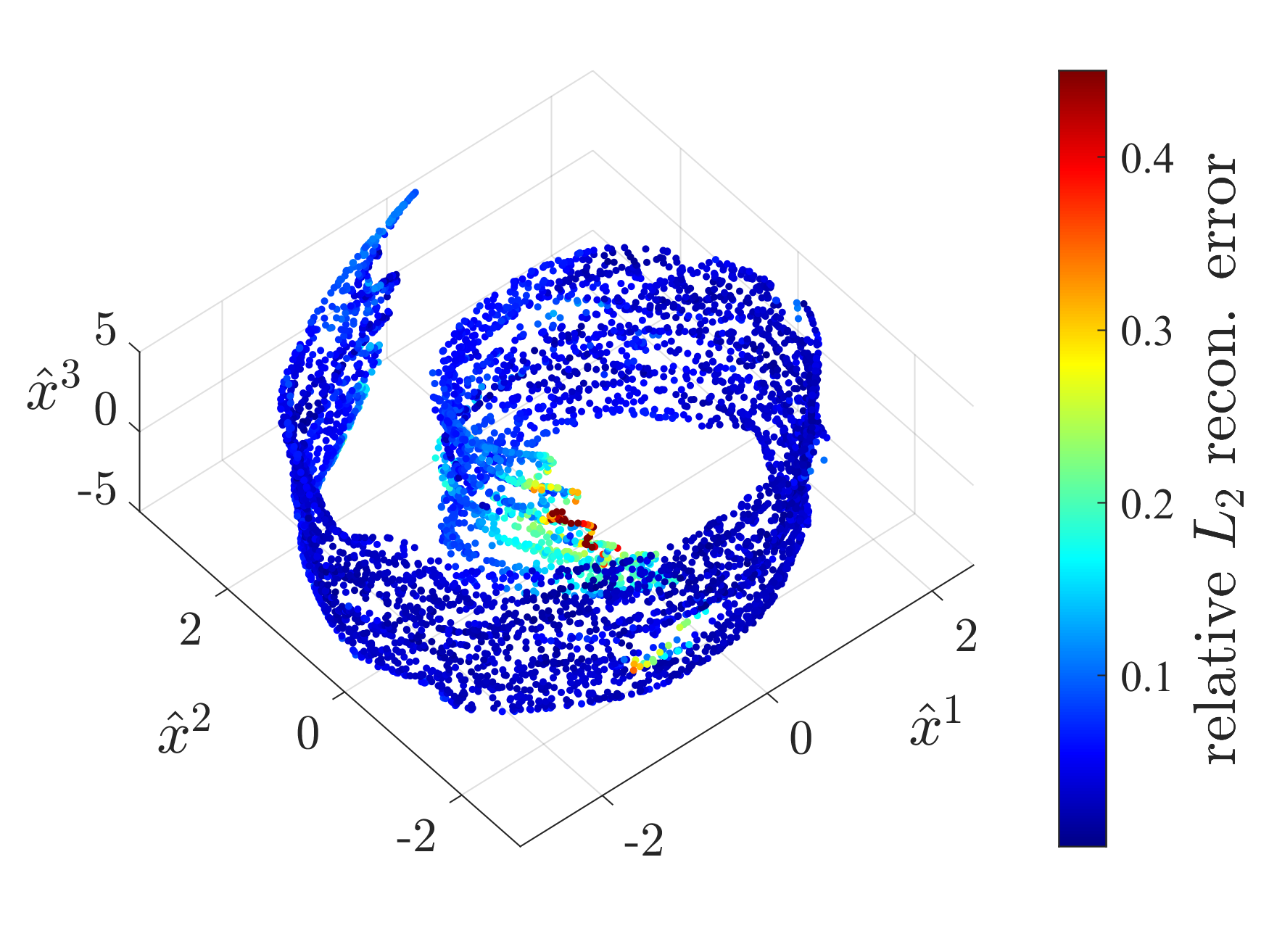}
        \caption{RFNN-RFF ($P=N/2$)}
        \label{fig:SR_DM_RFNNf_recon_2K}
    \end{subfigure}
    \hfill
    \begin{subfigure}[b]{0.32\textwidth}
        \centering
        \includegraphics[width=\textwidth]{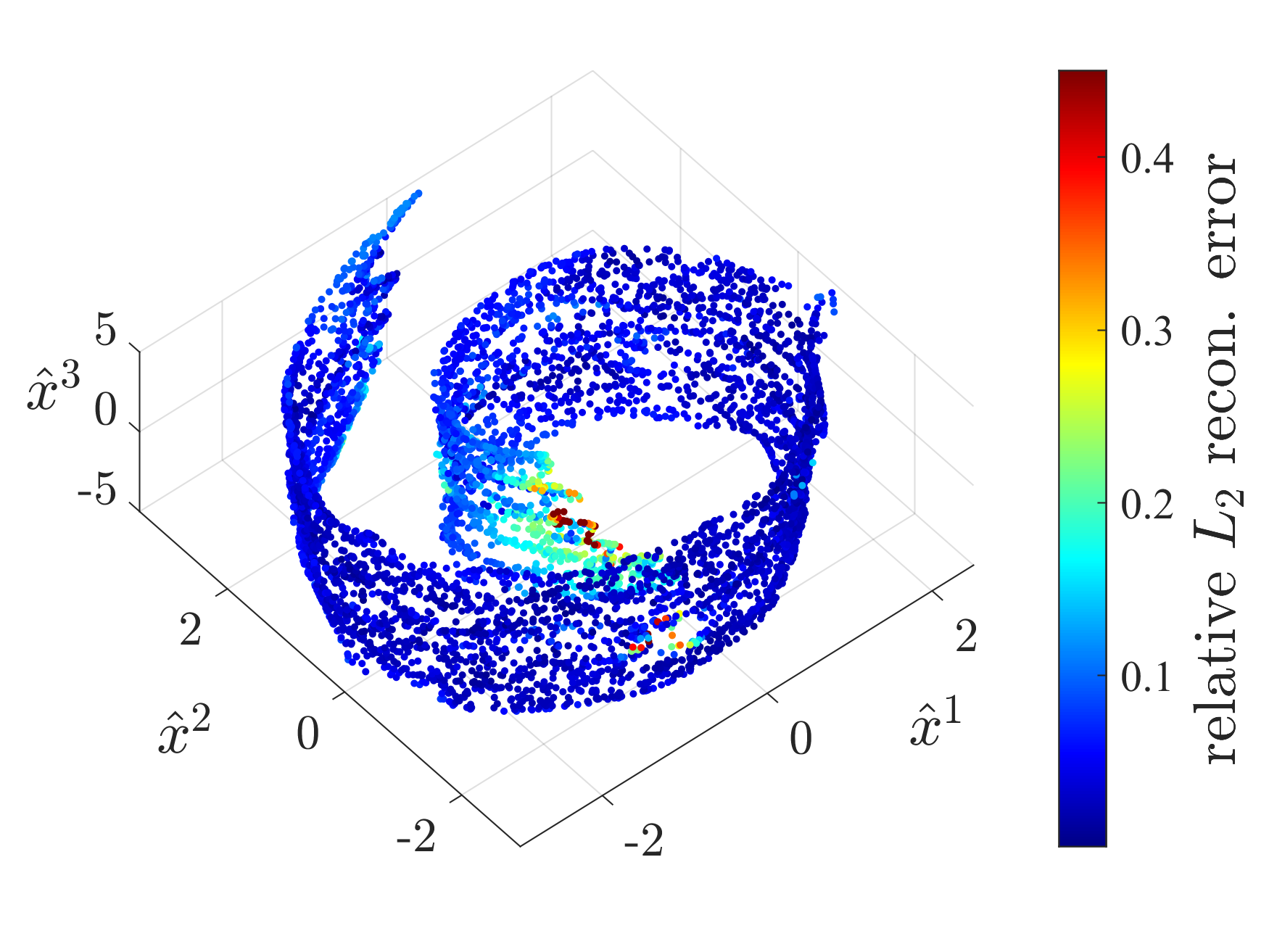}
        \caption{RFNN-MS-RFF ($P=N$)}
        \label{fig:SR_DM_RFNNfMK_recon_2K}
    \end{subfigure}
    \hfill
    \begin{subfigure}[b]{0.32\textwidth}
        \centering
        \includegraphics[width=\textwidth]{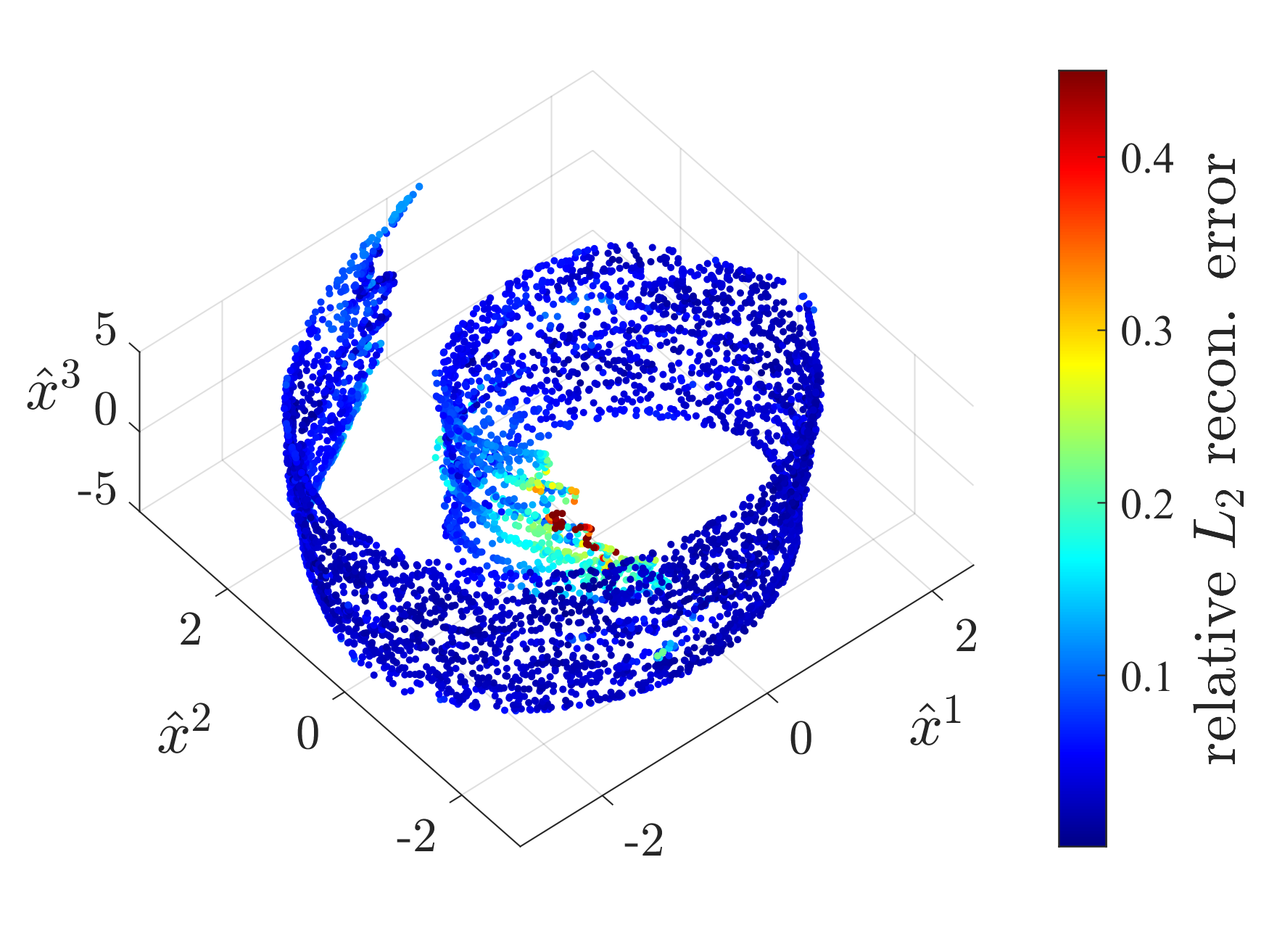}
        \caption{RFNN-Sig ($P=N$)}
        \label{fig:SR_DM_RFNNs_recon_2K}
    \end{subfigure}
    
    \begin{subfigure}[b]{0.32\textwidth}
        \centering
        \includegraphics[width=\textwidth]{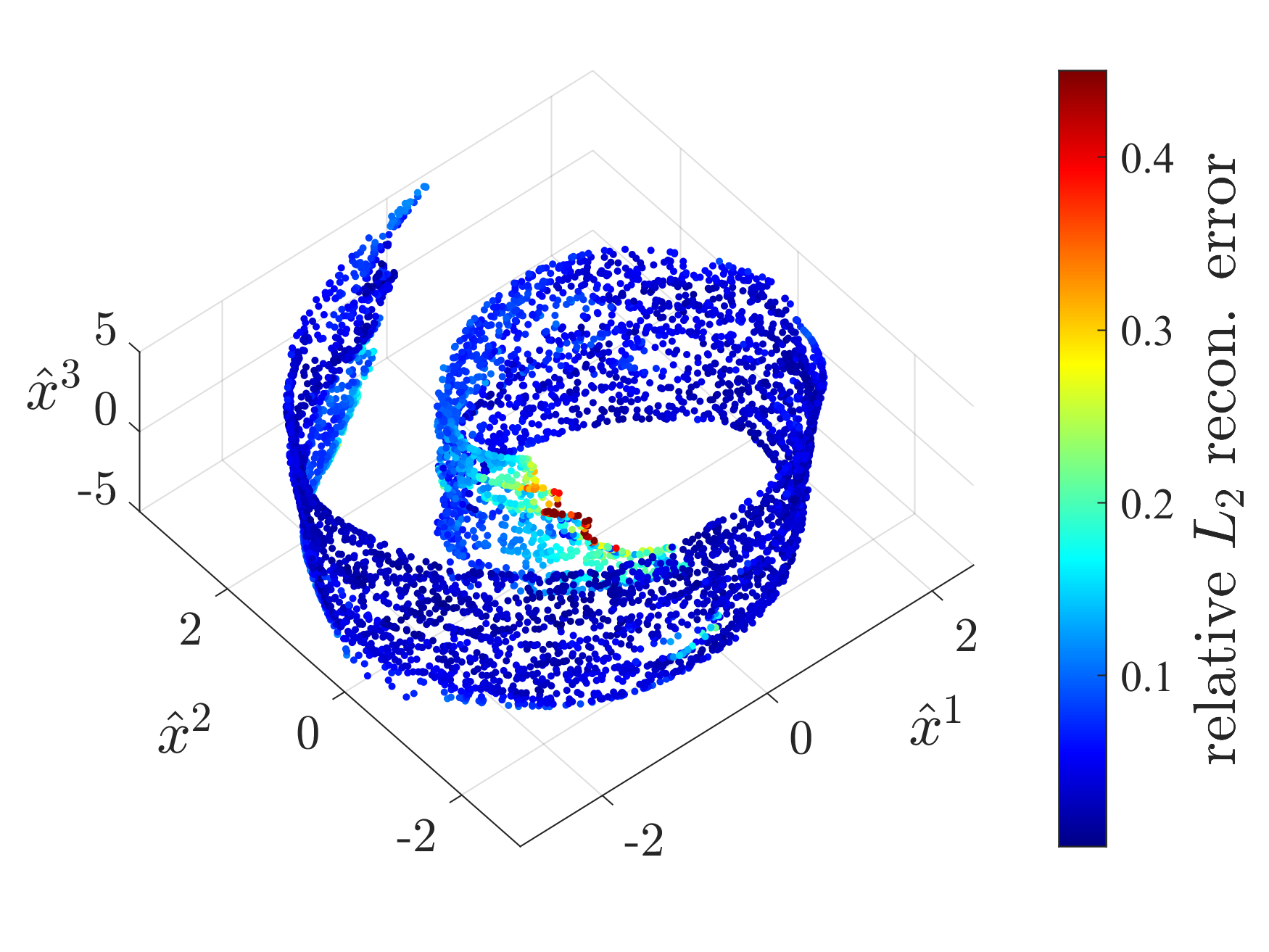}
        \caption{DDM}
        \label{fig:SR_DM_GH_recon_2K}
    \end{subfigure}\hspace{0.02\textwidth}
    \begin{subfigure}[b]{0.32\textwidth}
        \centering
        \includegraphics[width=\textwidth]{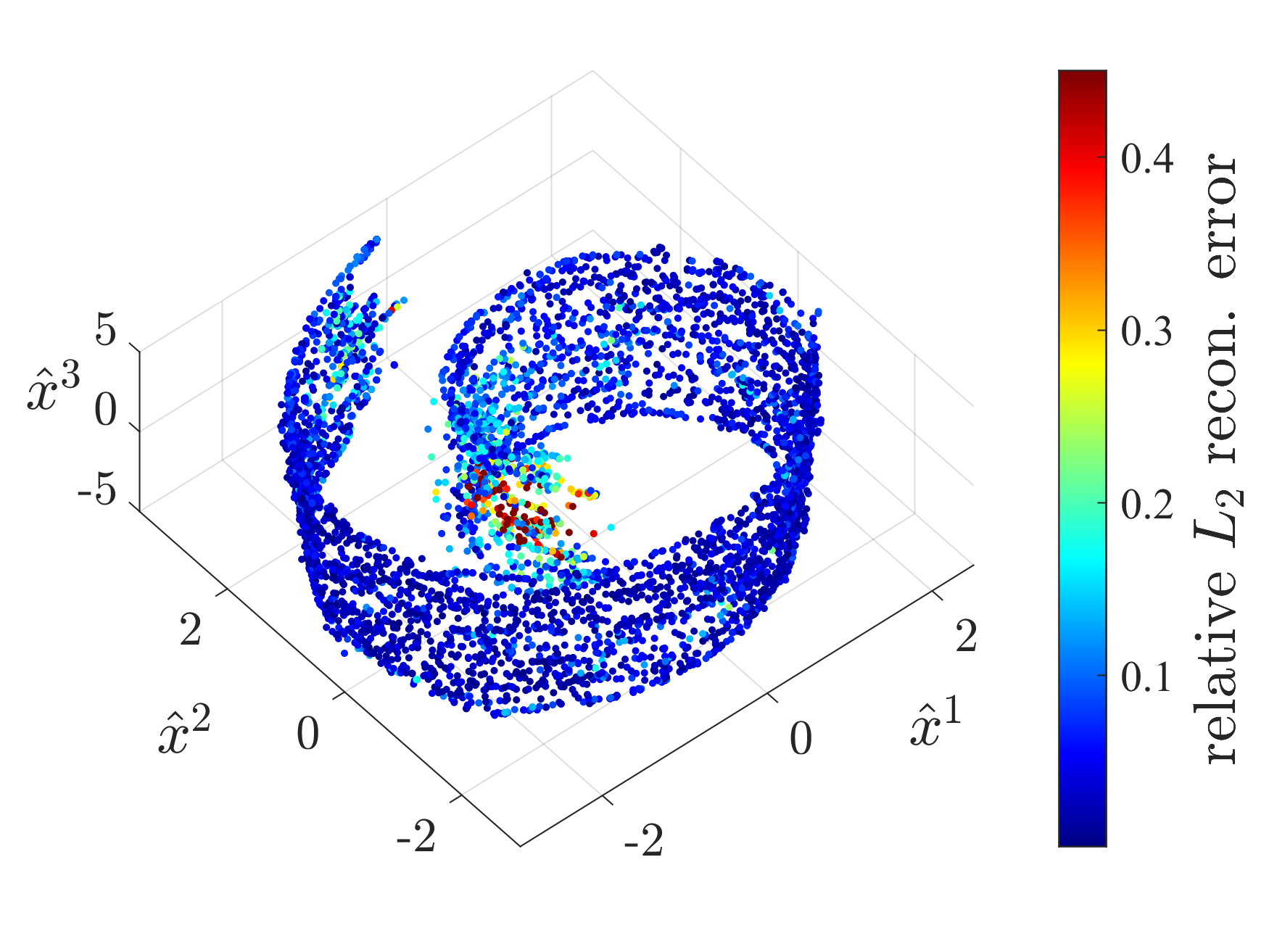}
        \caption{$k$-NN}
        \label{fig:SR_DM_kNN_recon_2K}
    \end{subfigure}
    \caption{Swiss Roll dataset ($M=3$) with $N=2000$ training points. Panel~\ref{fig:SR_data_2K} shows the training data, while panel~\ref{fig:SR_DMcoords_2K} shows the $d=2$ DM coordinates, colored by the intrinsic angular coordinate $\theta$. Panels~\ref{fig:SR_DM_RFNNf_recon_2K}-\ref{fig:SR_DM_kNN_recon_2K} show the reconstructed data $\widehat{x}=[\widehat x^1,\widehat x^2, \widehat x^3]^\top$ and the per-point relative $L_2$ errors $e_{2,i}$ (Eq.~\eqref{eq:recon_errors}) for the five decoders: RFNN-RFF with $P=N/2$, RFNN-MS-RFF with $P=N$, RFNN-Sig with $P=N$, DDM and $k$-NN. For the RFNN decoders, the displayed configuration is the one achieving the lowest training reconstruction error among all tested variants (see Fig.~\ref{fig:SR_dec_tr2K} for the full comparison).}
    \label{fig:SwissRoll_2K}
\end{figure}

\clearpage
\newpage
\renewcommand{\theequation}{F.\arabic{equation}}
\renewcommand{\thefigure}{F.\arabic{figure}}
\renewcommand{\thetable}{F.\arabic{table}}
\setcounter{equation}{0}
\setcounter{figure}{0}
\setcounter{table}{0}
\section{The S-curve dataset (\texorpdfstring{$M=20$}{M=20})}
\label{app:SC20}

Here, we consider our second non-mass-preserving benchmark example to evaluate the performance of the proposed decoders. We consider the well-known S-curve benchmark, for which the data lie on an intrinsic $d=2$-dimensional manifold $\mathcal{M}$ embedded in the $3$-dimensional ambient space, forming a curve of shape $S$. To test the generalizability in higher ambient dimensions, we project the $3$-dimensional data into an $M=20$-dimensional space. To generate the synthetic dataset, we uniformly sample $N_{obs}$ angles $\theta_i\sim \mathcal{U}[-3\pi/2, 3\pi/2)$ and widths $w_i\sim \mathcal{U}[0, 1)$. These are mapped to the $3$-dimensional space via the S-curve equations
\begin{equation}
z^1_i = \sin(\theta_i) + \mathcal{N}(0,\sigma^2), \quad z^2_i = w_i + \mathcal{N}(0,\sigma^2), \quad z^3_i = \text{sign}(\theta_i) (\cos(\theta_i) - 1) + \mathcal{N}(0,\sigma^2). \label{eq:SCurveeq}
\end{equation}
with additive Gaussian noise $\mathcal{N}(0,\sigma^2)$ of magnitude $\sigma=0.01$; $\text{sign}(\theta_i)$ denotes the sign of $\theta_i$, producing a fold in the S-curve. Each point $z_i=[z^1_i,z^2_i,z^3_i]^\top\in \mathbb{R}^3$ is then linearly projected into $x_i=R\, z_i\in \mathbb{R}^{20}$, where $R\in\mathbb{R}^{20\times 3}$ is a random orthogonal matrix constructed via the SVD of a Gaussian random matrix. From the resulting full dataset, we construct a training/validation/testing split configuration with $N=1000$ points in $\mathcal{X}_{tr}$ and $\mathcal{X}_{vl}$, with the remaining points reserved for the testing set $\mathcal{X}_{ts}$. A visualization of the training set is provided in Fig.~\ref{fig:SC20_data}. 

Using the training dataset, we then employ the DM algorithm with $\alpha=1$ and $w_1=0.2$ to obtain the intrinsic DM coordinates $y_i\in \mathbb{R}^2$, which are depicted in Fig.~\ref{fig:SC20_DMcoords} colored by the angle $\theta_i$. The extended by Nystr\"{o}m extension DM encoder yields the training, validation, and testing sets $\mathcal{Y}_{tr}$, $\mathcal{Y}_{vl}$, $\mathcal{Y}_{ts}$ for decoder training, tuning and evaluation.

Next, we train the RFNN decoder with $P=N,N/2,N/4$ (again without downsampling, using $n=N$) and the DDM and $k$-NN decoders as described in Appendix~\ref{app:ImpDet}. To tune the hyperparameters of the RFNN-RFF, RFNN-MS-RFF and RFNN-Sig decoders, we considered the ranges $\sigma_w\in[0.1,1]$, $\sigma_{UB}\in[2,10]$ and $c \in [5,18]$; for $P=N$ the optimal values were $\sigma_w=0.2$, $\sigma_{UB}=5$ and $c=9$. For tuning the DDM and $k$-NN decoders we consider the ranges $w_2\in[0.3,0.9]$ and $k\in[2,11]$; optimal values were $w_2=0.4$ and $k=7$.

Training and testing performance (mean relative $L_2$ and $L_\infty$ reconstruction errors and computational times) are detailed in Tables~\ref{tab:SC20_dec_tr_all} and \ref{tab:SC20_dec_ts_all}; a visual summary of the reconstruction error vs. computational time trade-off is shown in Fig.~\ref{fig:SCurve20_Dec}.
\begin{table}[htbp]
\centering
\caption{Training set performance for the S-Curve dataset ($M=20$). Detailed reconstruction metrics for $N=1000$ training points, using $d=2$-dim. DM embeddings. Decoders are compared based on the relative $L_2$ and $L_\infty$ mean reconstruction errors, $e_{2,i}$ and $e_{\infty,i}$ (Eq.~\eqref{eq:recon_errors}), and computational time (in seconds). For stochastic decoders (RFNN-RFF, RFNN-MS-RFF and RFNN-Sig), metrics show the median with 5-95\% percentiles in parentheses, computed over 100 random initializations. Deterministic decoders (DDM, $k$-NN) report single values.}
\label{tab:SC20_dec_tr_all}
\begin{tabular}{@{}llccc@{}}
\multirow{2}{*}{Decoder} & \multirow{2}{*}{$P$} & \multicolumn{2}{c}{Mean reconstruction error ($\times 10^{-2}$)} & \multirow{2}{*}{Comp. Time (s)} \\
\cmidrule(lr){3-4}
& & relative $L_2$, $e_{2,i}$ &  relative $L_\infty$, $e_{\infty,i}$ &  \\
\midrule
\midrule

\multirow{3}{*}{RFNN-RFF} 
&	$N$	    & 5.890 (5.794--6.019) & 6.395 (6.284--6.524) & 0.721 (0.718--0.734) \\
&	$N/2$	& 6.034 (5.932--6.186) & 6.547 (6.429--6.673) & 0.420 (0.418--0.425) \\
&	$N/4$	& 6.298 (6.087--6.550) & 6.817 (6.591--7.162) & 0.254 (0.252--0.256) \\
\midrule

\multirow{3}{*}{RFNN-MS-RFF} 
&	$N$	    & 5.967 (5.770--6.255) & 6.472 (6.283--6.775) & 0.856 (0.848--0.866) \\
&	$N/2$	& 6.058 (5.873--6.464) & 6.602 (6.390--7.013) & 0.366 (0.362--0.368) \\
&	$N/4$	& 6.654 (6.290--7.777) & 7.227 (6.813--8.335) & 0.266 (0.265--0.268) \\
\midrule

\multirow{3}{*}{RFNN-Sig} 
&	$N$	    & 5.850 (5.795--5.908) & 6.343 (6.281--6.403) & 0.752 (0.748--0.767) \\
&	$N/2$	& 5.841 (5.743--5.941) & 6.348 (6.247--6.453) & 0.404 (0.403--0.410) \\
&	$N/4$	& 6.343 (6.151--6.528) & 6.910 (6.690--7.131) & 0.262 (0.261--0.264) \\
\midrule

DDM  &	-	&	8.437 & 9.078 & 3.119  \\
$k$-NN &	-	&	5.391 & 5.913 & 401.370  \\

\bottomrule
\end{tabular}%
\end{table}

\begin{table}[htbp]
\centering
\caption{Testing set performance for the S-Curve dataset ($M=20$). Detailed reconstruction metrics for $N=1000$ training points, using $d=2$-dim. DM embeddings. Decoders are compared based on the relative $L_2$ and $L_\infty$ mean reconstruction errors, $e_{2,i}$ and $e_{\infty,i}$ (Eq.~\eqref{eq:recon_errors}), and computational time (in seconds). For stochastic decoders (RFNN-RFF, RFNN-MS-RFF and RFNN-Sig), metrics show the median with 5-95\% percentiles in parentheses, computed over 100 random initializations. Deterministic decoders (DDM, $k$-NN) report single values.}
\label{tab:SC20_dec_ts_all}
\begin{tabular}{@{}llccc@{}}
\multirow{2}{*}{Decoder} & \multirow{2}{*}{$P$} & \multicolumn{2}{c}{Mean reconstruction error ($\times 10^{-2}$)} & \multirow{2}{*}{Comp. Time (s)} \\
\cmidrule(lr){3-4}
& & relative $L_2$, $e_{2,i}$ &  relative $L_\infty$, $e_{\infty,i}$ &  \\
\midrule
\midrule

\multirow{3}{*}{RFNN-RFF} 
&	$N$	    & 6.733 (6.522--6.914) & 7.253 (7.024--7.521) & 1.635 (1.578--1.677) \\
&	$N/2$	& 6.852 (6.634--7.098) & 7.393 (7.173--7.667) & 1.573 (1.529--1.621) \\
&	$N/4$	& 7.083 (6.813--7.438) & 7.628 (7.342--8.069) & 1.544 (1.501--1.579) \\
\midrule

\multirow{3}{*}{RFNN-MS-RFF} 
&	$N$	    & 6.843 (6.589--7.114) & 7.400 (7.150--7.696) & 1.633 (1.583--1.675) \\
&	$N/2$	& 7.000 (6.729--7.358) & 7.564 (7.233--7.972) & 1.567 (1.513--1.626) \\
&	$N/4$	& 7.445 (6.916--8.575) & 8.038 (7.458--9.173) & 1.554 (1.498--1.615) \\
\midrule

\multirow{3}{*}{RFNN-Sig} 
&	$N$	    & 6.475 (6.350--6.570) & 6.951 (6.834--7.060) & 1.655 (1.599--1.701) \\
&	$N/2$	& 6.742 (6.522--6.968) & 7.281 (7.051--7.525) & 1.584 (1.530--1.643) \\
&	$N/4$	& 7.141 (6.873--7.450) & 7.750 (7.472--8.043) & 1.555 (1.507--1.592) \\
\midrule

DDM  &	-	& 8.631 & 9.288 & 1.632 \\
$k$-NN &	-	& 6.971 & 7.565 & 475.168 \\
\bottomrule
\end{tabular}%
\end{table}

\begin{figure}[!htbp]
    \centering
    \begin{subfigure}[b]{0.32\textwidth}
        \centering
        \includegraphics[width=\textwidth]{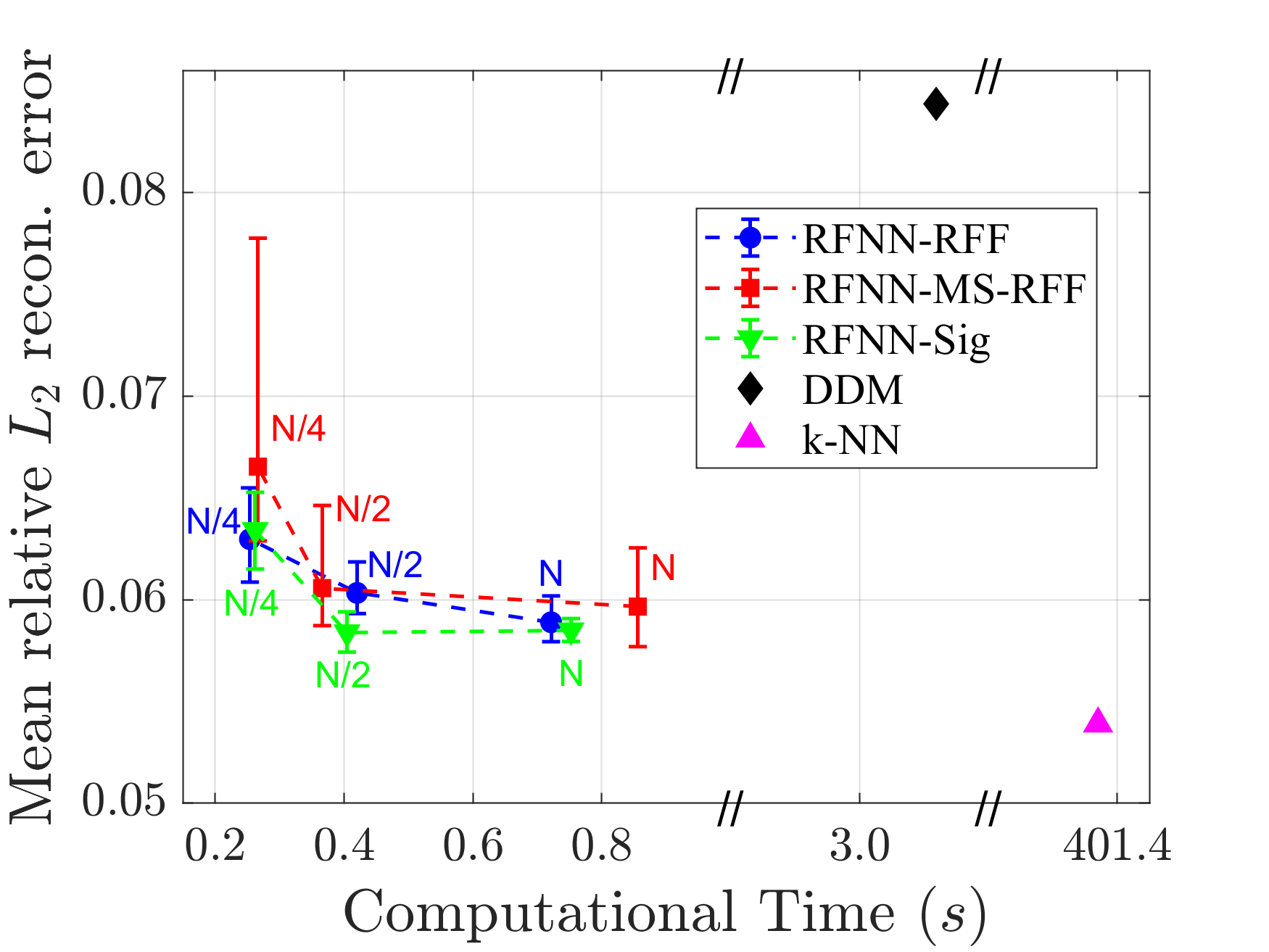}
        \caption{Training, $L_2$ error}
        \label{fig:SC20_dec_tr}
    \end{subfigure}
    \hspace{2pt}
    \begin{subfigure}[b]{0.32\textwidth}
        \centering
        \includegraphics[width=\textwidth]{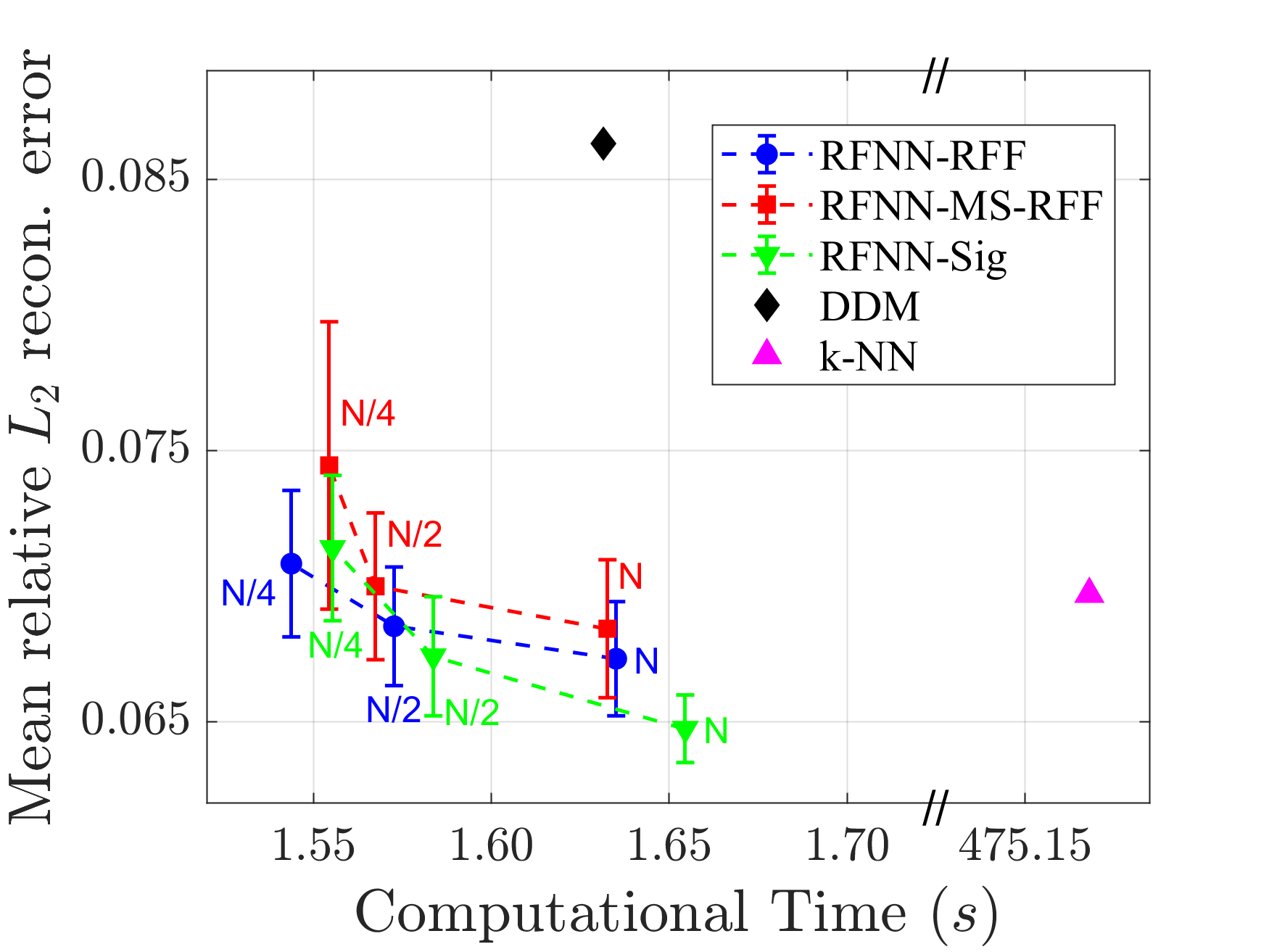}
        \caption{Testing, $L_2$ error}
        \label{fig:SC20_dec_ts}
    \end{subfigure}
    \hspace{2pt}
    \begin{subfigure}[b]{0.32\textwidth}
        \centering
        \includegraphics[width=\textwidth]{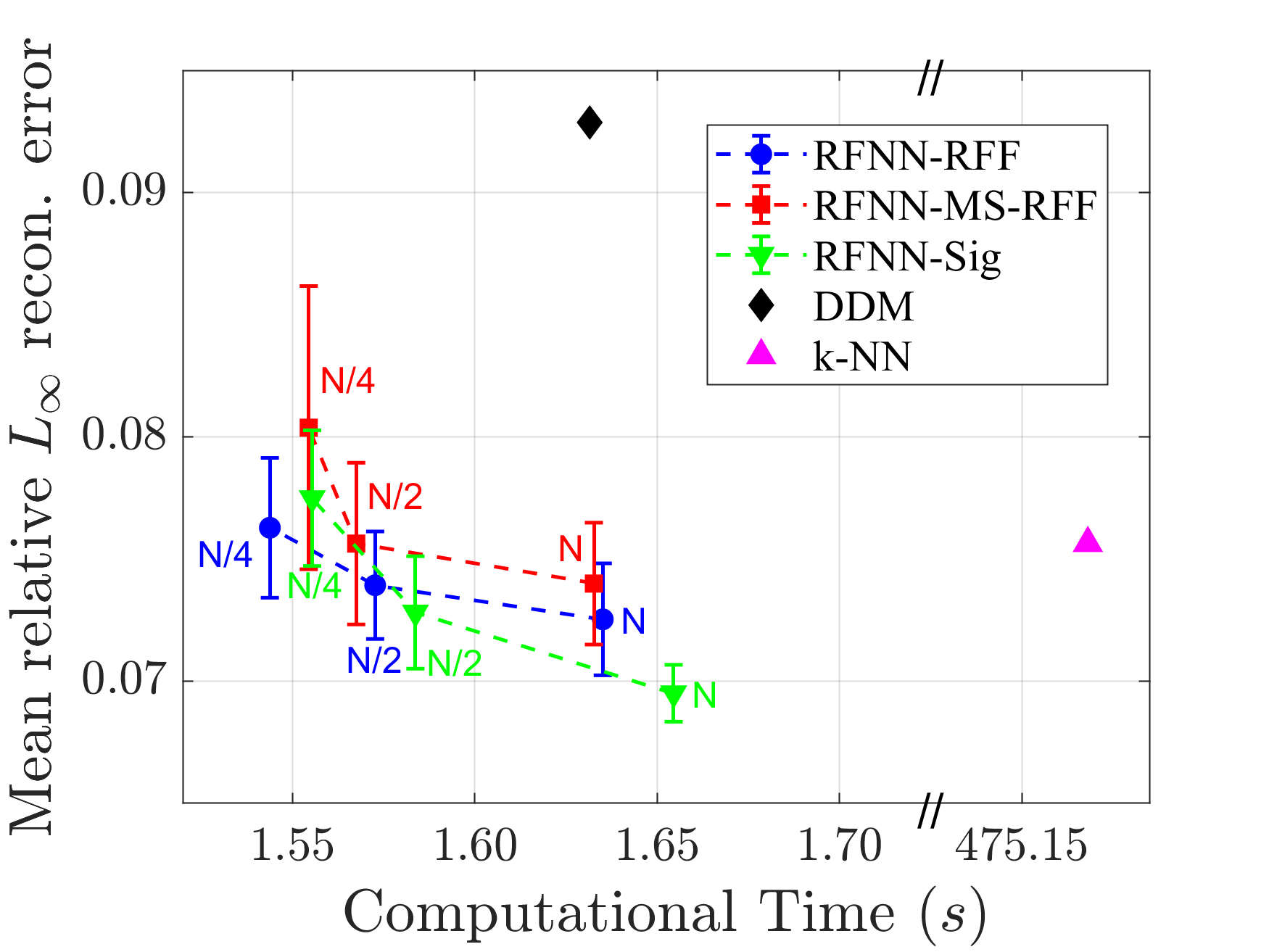}
        \caption{Testing, $L_\infty$ error}
        \label{fig:SC20_dec_ts_linf}
    \end{subfigure}
    \caption{Reconstruction error vs. computational time for the S-Curve dataset ($M=20$). Results are shown for $N=1000$ training points. Panel~\ref{fig:SC20_dec_tr} shows the mean relative $L_2$ error $e_{2,i}$ (Eq.~\eqref{eq:recon_errors}) on the training set vs. training time. Panels~\ref{fig:SC20_dec_ts} and \ref{fig:SC20_dec_ts_linf} show mean relative $L_2$ and $L_\infty$ errors, $e_{2,i}$ and $e_{\infty,i}$, respectively, on the testing set vs. inference time. For the stochastic RFNN decoders (RFNN-RFF, RFNN-MS-RFF, RFNN-Sig) with $P=N/4,N/2,N$, points indicate the median over 100 runs; error bars show the 5–95\% percentile range. Deterministic decoders (DDM, $k$-NN) are shown without error bars. 
    }
    \label{fig:SCurve20_Dec}
\end{figure}

All RFNN decoder variants are faster in both training and inference than the DDM and especially the $k$-NN decoders, while generally matching or surpassing their accuracy. In training, RFNN decoders with few features ($P=N/4$) are less accurate, but as $P$ increases, their accuracy improves, exhibiting a clear monotonic trend. All of them surpass the accuracy of the DDM decoder (which was exhibiting higher reconstruction accuracy in the Swiss Roll example in Appendix~\ref{app:SR}), while being slightly less accurate than the $k$-NN one. In testing (out-of-sample data), all RFNN variants show a clear accuracy gain as $P$ increases. The similar behavior of $L_2$ and $L_\infty$ errors further indicates uniformly good pointwise reconstruction. While RFNN-Sig is the most accurate RFNN variant for $P=N,N/2$ with RFNN-RFF and RFNN-MS-RFF following, when the number of features is small at $P=N/4$, the RFNN-RFF becomes more accurate. More importantly, all RFNN variants are more accurate than DDM and $k$-NN decoders in out-of-sample data, though their inference times become comparable to those of DDM. Visualizations of the S-Curve reconstructions, projected onto a $3$-dimensional space are provided for the testing sets in Fig.~\ref{fig:SCurve20}. The reconstructed datasets confirm that RFNN decoders preserve the underlying geometry with high fidelity, matching the quantitative trends reported above.

Considering together the non-mass-preserving examples in Appendices~\ref{app:SR} and \ref{app:SC20}, the results demonstrate that RFNN decoders, particularly the RFNN‑Sig one, consistently achieve higher reconstruction accuracy at lower computational cost compared to both numerical analysis-based DDM and $k$-NN approaches.

\begin{figure}[!htbp]
    \centering
    \begin{subfigure}[b]{0.32\textwidth}
        \centering
        \includegraphics[width=\textwidth]{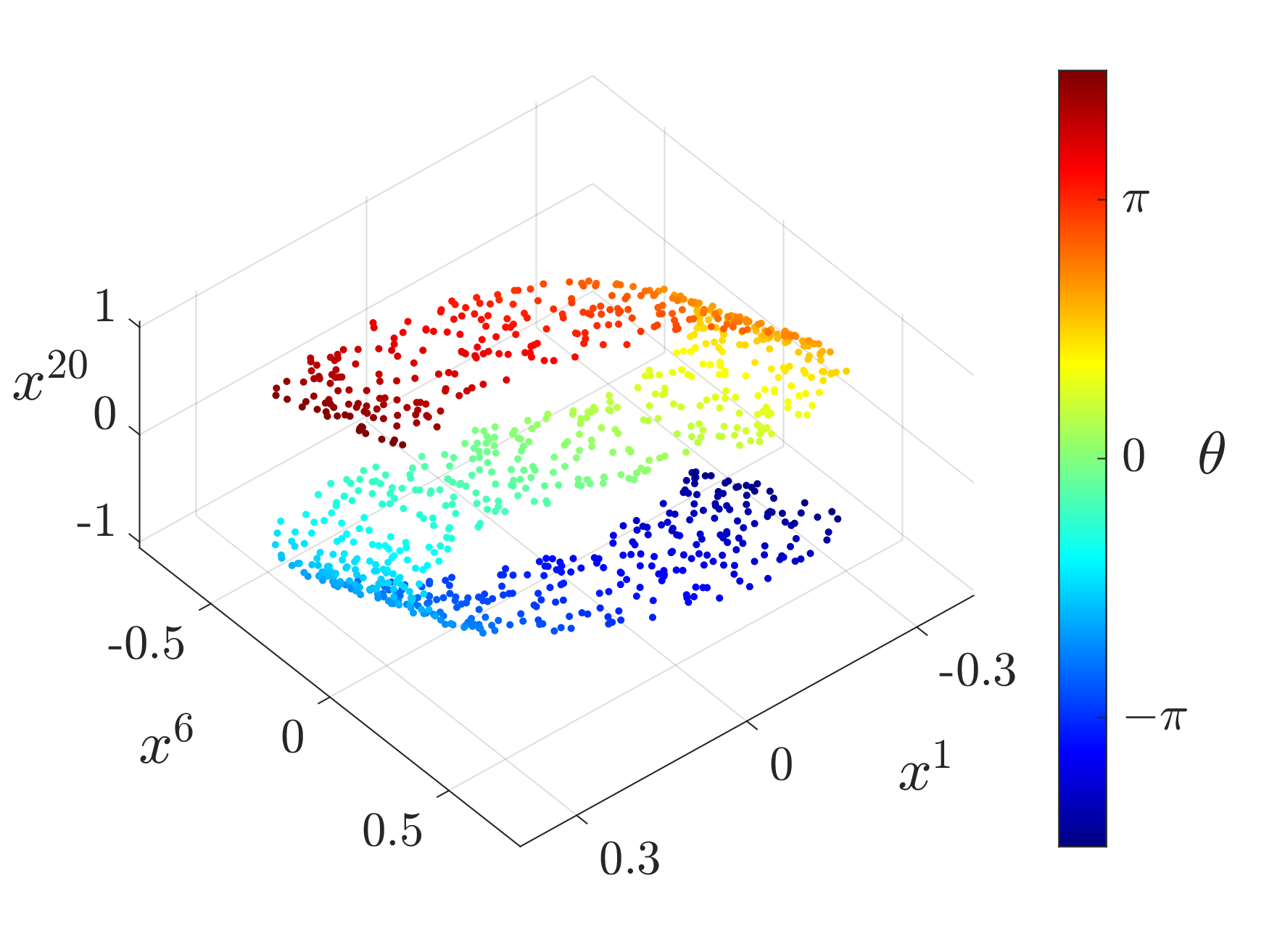}
        \caption{Data set}
        \label{fig:SC20_data}
    \end{subfigure} \hspace{0.02\textwidth}
    \begin{subfigure}[b]{0.32\textwidth}
        \centering
        \includegraphics[width=\textwidth]{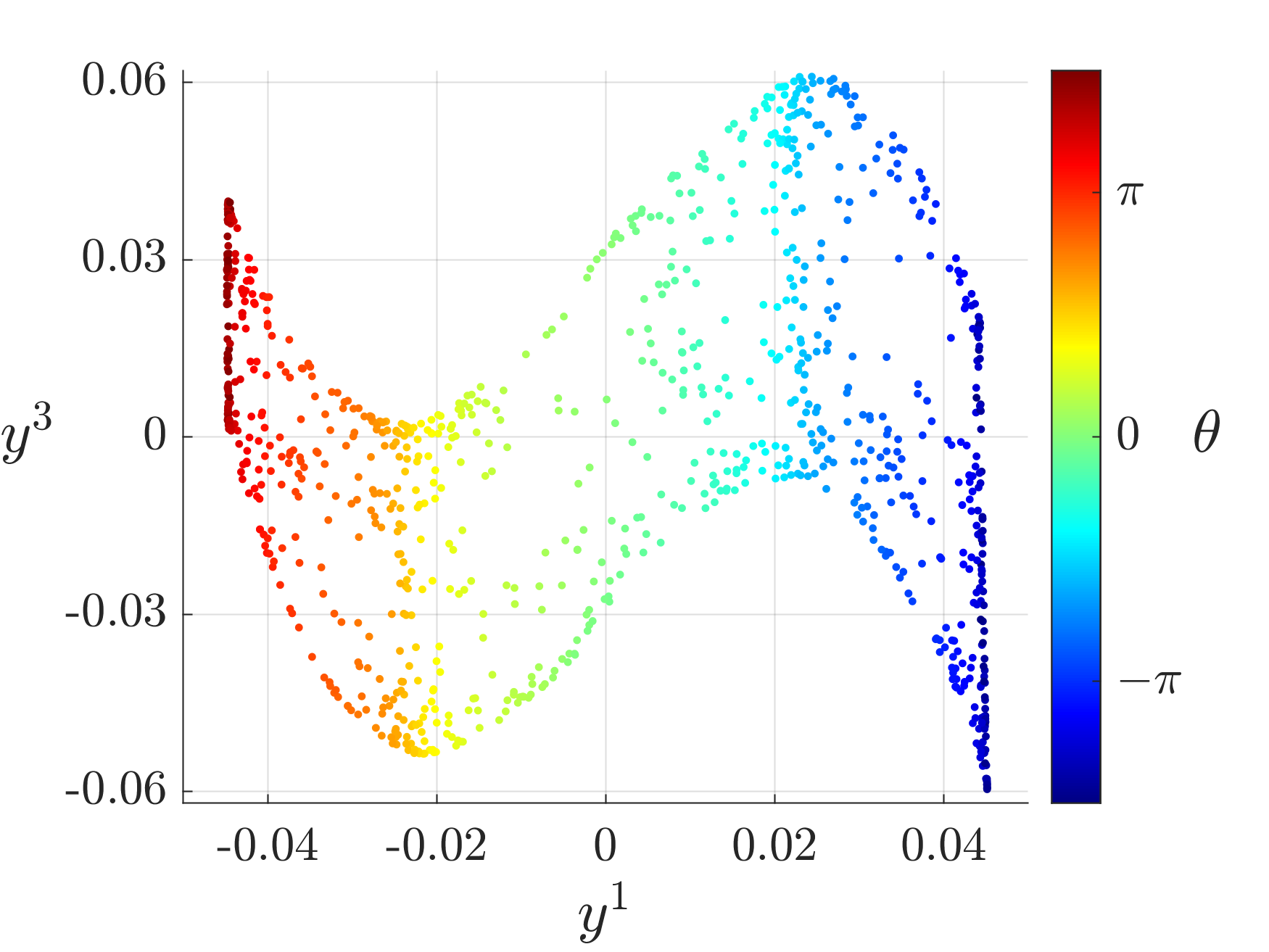}
        \caption{DM coordinates}
        \label{fig:SC20_DMcoords}
    \end{subfigure}

    \begin{subfigure}[b]{0.32\textwidth}
        \centering
        \includegraphics[width=\textwidth]{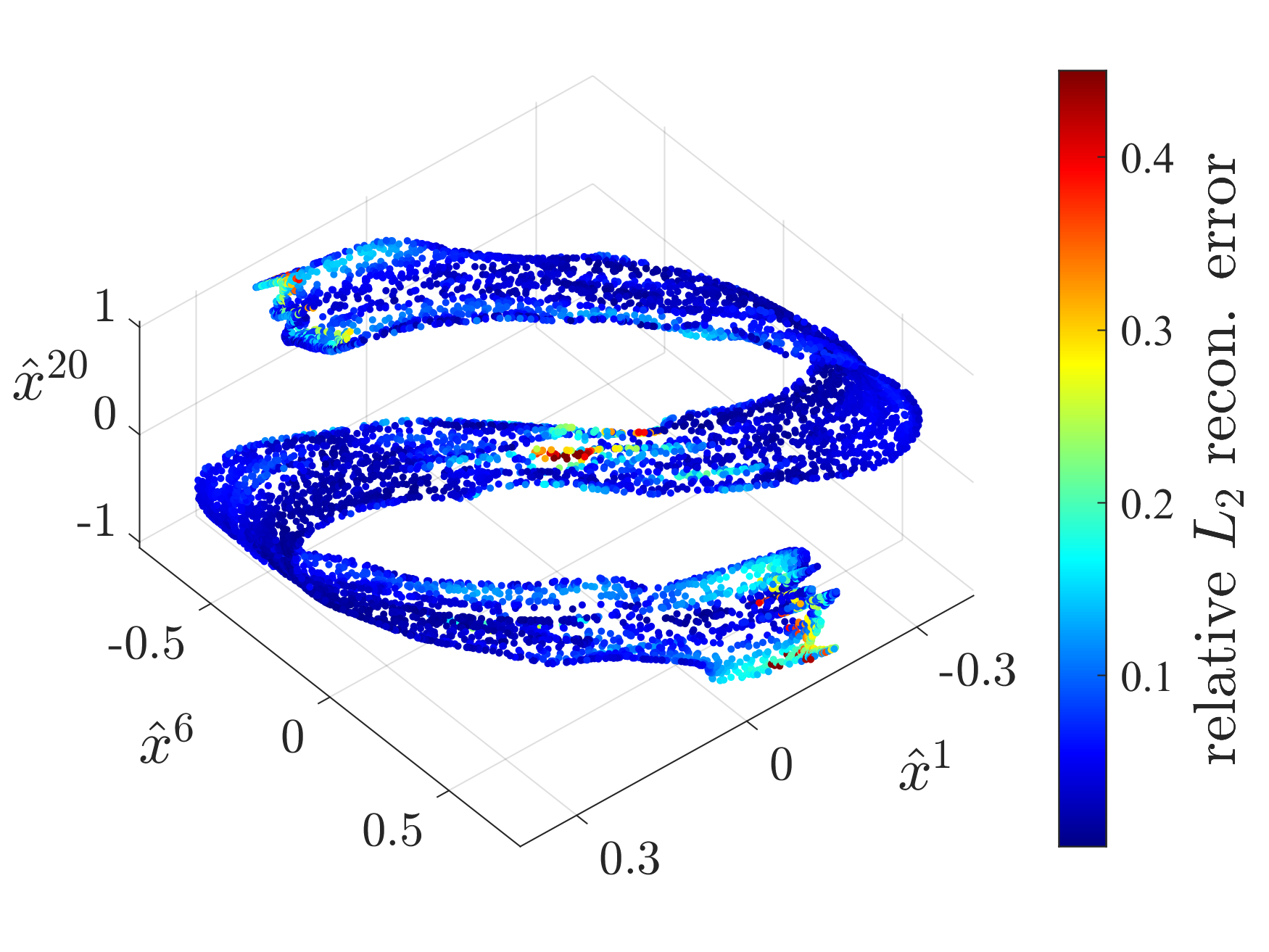}
        \caption{RFNN-RFF ($P=N$)}
        \label{fig:SC20_DM_RFNNf_recon}
    \end{subfigure}
    \hfill
    \begin{subfigure}[b]{0.32\textwidth}
        \centering
        \includegraphics[width=\textwidth]{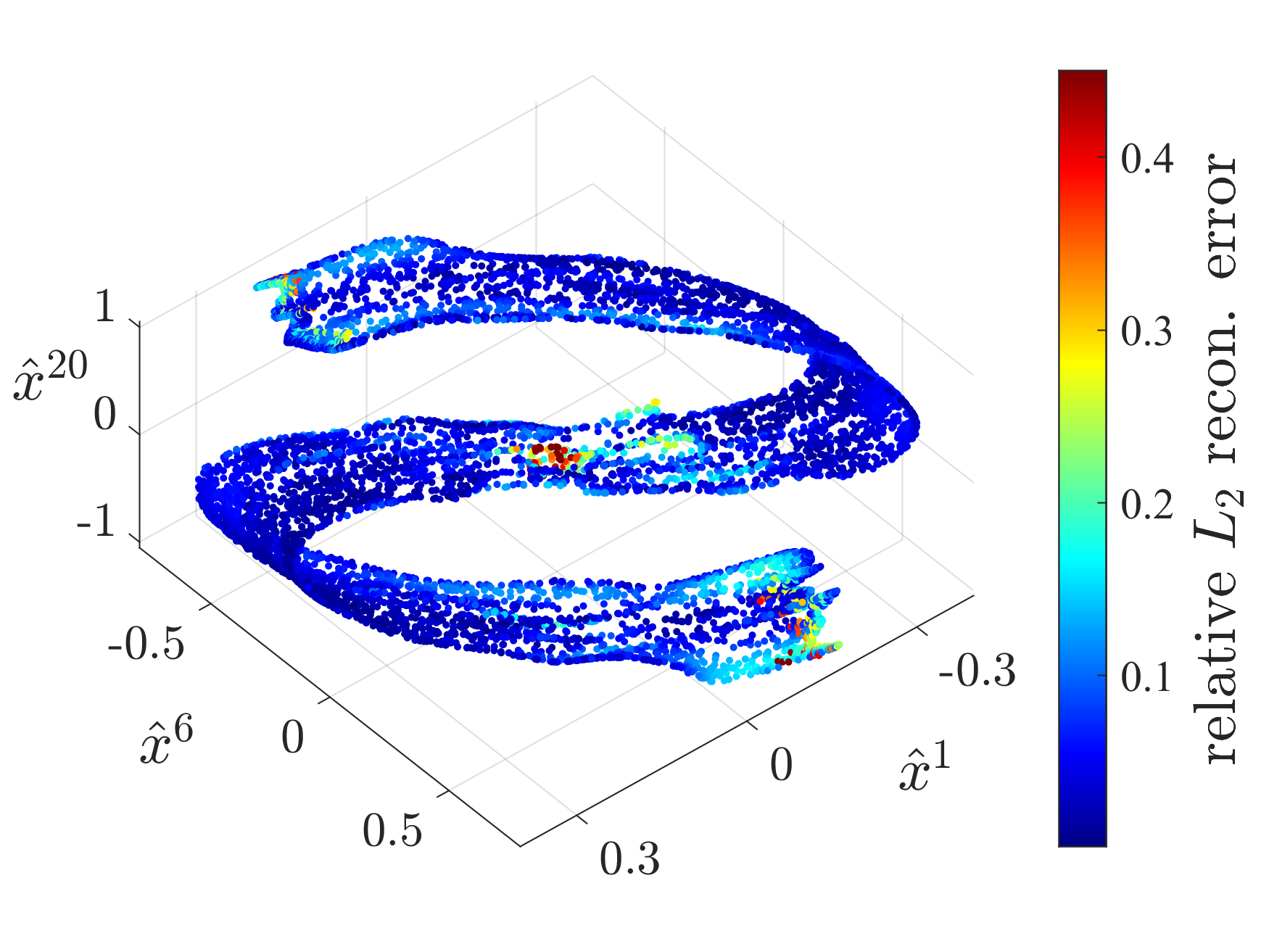}
        \caption{RFNN-MS-RFF ($P=N$)}
        \label{fig:SC20_DM_RFNNfMK_recon}
    \end{subfigure}
    \hfill
    \begin{subfigure}[b]{0.32\textwidth}
        \centering
        \includegraphics[width=\textwidth]{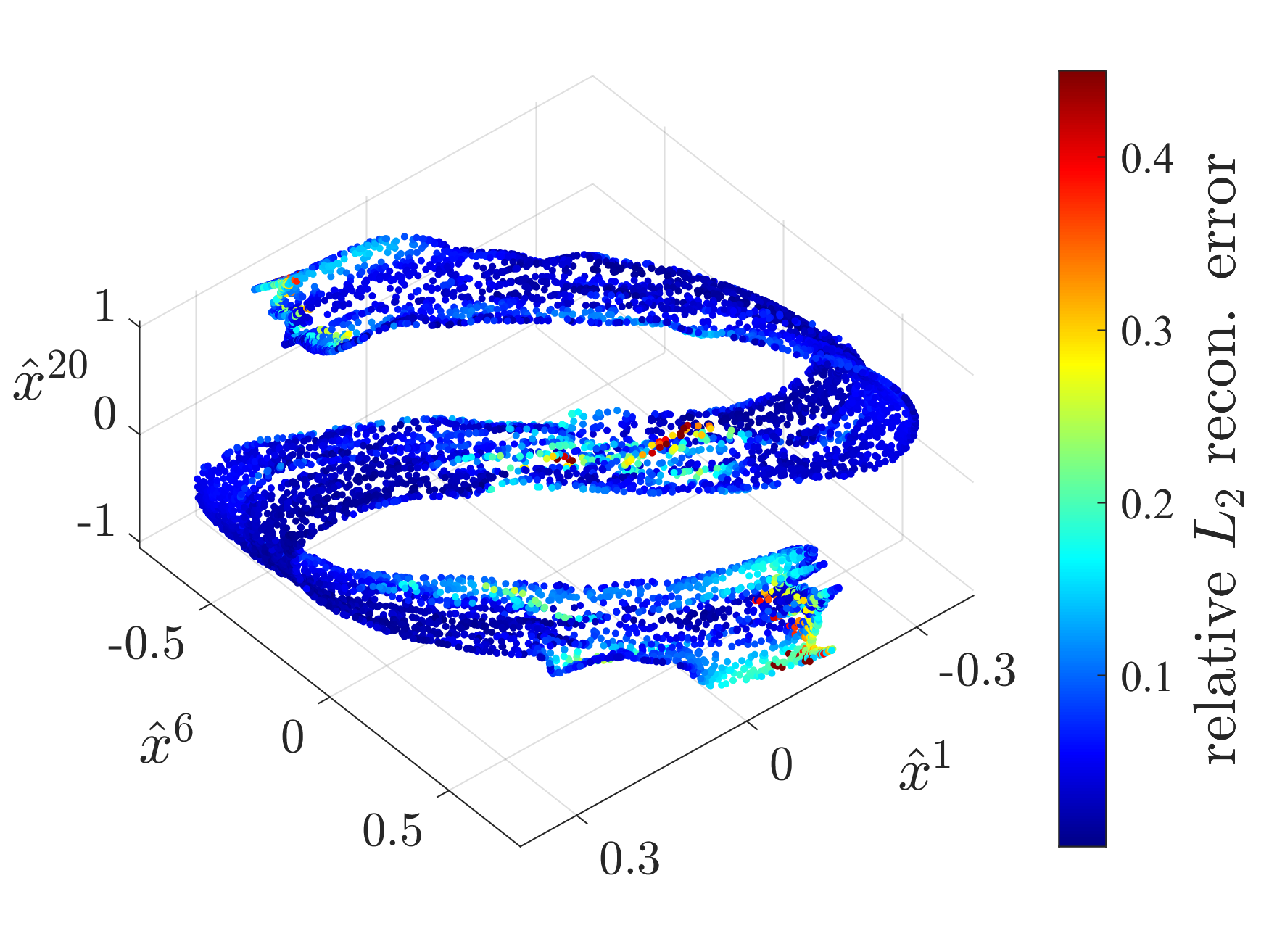}
        \caption{RFNN-Sig ($P=N/2$)}
        \label{fig:SC20_DM_RFNNs_recon}
    \end{subfigure}

    \begin{subfigure}[b]{0.32\textwidth}
        \centering
        \includegraphics[width=\textwidth]{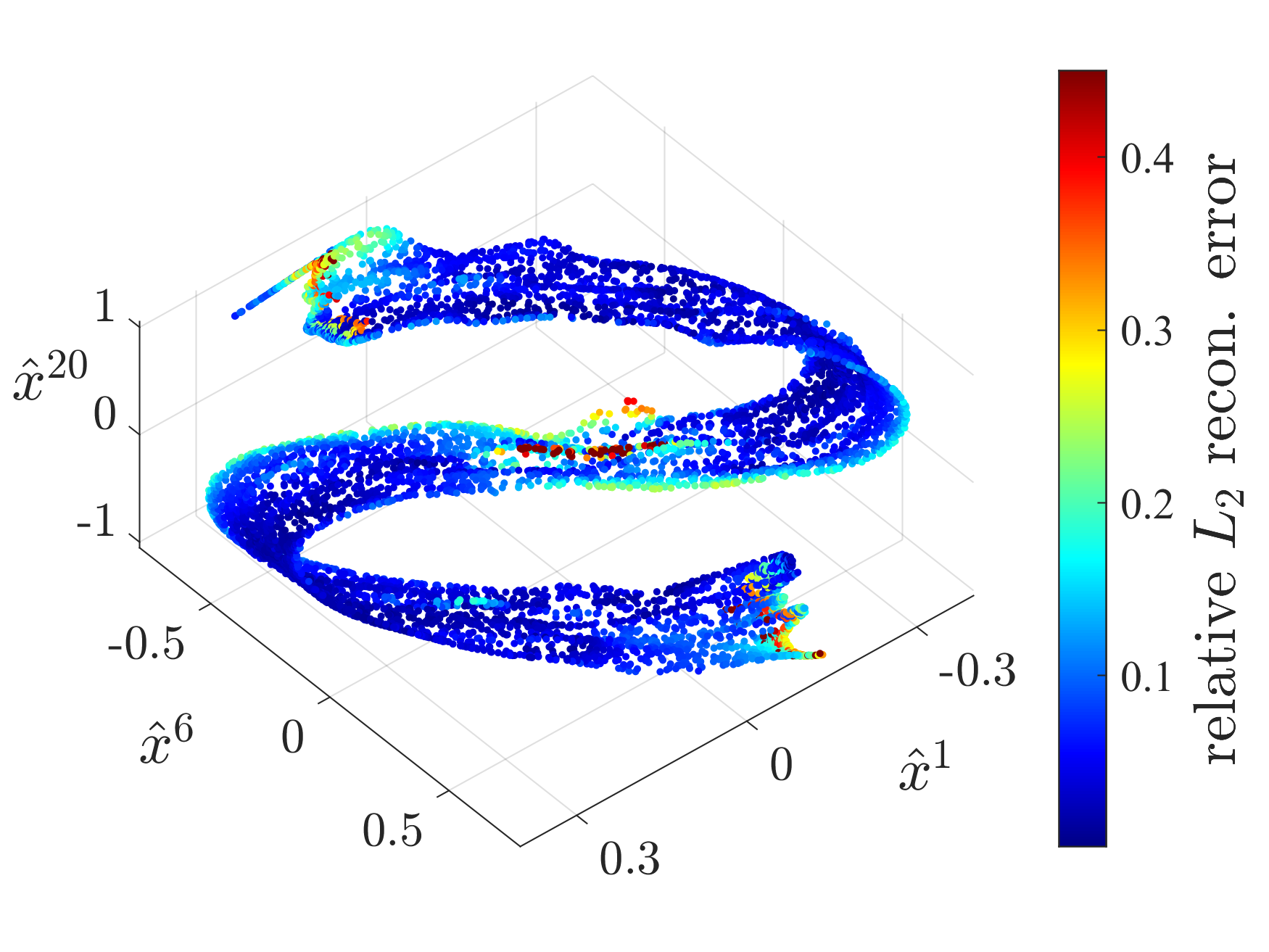}
        \caption{DDM}
        \label{fig:SC20_DM_GH_recon}
    \end{subfigure}\hspace{0.02\textwidth}
    \begin{subfigure}[b]{0.32\textwidth}
        \centering
        \includegraphics[width=\textwidth]{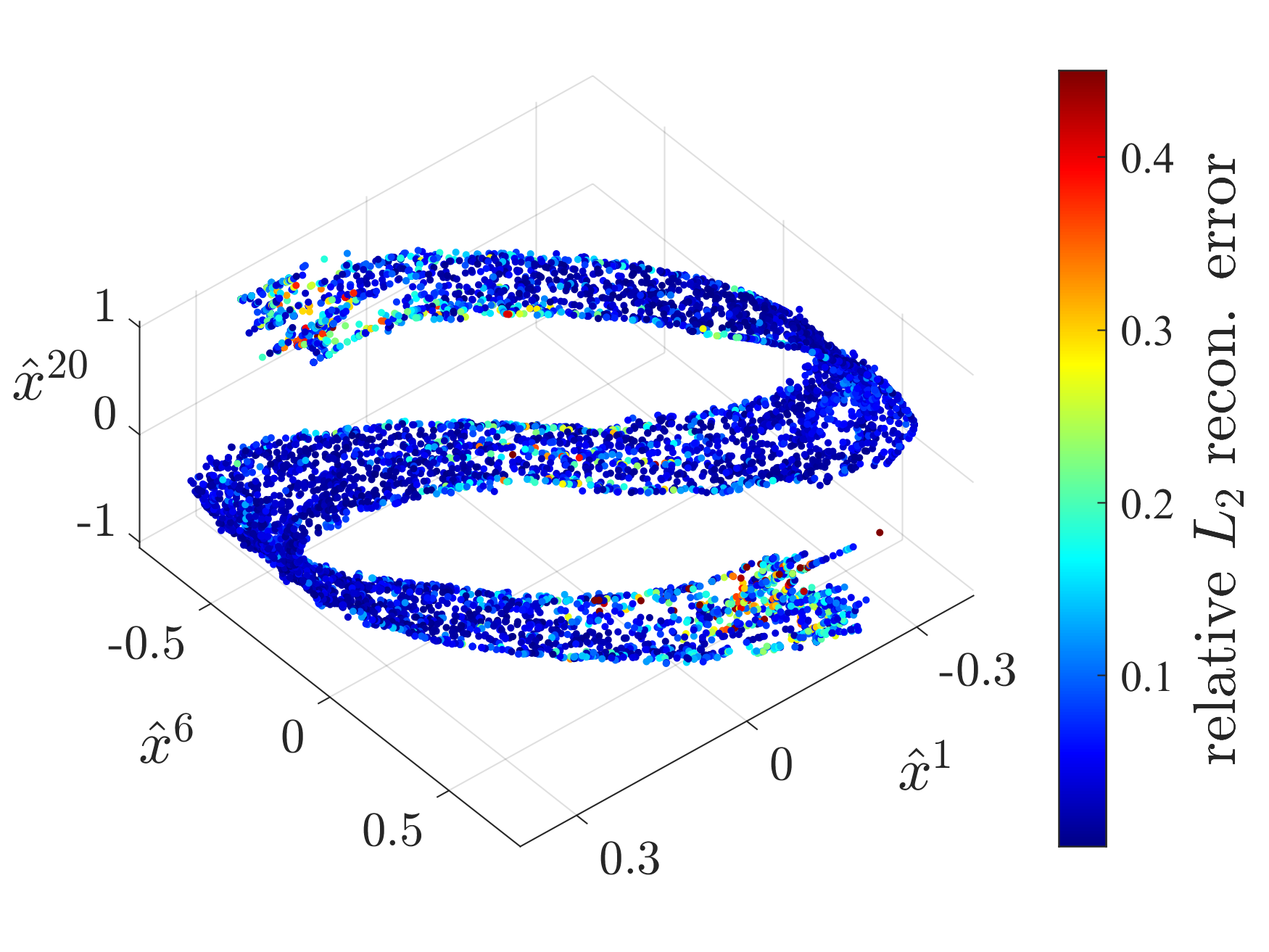}
        \caption{$k$-NN}
        \label{fig:SC20_DM_kNN_recon}
    \end{subfigure}
    \caption{S-Curve dataset ($M=20$) with $N=1000$ training points. Panel~\ref{fig:SC20_data} shows the training data projected onto the coordinates $[x^1,x^6,x^{20}]^\top$, while panel~\ref{fig:SC20_DMcoords} shows the $d=2$ DM coordinates $[y^1, y^3]^\top$; both panels are colored by the intrinsic angular coordinate $\theta$. Panels~\ref{fig:SC20_DM_RFNNf_recon}-\ref{fig:SC20_DM_kNN_recon} show the reconstructed data projected onto $[\widehat x^1,\widehat x^6,\widehat x^{20}]^\top$ and the per-point relative $L_2$ errors $e_{2,i}$ (Eq.~\eqref{eq:recon_errors} for the five decoders: RFNN-RFF with $P=N$, RFNN-MS-RFF with $P=N$, RFNN-Sig with $P=N/2$, DDM and $k$-NN. For the RFNN decoders, the displayed configuration is the one achieving the lowest training reconstruction error among all tested variants (see Fig.~\ref{fig:SC20_dec_tr} for the full comparison).}
    \label{fig:SCurve20}
\end{figure}

\clearpage
\newpage
\renewcommand{\theequation}{G.\arabic{equation}}
\renewcommand{\thefigure}{G.\arabic{figure}}
\renewcommand{\thetable}{G.\arabic{table}}
\setcounter{equation}{0}
\setcounter{figure}{0}
\setcounter{table}{0}
\section{Reconstruction results of the LWR 1D Traffic Model Dataset (\texorpdfstring{$M=400$}{M=400})} \label{app:LWR}

Here, we provide detailed reconstruction results for the LWR 1D traffic model dataset in Section~\ref{sb:LWR}. Tables~\ref{tab:LWR_dec_tr_all} and \ref{tab:LWR_dec_ts_all} enlist the training and testing set performance of all decoders considered for this benchmark.
\begin{table}[htbp]
\centering
\caption{Training set performance for the LWR 1D traffic model dataset ($M=400$). Detailed reconstruction metrics for $N=1000$ training points, using $d=2$-dim. DM embeddings. Decoders are compared based on the relative $L_2$ and $L_\infty$ mean reconstruction errors, $e_{2,i}$ and $e_{\infty,i}$ (Eq.~\eqref{eq:recon_errors}), computational time (in seconds), and mean conservation error $e_{con,i}$. For stochastic decoders (RANDSMAP-RFF, RANDSMAP-MS-RFF, RANDSMAP-Sig, and RFNN-Sig), metrics show the median with 5-95\% percentiles in parentheses, computed over 100 random initializations. Deterministic decoders (DDM, $k$-NN) report single values.}
\label{tab:LWR_dec_tr_all}
\resizebox{\textwidth}{!}{%
\begin{tabular}{@{}llcccc@{}}
\multirow{2}{*}{Decoder} & \multirow{2}{*}{$P$} & \multicolumn{2}{c}{Mean reconstruction error ($\times 10^{-1}$)} & Mean conservation & \multirow{2}{*}{Comp. Time (s)} \\
\cmidrule(lr){3-4}
& & relative $L_2$, $e_{2,i}$ &  relative $L_\infty$, $e_{\infty,i}$ & error, $e_{con,i}$ ($\times 10^{-8}$) & \\
\midrule
\midrule

\multirow{3}{*}{RANDSMAP-RFF} 
&	$N$	    & 1.372 (1.361--1.382) & 3.630 (3.617--3.644) & 2.328 (1.999--2.701) & 11.702 (11.665--11.733)\\
&	$N/2$	& 1.365 (1.354--1.379) & 3.617 (3.600--3.635) & 2.128 (1.811--2.487) & 4.624 (4.607--4.658)   \\   
&	$N/4$	& 1.381 (1.367--1.393) & 3.629 (3.610--3.648) & 1.729 (1.305--2.031) & 2.636 (2.630--2.659)   \\   
\midrule

\multirow{3}{*}{RANDSMAP-MS-RFF} 
&	$N$	     & 1.282 (1.223--1.371) & 3.472 (3.341--3.621) & 2.475 (1.801--3.131) & 11.757 (11.701--11.811) \\
&	$N/2$	 & 1.353 (1.291--1.440) & 3.574 (3.466--3.683) & 1.874 (1.303--2.423) & 4.698 (4.677--4.730)    \\
&	$N/4$	 & 1.391 (1.317--1.474) & 3.594 (3.473--3.732) & 1.369 (0.832--1.877) & 2.655 (2.645--2.680)    \\
\midrule

\multirow{3}{*}{RANDSMAP-Sig} 
&	$N$	    & 1.300 (1.295--1.305) & 3.544 (3.537--3.552) & 0.524 (0.424--0.628) & 14.449 (14.394--14.520) \\
&	$N/2$	& 1.327 (1.322--1.334) & 3.580 (3.571--3.590) & 0.647 (0.501--0.830) & 5.183 (5.165--5.212)    \\
&	$N/4$	& 1.367 (1.356--1.381) & 3.616 (3.602--3.633) & 0.829 (0.535--1.363) & 2.819 (2.810--2.840)    \\
\midrule

\multirow{3}{*}{RFNN-Sig} 
&	$N$	    & 1.300 (1.295--1.305) & 3.544 (3.537--3.552) & 0.365 (0.337--0.387) $\times 10^{3}$ & 3.723 (3.714--3.737) \\
&	$N/2$	& 1.327 (1.322--1.334) & 3.580 (3.571--3.590) & 0.779 (0.718--0.827) $\times 10^{3}$ & 2.013 (2.011--2.021) \\
&	$N/4$	& 1.367 (1.356--1.381) & 3.616 (3.602--3.633) & 1.909 (1.716--2.147) $\times 10^{3}$ & 1.469 (1.468--1.473) \\
\midrule

DDM  &	-	& 6.384 & 6.842 & 5.157 $\times 10^{6}$ & 10.369 \\
$k$-NN &	-	& 1.343 & 3.690 & 4.030 $\times 10^{-8}$ & 1283.531 \\

\bottomrule
\end{tabular}%
}
\end{table}

\begin{table}[htbp]
\centering
\caption{Testing set performance for the LWR 1D traffic model dataset ($M=400$). Detailed reconstruction metrics for $N=1000$ training points, using $d=2$-dim. DM embeddings. Decoders are compared based on the relative $L_2$ and $L_\infty$ mean reconstruction errors, $e_{2,i}$ and $e_{\infty,i}$ (Eq.~\eqref{eq:recon_errors}), computational time (in seconds), and mean conservation error, $e_{con,i}$. For stochastic decoders (RANDSMAP-RFF, RANDSMAP-MS-RFF, RANDSMAP-Sig, and RFNN-Sig), metrics show the median with 5-95\% percentiles in parentheses, computed over 100 random initializations. Deterministic decoders (DDM, $k$-NN) report single values.}
\label{tab:LWR_dec_ts_all}
\resizebox{\textwidth}{!}{%
\begin{tabular}{@{}llcccc@{}}
\multirow{2}{*}{Decoder} & \multirow{2}{*}{$P$} & \multicolumn{2}{c}{Mean reconstruction error ($\times 10^{-1}$)} & Mean conservation & \multirow{2}{*}{Comp. Time (s)} \\
\cmidrule(lr){3-4}
& & relative $L_2$, $e_{2,i}$ &  relative $L_\infty$, $e_{\infty,i}$ & error, $e_{con,i}$ ($\times 10^{-8}$) & \\
\midrule
\midrule

\multirow{3}{*}{RANDSMAP-RFF} 
&	$N$	    & 1.622 (1.618--1.626) & 4.236 (4.230--4.243) & 2.646 (2.261--3.096) & 3.179 (3.114--3.239) \\
&	$N/2$	& 1.623 (1.620--1.628) & 4.240 (4.232--4.247) & 2.421 (2.045--2.870) & 3.053 (2.989--3.106) \\   
&	$N/4$	& 1.629 (1.625--1.634) & 4.231 (4.222--4.237) & 1.925 (1.475--2.327) & 2.939 (2.868--3.011) \\   
\midrule

\multirow{3}{*}{RANDSMAP-MS-RFF} 
&	$N$	     & 1.628 (1.619--1.654) & 4.296 (4.226--4.368) & 3.022 (2.027--4.038) & 3.179 (3.115--3.247) \\
&	$N/2$	 & 1.629 (1.623--1.651) & 4.241 (4.198--4.292) & 2.149 (1.454--2.864) & 3.044 (2.973--3.110) \\
&	$N/4$	 & 1.648 (1.637--1.681) & 4.217 (4.180--4.248) & 1.512 (0.870--2.134) & 2.956 (2.898--3.023) \\
\midrule

\multirow{3}{*}{RANDSMAP-Sig} 
&	$N$	     & 1.586 (1.583--1.588) & 4.263 (4.258--4.267) & 4.100 (3.089--5.653)  & 3.179 (3.118--3.250) \\
&	$N/2$	 & 1.592 (1.588--1.598) & 4.247 (4.241--4.255) & 3.556 (2.645--4.839)  & 3.033 (2.962--3.117) \\
&	$N/4$	 & 1.617 (1.610--1.627) & 4.241 (4.230--4.253) & 2.178 (1.169--4.188)  & 2.943 (2.865--3.030) \\
\midrule

\multirow{3}{*}{RFNN-Sig} 
&	$N$	     & 1.586 (1.583--1.588) & 4.263 (4.258--4.267) & 0.495 (0.449--0.535) $\times 10^{3}$ & 3.198 (3.142--3.275) \\
&	$N/2$	 & 1.592 (1.588--1.598) & 4.247 (4.241--4.255) & 1.008 (0.909--1.123) $\times 10^{3}$ & 3.077 (3.014--3.128) \\
&	$N/4$	 & 1.617 (1.610--1.627) & 4.241 (4.230--4.253) & 2.408 (2.152--2.685) $\times 10^{3}$ & 2.984 (2.939--3.048) \\
\midrule

DDM  &	-	& 6.367 & 6.827 & 5.118 $\times 10^{6}$ & 4.415 \\
$k$-NN &	-	& 1.599 & 4.419 & 3.916 $\times 10^{-8}$ & 724.774 \\

\bottomrule
\end{tabular}%
}
\end{table}

Figure~\ref{fig:LWR_recon} shows a shock-including representative density profile and its reconstructions with the five decoders considered, while Fig.~\ref{fig:LWR_DM_GH} demonstrates that the poor performance of the DDM decoder is not due to poor tuning, via an exhaustive 2D grid search. Finally, Figure~\ref{fig:LWR} demonstrates per-density profile reconstructions provided by all decoders, projected in the summary statistics space $(\mu_x,\sigma^2_x,\gamma_x)$.
\begin{figure}[!htbp]
    \centering
    \begin{subfigure}[b]{0.32\textwidth}
        \centering
        \includegraphics[width=\textwidth]{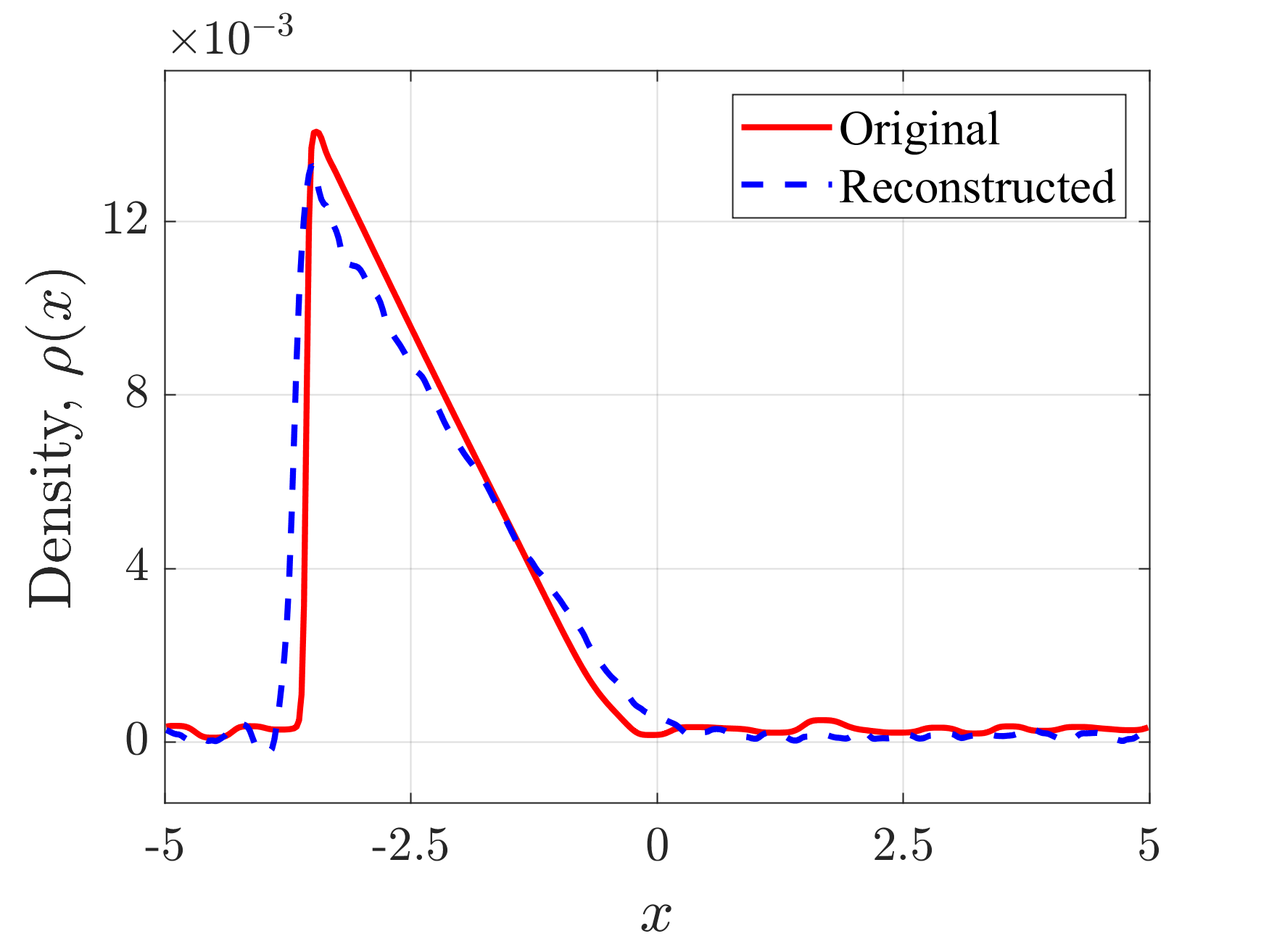}
        \caption{RANDSMAP-RFF ($P=N/2$)}
        \label{fig:LWR_DM_RFNNf_recon_sample}
    \end{subfigure}
    \hfill
    \begin{subfigure}[b]{0.32\textwidth}
        \centering
        \includegraphics[width=\textwidth]{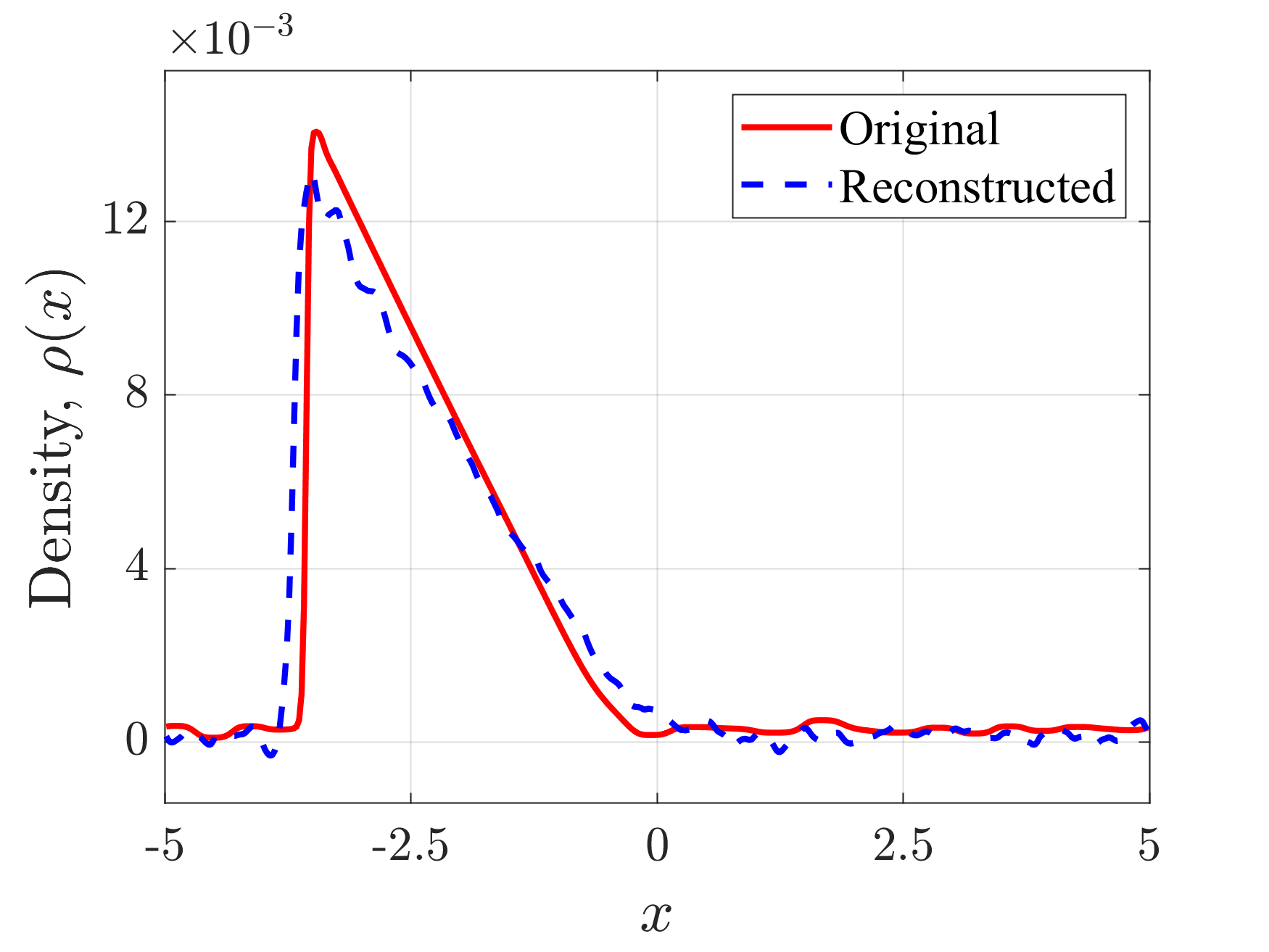}
        \caption{RANDSMAP-MS-RFF ($P=N$)}
        \label{fig:LWR_DM_RFNNfMK_recon_sample}
    \end{subfigure}
    \hfill
    \begin{subfigure}[b]{0.32\textwidth}
        \centering
        \includegraphics[width=\textwidth]{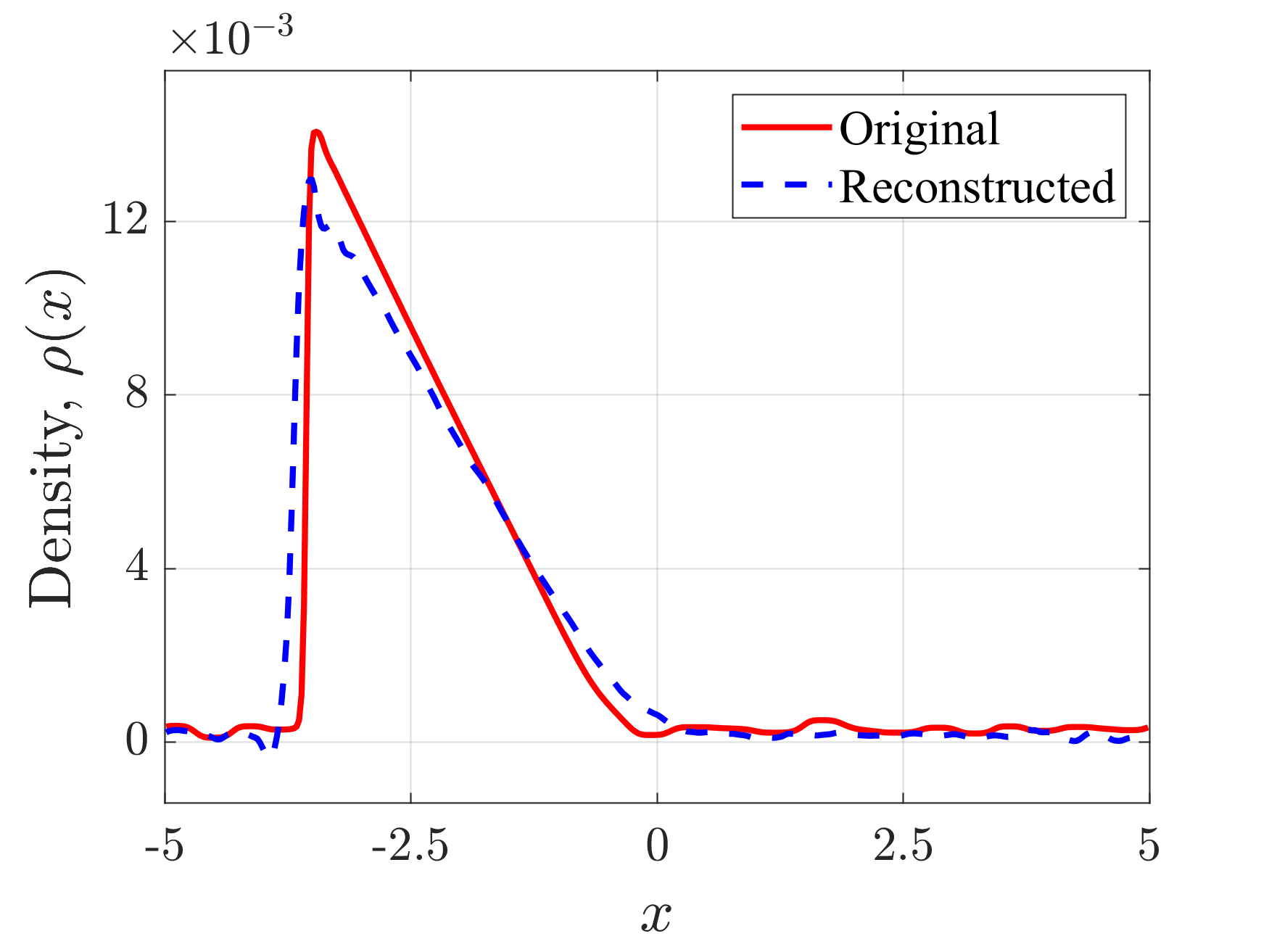}
        \caption{RANDSMAP-Sig ($P=N$)}
        \label{fig:LWR_DM_RFNNs_recon_sample}
    \end{subfigure}

    \begin{subfigure}[b]{0.32\textwidth}
        \centering
        \includegraphics[width=\textwidth]{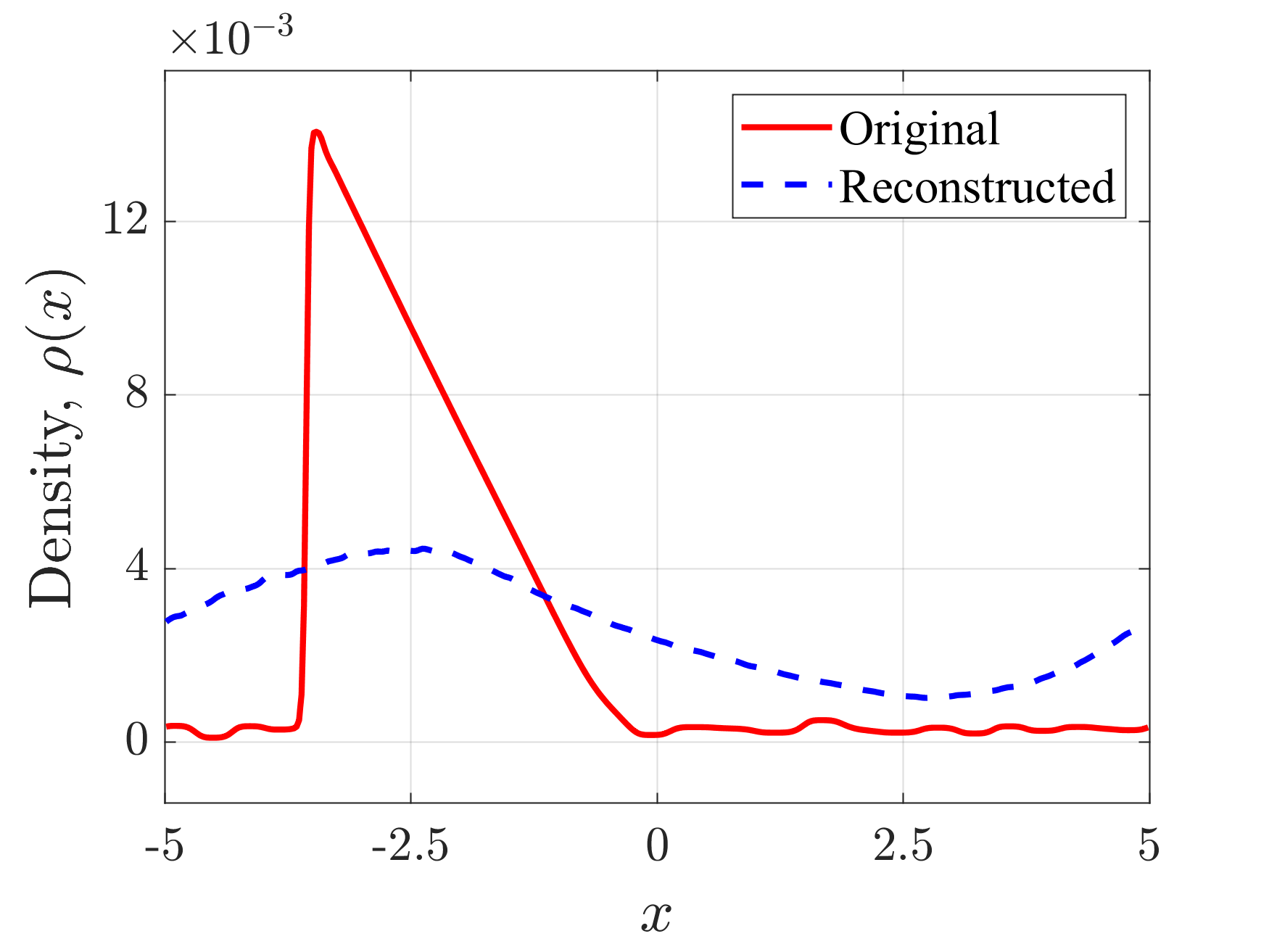}
        \caption{DDM}
        \label{fig:LWR_DM_GH_recon_sample}
    \end{subfigure}\hspace{0.02\textwidth}
    \begin{subfigure}[b]{0.32\textwidth}
        \centering
        \includegraphics[width=\textwidth]{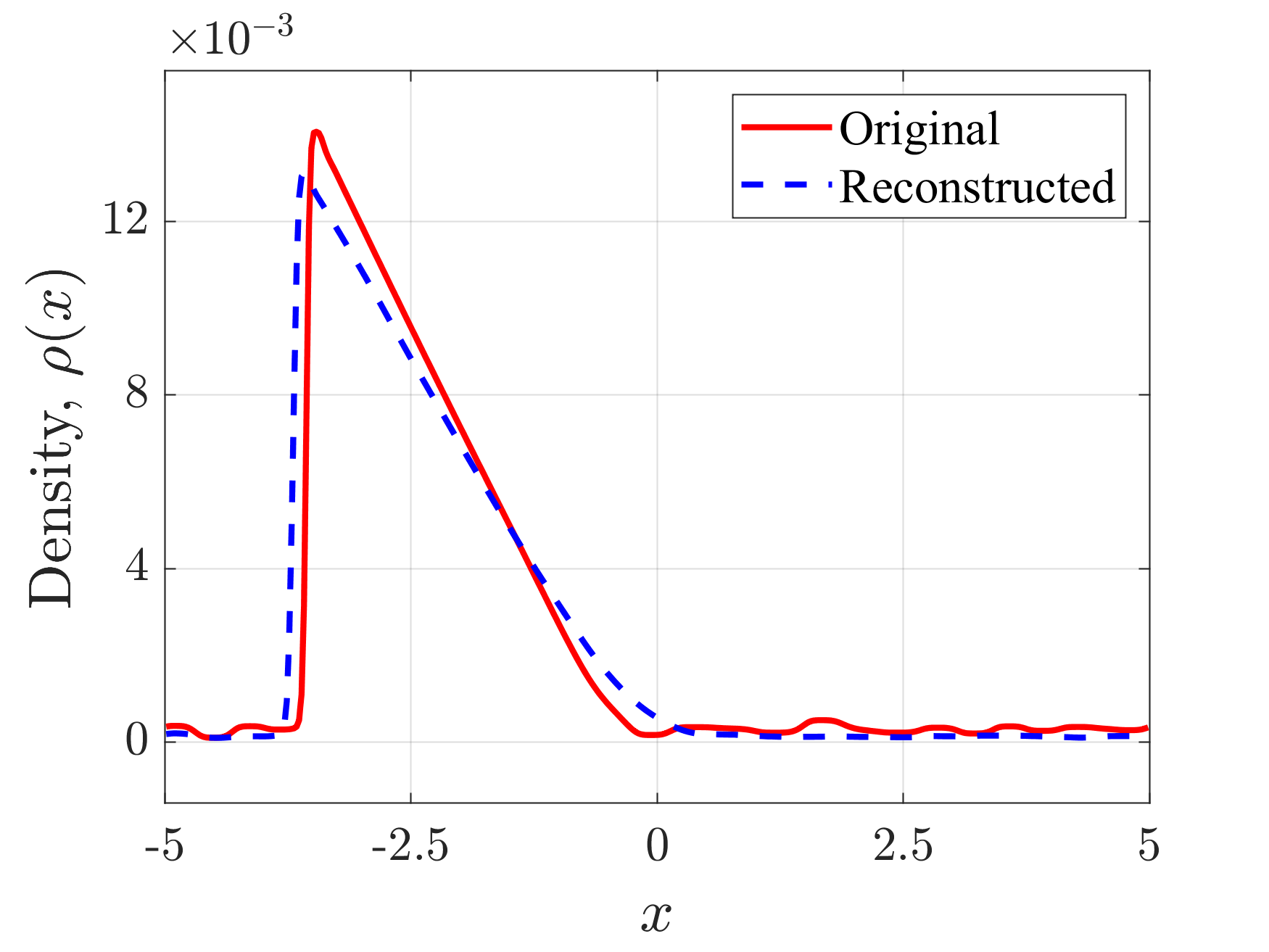}
        \caption{$k$-NN}
        \label{fig:LWR_DM_kNN_recon_sample}
    \end{subfigure}
    \caption{Representative original and reconstructed density profiles for the LWR 1D traffic model dataset ($M=400$). Panels~\ref{fig:LWR_DM_RFNNf_recon_sample}-\ref{fig:LWR_DM_kNN_recon} show the reconstructions for the five decoders: RANDSMAP-RFF with $P=N/2$, RANDSMAP-MS-RFF with $P=N$, RANDSMAP-Sig with $P=N$, DDM and $k$-NN. For the RANDSMAP decoders, the displayed configuration is the one achieving the lowest training reconstruction error among all variants of that type (see Fig.~\ref{fig:LWR_dec_tr} for the full comparison).}
    \label{fig:LWR_recon}
\end{figure}

\begin{figure}[!htbp]
    \centering
    \begin{subfigure}[b]{0.32\textwidth}
        \centering
        \includegraphics[width=\textwidth]{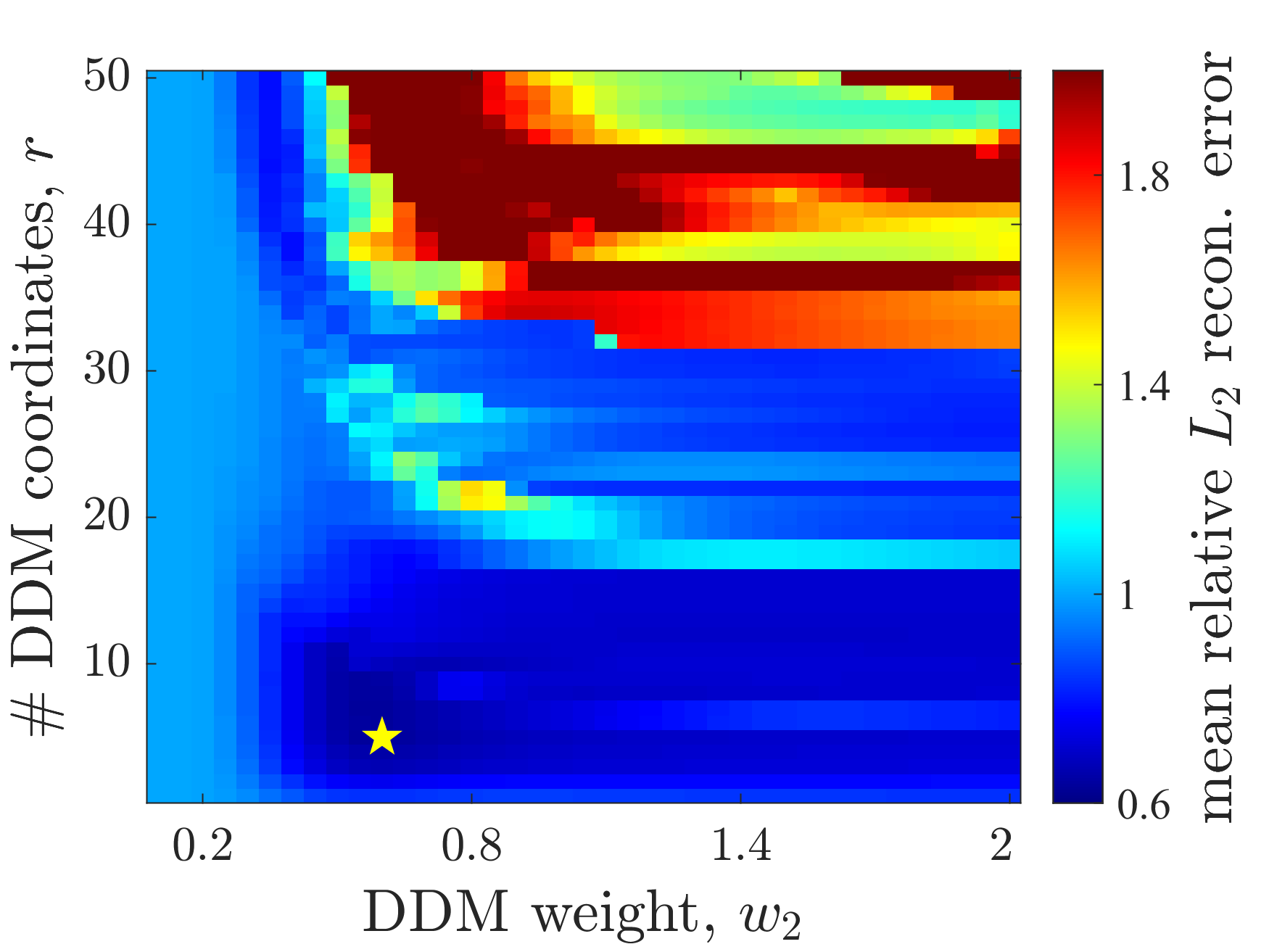}
        \caption{Exhaustive ($w_2,r$) tuning}
        \label{fig:LWR_DM_GH_tuning}
    \end{subfigure}
    \hfill
    \begin{subfigure}[b]{0.32\textwidth}
        \centering
        \includegraphics[width=\textwidth]{FigsLWR/ReconSample_DM_GH.png}
        \caption{Reconstruction, $(w_2,r)=(0.6,5)$}
        \label{fig:LWR_DM_GH_k5}
    \end{subfigure}
    \hfill
    \begin{subfigure}[b]{0.32\textwidth}
        \centering
        \includegraphics[width=\textwidth]{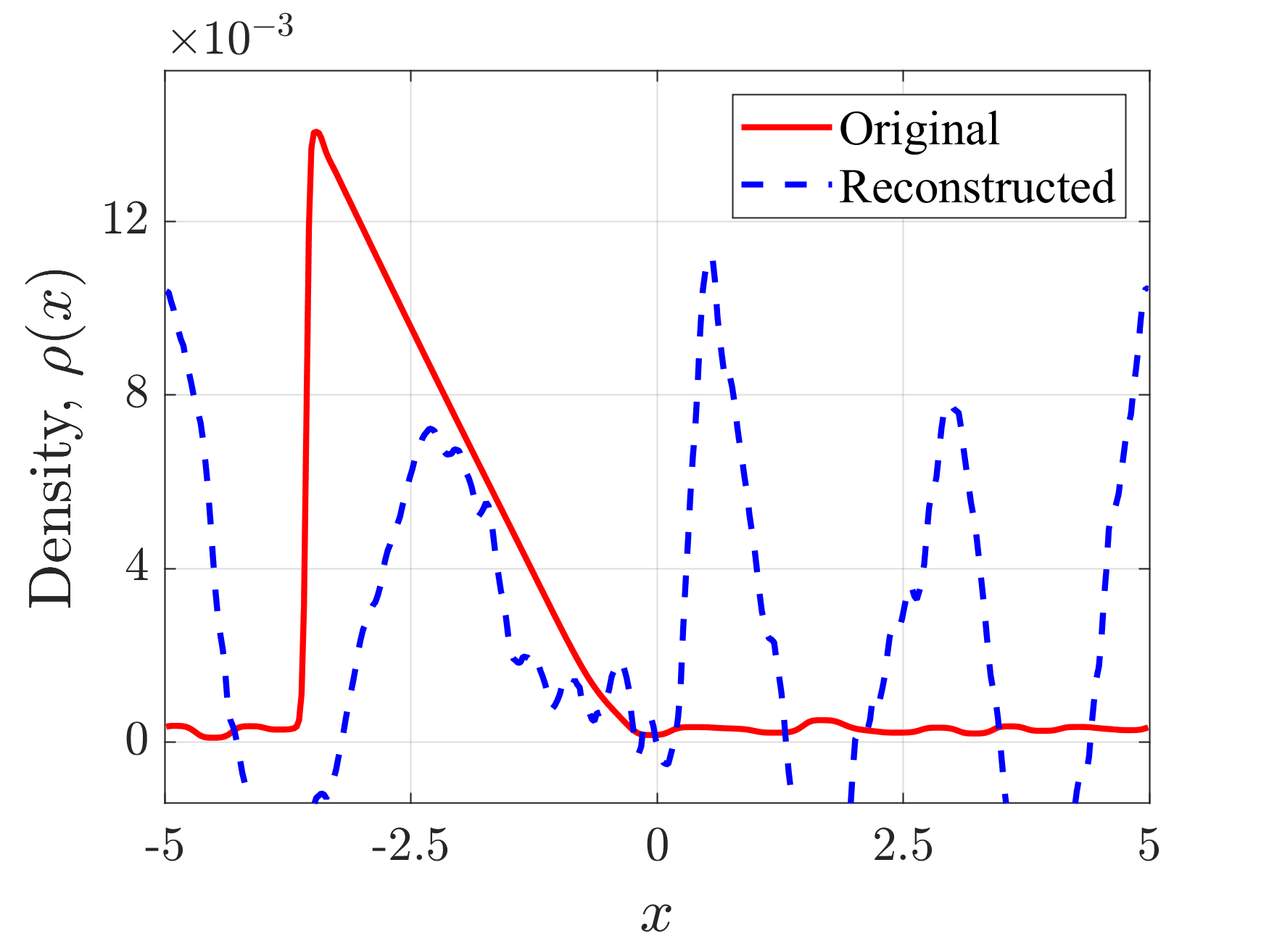}
        \caption{Reconstruction, $(w_2,r)=(0.6,25)$}
        \label{fig:LWR_DM_GH_k25}
    \end{subfigure}
    \caption{Reconstruction with the DDM decoder for the LWR 1D traffic model dataset ($M=400$). Panel~\ref{fig:LWR_DM_GH_tuning} shows exhaustive hyperparameter tuning over the DDM weight $w_2$ and the number of DDM coordinates $r$. The yellow star marks the optimal combination $(w_2,r) = (0.6, 5)$. Panels \ref{fig:LWR_DM_GH_k5} and \ref{fig:LWR_DM_GH_k25} show the reconstructed density profiles (corresponding to the original profile in Fig.~\ref{fig:LWR_recon}) obtained with $w_2=0.6$ and $r=5$ (optimal) or $r=25$ DDM coordinates, respectively.}
    \label{fig:LWR_DM_GH}
\end{figure}

\begin{figure}[!htbp]
    \centering
    \begin{subfigure}[b]{0.32\textwidth}
        \centering
        \includegraphics[width=\textwidth]{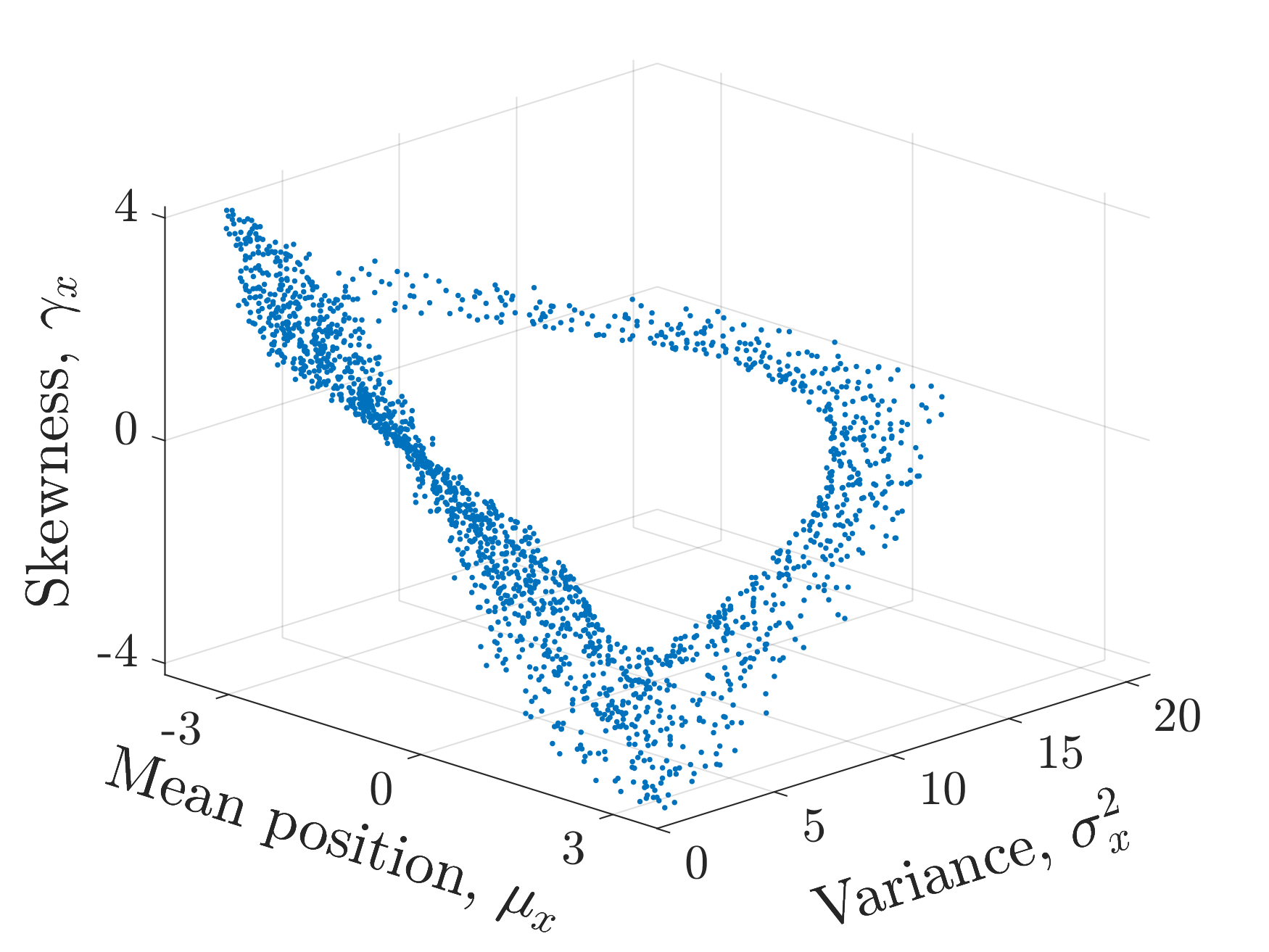}
        \caption{Data set}
        \label{fig:LWR_data}
    \end{subfigure} \hspace{0.02\textwidth}
    \begin{subfigure}[b]{0.32\textwidth}
        \centering
        \includegraphics[width=\textwidth]{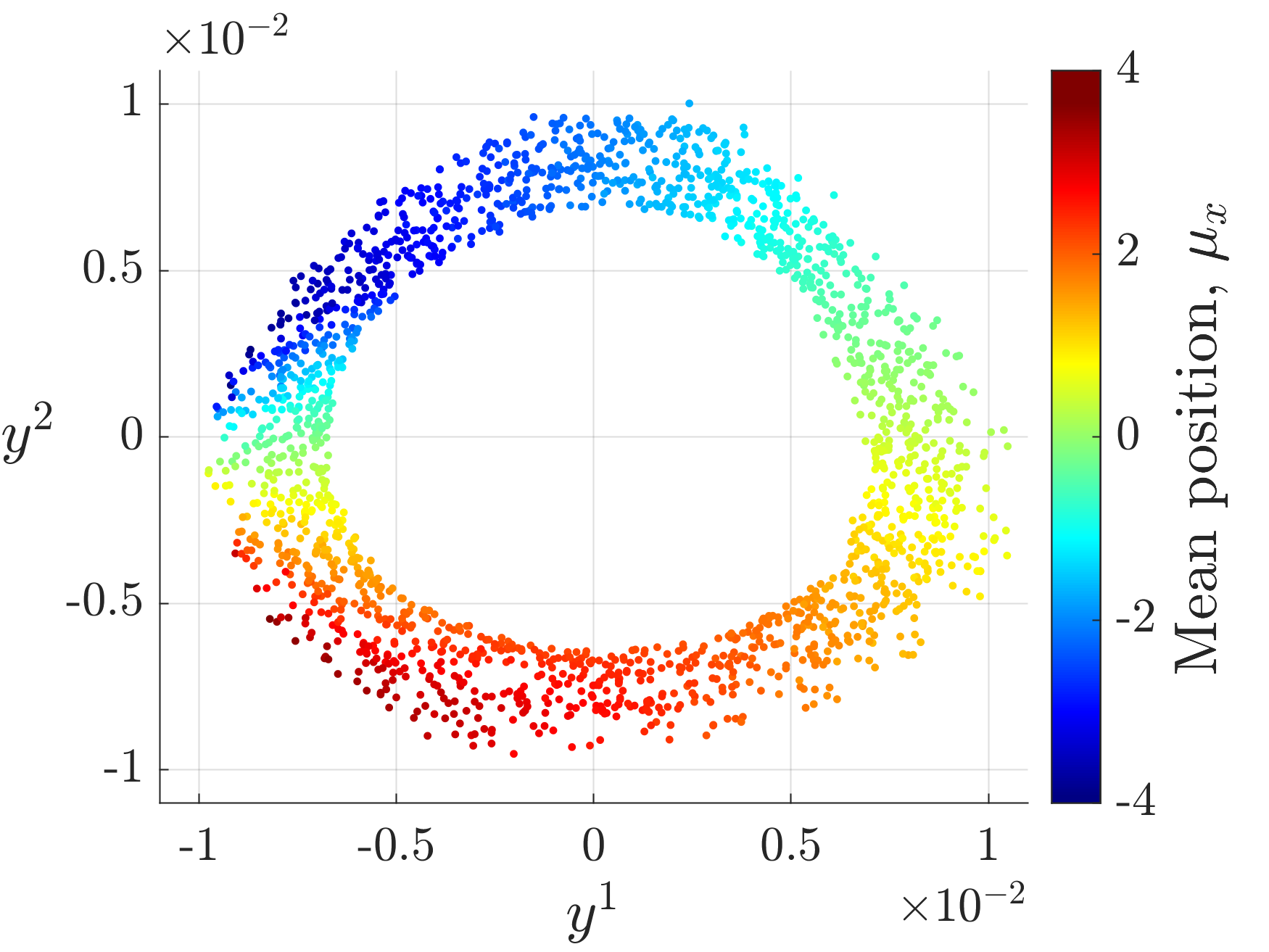}
        \caption{DM coordinates}
        \label{fig:LWR_DMcoords}
    \end{subfigure}

    \begin{subfigure}[b]{0.32\textwidth}
        \centering
        \includegraphics[width=\textwidth]{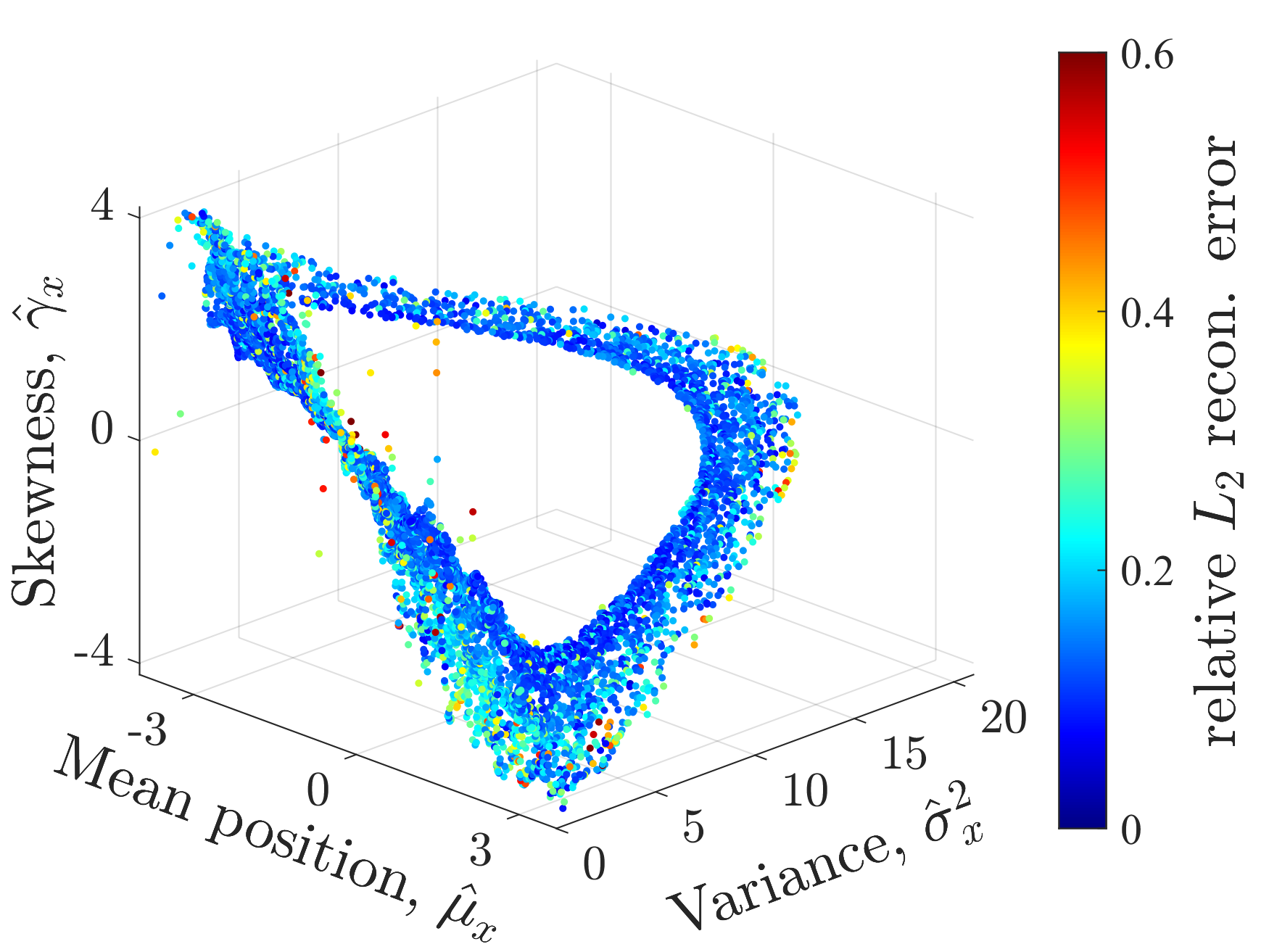}
        \caption{RANDSMAP-RFF ($P=N/2$)}
        \label{fig:LWR_DM_RFNNf_recon}
    \end{subfigure}
    \hfill
    \begin{subfigure}[b]{0.32\textwidth}
        \centering
        \includegraphics[width=\textwidth]{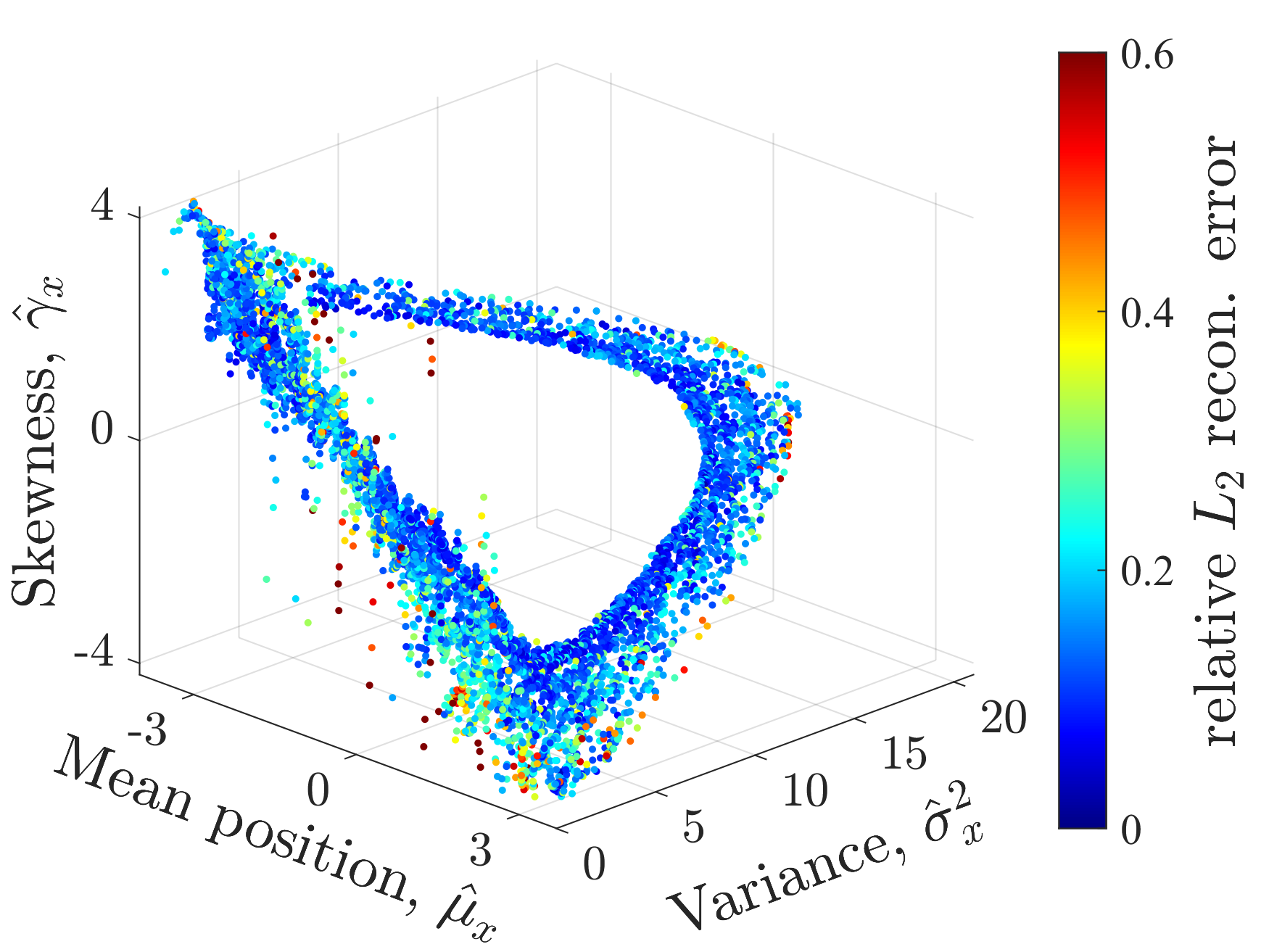}
        \caption{RANDSMAP-MS-RFF ($P=N$)}
        \label{fig:LWR_DM_RFNNfMK_recon}
    \end{subfigure}
    \hfill
    \begin{subfigure}[b]{0.32\textwidth}
        \centering
        \includegraphics[width=\textwidth]{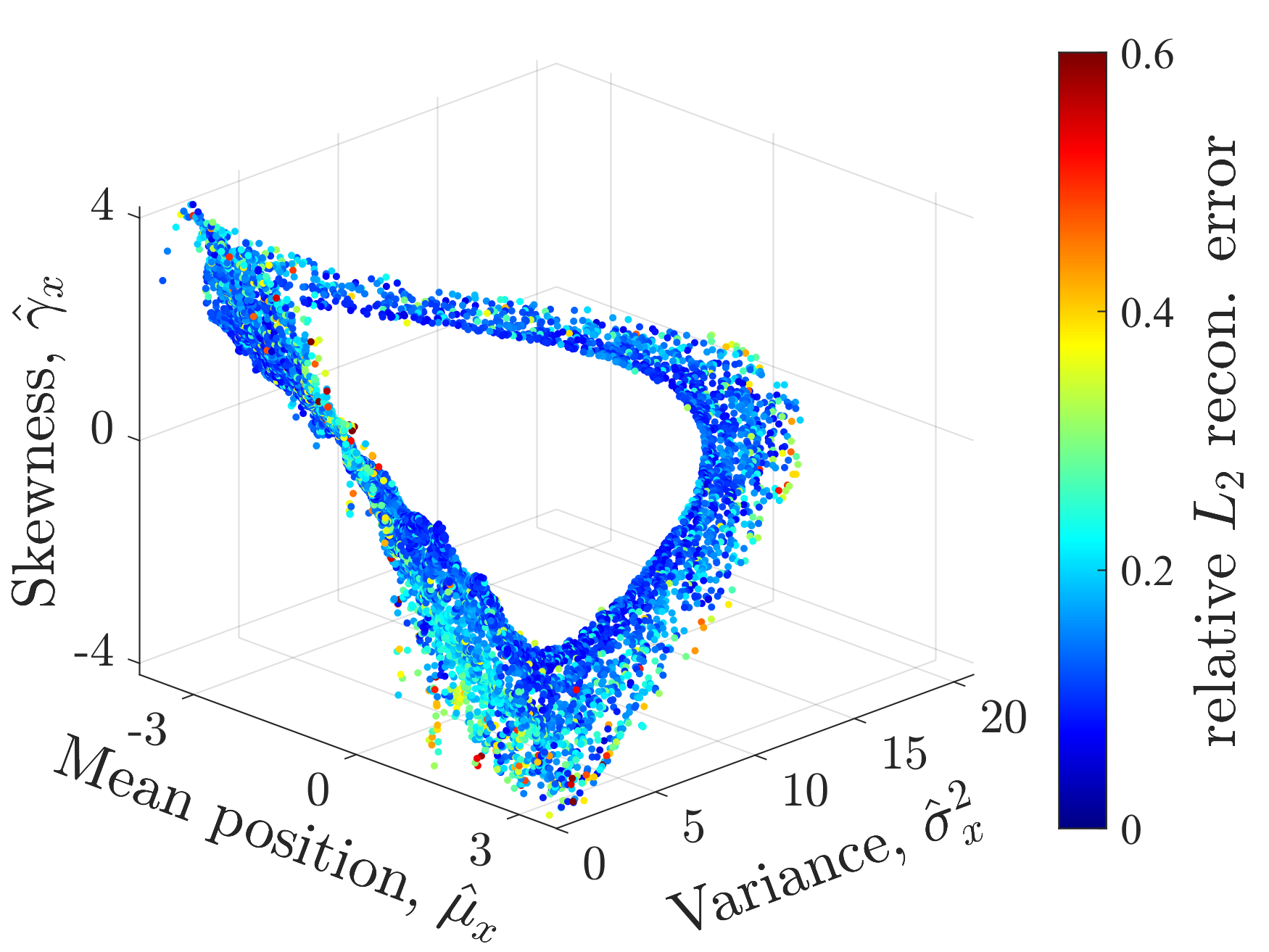}
        \caption{RANDSMAP-Sig ($P=N$)}
        \label{fig:LWR_DM_RFNNs_recon}
    \end{subfigure}

    \begin{subfigure}[b]{0.32\textwidth}
        \centering
        \includegraphics[width=\textwidth]{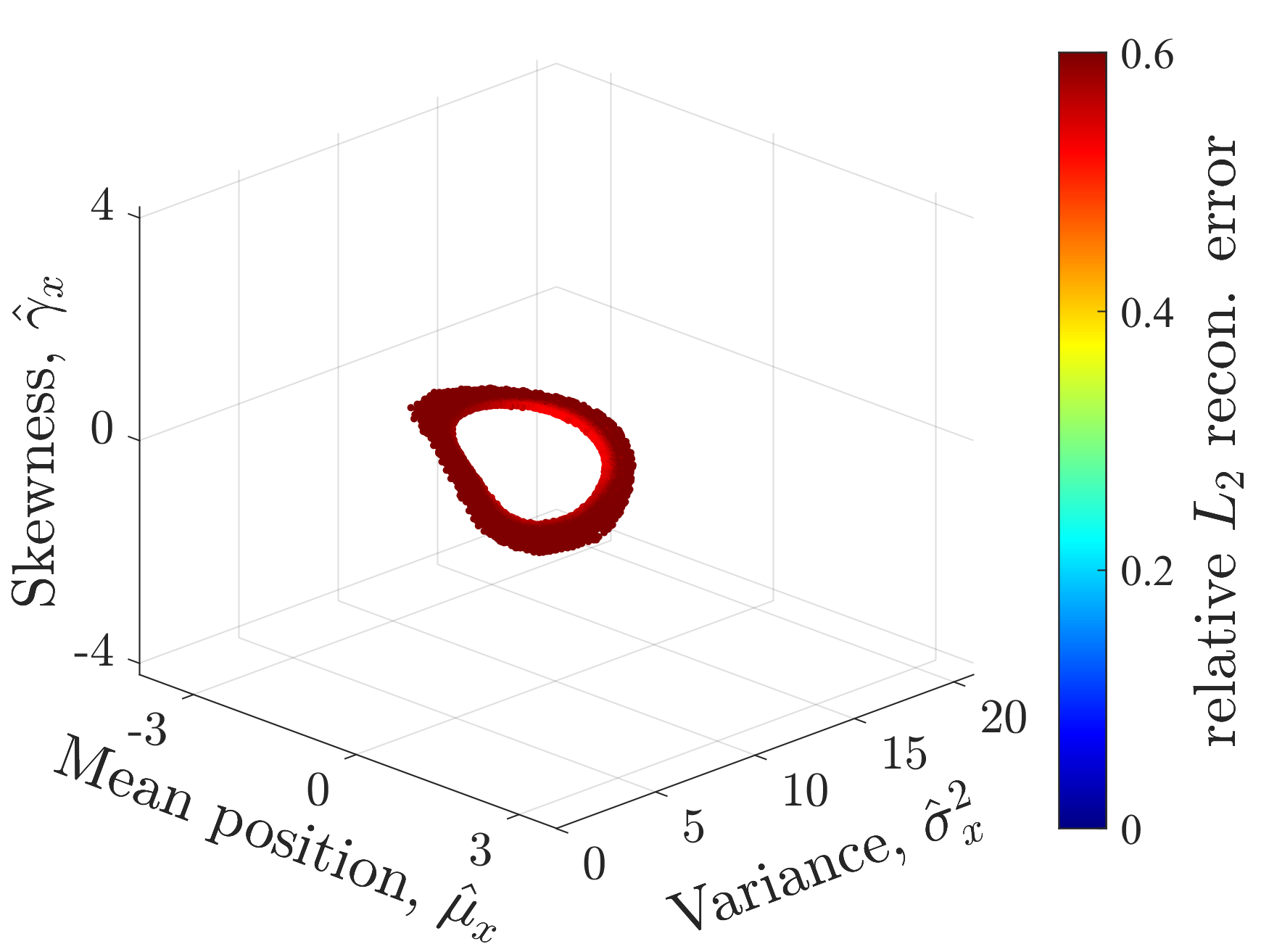}
        \caption{DDM}
        \label{fig:LWR_DM_GH_recon}
    \end{subfigure}\hspace{0.02\textwidth}
    \begin{subfigure}[b]{0.32\textwidth}
        \centering
        \includegraphics[width=\textwidth]{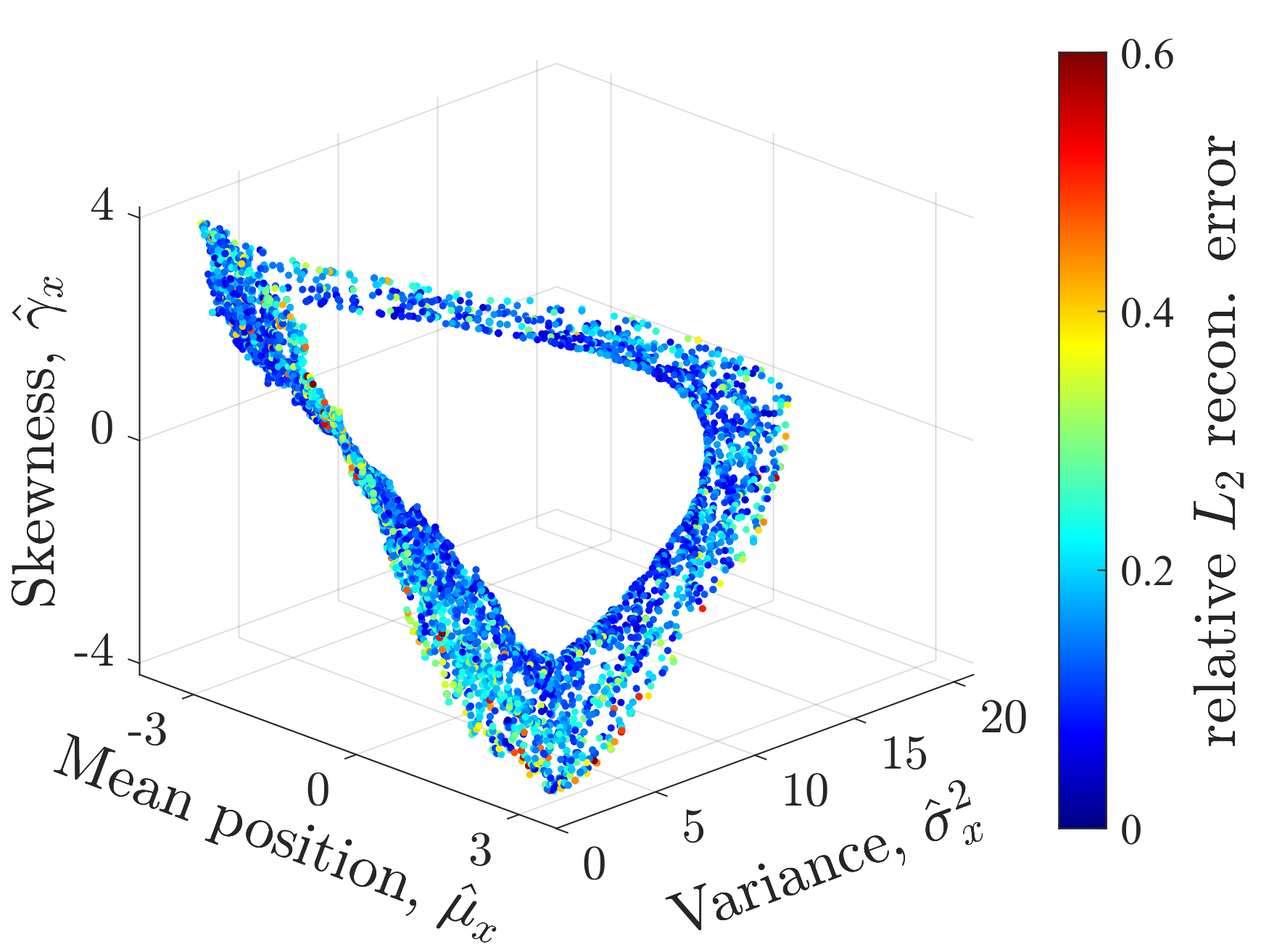}
        \caption{$k$-NN}
        \label{fig:LWR_DM_kNN_recon}
    \end{subfigure}
    \caption{LWR 1D traffic model dataset ($M=400$) with $N=2000$ training points. Because the density profiles are 400‑dimensional, we visualize them via three summary statistics: mean position $\mu_x$, variance $\sigma_x^2$, and skewness $\gamma_x$. Panel~\ref{fig:LWR_data} shows the training data represented in this statistics space, while panel~\ref{fig:LWR_DMcoords} shows the $d=2$ DM coordinates $[y^1, y^2]^\top$, colored by the mean position $\mu_x$ of each profile. Panels~\ref{fig:LWR_DM_RFNNf_recon}-\ref{fig:LWR_DM_kNN_recon} show the reconstructed density profiles projected onto the statistic space ($\mu_x$, $\sigma_x^2$, $\gamma_x$) and their per‑profile relative $L_2$ error $e_{2,i}$ (Eq.~\eqref{eq:recon_errors} for the five decoders: RANDSMAP-RFF with $P=N/2$, RANDSMAP-MS-RFF with $P=N$, RANDSMAP-Sig with $P=N$, DDM and $k$-NN. For the RANDSMAP decoders, the displayed configuration is the one achieving the lowest training reconstruction error among all tested variants (see Fig.~\ref{fig:LWR_dec_tr} for the full comparison).}
    \label{fig:LWR}
\end{figure}

\clearpage
\newpage
\renewcommand{\theequation}{H.\arabic{equation}}
\renewcommand{\thefigure}{H.\arabic{figure}}
\renewcommand{\thetable}{H.\arabic{table}}
\setcounter{equation}{0}
\setcounter{figure}{0}
\setcounter{table}{0}
\section{Reconstruction results of the 2D Rotated MRI Image Dataset (\texorpdfstring{$M=128\times128$}{M=128 x 128})} \label{app:MRI}

Here, we provide detailed reconstruction results for the 2D rotated MRI image dataset in Section~\ref{sb:MRI}. Figure~\ref{fig:MRIimage_data} provides indicative images of belonging to the generating dataset. Tables~\ref{tab:MRI_dec_tr_all} and \ref{tab:MRI_dec_ts_all} enlist the training and testing set performance of all decoders considered for this benchmark. Figure~\ref{fig:MRI_DM_GH} demonstrates that the poor performance of the DDM decoder is not due to poor tuning, via an exhaustive 2D grid search. Figure~\ref{fig:MRI} demonstrates per-image profile reconstructions provided by all decoders, ordered by the rotation angle $\theta$ used for constructing each rotated image. Finally, Fig.~\ref{fig:MRI_recon} shows a representative MRI image and its reconstructions with the five decoders considered, alongside absolute errors.

\begin{figure}[!htbp]
    \centering
    \includegraphics[width=\textwidth]{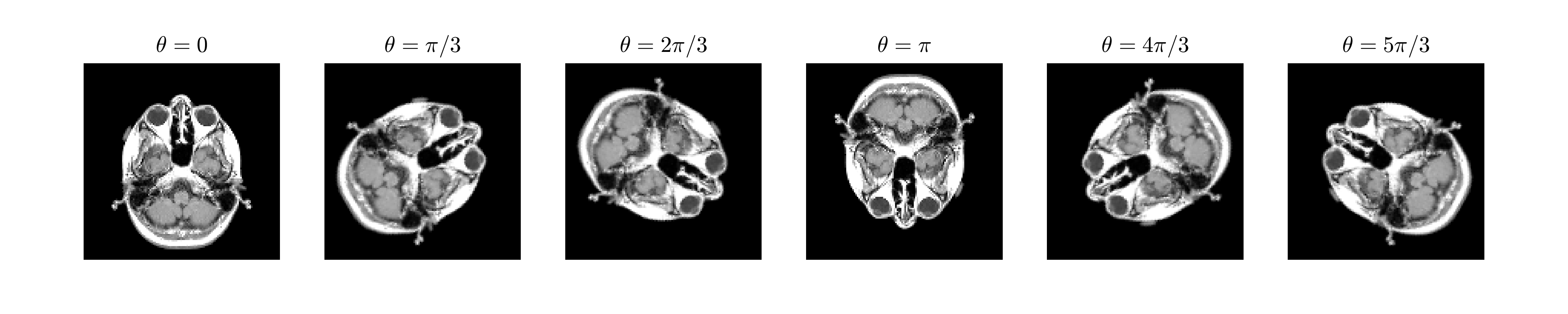}
    \caption{Representative rotated MRI images from the 2D rotated MRI images dataset ($M=128 \times 128$). Shown are samples at rotation angles $\theta = 0,\ \pi/3,\ 2\pi/3,\ \pi,\ 4\pi/3,\ 5\pi/3$.}
    \label{fig:MRIimage_data}
\end{figure}

\begin{table}[htbp]
\centering
\caption{Training set performance for the 2D rotated MRI images dataset ($M=128 \times 128$). Detailed reconstruction metrics for $N=720$ training points, using $d=2$-dim. DM embeddings. Decoders are compared based on the relative $L_2$ and $L_\infty$ mean reconstruction errors, $e_{2,i}$ and $e_{\infty,i}$ (Eq.~\eqref{eq:recon_errors}), computational time (in seconds), and mean conservation error $e_{con,i}$. For stochastic decoders (RANDSMAP-RFF, RANDSMAP-MS-RFF, RANDSMAP-Sig, and RFNN-Sig), metrics show the median with 5-95\% percentiles in parentheses, computed over 100 random initializations. Deterministic decoders (DDM, $k$-NN) report single values.}
\label{tab:MRI_dec_tr_all}
\resizebox{\textwidth}{!}{%
\begin{tabular}{@{}llcccc@{}}
\multirow{2}{*}{Decoder} & \multirow{2}{*}{$P$} & \multicolumn{2}{c}{Mean reconstruction error ($\times 10^{-2}$)} & Mean conservation & \multirow{2}{*}{Comp. Time (s)} \\
\cmidrule(lr){3-4}
& & relative $L_2$, $e_{2,i}$ &  relative $L_\infty$, $e_{\infty,i}$ & error, $e_{con,i}$ ($\times 10^{-7}$) & \\
\midrule
\midrule
\multirow{3}{*}{RANDSMAP-RFF} 
&	$N$	    & 3.296 (3.201--3.405) & 12.152 (11.583--12.636) & 5.066 (3.770--6.224)                    & 6.634 (6.604--6.653) \\
&	$N/2$	& 4.052 (3.933--4.254) & 13.607 (12.996--14.207) & 6.326 (2.242--28.527) $\times 10^{-6}$  & 4.457 (4.449--4.481) \\
&	$N/4$	& 5.717 (5.398--6.342) & 20.929 (20.365--21.479) & 2.413 (2.121--2.941) $\times 10^{-8}$   & 3.410 (3.405--3.416) \\
\midrule

\multirow{3}{*}{RANDSMAP-MS-RFF} 
&	$N$	     & 2.782 (2.687--3.464)  &  8.981 (8.660--10.465)  & 4.216 (2.321--20.225)                  & 6.709 (6.685--6.736) \\
&	$N/2$	 & 3.810 (3.605--4.864)  & 12.025 (11.153--14.395) & 0.131 (0.003--13.461) $\times 10^{-1}$ & 4.525 (4.513--4.549) \\
&	$N/4$	 & 6.079 (5.497--10.978) & 22.550 (20.315--27.916) & 0.759 (0.022--5.087)  $\times 10^{-6}$ & 3.591 (3.580--3.613) \\
\midrule

\multirow{3}{*}{RANDSMAP-Sig} 
&	$N$	    & 2.882 (2.704--3.043) & 11.537 (10.712--12.263) & 7.288 (2.981--15.268)                  & 6.945 (6.928--6.971) \\
&	$N/2$	& 4.127 (3.872--4.351) & 16.557 (15.512--17.429) & 2.588 (0.341--6.724) $\times 10^{-1}$  & 4.474 (4.465--4.498) \\
&	$N/4$	& 6.222 (5.939--6.561) & 26.943 (25.810--28.007) & 6.652 (0.089--20.264) $\times 10^{-2}$ & 3.438 (3.430--3.464) \\
\midrule

\multirow{3}{*}{RFNN-Sig} 
&	$N$	     & 2.882 (2.704--3.043) & 11.537 (10.712--12.263) & 1.860 (1.404--2.184) $\times 10^3$ & 5.404 (5.385--5.428) \\
&	$N/2$	 & 4.127 (3.872--4.351) & 16.557 (15.512--17.429) & 1.488 (1.107--1.821) $\times 10^3$ & 3.438 (3.433--3.450) \\
&	$N/4$	 & 6.222 (5.939--6.561) & 26.943 (25.810--28.007) & 1.233 (0.853--1.820) $\times 10^3$ & 2.544 (2.541--2.553) \\
\midrule

DDM  &	-	& 34.940 & 82.110 & 5.964 $\times 10^{5}$  & 4.629    \\
$k$-NN &	-	& 2.230  & 9.686  & 6.273 $\times 10^{-9}$ & 1857.641 \\

\bottomrule
\end{tabular}%
}
\end{table}

\begin{table}[htbp]
\centering
\caption{Testing set performance for the 2D rotated MRI images dataset ($M=128 \times 128$). Detailed reconstruction metrics for $N=720$ training points, using $d=2$-dim. DM embeddings. Decoders are compared based on the relative $L_2$ and $L_\infty$ mean reconstruction errors, $e_{2,i}$ and $e_{\infty,i}$ (Eq.~\eqref{eq:recon_errors}), computational time (in seconds), and mean conservation error $e_{con,i}$. For stochastic decoders (RANDSMAP-RFF, RANDSMAP-MS-RFF, RANDSMAP-Sig, and RFNN-Sig), metrics show the median with 5-95\% percentiles in parentheses, computed over 100 random initializations. Deterministic decoders (DDM, $k$-NN) report single values.}
\label{tab:MRI_dec_ts_all}
\resizebox{\textwidth}{!}{%
\begin{tabular}{@{}llcccc@{}}
\multirow{2}{*}{Decoder} & \multirow{2}{*}{$P$} & \multicolumn{2}{c}{Mean reconstruction error ($\times 10^{-2}$)} & Mean conservation & \multirow{2}{*}{Comp. Time (s)} \\
\cmidrule(lr){3-4}
& & relative $L_2$, $e_{2,i}$ &  relative $L_\infty$, $e_{\infty,i}$ & error, $e_{con,i}$ ($\times 10^{-7}$) & \\
\midrule
\midrule

\multirow{3}{*}{RANDSMAP-RFF} 
&	$N$	    & 4.015 (3.929--4.109) & 16.305 (15.912--16.633) & 6.985 (4.804--9.561)                   & 1.479 (1.393--1.534) \\
&	$N/2$	& 4.903 (4.787--5.087) & 17.727 (17.190--18.264) & 7.874 (2.653--34.676) $\times 10^{-6}$ & 1.290 (1.233--1.353) \\
&	$N/4$	& 6.882 (6.543--7.583) & 26.490 (26.011--27.036) & 2.479 (2.233--3.035) $\times 10^{-8}$  & 1.205 (1.144--1.273) \\
\midrule

\multirow{3}{*}{RANDSMAP-MS-RFF} 
&	$N$	     & 3.910 (3.742--5.029)   & 13.696 (13.270--14.968) & 6.441 (3.075--33.768)                  & 1.511 (1.421--1.591) \\
&	$N/2$	 & 5.117 (4.748--6.791)   & 17.259 (16.048--21.134) & 0.193 (0.004--18.154) $\times 10^{-1}$ & 1.329 (1.258--1.400) \\
&	$N/4$	 & 7.603 (6.832--14.111)  & 29.002 (26.491--34.696) & 0.937 (0.023--6.634) $\times 10^{-6}$  & 1.220 (1.141--1.287) \\
\midrule

\multirow{3}{*}{RANDSMAP-Sig} 
&	$N$	     & 3.644 (3.482--3.802) & 15.699 (15.064--16.188) & 10.991 (4.381--25.037)                 & 1.501 (1.423--1.562) \\
&	$N/2$	 & 5.034 (4.804--5.260) & 20.820 (19.899--21.598) & 2.724 (0.340--6.816) $\times 10^{-1}$  & 1.315 (1.234--1.376) \\
&	$N/4$	 & 7.367 (7.086--7.726) & 32.468 (31.451--33.700) & 7.165 (0.101--19.934) $\times 10^{-2}$ & 1.206 (1.143--1.273) \\
\midrule

\multirow{3}{*}{RFNN-Sig} 
&	$N$	     & 3.644 (3.482--3.802) & 15.699 (15.064--16.188) & 2.153 (1.708--2.527) $\times 10^3$ & 1.504 (1.439--1.567) \\
&	$N/2$	 & 5.034 (4.804--5.260) & 20.820 (19.899--21.598) & 1.720 (1.358--2.072) $\times 10^3$ & 1.337 (1.257--1.409) \\
&	$N/4$	 & 7.367 (7.086--7.726) & 32.468 (31.451--33.700) & 1.393 (0.985--2.144) $\times 10^3$ & 1.245 (1.174--1.319) \\
\midrule

DDM  &	-	& 35.561 & 83.406 & 6.747 $\times 10^{5}$  & 1.310 \\
$k$-NN &	-	& 4.318  & 20.196 & 5.791 $\times 10^{-9}$ & 253.217 \\

\bottomrule
\end{tabular}%
}
\end{table}

\begin{figure}[!htbp]
    \centering
    \begin{subfigure}[b]{0.32\textwidth}
        \centering
        \includegraphics[width=\textwidth]{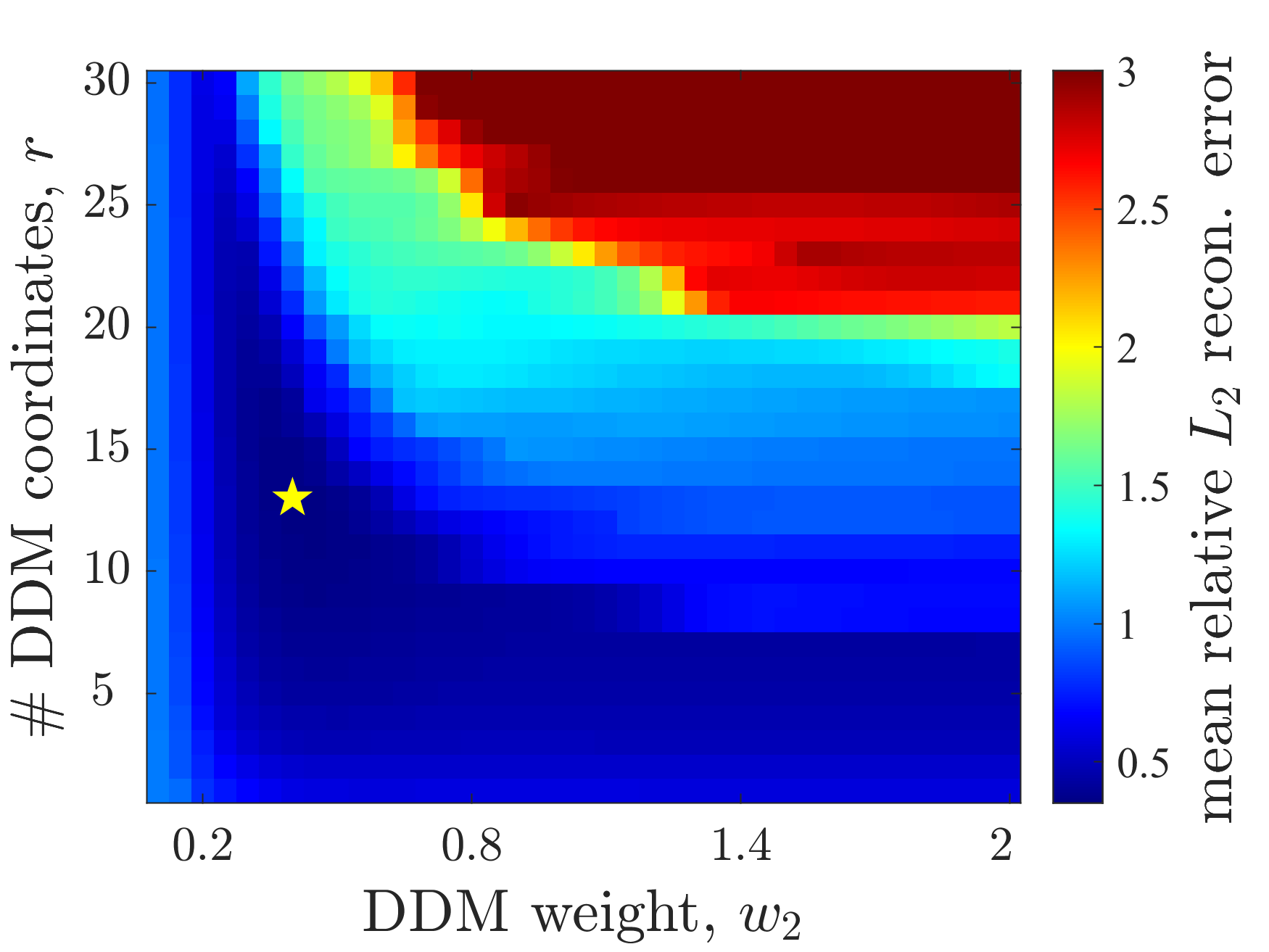}
        \caption{Exhaustive ($w_2,r$) tuning}
        \label{fig:MRI_DM_GH_tuning}
    \end{subfigure}
    \hfill
    \begin{subfigure}[b]{0.32\textwidth}
        \centering
        \includegraphics[width=\textwidth]{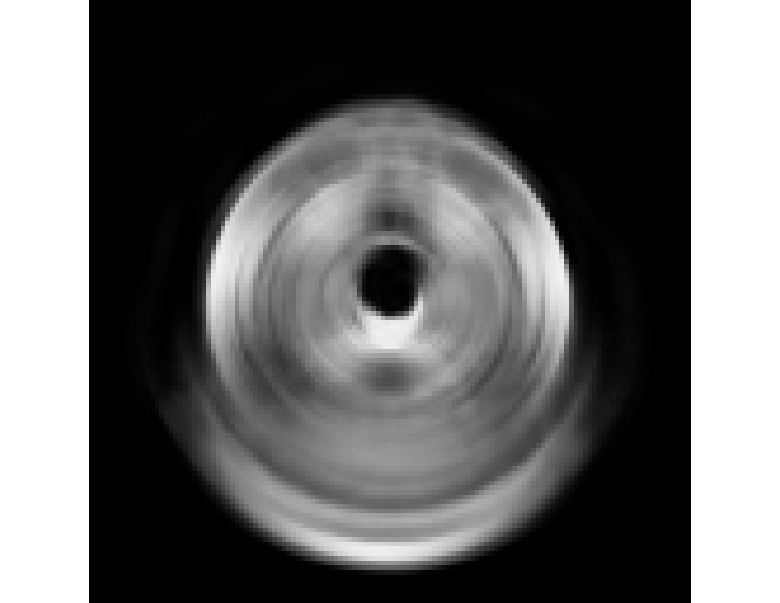}
        \caption{Reconstruction, $(w_2,r)=(0.4,13)$}
        \label{fig:MRI_DM_GH_k13}
    \end{subfigure}
    \hfill
    \begin{subfigure}[b]{0.32\textwidth}
        \centering
        \includegraphics[width=\textwidth]{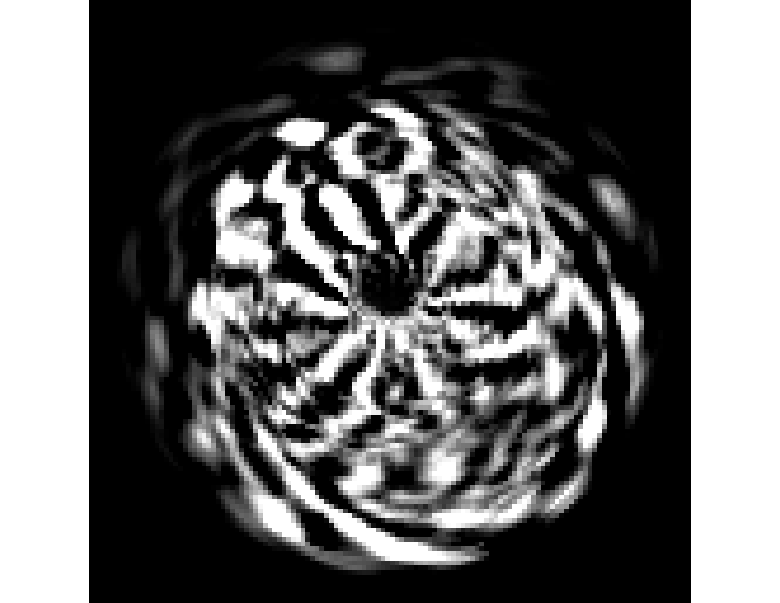}
        \caption{Reconstruction, $(w_2,r)=(0.4,25)$}
        \label{fig:MRI_DM_GH_k25}
    \end{subfigure}
    \caption{Reconstruction with the DDM decoder for the 2D rotated MRI images dataset ($M=128 \times 128$). Panel~\ref{fig:MRI_DM_GH_tuning} shows exhaustive hyperparameter tuning over the DDM weight $w_2$ and the number of DDM coordinates $r$. The yellow star indicates the optimal combination $(w_2,r)=(0.4,13)$. Panels \ref{fig:MRI_DM_GH_k13} and \ref{fig:MRI_DM_GH_k25} show the reconstructed MRI slice (corresponding to the original MRI slice in Fig.~\ref{fig:MRIslice_orig}) obtained with $w_2=0.4$ and $r=13$ (optimal) or $r=25$ DDM coordinates, respectively.}
    \label{fig:MRI_DM_GH}
\end{figure}

\begin{figure}[!htbp]
    \centering
    \begin{subfigure}[b]{0.32\textwidth}
        \centering
        \includegraphics[width=\textwidth]{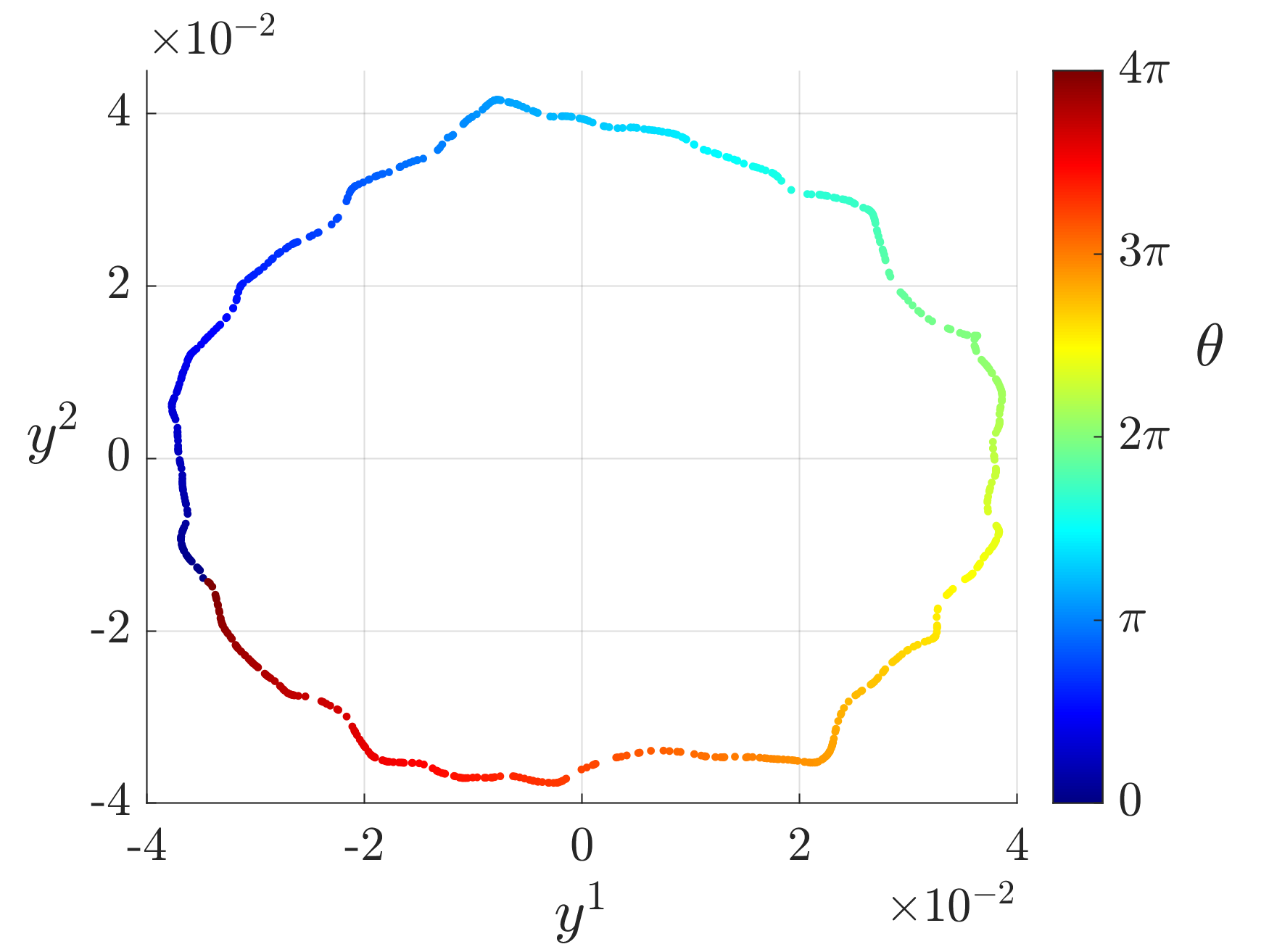}
        \caption{DM coordinates}
        \label{fig:MRI_DMcoords}
    \end{subfigure} 
    \hfill
    \begin{subfigure}[b]{0.32\textwidth}
        \centering
        \includegraphics[width=\textwidth]{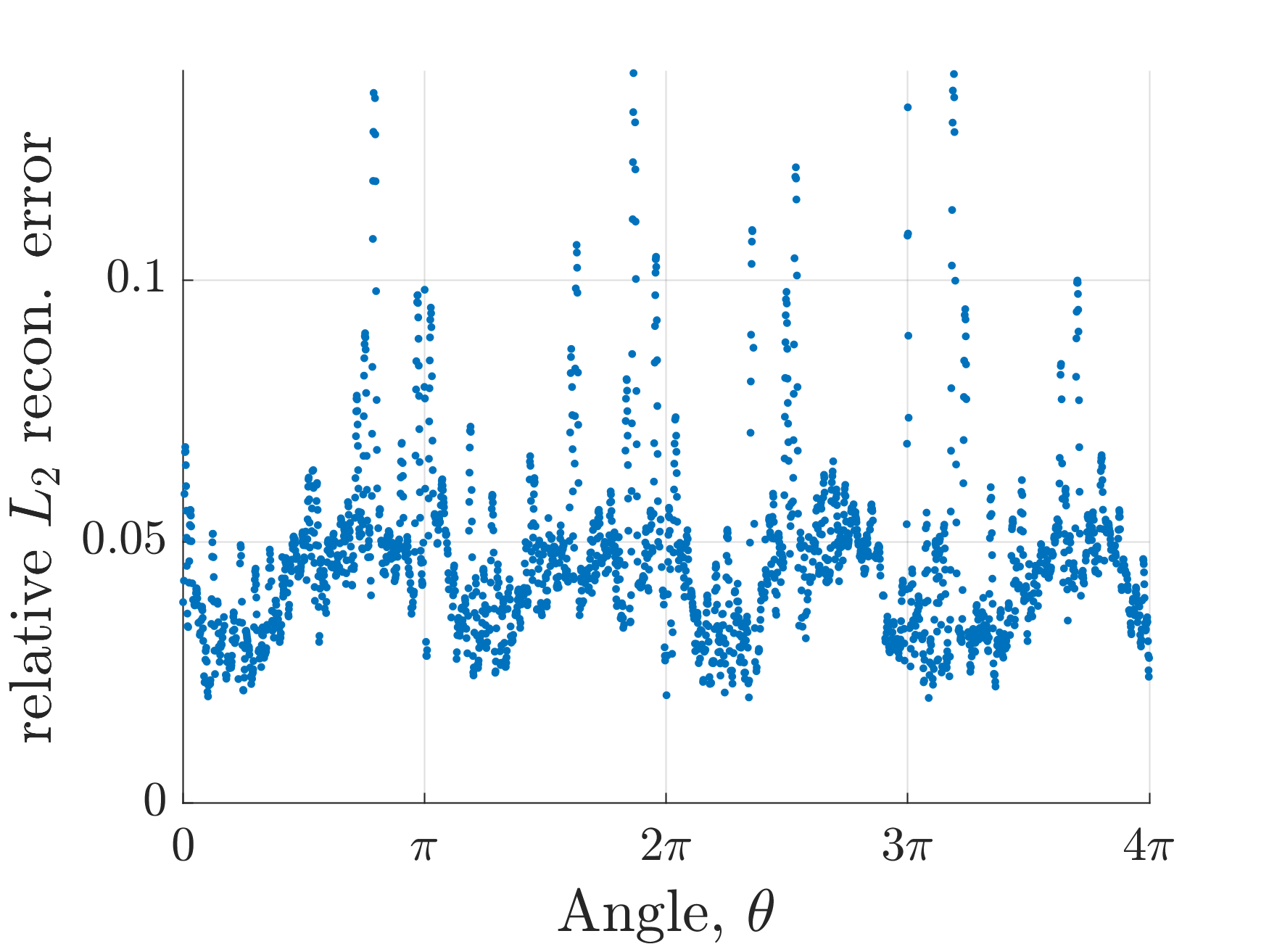}
        \caption{RANDSMAP-RFF ($P=N$)}
        \label{fig:MRI_DM_RFNNf_recon}
    \end{subfigure}
    \hfill
    \begin{subfigure}[b]{0.32\textwidth}
        \centering
        \includegraphics[width=\textwidth]{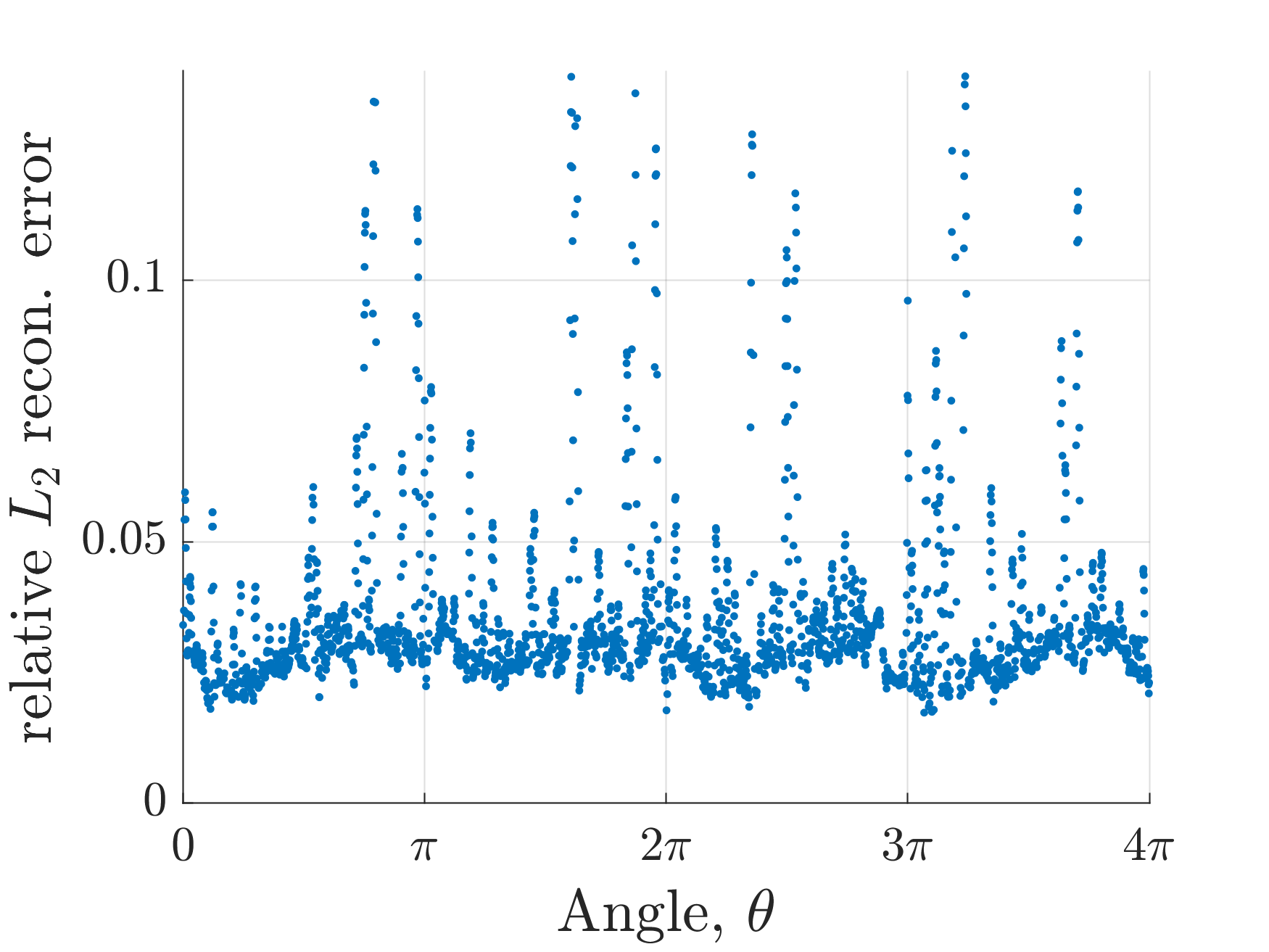}
        \caption{RANDSMAP-MS-RFF ($P=N$)}
        \label{fig:MRI_DM_RFNNfMK_recon}
    \end{subfigure}

    \begin{subfigure}[b]{0.32\textwidth}
        \centering
        \includegraphics[width=\textwidth]{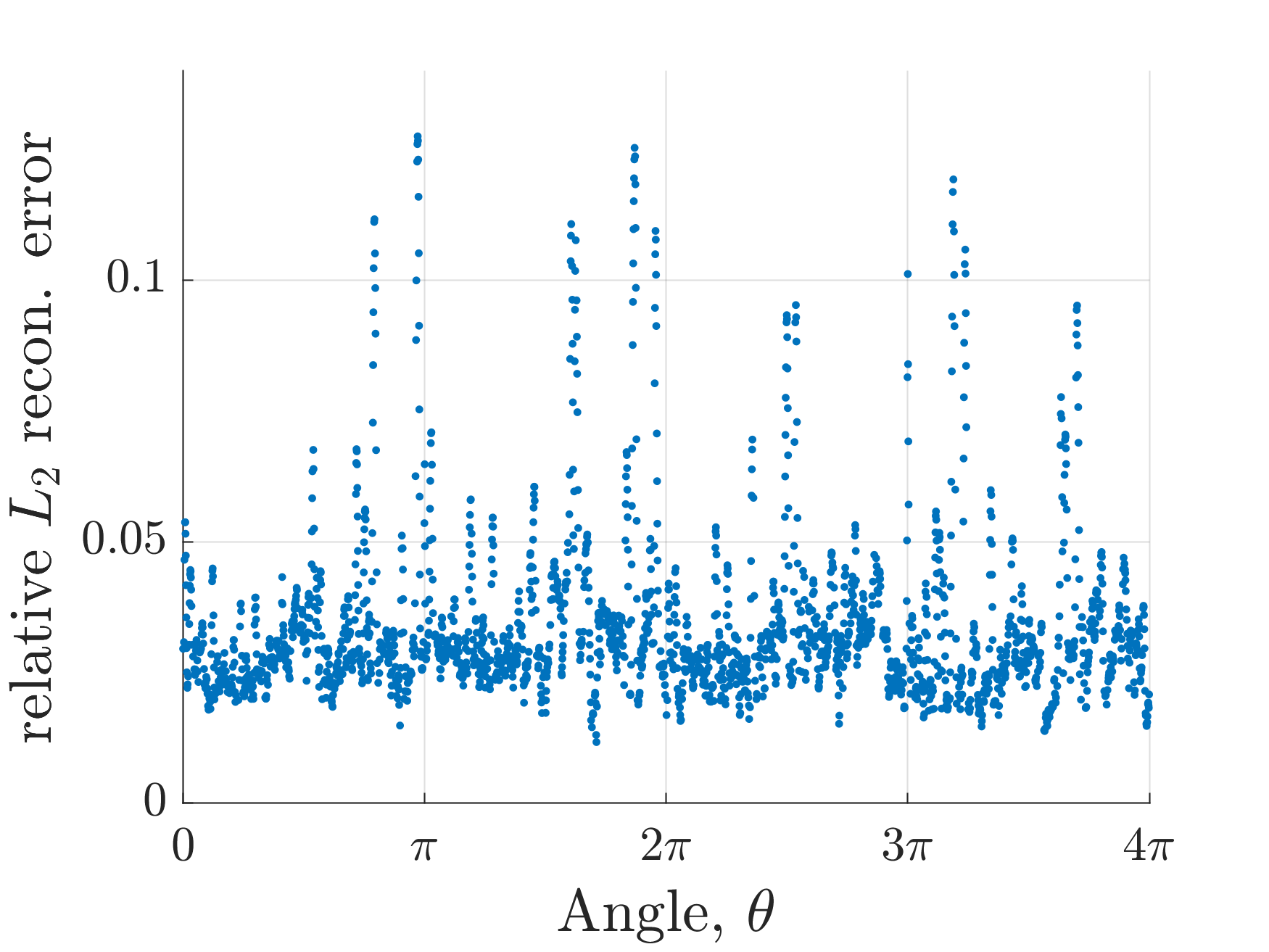}
        \caption{RANDSMAP-Sig ($P=N$)}
        \label{fig:MRI_DM_RFNNs_recon}
    \end{subfigure}
    \hfill
    \begin{subfigure}[b]{0.32\textwidth}
        \centering
        \includegraphics[width=\textwidth]{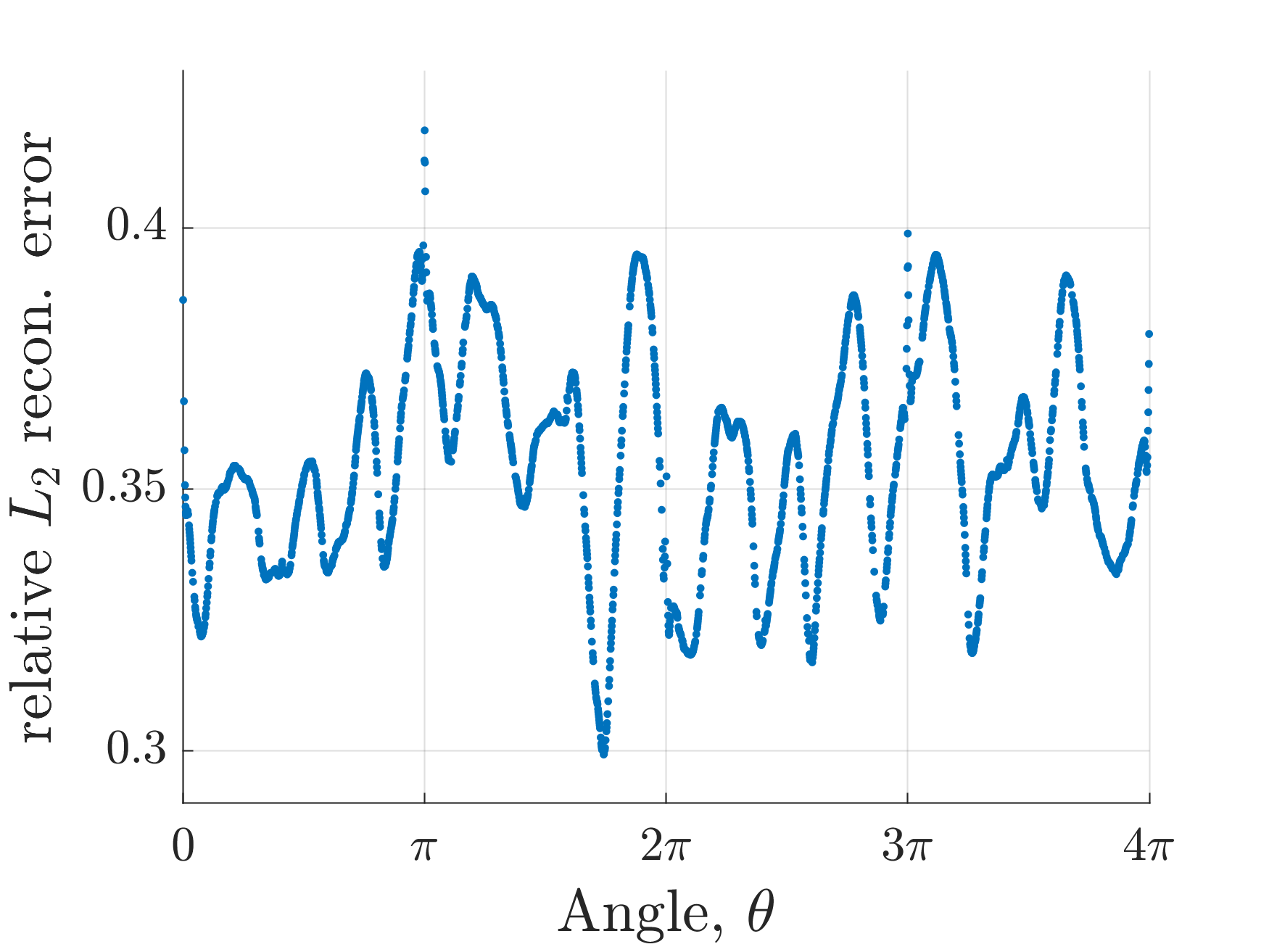}
        \caption{DDM}
        \label{fig:MRI_DM_GH_recon}
    \end{subfigure}
    \hfill
    \begin{subfigure}[b]{0.32\textwidth}
        \centering
        \includegraphics[width=\textwidth]{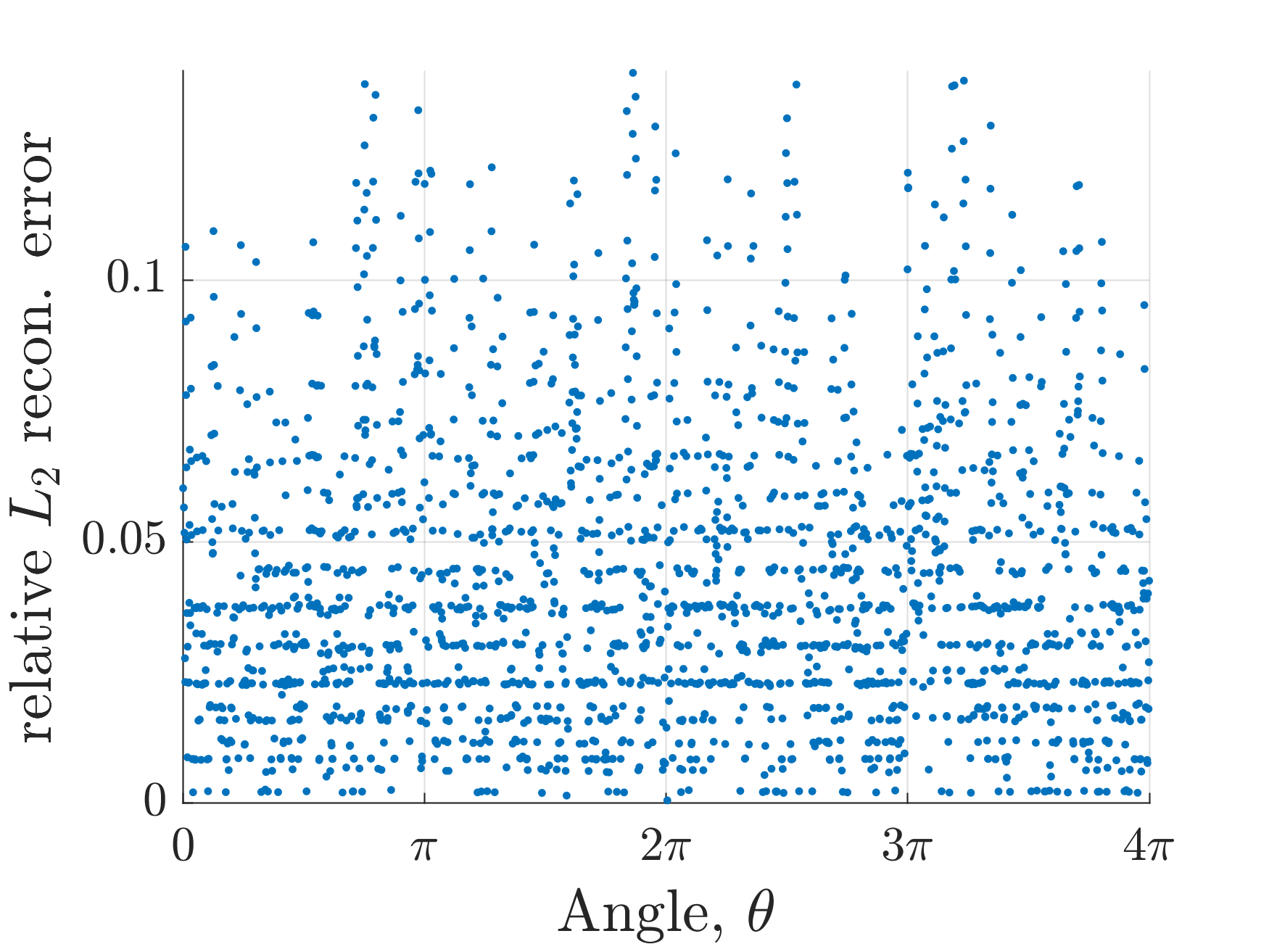}
        \caption{$k$-NN}
        \label{fig:MRI_DM_kNN_recon}
    \end{subfigure}
    \caption{2D rotated MRI images dataset ($M=128 \times 128$) with $N=720$ training points. Panel~\ref{fig:MRI_DMcoords} shows the $d=2$ DM coordinates $[y^1, y^2]^\top$, colored by the rotation angle $\theta$ of each MRI image. Panels~\ref{fig:MRI_DM_RFNNf_recon}-\ref{fig:MRI_DM_kNN_recon} show the per-image relative $L_2$ reconstruction errors versus the rotation angle $\theta$ for the five decoders: RANDSMAP-RFF with $P=N$, RANDSMAP-MS-RFF $P=N$, RANDSMAP-Sig with $P=N$, DDM, and $k$-NN. For the RANDSMAP decoders, the displayed configuration is the one achieving the lowest training reconstruction error among all tested variants (see Fig.~\ref{fig:MRI_dec_tr} for the full comparison).}
    \label{fig:MRI}
\end{figure}

\begin{figure}[!htbp]
    \centering
    \begin{subfigure}[b]{0.3\textwidth}
        \centering
        \includegraphics[width=0.8\textwidth]{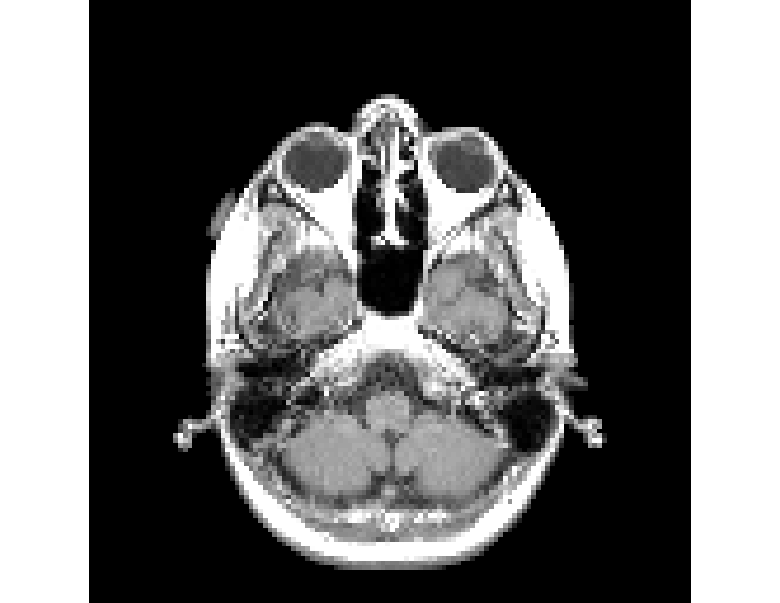}
        \caption{Original slice}
        \label{fig:MRIslice_orig}
    \end{subfigure}
    \hspace{2pt}
    \begin{subfigure}[b]{0.3\textwidth}
        \centering
        \includegraphics[width=0.8\textwidth]{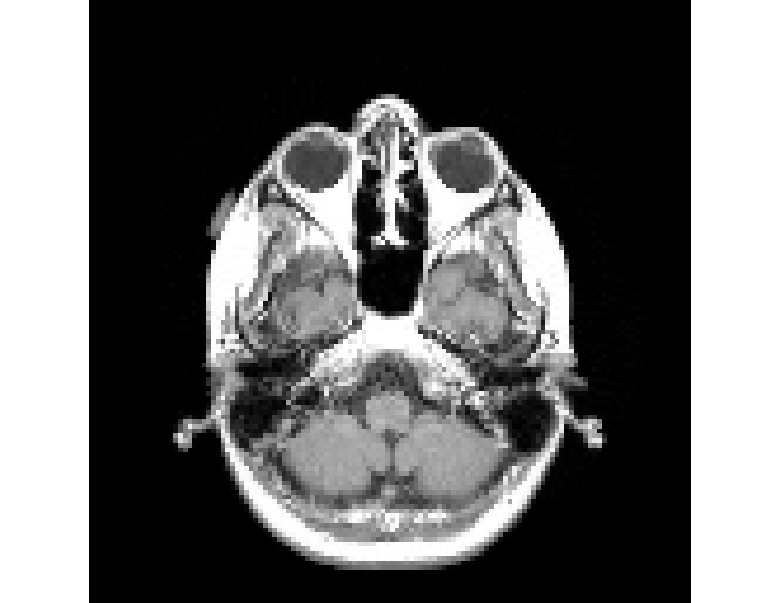}
        \caption{RANDSMAP-RFF ($P=N$)}
        \label{fig:MRIslice_RFNNf_recon}
    \end{subfigure}
    \hspace{2pt}
    \begin{subfigure}[b]{0.3\textwidth}
        \centering
        \includegraphics[width=0.8\textwidth]{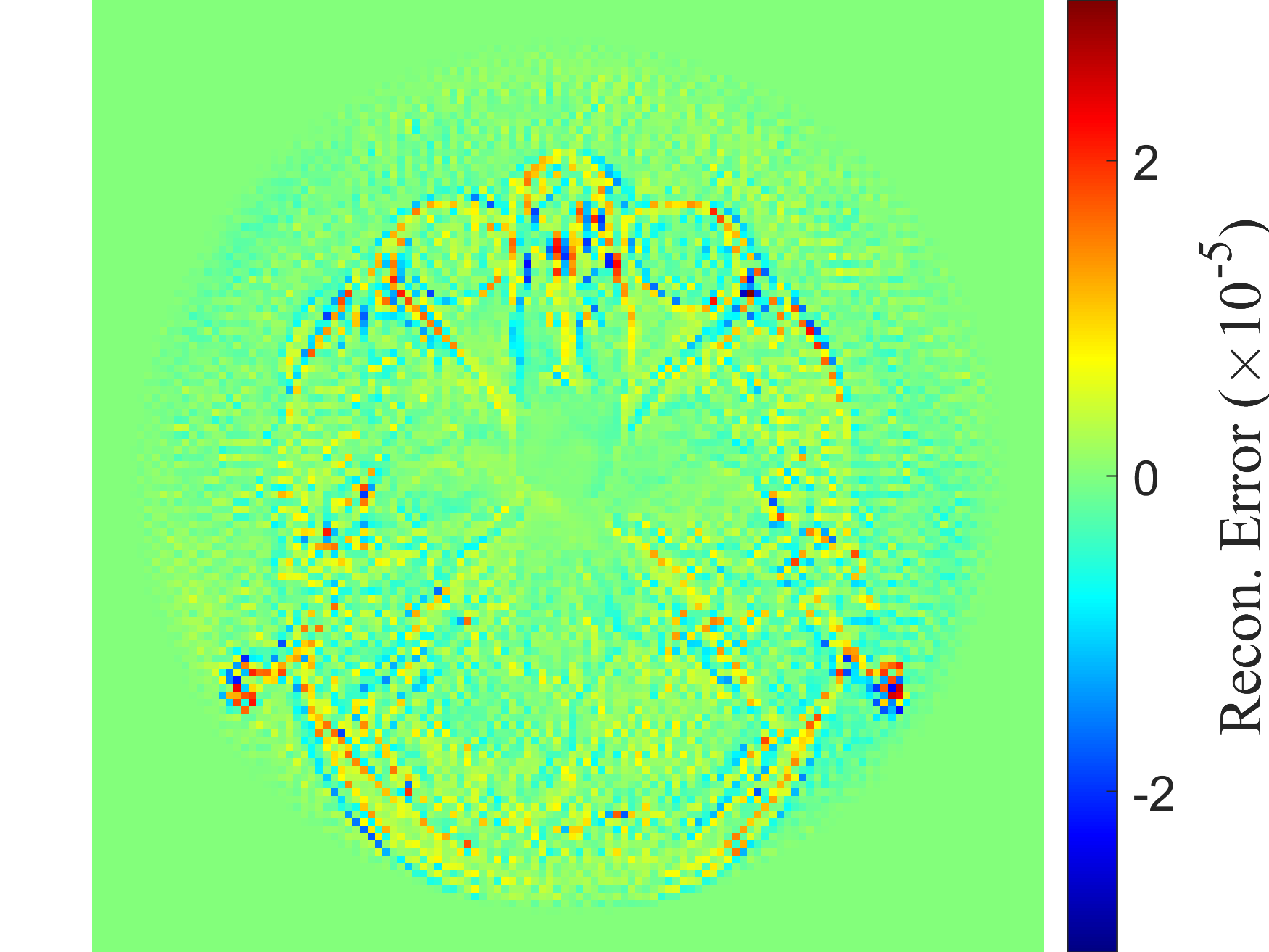}
        \caption{Absolute recon. error ($\times 10^{-5}$)}
        \label{fig:MRIslice_RFNNf_recon_err}
    \end{subfigure}
    \\
    \begin{subfigure}[b]{0.3\textwidth}
        \centering
        \includegraphics[width=0.8\textwidth]{FigsMRI/SliceOrig.png}
        \caption{Original slice}
    \end{subfigure}
    \hspace{2pt}
    \begin{subfigure}[b]{0.3\textwidth}
        \centering
        \includegraphics[width=0.8\textwidth]{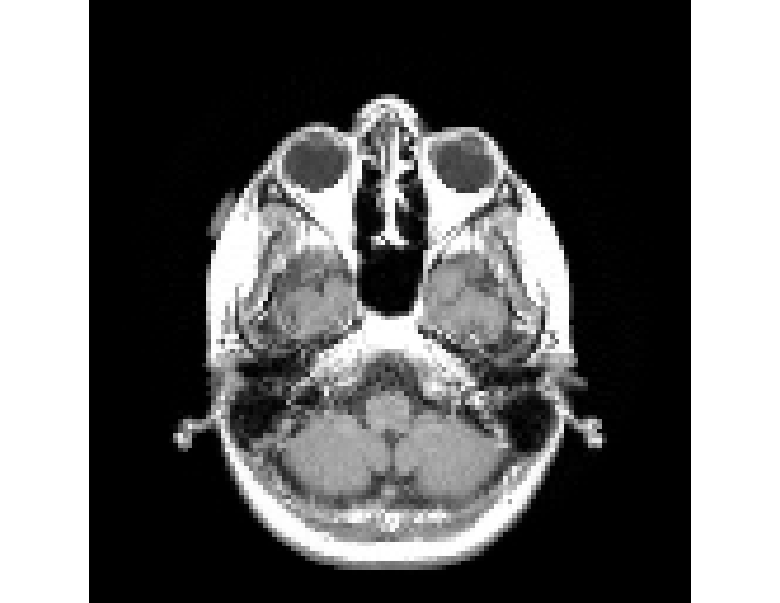}
        \caption{RANDSMAP-MS-RFF ($P=N$)}
        \label{fig:MRIslice_RFNNfMK_recon}
    \end{subfigure}
    \hspace{2pt}
    \begin{subfigure}[b]{0.3\textwidth}
        \centering
        \includegraphics[width=0.8\textwidth]{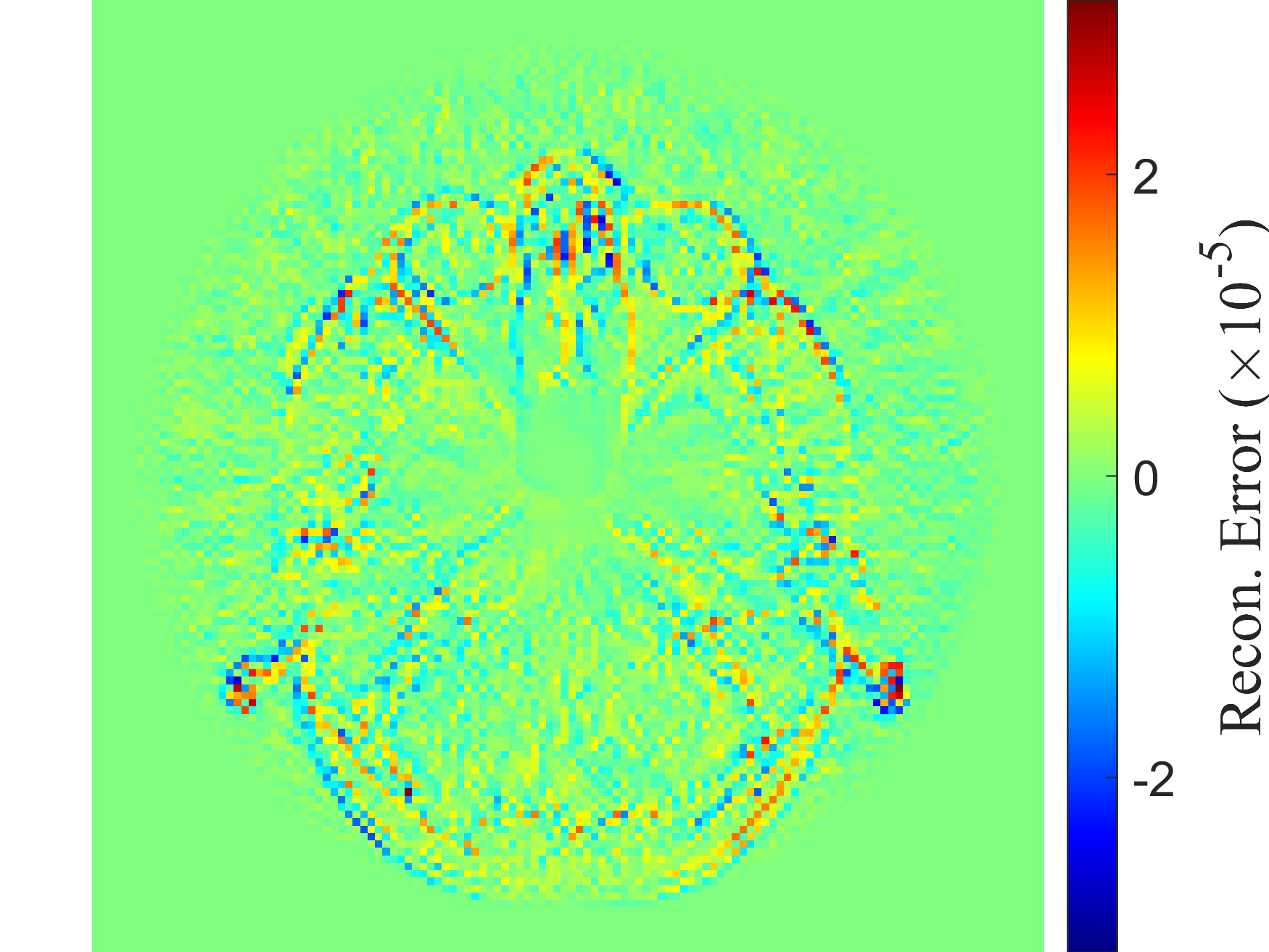}
        \caption{Absolute recon. error ($\times 10^{-5}$)}
        \label{fig:MRIslice_RFNNfMK_recon_err}
    \end{subfigure}
    \\
    \begin{subfigure}[b]{0.3\textwidth}
        \centering
        \includegraphics[width=0.8\textwidth]{FigsMRI/SliceOrig.png}
        \caption{Original slice}
    \end{subfigure}
    \hspace{2pt}
    \begin{subfigure}[b]{0.3\textwidth}
        \centering
        \includegraphics[width=0.8\textwidth]{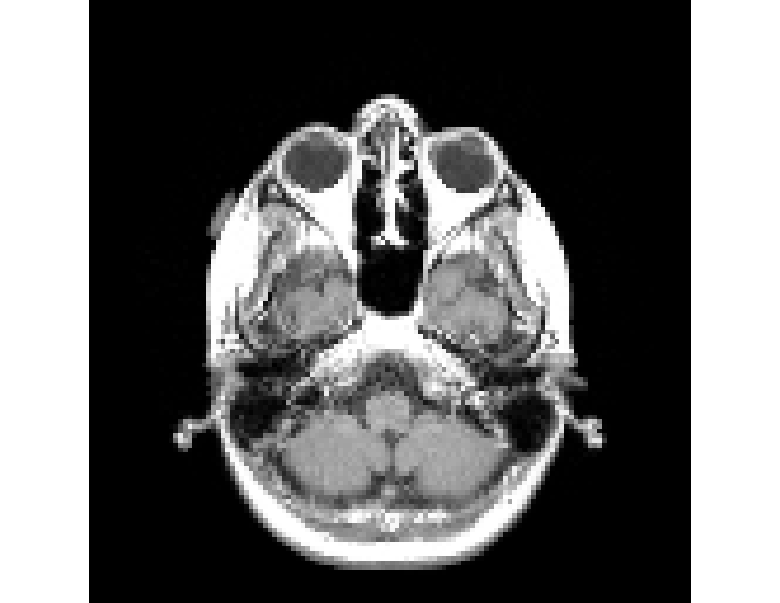}
        \caption{RANDSMAP-Sig ($P=N$)}
        \label{fig:MRIslice_RFNNs_recon}
    \end{subfigure}
    \hspace{2pt}
    \begin{subfigure}[b]{0.3\textwidth}
        \centering
        \includegraphics[width=0.8\textwidth]{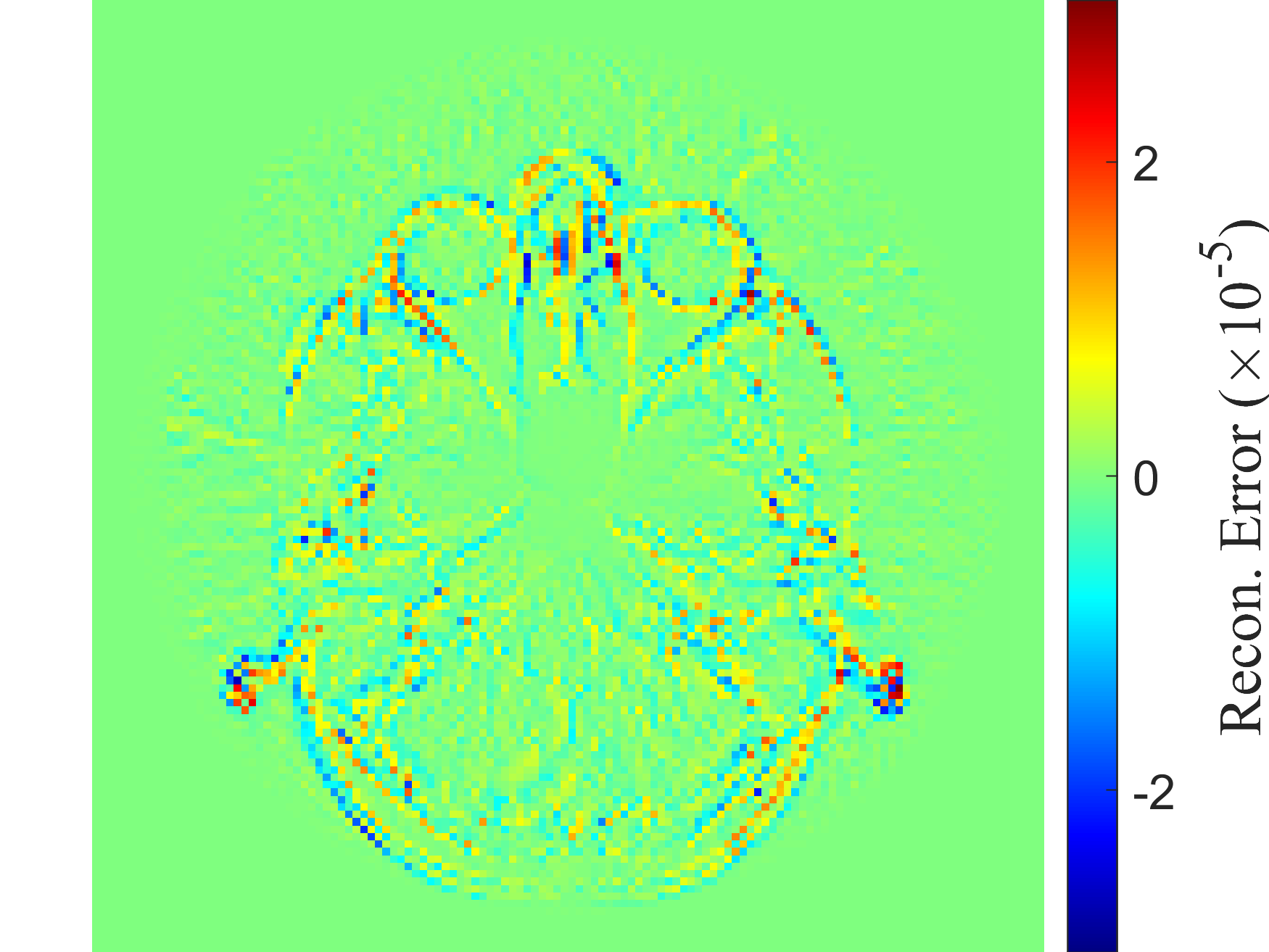}
        \caption{Absolute recon. error ($\times 10^{-5}$)}
        \label{fig:MRIslice_RFNNs_recon_err}
    \end{subfigure}
    \\
    \begin{subfigure}[b]{0.3\textwidth}
        \centering
        \includegraphics[width=0.8\textwidth]{FigsMRI/SliceOrig.png}
        \caption{Original slice}
    \end{subfigure}
    \hspace{2pt}
    \begin{subfigure}[b]{0.3\textwidth}
        \centering
        \includegraphics[width=0.8\textwidth]{FigsMRI/SliceReconGH.png}
        \caption{DDM}
        \label{fig:MRIslice_GH_recon}
    \end{subfigure}
    \hspace{2pt}
    \begin{subfigure}[b]{0.3\textwidth}
        \centering
        \includegraphics[width=0.8\textwidth]{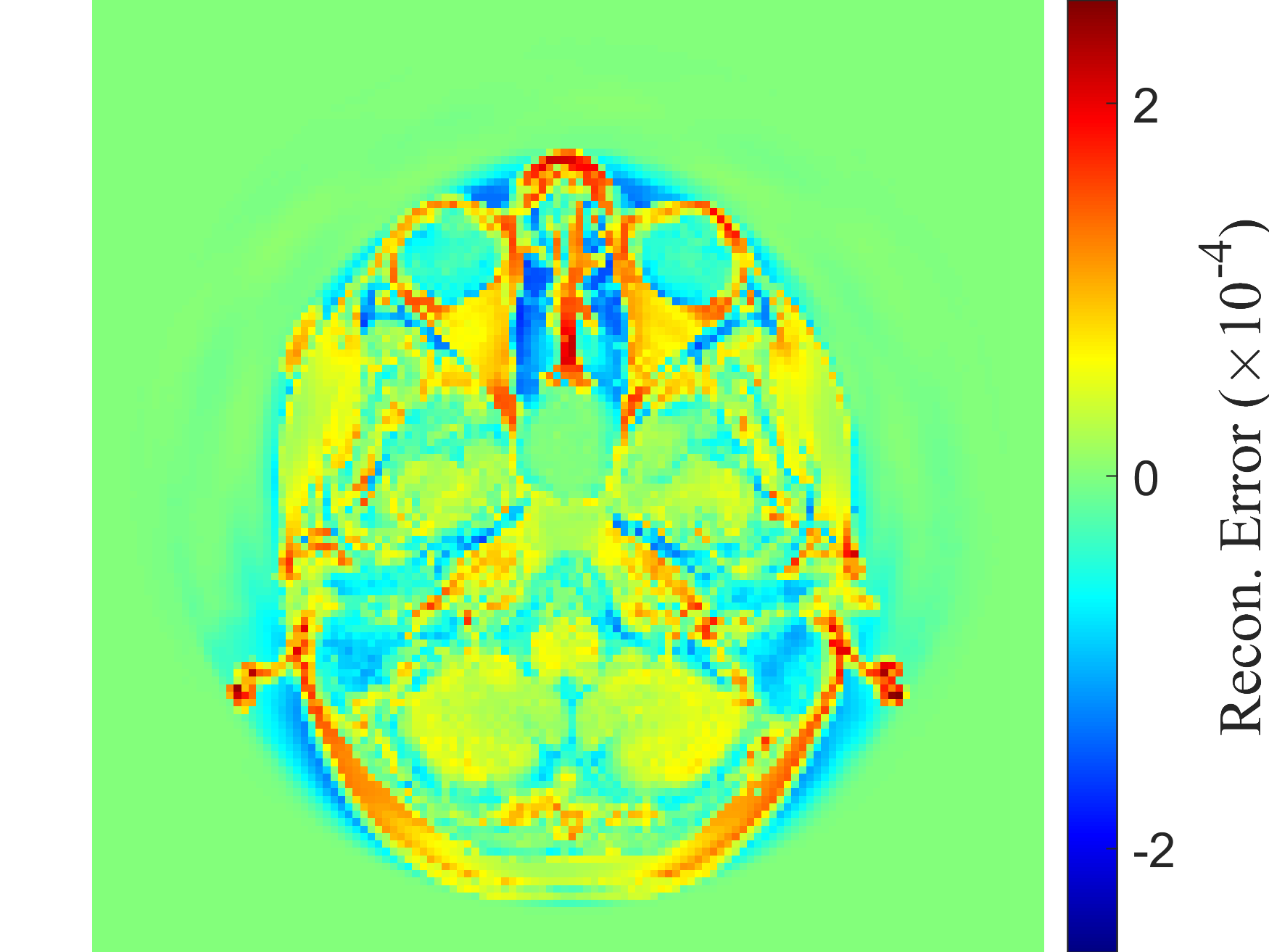}
        \caption{Absolute recon. error ($\times 10^{-4}$)}
        \label{fig:MRIslice_GH_recon_err}
    \end{subfigure}
    \\
    \begin{subfigure}[b]{0.3\textwidth}
        \centering
        \includegraphics[width=0.8\textwidth]{FigsMRI/SliceOrig.png}
        \caption{Original slice}
    \end{subfigure}
    \hspace{2pt}
    \begin{subfigure}[b]{0.3\textwidth}
        \centering
        \includegraphics[width=0.8\textwidth]{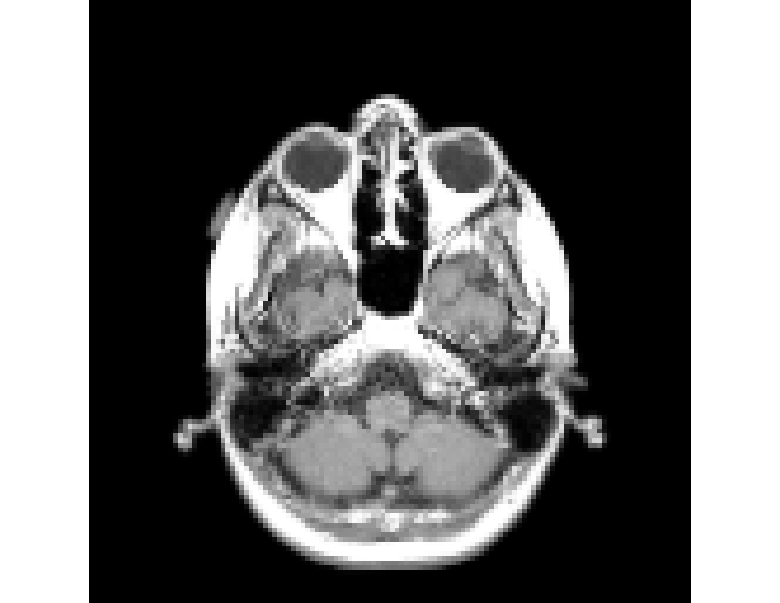}
        \caption{$k$-NN}
        \label{fig:MRIslice_kNN_recon}
    \end{subfigure}
    \hspace{2pt}
    \begin{subfigure}[b]{0.3\textwidth}
        \centering
        \includegraphics[width=0.8\textwidth]{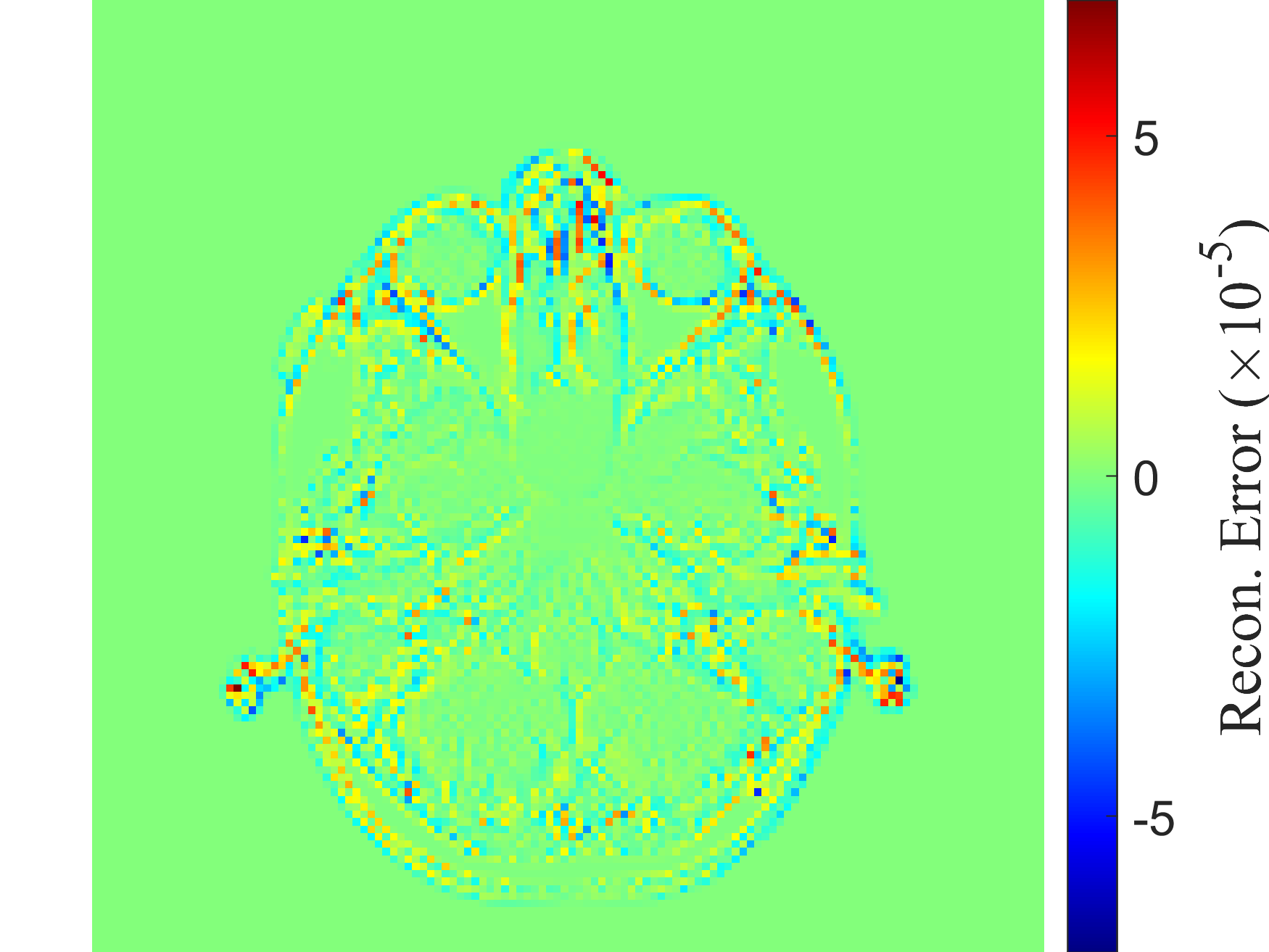}
        \caption{Absolute recon. error ($\times 10^{-5}$)}
        \label{fig:MRIslice_kNN_recon_err}
    \end{subfigure}
    \caption{Representative MRI image reconstruction for the 2D rotated MRI images dataset ($M=128 \times 128$). The original MRI slice in the left panels (corresponding to $\theta=0$ in Fig.~\ref{fig:MRIimage_data}) is compared to the reconstructed images in panels~\ref{fig:MRIslice_RFNNf_recon}, \ref{fig:MRIslice_RFNNfMK_recon}, \ref{fig:MRIslice_RFNNs_recon}, \ref{fig:MRIslice_GH_recon} and \ref{fig:MRIslice_kNN_recon} obtained with the five decoders RANDSMAP-RFF with $P=N$, RANDSMAP-MS-RFF $P=N$, RANDSMAP-Sig with $P=N$, DDM, and $k$-NN, respectively. The corresponding absolute pixel‑wise reconstruction errors are shown in panels~\ref{fig:MRIslice_RFNNf_recon_err}, \ref{fig:MRIslice_RFNNfMK_recon_err}, \ref{fig:MRIslice_RFNNs_recon_err}, \ref{fig:MRIslice_GH_recon_err} and \ref{fig:MRIslice_kNN_recon_err}, where the color‑bar range indicates the error magnitude. For the RANDSMAP decoders, the configuration shown is the one that achieved the lowest training reconstruction error among all variants of that type (see full comparison in Fig.~\ref{fig:MRI_dec_tr}).}
    \label{fig:MRI_recon}
\end{figure}

\clearpage
\newpage
\renewcommand{\theequation}{I.\arabic{equation}}
\renewcommand{\thefigure}{I.\arabic{figure}}
\renewcommand{\thetable}{I.\arabic{table}}
\setcounter{equation}{0}
\setcounter{figure}{0}
\setcounter{table}{0}
\section{Reconstruction results of the Hughes 2D Pedestrian Model Dataset (\texorpdfstring{$M=200\times50$}{M=200 x 50}) reconstruction results} \label{app:Hughes}

Here, we provide detailed reconstruction results for the Hughes 2D pedestrian model dataset in Section~\ref{sb:Hughes}. Tables~\ref{tab:Hughes_dec_tr_all} and \ref{tab:Hughes_dec_ts_all} enlist the training and testing set performance of all decoders considered for this benchmark. 

\begin{table}[htbp]
\centering
\caption{Training set performance for the Hughes 2D pedestrian model dataset ($M=200 \times 50$). Detailed reconstruction metrics for $N=5000$ training points, using $d=10$-dim. DM embeddings. Decoders are compared based on the relative $L_2$ and $L_\infty$ mean reconstruction errors, $e_{2,i}$ and $e_{\infty,i}$ (Eq.~\eqref{eq:recon_errors}), computational time (in seconds), and mean conservation error $e_{con,i}$. For stochastic decoders (RANDSMAP-RFF, RANDSMAP-MS-RFF, RANDSMAP-Sig, and RFNN-Sig), metrics show the median with 5-95\% percentiles in parentheses, computed over 100 random initializations. Deterministic decoders (DDM, $k$-NN) report single values.}
\label{tab:Hughes_dec_tr_all}
\resizebox{\textwidth}{!}{%
\begin{tabular}{@{}llcccc@{}}
\multirow{2}{*}{Decoder} & \multirow{2}{*}{$P$} & \multicolumn{2}{c}{Mean reconstruction error ($\times 10^{-2}$)} & Mean conservation & Comp. Time (s) \\
\cmidrule(lr){3-4}
& & relative $L_2$, $e_{2,i}$ &  relative $L_\infty$, $e_{\infty,i}$ & error, $e_{con,i}$ ($\times 10^{-7}$) & ($\times 10^2$) \\
\midrule
\midrule
\multirow{3}{*}{RANDSMAP-RFF} 
&	$N$	    & 3.582 (3.571--3.597) & 6.434 (6.419--6.452) & 3.474 (3.293--3.697) $\times 10^{-1}$ & 2.686 (2.678--2.712) \\
&	$N/2$	& 3.744 (3.722--3.764) & 6.633 (6.608--6.658) & 2.216 (1.873--2.548) $\times 10^{-2}$ & 0.967 (0.965--0.969) \\
&	$N/4$	& 4.225 (4.195--4.247) & 7.440 (7.411--7.470) & 3.111 (2.513--3.741) $\times 10^{-4}$ & 0.521 (0.520--0.522) \\
\midrule

\multirow{3}{*}{RANDSMAP-MS-RFF} 
&	$N$	    & 3.868 (3.665--4.155) & 6.831 (6.467--7.234) & 2.833 (2.099--3.358) $\times 10^{-1}$  & 2.682 (2.670--2.697) \\
&	$N/2$	& 4.074 (3.860--4.422) & 7.096 (6.751--7.614) & 1.795 (1.125--2.169) $\times 10^{-1}$  & 0.954 (0.951--0.957) \\
&	$N/4$	& 4.464 (4.212--4.799) & 7.579 (7.236--7.961) & 7.305 (0.003--10.918) $\times 10^{-2}$ & 0.523 (0.522--0.524) \\
\midrule

\multirow{3}{*}{RANDSMAP-Sig} 
&	$N$	    & 2.476 (2.468--2.486) & 5.025 (5.011--5.041) & 4.601 (4.343--4.955) $\times 10^{-2}$  & 2.793 (2.778--2.811) \\
&	$N/2$	& 2.843 (2.825--2.855) & 5.562 (5.544--5.581) & 9.558 (7.640--11.694) $\times 10^{-2}$ & 1.000 (0.999--1.002) \\
&	$N/4$	& 3.383 (3.353--3.424) & 6.304 (6.275--6.347) & 1.287 (0.789--1.847) $\times 10^{-1}$  & 0.535 (0.535--0.536) \\
\midrule

\multirow{3}{*}{RFNN-Sig} 
&	$N$	    & 2.476 (2.468--2.486) & 5.025 (5.011--5.041) & 1.863 (1.798--1.934) $\times 10^1$ & 0.921 (0.918--0.952) \\
&	$N/2$	& 2.843 (2.825--2.855) & 5.562 (5.544--5.581) & 3.911 (3.706--4.105) $\times 10^1$ & 0.546 (0.546--0.547) \\
&	$N/4$	& 3.383 (3.353--3.424) & 6.304 (6.275--6.347) & 8.888 (8.179--9.490) $\times 10^1$ & 0.369 (0.369--0.369) \\
\midrule

DDM  &	-	& 19.424 & 27.649 & 1.879 $\times 10^{5}$  & 1.735   \\
$k$-NN &	-	& 7.250  & 11.198 & 1.799 $\times 10^{-8}$ & 572.722 \\

\bottomrule
\end{tabular}%
}
\end{table}

\begin{table}[htbp]
\centering
\caption{Testing set performance for the Hughes 2D pedestrian model dataset ($M=200 \times 50$). Detailed reconstruction metrics for $N=5000$ training points, using $d=10$-dimensional DM embeddings. Decoders are compared based on the relative $L_2$ and $L_\infty$ mean reconstruction errors, $e_{2,i}$ and $e_{\infty,i}$ (Eq.~\eqref{eq:recon_errors}), computational time (in seconds), and mean conservation error $e_{con,i}$. For stochastic decoders (RANDSMAP-RFF, RANDSMAP-MS-RFF, RANDSMAP-Sig, and RFNN-Sig), metrics show the median with 5-95\% percentiles in parentheses, computed over 100 random initializations. Deterministic decoders (DDM, $k$-NN) report single values.}
\label{tab:Hughes_dec_ts_all}
\resizebox{\textwidth}{!}{%
\begin{tabular}{@{}llcccc@{}}
\multirow{2}{*}{Decoder} & \multirow{2}{*}{$P$} & \multicolumn{2}{c}{Mean reconstruction error ($\times 10^{-2}$)} & Mean conservation & Comp. Time (s) \\
\cmidrule(lr){3-4}
& & relative $L_2$, $e_{2,i}$ &  relative $L_\infty$, $e_{\infty,i}$ & error, $e_{con,i}$ ($\times 10^{-7}$) & ($\times 10^2$) \\
\midrule
\midrule
\multirow{3}{*}{RANDSMAP-RFF} 
&	$N$	    & 5.088 (5.053--5.141) & 9.157 (9.115--9.219) & 11.693 (6.698--19.590)                & 0.327 (0.286--0.364) \\
&	$N/2$	& 5.247 (5.202--5.303) & 9.288 (9.226--9.370) & 2.485 (2.131--2.891) $\times 10^{-2}$ & 0.241 (0.237--0.244) \\
&	$N/4$	& 5.286 (5.232--5.354) & 9.396 (9.337--9.461) & 3.434 (2.758--4.142) $\times 10^{-4}$ & 0.223 (0.220--0.227) \\
\midrule

\multirow{3}{*}{RANDSMAP-MS-RFF} 
&	$N$	    & 5.304 (5.142--5.450) & 9.411 (9.245--9.585)  & 6.761 (2.818--20.996)                  & 0.339 (0.304--0.417) \\
&	$N/2$	& 5.466 (5.303--5.687) & 9.557 (9.386--9.768)  & 1.125 (0.485--3.004)                   & 0.254 (0.252--0.256) \\
&	$N/4$	& 5.754 (5.497--6.009) & 9.817 (9.600--10.065) & 1.203 (0.003--2.307) $\times 10^{-1}$  & 0.232 (0.231--0.235) \\
\midrule

\multirow{3}{*}{RANDSMAP-Sig} 
&	$N$	     & 3.909 (3.848--4.004) & 7.752 (7.663--7.861) & 22.476 (13.719--38.500) & 0.328 (0.289--0.380) \\
&	$N/2$	 & 4.235 (4.162--4.340) & 8.143 (8.060--8.297) & 8.364 (5.359--16.079)   & 0.242 (0.240--0.247) \\
&	$N/4$	 & 4.701 (4.604--4.797) & 8.690 (8.597--8.870) & 1.007 (0.480--2.070)    & 0.224 (0.220--0.228) \\
\midrule

\multirow{3}{*}{RFNN-Sig} 
&	$N$	     & 3.909 (3.848--4.004) & 7.752 (7.663--7.861) & 4.556 (3.712--6.882) $\times 10^1$ & 0.330 (0.303--0.416) \\
&	$N/2$	 & 4.235 (4.162--4.340) & 8.143 (8.060--8.297) & 7.876 (6.640--10.887) $\times 10^1$ & 0.252 (0.250--0.255) \\
&	$N/4$	 & 4.701 (4.604--4.797) & 8.690 (8.597--8.870) & 14.348 (12.677--17.731) $\times 10^1$ & 0.230 (0.228--0.233) \\
\midrule

DDM  &	-	& 19.259 & 27.634 & 1.872 $\times 10^{5}$  & 0.324   \\
$k$-NN &	-	& 8.543  & 13.530 & 1.801 $\times 10^{-8}$ & 299.823 \\

\bottomrule
\end{tabular}%
}
\end{table}

Figure~\ref{fig:Hughes_DM_GH} demonstrates that the fair performance of the DDM decoder is not due to poor tuning, via an exhaustive 2D grid search. Figure~\ref{fig:Hughes} demonstrates per-density profile reconstructions provided by all decoders, projected in the summary statistics space $(\mu_x,\mu_y,\sigma^2_x)$. Finally, Fig.~\ref{fig:Hughes_recon} shows a representative density profile, in which the crowd flow avoids the obstacle,  and its reconstructions with the five decoders considered.

\begin{figure}[!htbp]
    \centering
    \begin{subfigure}[b]{0.49\textwidth}
        \centering
        \includegraphics[width=\textwidth]{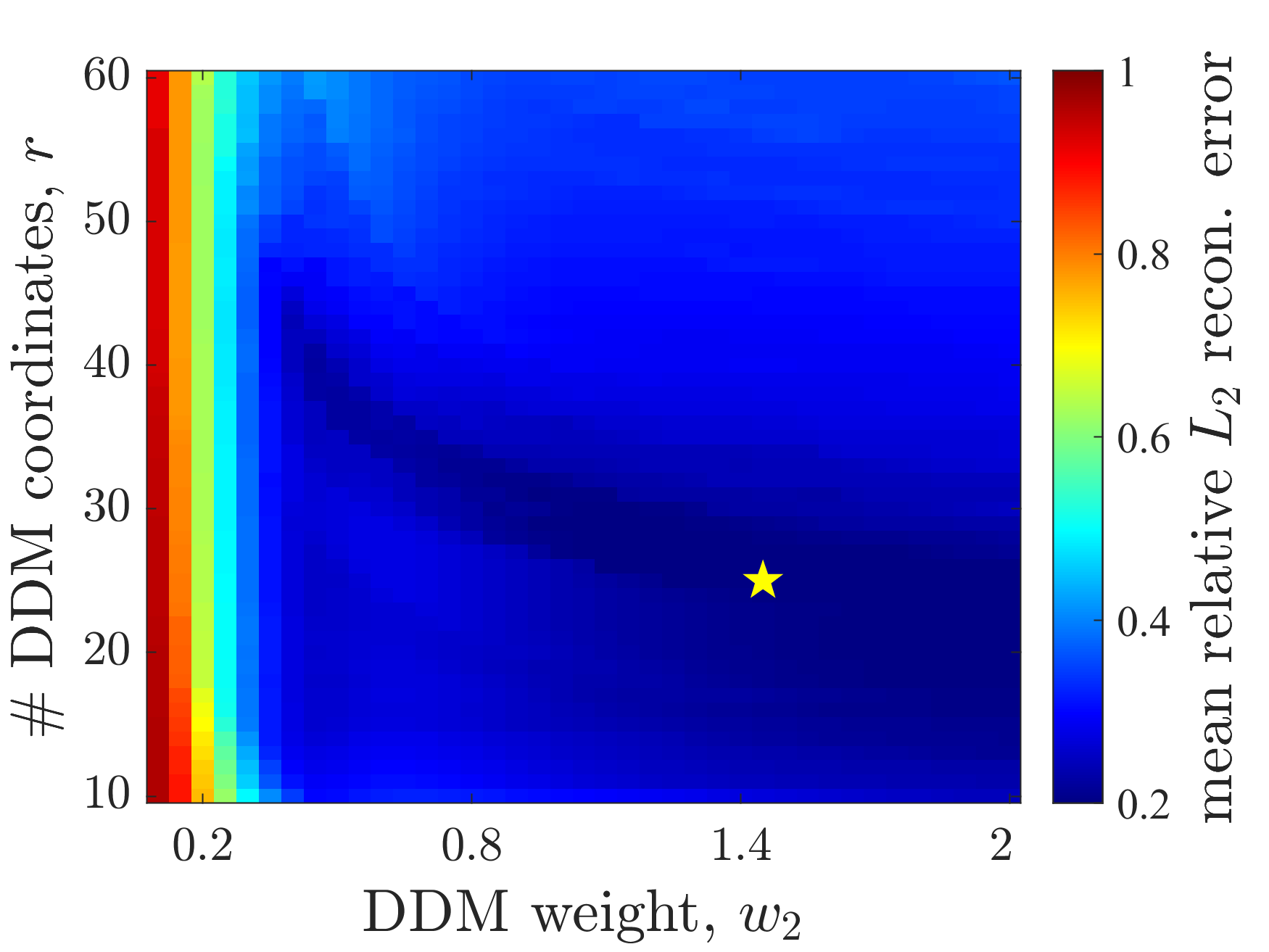}
        \caption{Exhaustive ($w_2,r$) tuning}
        \label{fig:Hughes_DM_GH_tuning}
    \end{subfigure}
    \hfill
    \begin{subfigure}[b]{0.49\textwidth}
        \centering
        \begin{subfigure}[b]{\textwidth}
            \centering
            \includegraphics[width=\textwidth]{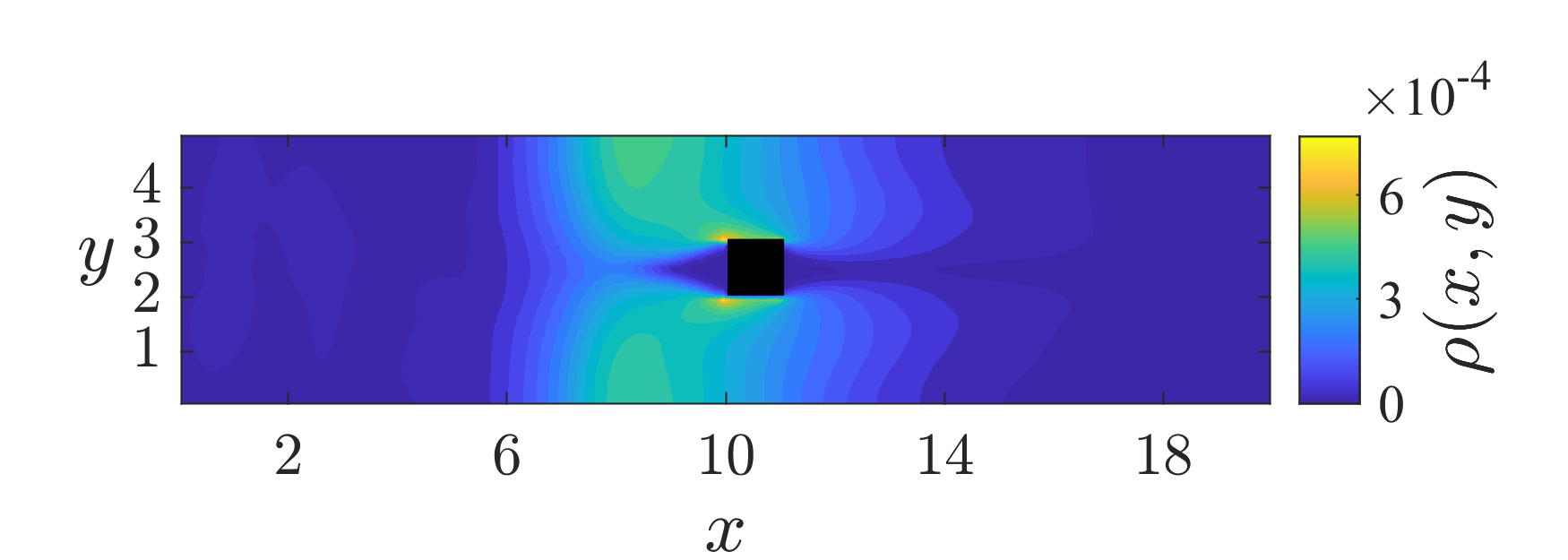}
            \caption{Reconstruction, $(w_2,r)=(1.45,25)$}
            \label{fig:Hughes_DM_GH_k25}
        \end{subfigure}
        
        
        \begin{subfigure}[b]{\textwidth}
            \centering
            \includegraphics[width=\textwidth]{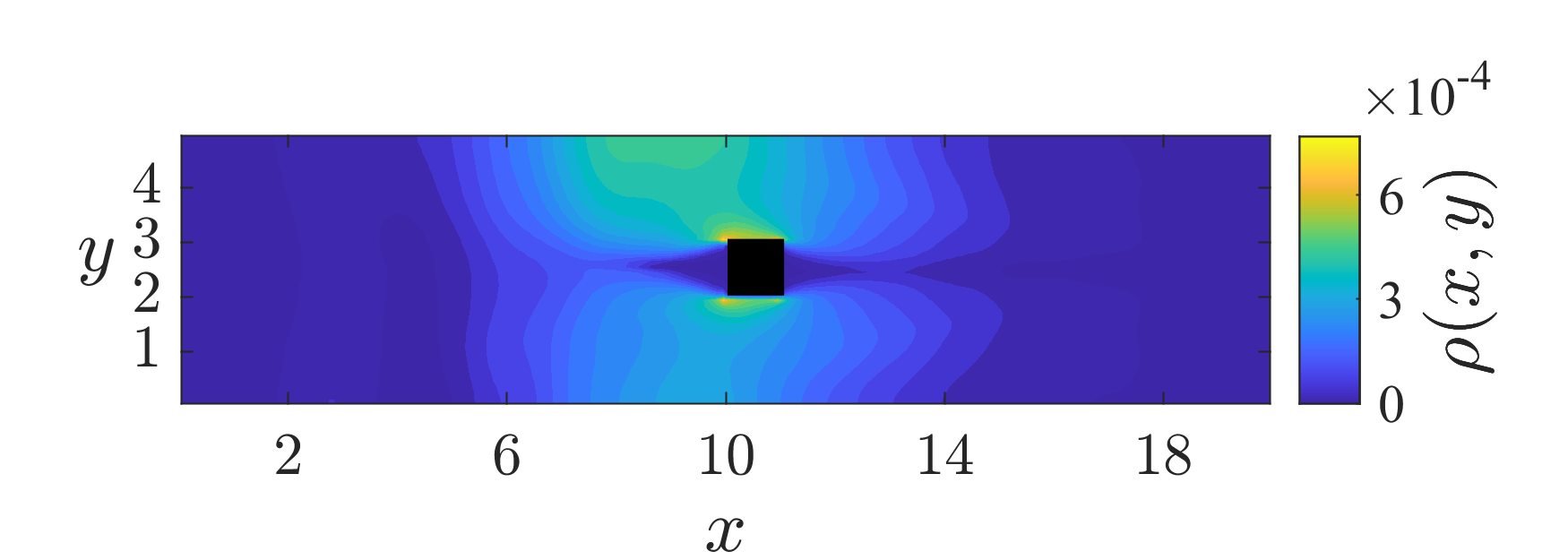}
            \caption{Reconstruction, $(w_2,r)=(1.45,50)$}
            \label{fig:Hughes_DM_GH_k50}
        \end{subfigure}
    \end{subfigure}
    \caption{Reconstruction with the DDM decoder for the Hughes 2D pedestrian model dataset ($M=200 \times 50$). Panel~\ref{fig:Hughes_DM_GH_tuning} shows exhaustive hyperparameter tuning over the DDM weight $w_2$ and the number of DDM coordinates $r$. The yellow star indicates the optimal combination $(w_2,r)=(1.45,25)$. Panels \ref{fig:Hughes_DM_GH_k25} and \ref{fig:Hughes_DM_GH_k50} show the reconstructed density profiles (corresponding to the original profile in Fig.~\ref{fig:Hughes_den_orig}) obtained with $w_2=1.45$ and $r=25$ (optimal) or $r=50$ DDM coordinates, respectively.}
    \label{fig:Hughes_DM_GH}
\end{figure}

\begin{figure}[!htbp]
    \centering
    \begin{subfigure}[b]{0.32\textwidth}
        \centering
        \includegraphics[width=\textwidth]{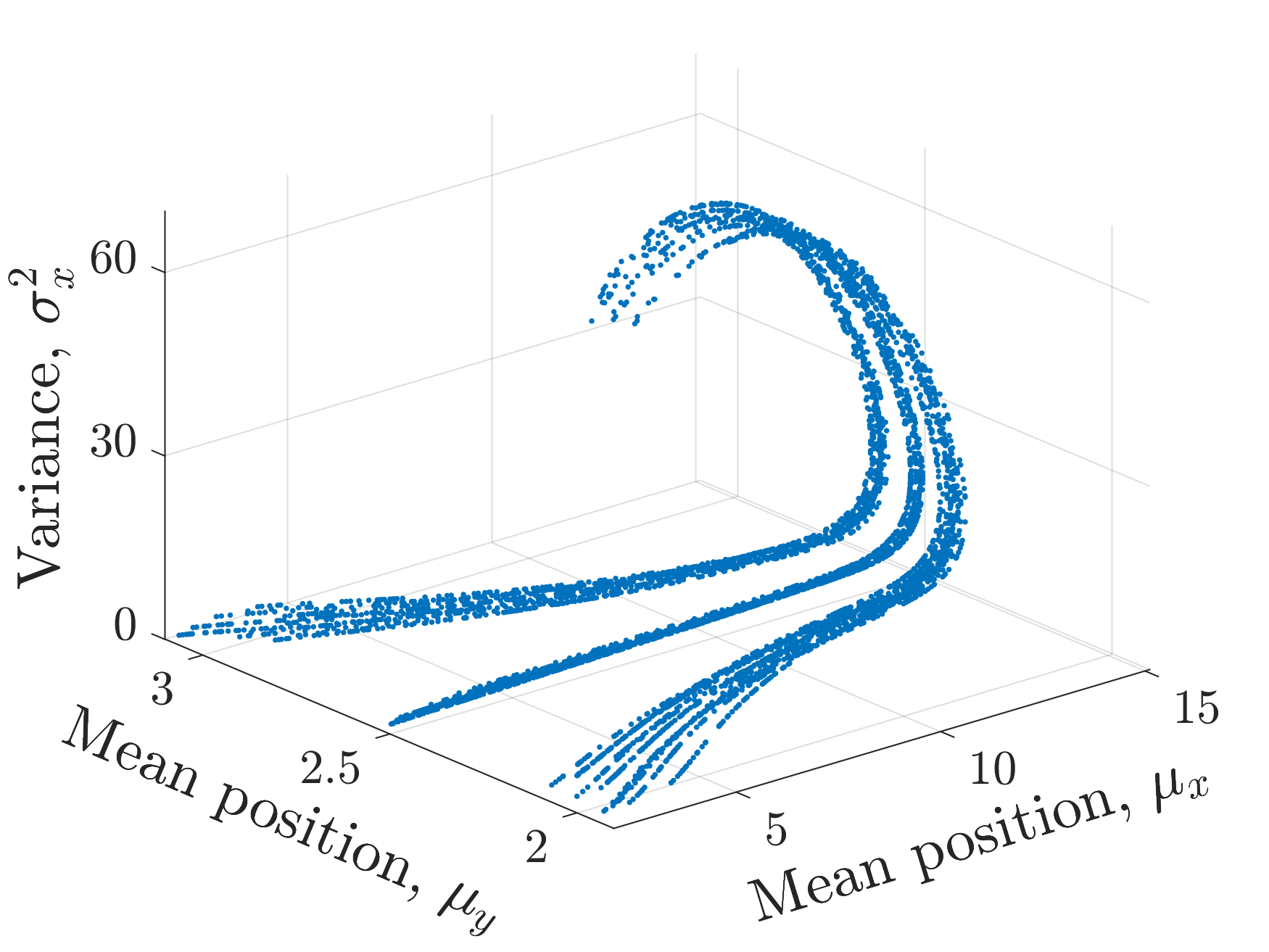}
        \caption{Data set}
        \label{fig:Hughes_data}
    \end{subfigure} \hspace{0.02\textwidth}
    \begin{subfigure}[b]{0.32\textwidth}
        \centering
        \includegraphics[width=\textwidth]{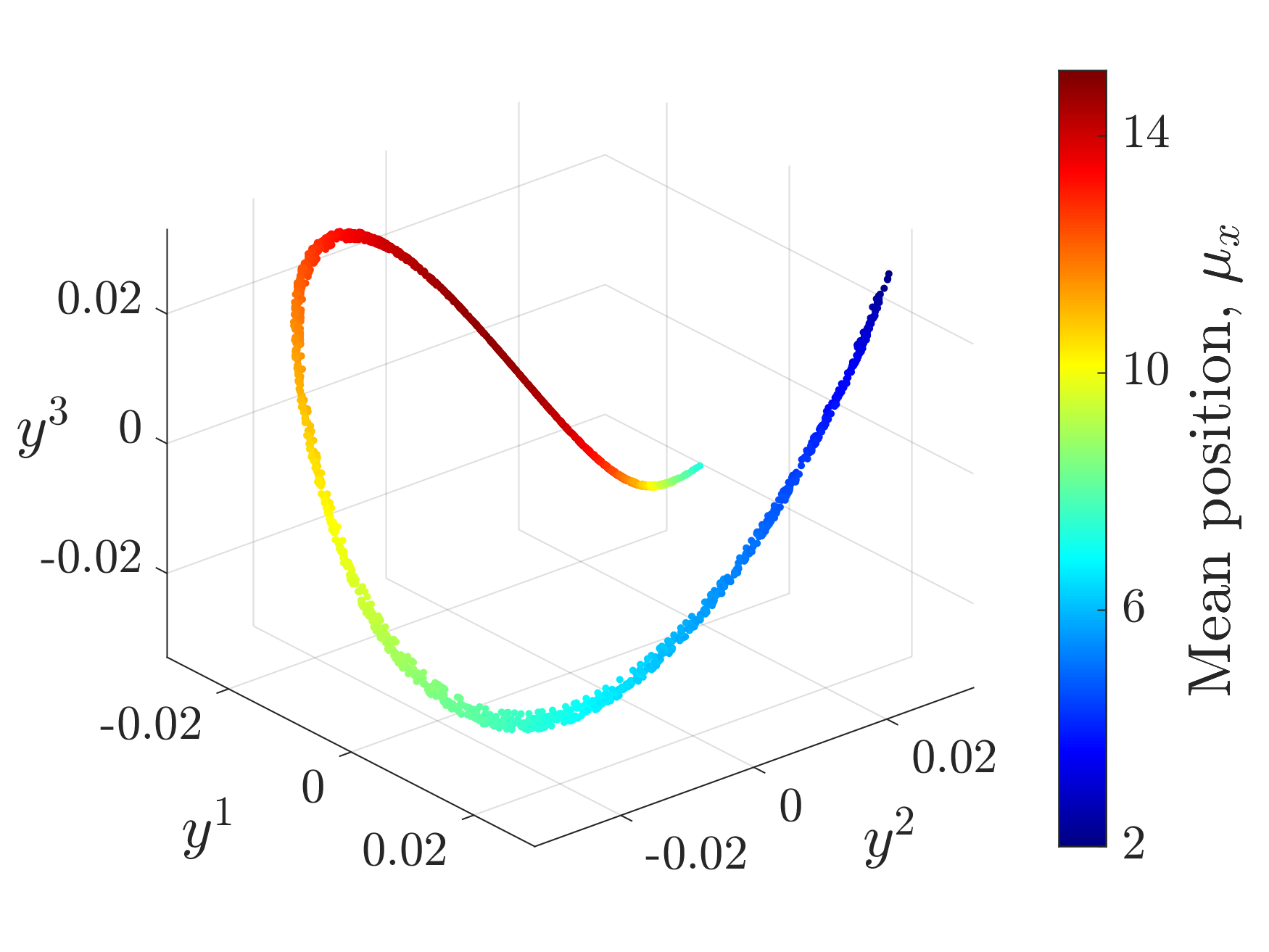}
        \caption{DM coordinates}
        \label{fig:Hughes_DMcoords}
    \end{subfigure}

    \begin{subfigure}[b]{0.32\textwidth}
        \centering
        \includegraphics[width=\textwidth]{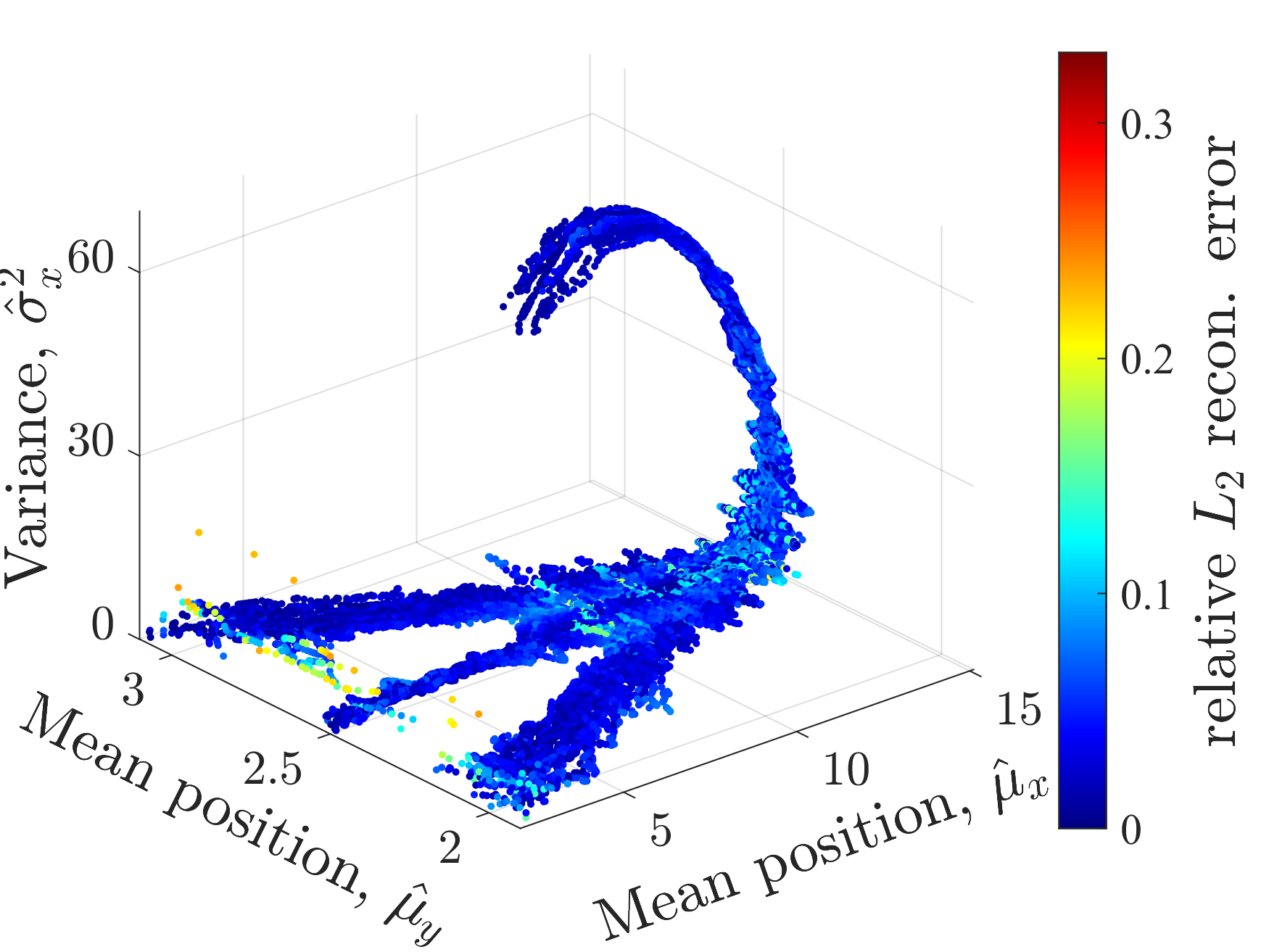}
        \caption{RANDSMAP-RFF ($P=N$)}
        \label{fig:Hughes_DM_RFNNf_recon}
    \end{subfigure}
    \hfill
    \begin{subfigure}[b]{0.32\textwidth}
        \centering
        \includegraphics[width=\textwidth]{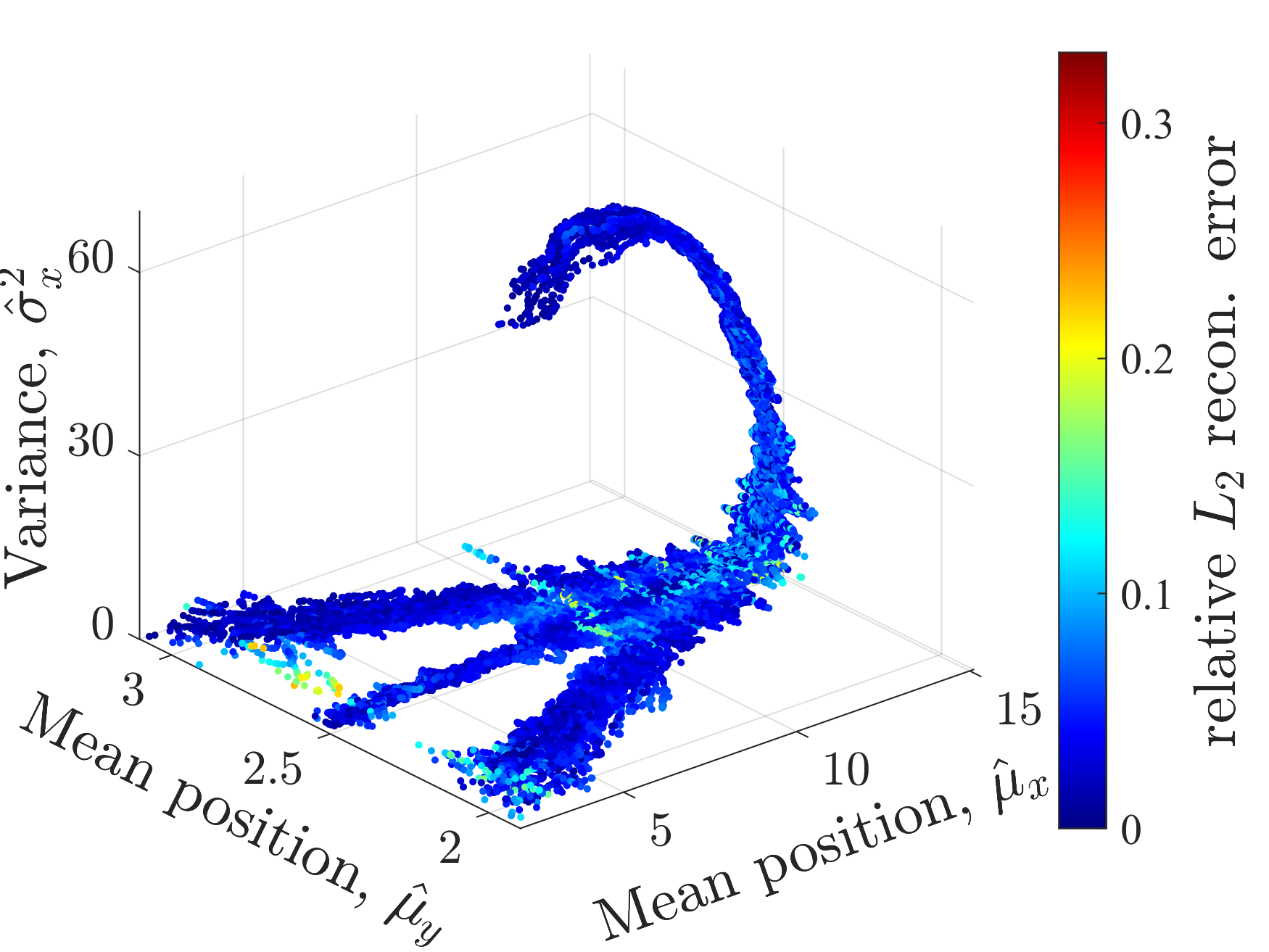}
        \caption{RANDSMAP-MS-RFF ($P=N$)}
        \label{fig:Hughes_DM_RFNNfMK_recon}
    \end{subfigure}
    \hfill
    \begin{subfigure}[b]{0.32\textwidth}
        \centering
        \includegraphics[width=\textwidth]{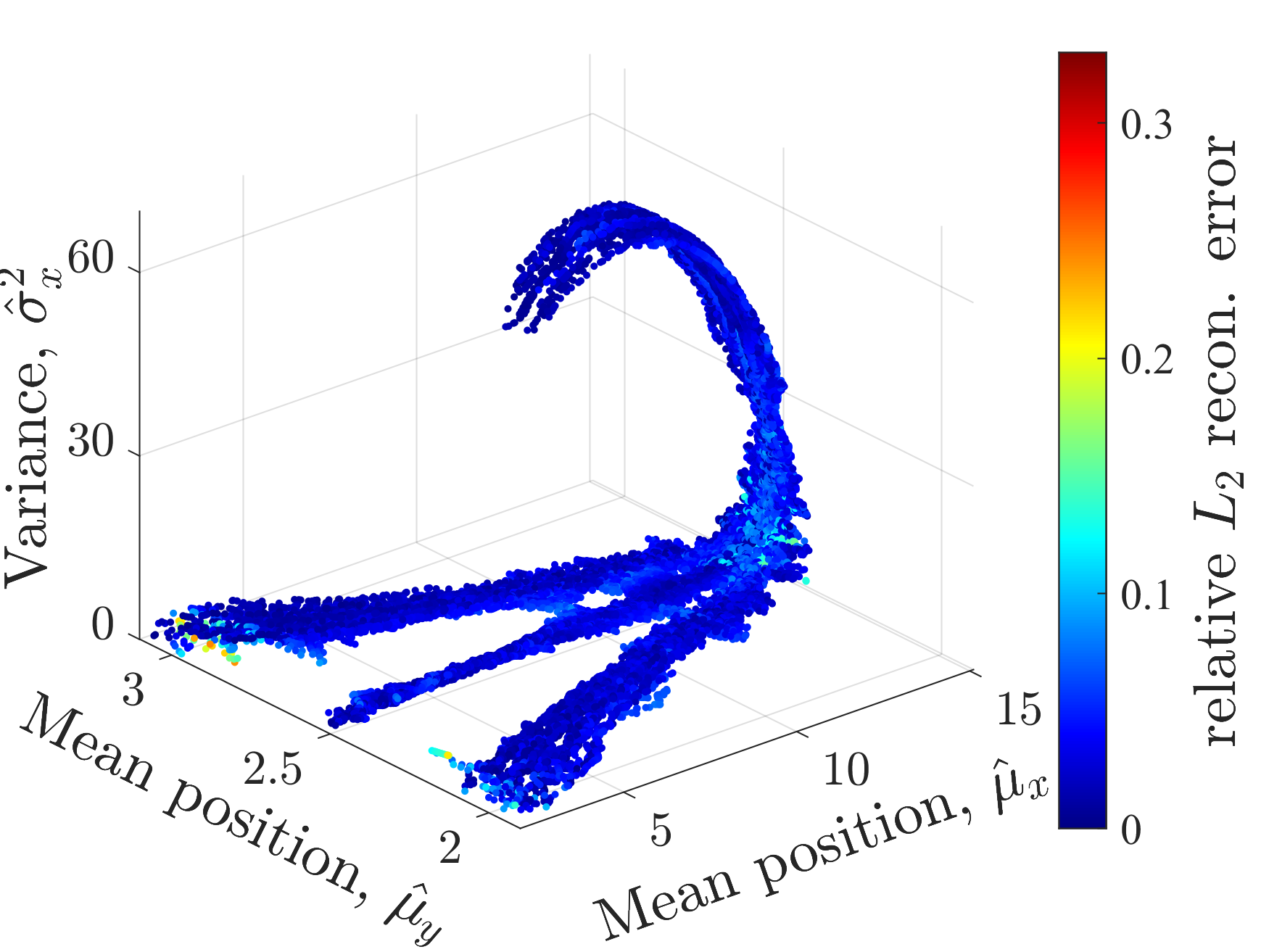}
        \caption{RANDSMAP-Sig ($P=N$)}
        \label{fig:Hughes_DM_RFNNs_recon}
    \end{subfigure}

    \begin{subfigure}[b]{0.32\textwidth}
        \centering
        \includegraphics[width=\textwidth]{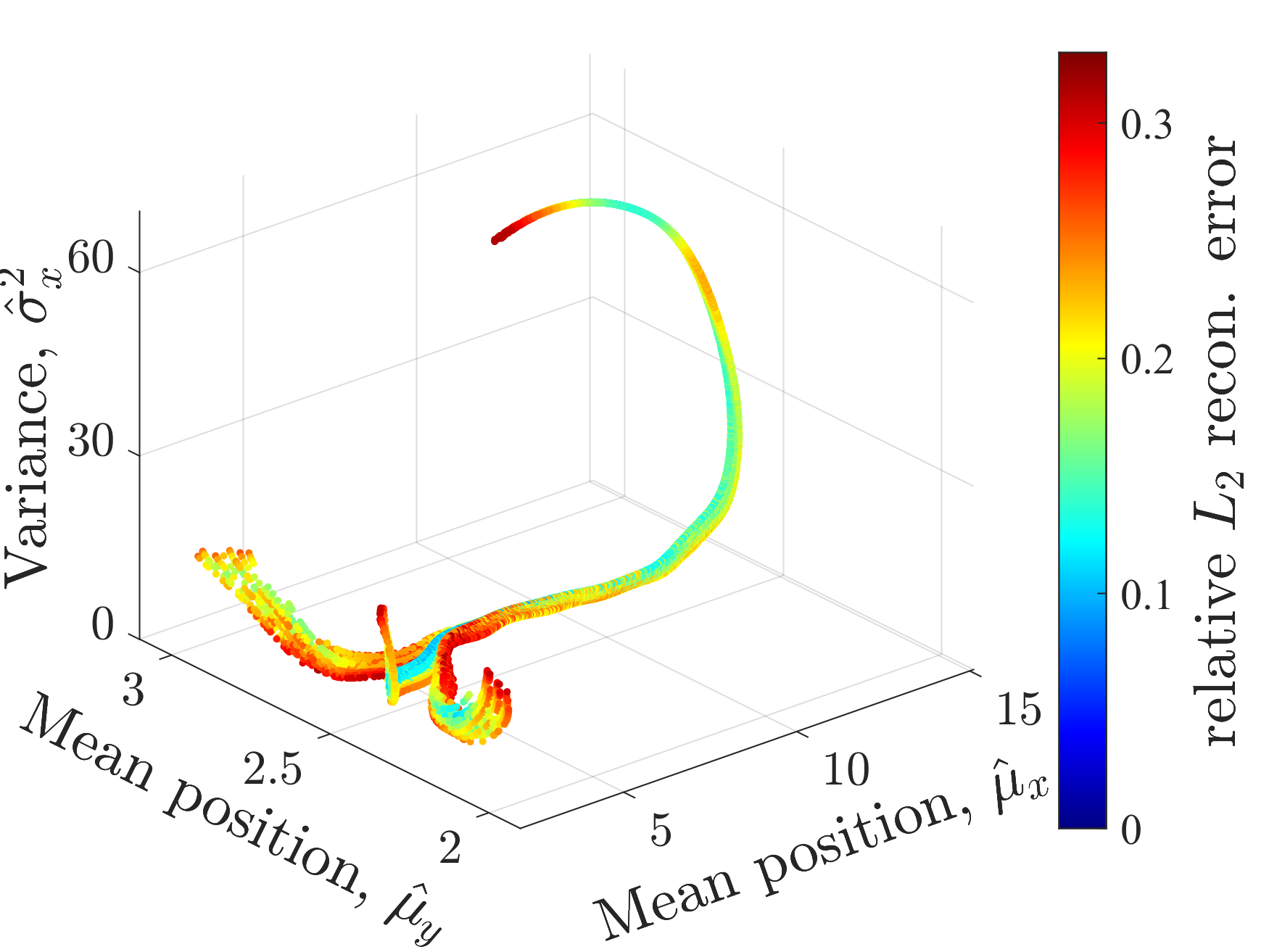}
        \caption{DDM}
        \label{fig:Hughes_DM_GH_recon}
    \end{subfigure}\hspace{0.02\textwidth}
    \begin{subfigure}[b]{0.32\textwidth}
        \centering
        \includegraphics[width=\textwidth]{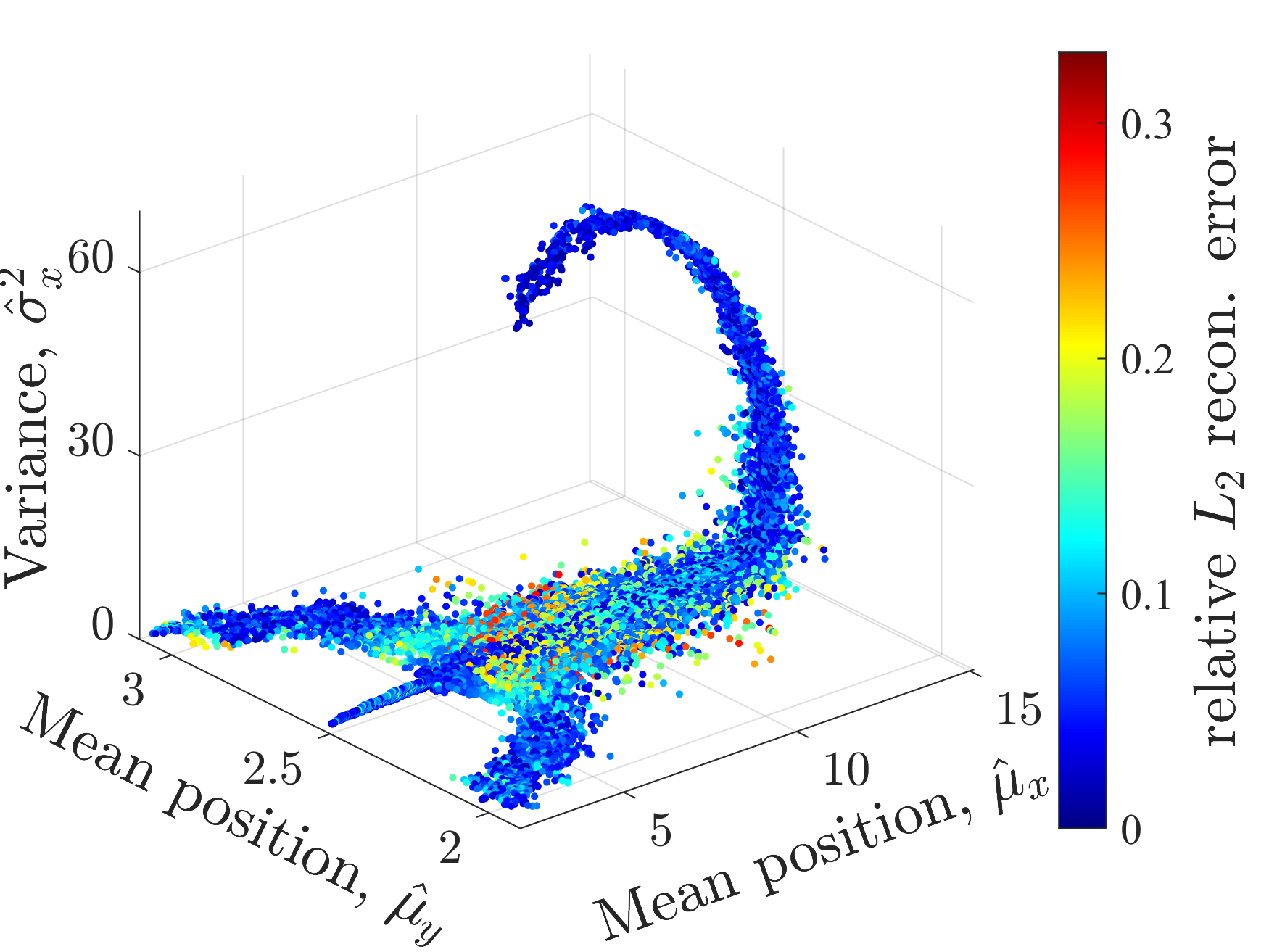}
        \caption{$k$-NN}
        \label{fig:Hughes_DM_kNN_recon}
    \end{subfigure}
    \caption{Hughes 2D pedestrian model dataset ($M=200 \times 50$) with $N=5000$ training points. Because the density fields are $10.000$-dimensional, we visualize them via three summary statistics: mean positions $(\mu_x, \mu_y)$ and variance $\sigma_x^2$. Panel~\ref{fig:Hughes_data} shows the training data represented in this statistics space, while panel~\ref{fig:Hughes_DMcoords} shows the projection onto the first three DM coordinates $[y^1, y^2, y^3]^\top$ (from a $d=10$-dimensional embedding), colored by the mean x-position $\mu_x$ for each density profile. Panels~\ref{fig:Hughes_DM_RFNNf_recon}-\ref{fig:Hughes_DM_kNN_recon} show the reconstructed density profiles, projected onto the statistics space ($\mu_x$, $\mu_y$, $\sigma^2_x$), and their per-profile relative $L_2$ errors for the five decoders: RANDSMAP-RFF with $P=N$, RANDSMAP-MS-RFF with $P=N$, RANDSMAP-Sig with $P=N$, DDM and $k$-NN. For the RFNN decoders, the displayed configuration is the one achieving the lowest training reconstruction error among all tested variants (see Fig.~\ref{fig:Hughes_dec_tr} for the full comparison).}
    \label{fig:Hughes}
\end{figure}

\begin{figure}[!htbp]
    \centering
    \begin{subfigure}[b]{0.49\textwidth}
        \centering
        \includegraphics[width=\textwidth]{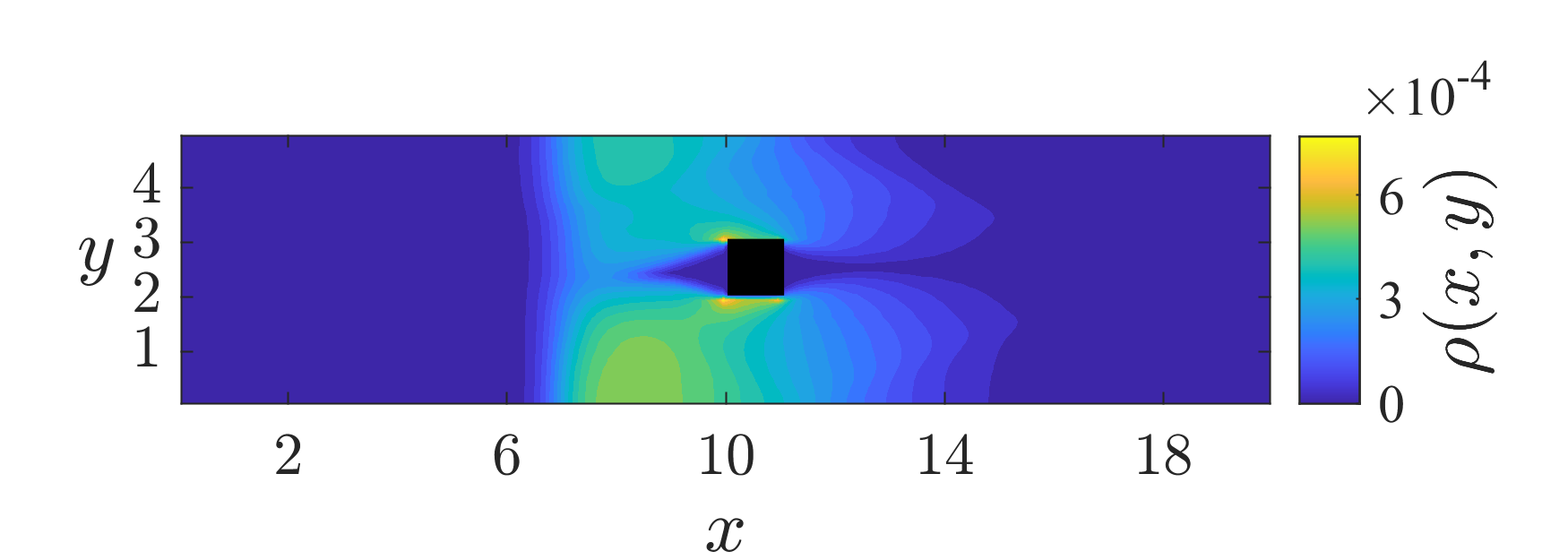}
        \caption{Original density profile}
        \label{fig:Hughes_den_orig}
    \end{subfigure}
    \hfill
    \begin{subfigure}[b]{0.49\textwidth}
        \centering
        \includegraphics[width=\textwidth]{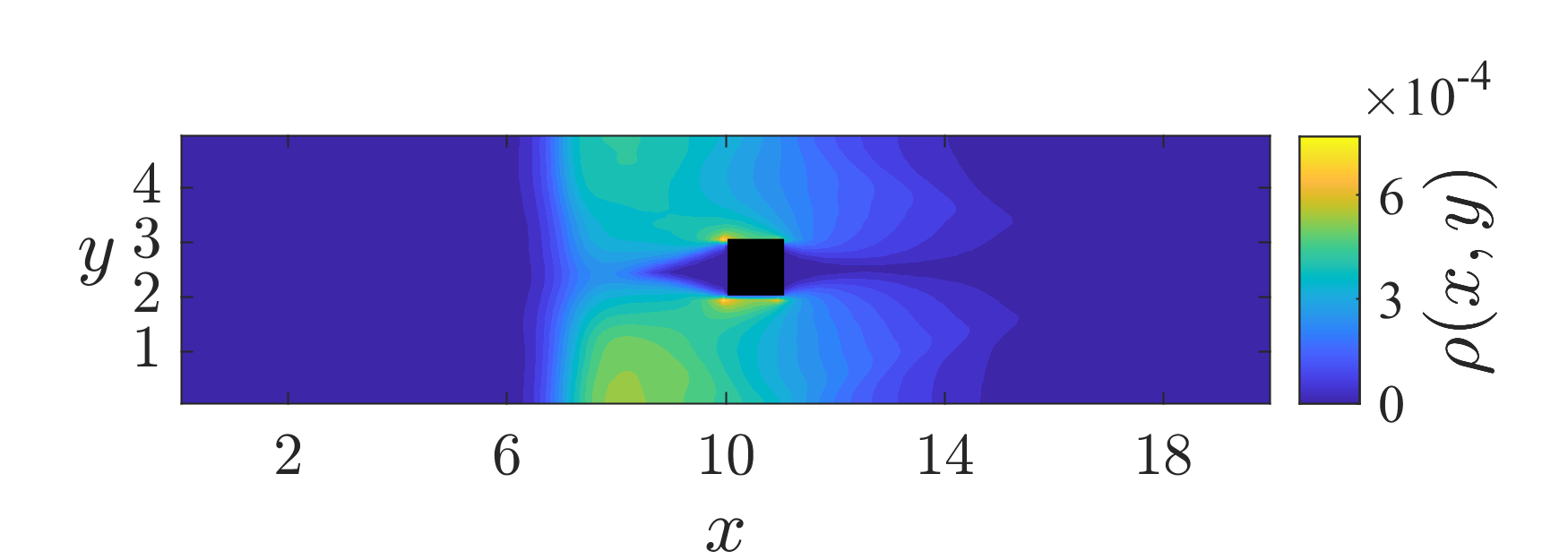}
        \caption{RANDSMAP-RFF ($P=N$)}
        \label{fig:Hughes_den_RFNNf_recon}
    \end{subfigure}
    \\
    \begin{subfigure}[b]{0.49\textwidth}
        \centering
        \includegraphics[width=\textwidth]{FigsHughes/DenProf_Orig.png}
        \caption{Original density profile}
    \end{subfigure}
    \hfill
    \begin{subfigure}[b]{0.49\textwidth}
        \centering
        \includegraphics[width=\textwidth]{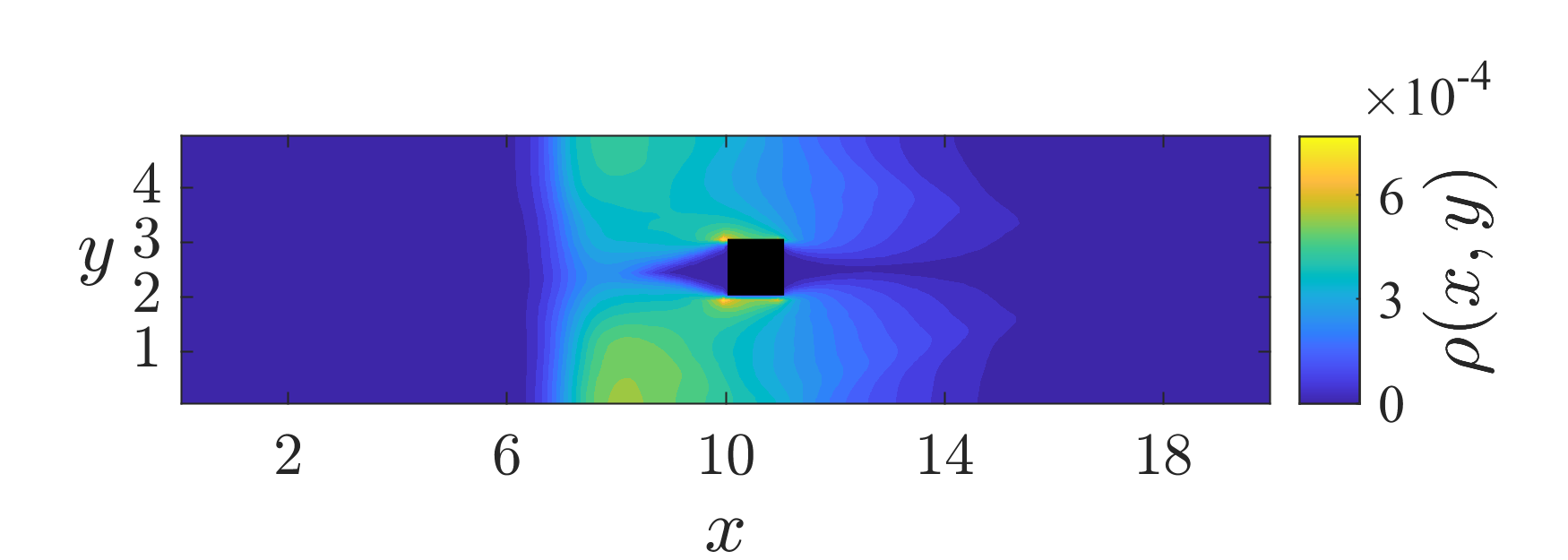}
        \caption{RANDSMAP-MS-RFF ($P=N$)}
        \label{fig:Hughes_den_RFNNfMK_recon}
    \end{subfigure}
    \\
    \begin{subfigure}[b]{0.49\textwidth}
        \centering
        \includegraphics[width=\textwidth]{FigsHughes/DenProf_Orig.png}
        \caption{Original density profile}
    \end{subfigure}
    \hfill
    \begin{subfigure}[b]{0.49\textwidth}
        \centering
        \includegraphics[width=\textwidth]{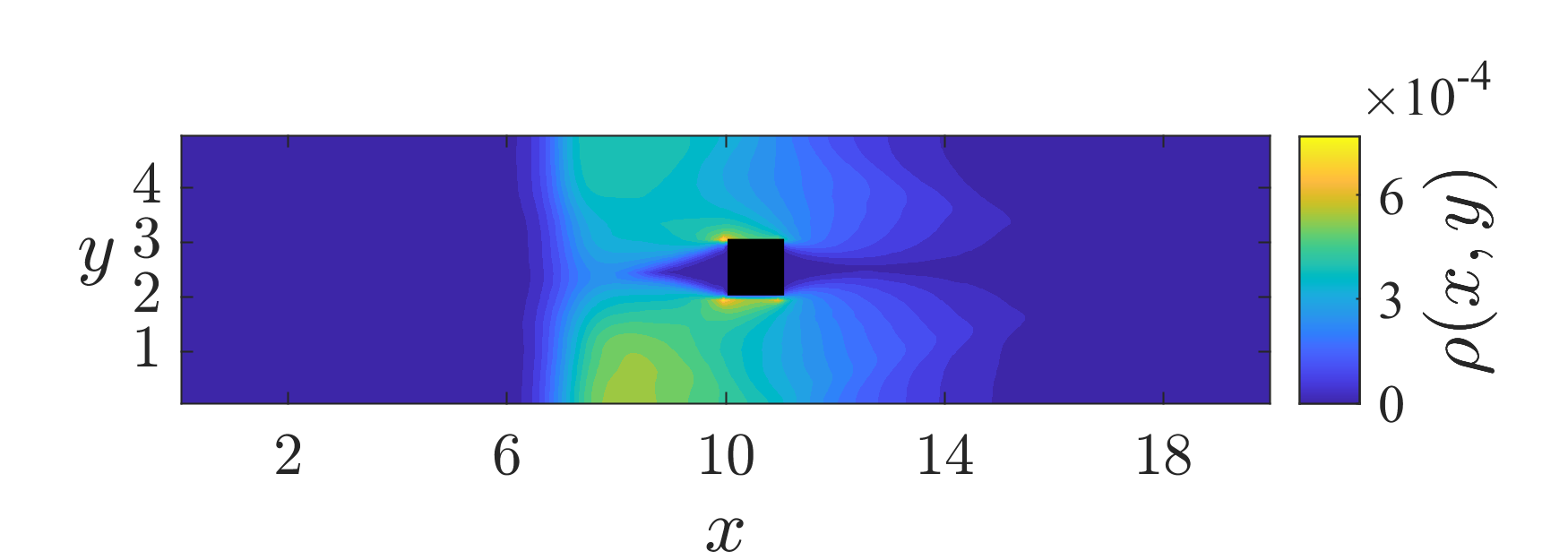}
        \caption{RANDSMAP-Sig ($P=N$)}
        \label{fig:Hughes_den_RFNNs_recon}
    \end{subfigure}
    \\
    \begin{subfigure}[b]{0.49\textwidth}
        \centering
        \includegraphics[width=\textwidth]{FigsHughes/DenProf_Orig.png}
        \caption{Original density profile}
    \end{subfigure}
    \hfill
    \begin{subfigure}[b]{0.49\textwidth}
        \centering
        \includegraphics[width=\textwidth]{FigsHughes/DenProf_GH.png}
        \caption{DDM}
        \label{fig:Hughes_den_GH_recon}
    \end{subfigure}
    \\
    \begin{subfigure}[b]{0.49\textwidth}
        \centering
        \includegraphics[width=\textwidth]{FigsHughes/DenProf_Orig.png}
        \caption{Original density profile}
    \end{subfigure}
    \hfill
    \begin{subfigure}[b]{0.49\textwidth}
        \centering
        \includegraphics[width=\textwidth]{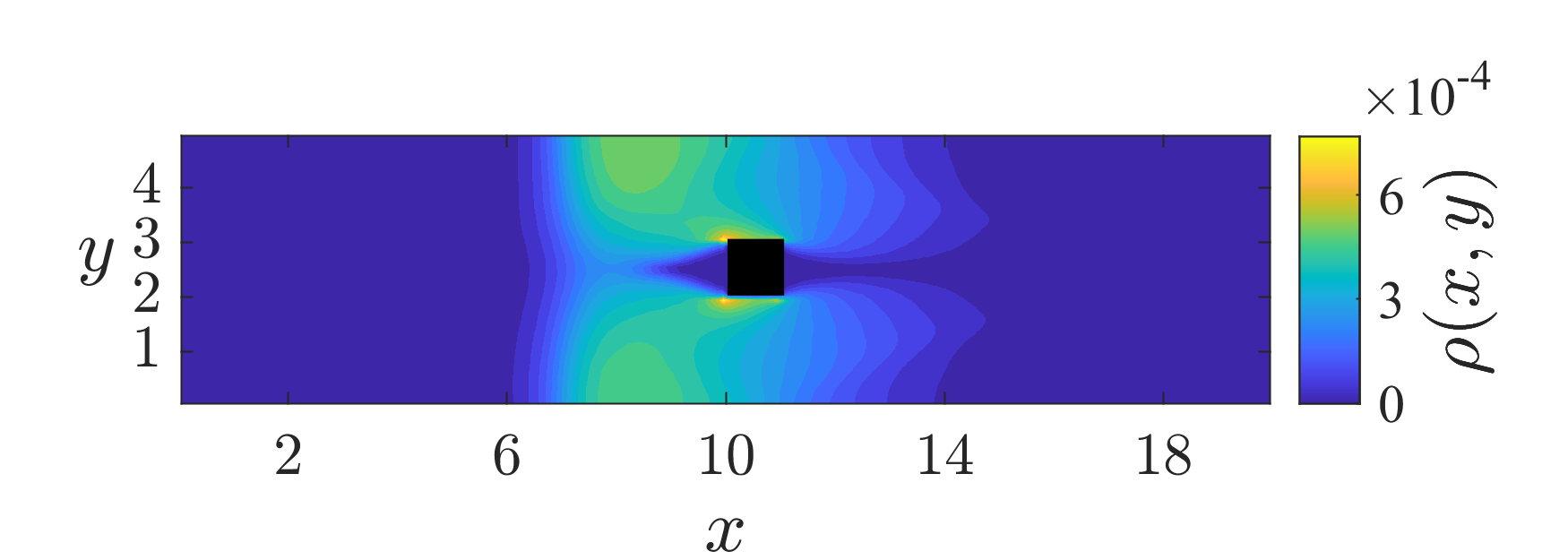}
        \caption{$k$-NN}
        \label{fig:Hughes_den_kNN_recon}
    \end{subfigure}
    \caption{Representative original and reconstructed density profiles for the Hughes 2D pedestrian model dataset ($M=200 \times 50$). Left column panels show the original density profile, while panels~\ref{fig:Hughes_den_RFNNf_recon}, \ref{fig:Hughes_den_RFNNfMK_recon}, \ref{fig:Hughes_den_RFNNs_recon}, \ref{fig:Hughes_den_GH_recon} and \ref{fig:Hughes_den_kNN_recon} show its reconstructions, obtained with the five decoders: RANDSMAP-RFF with $P=N$, RANDSMAP-MS-RFF with $P=N$, RANDSMAP-Sig with $P=N$, DDM and $k$-NN, respectively. For the RANDSMAP decoders, the displayed configuration is the one achieving the lowest training error among all variants of that type (see full comparison in Fig.~\ref{fig:Hughes_dec_tr}).}
    \label{fig:Hughes_recon}
\end{figure}

\end{document}